\newtheorem{thm}{Theorem}[section]
\newtheorem{cor}[thm]{Corollary}
\newtheorem{lem}[thm]{Lemma}
\newtheorem{rem}[thm]{Remark}
\newtheorem{cjt}[thm]{Conjecture}
\theoremstyle{definition}
\newcommand{\be}{\begin{equation}}
\newcommand{\ee}{\end{equation}}
\newcommand{\bea}{\begin{eqnarray}}
\newcommand{\eea}{\end{eqnarray}}
\newcommand{\ben}{\begin{eqnarray*}}
\newcommand{\een}{\end{eqnarray*}}
\newcommand{\bt}{\begin{split}}
\newcommand{\et}{\end{split}}
\newcommand{\bet}{\begin{equation}
\begin{split}}
\newcommand{\eet}{\end{split}
\end{equation}}
\definecolor{green}{rgb}{0,1,0}
\definecolor{yellow}{rgb}{1,1,0}
\definecolor{orange}{rgb}{1,.7,0}
\definecolor{red}{rgb}{1,0,0}
\definecolor{white}{rgb}{1,1,1}
\begin{document}

\title[]
{Getzler relation and Virasoro Conjecture for genus one}
\date{}
\author{Yijie Lin}

\address{Yijie Lin: School of Mathematics (Zhuhai)\\Sun Yat-Sen University\\Zhuhai, 519082, China}
\email{yjlin12@163.com}

\maketitle

\begin{abstract}
We derive explicit universal equations for primary Gromov-Witten invariants by applying Getzler's genus one relation to quantum powers of Euler vector field. As an application, we provide some evidences for the genus-1 Virasoro conjecture.
\end{abstract}



\section{Introduction}
The Virasoro conjecture of Eguchi-Hori-Xiong \cite{EHX} predicts some mysterious relations between Gromov-Witten invariants in all genera, that is, a sequence of operators are conjectured to annihilate the generating functions of Gromov-Witten invariants of smooth projective varieties. It is equivalent to Witten conjecture \cite{Wi1} proved by Kontsevich \cite{Kon} when the underlying mainfold is one point. For manifolds with semisimple quantum product, this conjecture was proved in lower genera \cite{DZ1,DZ2,Get1,LT,L1,L3,L4}, and completely solved by Teleman \cite{Tel}.
The genus-0 part of the Virasoro conjecture has been  proved firstly in \cite{LT} without assumption of semisimplicity, and later by other authors \cite{DZ1,Get1}. It is still open  for the general case of the genus-1 Virasoro conjecture. \par
\par

In \cite{L1}, it is proved that the genus-1 Virasoro conjecture can be reduced to some simple equation derived by restricting the genus-1 $L_{1}$-constraint on the small phase space. This equation is motivated by using Getzler's genus one relation \cite{Get2} as follows
\be
\textbf{G}(E^{k_{1}},E^{k_{2}},E^{k_{3}},E^{k_{4}})=0,\label{Liu1}
\ee
where $E^{k}$ is $k^{th}$ quantum power of Euler vector field, and the definition of $\textbf{G}$ is presented in section 2.
Furtherly, the author in \cite{L2}  shows that to prove the genus-1 Virasoro conjecture, it is enough to  prove the property that the derivative of
that simple equation along the direction of any vector field vanishes. Besides the identity of the ordinary cohomology ring satisfying this property, he also find one more  vector field called quantum volume element  by the computation of
\be\label{Liu2}
\sum_{\alpha}\textbf{G}(E,E,\gamma^{\alpha},\gamma_{\alpha})=0,
\ee
where $\gamma^{\alpha}$ and $\gamma_{\alpha}$ belong to the space of cohomology classes.\par
In this paper, we will consider the following form
\be\label{explicitformula1}
\textbf{G}(E^{k},\gamma_{\alpha},\gamma_{\beta},\gamma_{\sigma})=0.
\ee
The explicit formula of equation \eqref{explicitformula1} is computed in section 3 (cf. Theorem \ref{Getzler-1E}), which implies the explicit expressions (cf. Theorems \ref{Getzler-2E} and \ref{Getzler-3E}) for the following two equations
\bea
&&\textbf{G}(E^{k_{1}},E^{k_{2}},\gamma_{\alpha},\gamma_{\beta})=0,\label{explicitformula2}\\
&&\textbf{G}(E^{k_{1}},E^{k_{2}},E^{k_{3}},\gamma_{\alpha})=0.\label{explicitformula3}
\eea
The explicit formula \eqref{explicitformula3} generalizes  equation \eqref{Liu1}, and its equivalent result, i.e.,
Theorem \ref{mainresult1} implies some  generalized version (cf. Corollaries \ref{mainresult2} and \ref{mainresult3}) of the Virasoro type relation for $\{\Phi_{k}\}$ in \cite{L1}. And formula \eqref{explicitformula2} implies equation \eqref{Liu2}, and provides more evidences (cf. Theorem \ref{mainresult6}) for the genus-1 Virasoro conjecture
by computing
\ben
&&\sum_{\mu}\textbf{G}(E^{k_{1}},E^{k_{2}},\gamma_{\mu},\gamma^{\mu}\circ\gamma_{\alpha})=0.
\een
These evidences give an alternative proof of the genus-1 Virasoro conjecture for any manifold with semisimple quantum cohomology (cf. Corollary \ref{mainresult61}).
We also derive some new relation (cf. Theorem \ref{mainresult7}) from the general equation \eqref{explicitformula1}.

An outline of this paper is as follows. In section 2, we recall some basic definitions, present some known facts for the genus-1 Virasoro conjecture and
recollect universal equations for primary Gromov-Witten invariants which will be used later.
In section 3, we firstly obtain  an explicit formula for the first derivative of $\Phi_{k}$, and then derive explicit universal equations from Getzler's genus one relation involving some quantum powers of Euler
vector field.
In section 4, we consider applications to the genus-1 Virasoro conjecture.

{\bf Acknowledgements.}
The author would like to thank Professor Xiaobo Liu for many helpful suggestions, and Professor Jian Zhou for
his encouragement.

\section{Preliminaries}
In this section, we recall Gromov-Witten invariants, quantum product, and some known facts for the genus-1 Virasoro conjecture. We will also recollect some universal equations for primary Gromov-Witten invariants, and fix some notation.
\subsection{Gromov-Witten invariants, quantum product and  Virasoro conjecture}
Let $X$ be a smooth projective variety of dimension $d$, and denote by $N$  the dimension of the space of cohomology classes $H^{*}(X,\mathbb{C})$. We assume $H^{odd}(X;\mathbb{C})=0$ for simplicity, and fix a basis $\{\gamma_{1},\cdot\cdot\cdot,\gamma_{N}\}$ of $H^{*}(X,\mathbb{C})$. Let $\gamma_{1}$ be the identity of the cohomology ring of $X$ and $\gamma_{\alpha}\in H^{p_{\alpha},q_{\alpha}}(X,\mathbb{C})$ for each $\alpha$.  Let $\eta_{\alpha\beta}=\int_{X}\gamma_{\alpha}\cup\gamma_{\beta}$ be the intersection
form on $H^*(X,\mathbb{C})$, and $\mathcal{C}=(\mathcal{C}_{\alpha}^{\beta})$ be the matrix satisfying
\ben
c_{1}(X)\cup\gamma_{\alpha}=\sum_{\beta}\mathcal{C}_{\alpha}^{\beta}\gamma_{\beta}.
\een
The symmetric matrices $\eta=(\eta_{\alpha\beta})$ and $\eta^{-1}=(\eta^{\alpha\beta})$ are used to lower and raise indices. For example,
$\gamma^{\alpha}=\sum_{\mu}\gamma_{\mu}\eta^{\mu\alpha}$. Let
$b_{\alpha}=p_{\alpha}-\frac{1}{2}(d-1)$. It is easy to verify that if $\eta^{\alpha\beta}\neq0$ or $\eta_{\alpha\beta}\neq0$, then
$b_{\alpha}=1-b_{\beta}$. For
$\Lambda\in H_{2}(X,\mathbb{Z})$, let $\overline{\mathcal{M}}_{g,k}(X,\Lambda)$ be the moduli space of stable map with $[\overline{\mathcal{M}}_{g,k}(X,\Lambda)]^{vir}$ as its virtual fundamental class, and $\mathbb{L}_{i}$  the tautological line bundle over $\overline{\mathcal{M}}_{g,k}(X,\Lambda)$.  Let $\widetilde{\gamma}_{1},\cdot\cdot\cdot,\widetilde{\gamma}_{k}\in H^{*}(X,\mathbb{C})$, the  genus-$g$ descendant Gromov-Witten invariants are defined by
\bea
&&\langle\tau_{n_{1}}(\widetilde{\gamma}_{1})\cdot\cdot\cdot\tau_{n_{k}}(\widetilde{\gamma}_{k})\rangle_{g}\nonumber\\
&=&\sum_{\Lambda\in H_{2}(X,\mathbb{Z})}q^{\Lambda}\int_{[\overline{\mathcal{M}}_{g,k}(X,\Lambda)]^{vir}}\prod_{i=1}^k c_{1}(\mathbb{L}_{1})^{n_{i}}\cup\mathbf{ev}_{i}^{*}(\widetilde{\gamma}_{i}),\label{descendentGW}
\eea
where $q^{\Lambda}$ is in Novikov ring with the product defined by $q^{\Lambda_{1}}q^{\Lambda_{2}}=q^{\Lambda_{1}+\Lambda_{2}}$, and $\mathbf{ev}_{i}: \overline{\mathcal{M}}_{g,k}(X,\Lambda)\longrightarrow X$ is the $i$-th evaluation map $(C;x_{1},\cdot\cdot\cdot,x_{k};f)\mapsto f(x_{i})$. One can refer \cite{CK} for more details. As in \cite{L1}, for any $\tau_{n}(\gamma_{\alpha})$, one associates a parameter $t_{n}^{\alpha}$, and call
the space of all $T=\{t_{n}^{\alpha}: n\in\mathbb{Z}^{\geq0}, \alpha=1,\cdot\cdot\cdot,N\}$ the big phase space and its subspace spanned by $\{T|t_{n}^{\alpha}=0 \mbox{ if } n>0\}$ the small phase space. For simplicity, we identify $\tau_{n}(\gamma_{\alpha})$ with tangent vector field $\frac{\partial}{\partial t_{n}^{\alpha}}$ on the big phase space, and on the small phase space, we write $t_{0}^{\alpha}$ as $t^{\alpha}$ and identify
$\gamma_{\alpha}$ with $\frac{\partial}{\partial t^{\alpha}}$. If we restrict the invariant \eqref{descendentGW} on the small phase space, i.e., setting $n_{i}=0$
for all $1\leq i\leq k$, the resulting invariants are called primary Gromov-Witten invariants.

The generating function of genus-$g$ Gromov-Witten invariants is defined as
\ben
F_{g}(T):=\sum_{k\geq0}\frac{1}{k!}\sum_{n_{1},\alpha_{1},\cdot\cdot\cdot, n_{k},\alpha_{k}}t_{n_{1}}^{\alpha_{1}}\cdot\cdot\cdot t_{n_{k}}^{\alpha_{k}}
\langle\tau_{n_{1}}(\gamma_{\alpha_{1}})\cdot\cdot\cdot\tau_{n_{k}}(\gamma_{\alpha_{k}})\rangle_{g}.
\een
And the generating function for Gromov-Witten invariants involving all genera is defined to be
\ben
Z(T;\lambda):=\exp\sum_{g\geq0}\lambda^{2g-2}F_{g}(T).
\een
As in \cite{L4}, we define $k$-point (correlation) function
\ben
&&\langle\langle\mathcal{W}_{1}\mathcal{W}_{2}\cdot\cdot\cdot\mathcal{W}_{k}\rangle\rangle_{g}:
=\sum_{n_{1},\alpha_{1},\cdot\cdot\cdot,n_{k},\alpha_{k}}f_{n_{1},\alpha_{1}}^{1}\cdot\cdot\cdot f_{n_{k},\alpha_{k}}^{k}
\frac{\partial^k}{\partial t_{n_{1}}^{\alpha_{1}}\partial t_{n_{2}}^{\alpha_{2}}\cdot\cdot\cdot\partial t_{n_{k}}^{\alpha_{k}}}F_{g}
\een
for vector fields $\mathcal{W}_{i}=\sum_{n,\alpha}f_{n,\alpha}^{i}\frac{\partial}{\partial t_{n}^{\alpha}}$.
Let $\nabla$ be the covariant derivative defined by
\be
\nabla_{\mathcal{W}}\mathcal{V}=\sum_{n,\alpha}(\mathcal{W}f_{n,\alpha})\tau_{n}(\gamma_{\alpha})\label{cderivative}
\ee
for any vector fields $\mathcal{W}$ and $\mathcal{V}=\sum_{n,\alpha}f_{n,\alpha}\tau_{n}(\gamma_{\alpha})$. It is simple to show that
\be
[\mathcal{V},\mathcal{W}]=\nabla_{\mathcal{V}}\mathcal{W}-\nabla_{\mathcal{W}}\mathcal{V}\label{bracketrelation1}
\ee
and
\be\label{derivative1}
\mathcal{W}\langle\langle \mathcal{V}_{1}\cdot\cdot\cdot \mathcal{V}_{k}\rangle\rangle_{g}=\langle\langle \mathcal{W}\mathcal{V}_{1}\cdot\cdot\cdot \mathcal{V}_{k}\rangle\rangle_{g}+\sum_{i=1}^{k}\langle\langle \mathcal{V}_{1}\cdot\cdot\cdot(\nabla_{\mathcal{W}}\mathcal{V}_{i})\cdot\cdot\cdot \mathcal{V}_{k}\rangle\rangle_{g}
\ee
for any vector fields $\mathcal{V}$, $\mathcal{W}$ and $\mathcal{V}_{i}$.\par
Next, as in \cite{L4}, we define the quantum product of  any two vector fields $\mathcal{V}$ and $\mathcal{W}$ by
\ben
\mathcal{V}\circ\mathcal{W}=\sum_{\alpha}\langle\langle\mathcal{V}\mathcal{W}\gamma^{\alpha}\rangle\rangle_{0}\gamma_{\alpha}.
\een
Obviously, this product is commutative by definition and associative by the following well known generalized WDVV equation
\be\label{WDVV1}
\langle\langle\{\mathcal{V}_{1}\circ\mathcal{V}_{2}\}\mathcal{V}_{3}\mathcal{V}_{4}\rangle\rangle_{0}
=\langle\langle\{\mathcal{V}_{1}\circ\mathcal{V}_{3}\}\mathcal{V}_{2}\mathcal{V}_{4}\rangle\rangle_{0}.
\ee
It follows easily from formula \eqref{derivative1} that
\be\label{derivativeqp}
\nabla_{\mathcal{U}}(\mathcal{V}\circ \mathcal{W})=(\nabla_{\mathcal{U}}\mathcal{V})\circ \mathcal{W}+\mathcal{V}\circ(\nabla_{\mathcal{U}}\mathcal{W})+\sum_{\alpha}\langle\langle \mathcal{U}\mathcal{V}\mathcal{W}\gamma^\alpha\rangle\rangle_{0}\gamma_{\alpha}
\ee
for any vector fields $\mathcal{U}$, $\mathcal{V}$, and $\mathcal{W}$.
Define
\be
G(\mathcal{V}):=\sum_{n,\alpha}(n+b_{\alpha})f_{n,\alpha}\tau_{n}(\gamma_{\alpha})\label{Goperator}
\ee
for any vector field $\mathcal{V}=\sum\limits_{n,\alpha}f_{n,\alpha}\tau_{n}(\gamma_{\alpha}$).
We have the following easy observations
\bea
&&\sum_{\mu}\langle\langle\{\mathcal{V}_{1}\circ\gamma^{\mu}\}\{\mathcal{V}_{2}\circ\mathcal{V}_{3}\circ\gamma_{\mu}\}\mathcal{V}_{4}\cdot\cdot\cdot\mathcal{V}_{k}\rangle\rangle_{g}\nonumber\\
&=&\sum_{\mu}\langle\langle\{\mathcal{V}_{1}\circ\gamma_{\mu}\}\{\mathcal{V}_{2}\circ\mathcal{V}_{3}\circ\gamma^{\mu}\}\mathcal{V}_{4}\cdot\cdot\cdot\mathcal{V}_{k}\rangle\rangle_{g}\nonumber\\
&=&\sum_{\mu}\langle\langle\{\mathcal{V}_{1}\circ\mathcal{V}_{2}\circ\gamma^{\mu}\}\{\mathcal{V}_{3}\circ\gamma_{\mu}\}\mathcal{V}_{4}\cdot\cdot\cdot\mathcal{V}_{k}\rangle\rangle_{g}\label{Observation1}
\eea
and
\bea
&&\sum_{\mu}\langle\langle G(\mathcal{V}_{1}\circ\mathcal{V}_{2}\circ\gamma^{\mu})\{\mathcal{V}_{3}\circ\mathcal{V}_{4}\circ\gamma_{\mu}\}\mathcal{V}_{5}\cdot\cdot\cdot\mathcal{V}_{k}\rangle\rangle_{g}\nonumber\\
&=&\sum_{\mu}\langle\langle G(\mathcal{V}_{1}\circ\mathcal{V}_{2}\circ\gamma_{\mu})\{\mathcal{V}_{3}\circ\mathcal{V}_{4}\circ\gamma^{\mu}\}\mathcal{V}_{5}\cdot\cdot\cdot\mathcal{V}_{k}\rangle\rangle_{g}\nonumber\\
&=&\sum_{\mu}\langle\langle G(\mathcal{V}_{1}\circ\mathcal{V}_{2}\circ\mathcal{V}_{3}\circ\gamma^{\mu})\{\mathcal{V}_{4}\circ\gamma_{\mu}\}\mathcal{V}_{5}\cdot\cdot\cdot\mathcal{V}_{k}\rangle\rangle_{g}\nonumber\\
&=&\sum_{\mu}\langle\langle G(\mathcal{V}_{1}\circ\gamma^{\mu})\{\mathcal{V}_{2}\circ\mathcal{V}_{3}\circ\mathcal{V}_{4}\circ\gamma_{\mu}\}\mathcal{V}_{5}\cdot\cdot\cdot\mathcal{V}_{k}\rangle\rangle_{g}\label{Observation2}
\eea
for any vector fields $\mathcal{V}_{i}$ ($1\leq i\leq k$).\par
We recollect the following Virasoro operators defined in \cite{LT}
\ben
L_{-1}:&=&\sum_{m,\alpha}\tilde{t}_{m}^{\alpha}\frac{\partial}{\partial t_{m-1}^{\alpha}}+\frac{1}{2\lambda^2}\sum_{\alpha,\beta}\eta_{\alpha\beta}t_{0}^{\alpha}t_{0}^{\beta},
\\
L_{0}:&=&\sum_{m,\alpha}(m+b_{\alpha})\tilde{t}_{m}^{\alpha}\frac{\partial}{\partial t_{m}^{\alpha}}
+\sum_{m,\alpha,\beta}\mathcal{C}_{\alpha}^{\beta}\tilde{t}_{m}^{\alpha}\frac{\partial}{\partial t_{m-1}^{\beta}}
+\frac{1}{2\lambda^2}\sum_{\alpha,\beta}\mathcal{C}_{\alpha\beta}t_{0}^{\alpha}t_{0}^{\beta}\\
&&+\frac{1}{24}\bigg(\frac{3-d}{2}\chi(X)-\int_{X}c_{1}(X)\cup c_{d-1}(X)\bigg),
\een
and for $n\geq1$,
\ben
L_{n}:&=&\sum_{m,\alpha,\beta}\sum_{j=0}^{m+n}A_{\alpha}^{(j)}(m,n)(\mathcal{C}^j)_{\alpha}^{\beta}\tilde{t}_{m}^{\alpha}\frac{\partial}{\partial t_{m+n-j}^{\beta}}\\
&&+\frac{\lambda^2}{2}\sum_{\alpha,\beta,\gamma}\sum_{j=0}^{n-1}\sum_{k=0}^{n-j-1}B_{\alpha}^{(j)}(k,n)(\mathcal{C}^j)_{\alpha}^{\beta}\eta^{\alpha\gamma}
\frac{\partial}{\partial t_{k}^{\gamma}}\frac{\partial}{\partial t_{n-k-1-j}^{\beta}}\\
&&+\frac{1}{2\lambda^2}\sum_{\alpha,\beta}(\mathcal{C}^{n+1})_{\alpha\beta}t_{0}^{\alpha}t_{0}^{\beta},
\een
where
\ben
&&A_{\alpha}^{(j)}(m,n):=\frac{\Gamma(b_{\alpha}+m+n+1)}{\Gamma(b_{\alpha}+m)}\sum_{m\leq l_{1}<l_{2}<\cdot\cdot\cdot<l_{j}\leq m+n}\bigg(\prod_{i=1}^{j}\frac{1}{b_{\alpha}+l_{i}}\bigg),\\
&&B_{\alpha}^{(j)}(m,n):=\frac{\Gamma(m+2-b_{\alpha})\Gamma(n-m+b_{\alpha})}{\Gamma(1-b_{\alpha})\Gamma(b_{\alpha})}\sum_{-m-1\leq l_{1}<l_{2}<\cdot\cdot\cdot<l_{j}\leq n-m-1}\bigg(\prod_{i=1}^{j}\frac{1}{b_{\alpha}+l_{i}}\bigg).
\een
These operators satisfy the following bracket relation
\be\label{Virasoro relation}
[L_{m},L_{n}]=(m-n)L_{m+n},
\ee
for $m,n\geq -1$.
The Virasoro conjecture is
\begin{cjt}\label{Virasoro conjecture}
$L_{n}Z\equiv0$ (called the $L_{n}$-constraint) for all $n\geq-1$.
\end{cjt}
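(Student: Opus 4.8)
The conjecture asks for $L_n Z \equiv 0$ simultaneously for all $n \geq -1$, so the first reduction is purely Lie-algebraic: by the bracket relation \eqref{Virasoro relation}, if $L_a Z = 0$ and $L_b Z = 0$ then $(a-b)L_{a+b}Z = [L_a,L_b]Z = 0$, and since $\{L_{-1},L_1,L_2\}$ generate the whole family under bracket (with $L_0$ recovered from $[L_1,L_{-1}] = 2L_0$), it is enough to verify finitely many constraints. Among these, the $L_{-1}$-constraint is the string equation and the $L_0$-constraint is the dilaton/grading relation, both classical and valid in every genus, so the genuinely new content concentrates on the $L_1$-constraint.

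Next I would stratify by genus using $Z = \exp\sum_{g\geq0}\lambda^{2g-2}F_g$, which splits each $L_n$-constraint into a hierarchy of identities ordered by powers of $\lambda$. The genus-$0$ layer is precisely what was established unconditionally in \cite{LT,DZ1,Get1}, so I take it as input and turn to the genus-$1$ layer treated in this paper. Following \cite{L1}, the genus-$1$ $L_1$-constraint can be restricted to the small phase space, where it becomes a finite universal equation among primary invariants; the plan is to produce exactly this equation from Getzler's genus-one relation evaluated on quantum powers of the Euler field, namely $\textbf{G}(E^{k_1},E^{k_2},E^{k_3},E^{k_4})=0$ as in \eqref{Liu1}. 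Expanding $\textbf{G}$ through the correlation-function calculus \eqref{derivative1}, the derivation rule \eqref{derivativeqp}, the WDVV identity \eqref{WDVV1}, and the index-shuffling observations \eqref{Observation1}--\eqref{Observation2} yields explicit universal relations, while the more flexible variants \eqref{explicitformula2} and \eqref{explicitformula3}, in which some entries are replaced by basis classes, let one match the restricted $L_1$-constraint term by term.

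The step I expect to be the real obstacle is promoting this single small-phase-space equation to the full constraint. By \cite{L2} this is equivalent to showing that the directional derivative of the equation vanishes along every vector field, which reduces to exhibiting enough distinguished directions (the identity of the cohomology ring and the quantum volume element extracted from \eqref{Liu2}) along which the derivative is controlled. When the quantum product is semisimple this can be closed, recovering the genus-$1$ statement in line with Teleman's theorem; this is the route to the unconditional semisimple result. Without semisimplicity the derivative condition is not known along all directions, and beyond genus $1$ no finite reduction of this kind is available, so I expect this approach to settle the genus-$1$ conjecture only in the semisimple case and to furnish new evidence otherwise, leaving the full all-genus statement open.
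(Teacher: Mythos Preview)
The statement you are addressing is a \emph{conjecture}, not a theorem: the paper labels it as Conjecture~\ref{Virasoro conjecture} and does not claim to prove it. Indeed, the Virasoro conjecture is open in general; the paper's contribution is to derive new universal equations from Getzler's relation, give an alternative proof of the genus-$1$ case under the semisimplicity hypothesis (Corollary~\ref{mainresult61}), and establish further partial evidence such as Theorem~\ref{mainresult6}. Your own write-up concedes this in its final sentence (``leaving the full all-genus statement open''), so what you have produced is not a proof but a survey of the known reduction steps together with the paper's partial results. That is a reasonable summary of the landscape, but it cannot be submitted as a proof of the conjecture.

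Within your outline there is also a minor slip. You write that ``the genuinely new content concentrates on the $L_1$-constraint,'' but the bracket relation $[L_m,L_n]=(m-n)L_{m+n}$ cannot manufacture $L_2$ from $L_{-1},L_0,L_1$ alone; one needs both $L_1$ and $L_2$ as independent new inputs (this is why you correctly listed $\{L_{-1},L_1,L_2\}$ as generators a few lines earlier). Finally, your claim that restricting the genus-$1$ $L_1$-constraint to the small phase space and then verifying the derivative vanishes along every direction ``can be closed'' in the semisimple case is correct in outcome but skips the actual mechanism: the paper's argument goes through the new identity $(\Delta\circ\gamma_\alpha)(\langle\langle E^k\rangle\rangle_1-\Phi_k)=0$ of Theorem~\ref{mainresult6}, and it is the invertibility of the matrix $(g_i^{-3/2}\psi_{i\alpha})$ in an idempotent basis that converts this into the full vanishing. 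Without that step the semisimple case does not follow from your sketch.
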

It is well known that the $L_{-1}$-constraint and the $L_{0}$-constraint hold for all manifolds. In fact, the $L_{-1}$-constraint is equivalent to the string equation, and $L_{0}Z=0$ is true by Hori \cite{Hori}.
Assume that
\ben
L_{n}Z(T;\lambda)=\bigg\{\sum_{g\geq0}\Omega_{g,n}\lambda^{2g-2}\bigg\}Z(T;\lambda).
\een
Then the $L_{n}$-constraint is equivalent to $\Omega_{g,n}=0$ for all $g\geq0$.
As in \cite{LT}, equation $\Omega_{g,n}=0$ is called the genus-$g$ $L_{n}$-constraint, and the so called  genus-$g$ Virasoro conjecture predicts that the genus-$g$ $L_{n}$-constraint is true for all $n\geq-1$.\par
We will concentrate on the study of the genus-1 Virasoro conjecture. As mentioned in Introduction, it suffices to consider some simple equation on
the small phase space. Therefore, in the rest of this paper, everything will be considered  on the small phase space. Notice that all the above definitions and results hold when restricted on the small phase space. The
notation $\langle\langle\cdot\cdot\cdot\rangle\rangle_{g}$ is used again to denote some correlation function on the small phase space which is written as $\langle\langle\cdot\cdot\cdot\rangle\rangle_{g,s}$ in \cite{L1}.
 We start with Euler vector field on the small phase space, which is  defined by
\ben
E:=c_{1}(X)+\sum_{\alpha}(b_{1}+1-b_{\alpha})t^\alpha\gamma_{\alpha}.
\een
It is showed in \cite{LT} that the following quasi-homogeneity equation holds
\ben
\langle\langle E\rangle\rangle_{g}=(3-d)(1-g)F_{g}+\frac{1}{2}\delta_{g,0}\sum_{\alpha,\beta}\mathcal{C}_{\alpha\beta}t^{\alpha}t^{\beta}
-\frac{1}{24}\delta_{g,1}\int_{X}c_{1}(X)\cup c_{d-1}(X).
\een
And its derivatives are
\bea\label{quasihom}
&&\langle\langle E\upsilon_{1}\cdot\cdot\cdot\upsilon_{k}\rangle\rangle_{g}\nonumber\\
&=&\sum_{i=1}^{k}\langle\langle\upsilon_{1}\cdot\cdot\cdot G(\upsilon_{i})\cdot\cdot\cdot\upsilon_{k}\rangle\rangle_{g}-(2g+k-2)(b_{1}+1)\langle\langle\upsilon_{1}\cdot\cdot\cdot\upsilon_{k}\rangle\rangle_{g}\nonumber\\
&&+\delta_{g,0}\nabla^{k}_{\upsilon_{1},\cdot\cdot\cdot,\upsilon_{k}}\bigg(\frac{1}{2}\sum_{\alpha,\beta}\mathcal{C}_{\alpha\beta}t^{\alpha}t^{\beta}\bigg)
\eea
for any vector fields $\upsilon_{i}$ ($1\leq i\leq k$) on the small phase space.
In particular, we have
\be\label{4pointeq1}
\sum_{\alpha}\langle\langle E\upsilon_{1}\upsilon_{2}\gamma^\alpha\rangle\rangle_{0}\gamma_{\alpha}=G(\upsilon_{1})\circ\upsilon_{2}+\upsilon_{1}\circ G(\upsilon_{2})
-G(\upsilon_{1}\circ\upsilon_{2})-b_{1}\upsilon_{1}\circ\upsilon_{2}.
\ee
It is shown in \cite{L2} that
\be\label{derivative2}
\nabla_{\upsilon}E=-G(\upsilon)+(b_{1}+1)\upsilon.
\ee
More generally, we have
\begin{lem}
For all $k\geq0$ and any vector field $\upsilon$ on the small phase space, let $E^k$ be the $k^{th}$ quantum power of $E$, we have
\be\label{derivative3}
\nabla_{\upsilon}E^k
=\sum_{i=1}^{k}G(E^{i-1})\circ \upsilon\circ E^{k-i}-\sum_{i=1}^{k}G(\upsilon\circ E^{k-i})\circ E^{i-1}+k\upsilon\circ E^{k-1}.
\ee
\end{lem}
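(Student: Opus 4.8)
The plan is to prove \eqref{derivative3} by induction on $k$. The base case $k=1$ is exactly \eqref{derivative2}: for $k=1$ the right-hand side of \eqref{derivative3} collapses to $G(E^{0})\circ\upsilon-G(\upsilon)+\upsilon$, and since $E^{0}=\gamma_{1}$ is the identity of the quantum product and $G(\gamma_{1})=b_{1}\gamma_{1}$, this equals $(b_{1}+1)\upsilon-G(\upsilon)$, which is \eqref{derivative2}. (The degenerate case $k=0$ also holds, as $\nabla_{\upsilon}E^{0}=\nabla_{\upsilon}\gamma_{1}=0$ while the right-hand side is empty.)

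For the inductive step I would write $E^{k+1}=E^{k}\circ E$ and apply the derivation rule \eqref{derivativeqp} with $\mathcal{U}=\upsilon$, $\mathcal{V}=E^{k}$, $\mathcal{W}=E$, obtaining
\[
\nabla_{\upsilon}E^{k+1}=(\nabla_{\upsilon}E^{k})\circ E+E^{k}\circ(\nabla_{\upsilon}E)+\sum_{\alpha}\langle\langle\upsilon\,E^{k}\,E\,\gamma^{\alpha}\rangle\rangle_{0}\gamma_{\alpha}.
\]
Into the first term I substitute the induction hypothesis \eqref{derivative3} for $\nabla_{\upsilon}E^{k}$ (composing each summand with $E$ raises all the relevant powers by one); into the second I use \eqref{derivative2} to get $E^{k}\circ(\nabla_{\upsilon}E)=-G(\upsilon)\circ E^{k}+(b_{1}+1)\upsilon\circ E^{k}$; and the genus-zero four-point term I rewrite by \eqref{4pointeq1} with $\upsilon_{1}=\upsilon$, $\upsilon_{2}=E^{k}$ as
\[
\sum_{\alpha}\langle\langle E\,\upsilon\,E^{k}\,\gamma^{\alpha}\rangle\rangle_{0}\gamma_{\alpha}=G(\upsilon)\circ E^{k}+\upsilon\circ G(E^{k})-G(\upsilon\circ E^{k})-b_{1}\upsilon\circ E^{k}.
\]

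The remaining work is bookkeeping. The two copies of $G(\upsilon)\circ E^{k}$ (from \eqref{derivative2} and \eqref{4pointeq1}) cancel, and the coefficients of $\upsilon\circ E^{k}$ add as $(b_{1}+1)-b_{1}+k=k+1$, producing the desired $(k+1)\,\upsilon\circ E^{k}$. From $(\nabla_{\upsilon}E^{k})\circ E$ the first sum becomes $\sum_{i=1}^{k}G(E^{i-1})\circ\upsilon\circ E^{(k+1)-i}$, whose missing $i=k+1$ term $G(E^{k})\circ\upsilon$ is supplied by the leftover $\upsilon\circ G(E^{k})$ (using commutativity of $\circ$); the second sum becomes $-\sum_{i=1}^{k}G(\upsilon\circ E^{k-i})\circ E^{i}$, which after a shift of the summation index reads $-\sum_{i=2}^{k+1}G(\upsilon\circ E^{(k+1)-i})\circ E^{i-1}$, and its missing $i=1$ term $-G(\upsilon\circ E^{k})$ is supplied by the leftover $-G(\upsilon\circ E^{k})$. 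Assembling these gives precisely the right-hand side of \eqref{derivative3} with $k$ replaced by $k+1$.

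The main (and only) obstacle is the index matching and sign tracking in this last step; there is nothing conceptually hard. I would be most careful about the two boundary summands ($i=1$ and $i=k+1$) and about the single use of commutativity $\upsilon\circ G(E^{k})=G(E^{k})\circ\upsilon$, since these are the places where an off-by-one or sign slip would most easily occur.
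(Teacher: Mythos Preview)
Your proof is correct and follows essentially the same approach as the paper: induction on $k$ with base cases $k=0,1$, then splitting $E^{k+1}$ via \eqref{derivativeqp} and handling the genus-zero four-point term with \eqref{4pointeq1}. The only cosmetic difference is that the paper writes $E^{n+1}=E\circ E^{n}$ rather than $E^{k}\circ E$, which by commutativity of $\circ$ makes no substantive change to the bookkeeping.
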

\begin{proof}
We will prove this lemma by induction on $k$. By the definition \eqref{cderivative} and $E^0=\gamma_{1}$, it holds for $k=0$ and also  for $k=1$ due to formula \eqref{derivative2}.
Assume equation \eqref{derivative3} holds for $k\leq n$. For $k=n+1$, by equations \eqref{derivativeqp} and \eqref{4pointeq1}, we have
\ben
&&\nabla_{\upsilon}E^{n+1}\\
&=&\nabla_{\upsilon}(E\circ E^{n})\\
&=&(\nabla_{\upsilon}E)\circ E^n+E\circ(\nabla_{\upsilon}E^n)+\sum_{\alpha}\langle\langle EE^n\upsilon\gamma^\alpha\rangle\rangle_{0}\gamma_{\alpha}\\
&=&-G(\upsilon)\circ E^n+(b_{1}+1)\upsilon\circ E^n+\sum_{i=1}^nG(E^{i-1})\circ\upsilon\circ E^{n-i+1}\\
&&-\sum_{i=1}^{n}G(\upsilon\circ E^{n-i})\circ E^i+
n\upsilon\circ E^n+G(E^n)\circ\upsilon+G(\upsilon)\circ E^{n}\\
&&-G(E^n\circ\upsilon)-b_{1}E^n\circ\upsilon\\
&=&\sum_{i=1}^{n+1}G(E^{i-1})\circ\upsilon\circ E^{n+1-i}-\sum_{i=1}^{n+1}G(\upsilon\circ E^{n+1-i})\circ E^{i-1}+(n+1)\upsilon\circ E^{n}.
\een
The prooof is completed.
\end{proof}
Then by equations \eqref{bracketrelation1} and \eqref{derivative3}, we have
\begin{cor}[\cite{DZ1,HM,L1}]\label{Virasorotype1}For $k,m\geq0$,
\ben
[E^{k},E^m]=(m-k)E^{m+k-1}.
\een
\end{cor}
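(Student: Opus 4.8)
The plan is to expand the commutator with the torsion-free identity \eqref{bracketrelation1} and then feed in the derivative formula \eqref{derivative3}, the whole point being that the two complicated sums it produces cancel against one another, leaving only the advertised scalar multiple of $E^{m+k-1}$. First I would write
\[
[E^{k},E^{m}]=\nabla_{E^{k}}E^{m}-\nabla_{E^{m}}E^{k},
\]
and then apply \eqref{derivative3} twice: once with $\upsilon=E^{k}$ and exponent $m$, once with $\upsilon=E^{m}$ and exponent $k$. Because the quantum product is commutative and associative with identity $E^{0}=\gamma_{1}$, every quantum product of powers of $E$ collapses via $E^{a}\circ E^{b}=E^{a+b}$; in particular $E^{k}\circ E^{m-i}=E^{m+k-i}$ and $E^{m}\circ E^{k-i}=E^{m+k-i}$. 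This turns the two derivatives into
\[
\nabla_{E^{k}}E^{m}=\sum_{i=1}^{m}G(E^{i-1})\circ E^{m+k-i}-\sum_{i=1}^{m}G(E^{m+k-i})\circ E^{i-1}+mE^{m+k-1},
\]
together with the same expression for $\nabla_{E^{m}}E^{k}$ in which the summation ranges and the final coefficient have $m$ replaced by $k$.

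The key step is a reindexing that exhibits the cancellation. Setting $a_{p}:=G(E^{p})\circ E^{m+k-1-p}$, the substitution $p=i-1$ sends the first sum of $\nabla_{E^{k}}E^{m}$ to $\sum_{p=0}^{m-1}a_{p}$, while $p=m+k-i$ sends the second sum to $\sum_{p=k}^{m+k-1}a_{p}$; the same substitutions applied to $\nabla_{E^{m}}E^{k}$ give $\sum_{p=0}^{k-1}a_{p}$ and $\sum_{p=m}^{m+k-1}a_{p}$. Subtracting, the net contribution of the $a_{p}$ terms is
\[
\Big(\sum_{p=0}^{m-1}-\sum_{p=0}^{k-1}\Big)a_{p}-\Big(\sum_{p=k}^{m+k-1}-\sum_{p=m}^{m+k-1}\Big)a_{p},
\]
and both parenthesized differences equal $\sum_{p=k}^{m-1}a_{p}$ (interpreted as $-\sum_{p=m}^{k-1}a_{p}$ when $m<k$), so they cancel exactly. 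What survives is $(m-k)E^{m+k-1}$, which is the assertion.

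I expect the only delicate point to be the bookkeeping, namely checking that the cancellation is genuinely independent of whether $m\ge k$ or $m<k$. This is immediate once both sums are rewritten with the common summand $a_{p}$ and one exploits that they share the upper limit $m+k-1$: the difference of the two $G(E^{i-1})\circ\cdots$ sums and the difference of the two $G(E^{m+k-i})\circ\cdots$ sums telescope to the identical range. Beyond this indexing care I anticipate no real obstacle, since the substantive geometric input has already been absorbed into the derivative formula \eqref{derivative3}; the boundary cases $m=0$ or $k=0$ (where $E^{0}=\gamma_{1}$ is covariantly constant) are handled automatically by the same reindexing with empty sums.
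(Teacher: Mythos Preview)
Your proposal is correct and follows essentially the same approach as the paper: the paper deduces the corollary directly from equations \eqref{bracketrelation1} and \eqref{derivative3} without spelling out any details, and your argument is exactly the reindexing computation that fills in those details. The cancellation of the $a_{p}$ terms is handled correctly and the boundary cases are fine.
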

Secondly, functions $\Phi_{k}$ are defined in terms of genus-0 data in \cite{L1}. They are
\bea
\Phi_{0}&=&0,\nonumber\\
\Phi_{1}&=&-\frac{1}{24}\int_{X}c_{1}(X)\cup c_{d-1}(X),\nonumber\\
\Phi_{k}&=&-\frac{1}{24}\sum\limits_{m=0}^{k-1}\sum\limits_{\alpha,\beta,\sigma}b_{\alpha}\langle\langle\gamma_{1}E^{m}\gamma^{\alpha}\rangle\rangle_{0}
\langle\langle\gamma_{\alpha}E^{k-1-m}\gamma^{\beta}\rangle\rangle_{0}\langle\langle\gamma_{\beta}\gamma_{\sigma}\gamma^{\sigma}\rangle\rangle_{0}\nonumber\\
&&-\frac{1}{4}\sum\limits_{m=0}^{k-1}\sum\limits_{\alpha,\beta}b_{\alpha}b_{\beta}\langle\langle\gamma_{\alpha}E^{m}\gamma^{\beta}\rangle\rangle_{0}
\langle\langle\gamma_{\beta}E^{k-1-m}\gamma^{\alpha}\rangle\rangle_{0}\nonumber\\
&&+\frac{k}{12}\sum\limits_{\sigma}\langle\langle\gamma_{\sigma}E^{k-1}\gamma^{\sigma}\rangle\rangle_{0},\label{phiformula}
\eea
for $k\geq2$.
It follows from the definitions of quantum product and $G$ that
\bea
24\Phi_{k}
&=&-\sum_{i=0}^{k-1}\langle\langle G(E^i)\Delta E^{k-i-1}\rangle\rangle_{0}-k\sum_{\mu}\langle\langle E^{k-1}\gamma_{\mu}\gamma^{\mu}\rangle\rangle_{0}\nonumber\\
&&+6\sum_{i=0}^{k-1}\sum_{\mu}\langle\langle G(E^i\circ\gamma^{\mu})G(\gamma_{\mu})E^{k-i-1}\rangle\rangle_{0},\label{phialternative}
\eea
where $\Delta=\sum_{\alpha}\gamma^{\alpha}\circ\gamma_{\alpha}$ and $k\geq2$.
\begin{rem}
It is easy to check that the expression \eqref{phiformula} holds for $k=0$. It also holds for $k=1$ by the string equation
\eqref{stringequ1} and the following equality \cite{Bori} derived by Borisov
\ben
\sum_{\alpha}b_{\alpha}(1-b_{\alpha})-\frac{b_{1}+1}{6}\chi(X)=-\frac{1}{6}\int_{X}c_{1}(X)\cup c_{d-1}(X).
\een
The same argument is applied in the proof of $\gamma_{1}\Phi_{2}=2\Phi_{1}$ in \cite{L1}. Therefore,  we adopt
equation \eqref{phiformula} or \eqref{phialternative} as the definition of $\Phi_{k}$  for all $k\geq0$. And notice that
it is shown in \cite{L1} that $\langle\langle E^{k}\rangle\rangle_{1}=\Phi_{k}$ for $k=0,1$.
\end{rem}
One important result is
\begin{thm}[\cite{L1}]
For any manifold $X$, the genus-1 Virasoro conjecture holds if and only if $\langle\langle E^{2}\rangle\rangle_{1}=\Phi_{2}$.
\end{thm}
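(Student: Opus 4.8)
The plan is to read the statement as an equivalence between the full family of genus-one Virasoro constraints and a single scalar identity on the small phase space, and to interpolate between the two through the functions $\langle\langle E^{k}\rangle\rangle_{1}$ and $\Phi_{k}$. Since the $L_{-1}$- and $L_{0}$-constraints hold in all genera, the genus-one Virasoro conjecture is equivalent to $\Omega_{1,n}=0$ for all $n\geq1$. First I would record the dictionary identifying each genus-one $L_{n}$-constraint, after restriction to the small phase space, with the equation $\langle\langle E^{n+1}\rangle\rangle_{1}=\Phi_{n+1}$: the case $n=-1$ reads $\langle\langle E^{0}\rangle\rangle_{1}=\Phi_{0}=0$ (the string equation), $n=0$ reads $\langle\langle E\rangle\rangle_{1}=\Phi_{1}$, and $n=1$ is precisely $\langle\langle E^{2}\rangle\rangle_{1}=\Phi_{2}$. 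This dictionary is obtained by writing $L_{n}F_{1}$ out, using the quasi-homogeneity equation \eqref{quasihom} and the derivative formula \eqref{derivative3} to recognize the $(n+1)$-st quantum power $E^{n+1}$ as the symbol of $L_{n}$, and collecting the remaining genus-zero and anomaly terms into the combination \eqref{phialternative} that defines $\Phi_{n+1}$. Granting in addition the reduction (from \cite{L1}) that the genus-one $L_{n}$-constraint on the big phase space is equivalent to its restriction to the small phase space, the conjecture becomes the family of scalar identities $\langle\langle E^{k}\rangle\rangle_{1}=\Phi_{k}$ for all $k\geq 2$.

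With this reformulation the two implications become nearly symmetric. The ``only if'' direction is then immediate: the full conjecture contains the genus-one $L_{1}$-constraint, whose small-phase-space form is exactly $\langle\langle E^{2}\rangle\rangle_{1}=\Phi_{2}$. For ``if'' I would set $D_{k}:=\langle\langle E^{k}\rangle\rangle_{1}-\Phi_{k}$ and prove $D_{k}=0$ for all $k$ by induction. The cases $D_{0}=D_{1}=0$ are known and $D_{2}=0$ is the hypothesis, so what remains is to produce a recursion expressing $D_{k+1}$ as a linear combination of $D_{2},\dots,D_{k}$ with no genus-zero inhomogeneity. The engine for this recursion is Getzler's genus-one relation fed with quantum powers of the Euler field, i.e. the evaluation $\textbf{G}(E^{k_{1}},E^{k_{2}},E^{k_{3}},E^{k_{4}})=0$ of \eqref{Liu1}: being a genus-one topological recursion, it should, after repeated use of \eqref{derivative3}, \eqref{quasihom}, the four-point formula \eqref{4pointeq1}, the symmetry observations \eqref{Observation1}--\eqref{Observation2}, WDVV \eqref{WDVV1}, and the bracket relation of Corollary \ref{Virasorotype1}, collapse every correlator into the one-point functions $\langle\langle E^{m}\rangle\rangle_{1}$ together with the genus-zero quantities assembling $\Phi_{m}$ through \eqref{phialternative}. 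Choosing the exponents $k_{1},\dots,k_{4}$ so that the top power equals $k+1$ should isolate $D_{k+1}$ with a nonzero coefficient.

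The main obstacle is exactly the closing of this recursion, and it has two faces. First, one must carry out the collapse of Getzler's relation on $E$-powers into a clean identity purely among the $\langle\langle E^{m}\rangle\rangle_{1}$ and the $\Phi_{m}$; this is the explicit universal-equation computation to which Section~3 is devoted, and the bookkeeping of the genus-zero terms is heavy. Second, and more delicate, one must verify that those genus-zero terms reorganize \emph{precisely} into $\Phi_{k+1}$ as given by \eqref{phialternative}, so that the resulting relation is homogeneous in the $D_{j}$; any leftover genus-zero remainder would break the implication, since then $D_{2}=0$ would no longer force $D_{3}=0$. This homogeneity is the crux that makes a single equation $D_{2}=0$ propagate to all $k$, and I expect it to reflect the compatibility of Getzler's relation with the Virasoro bracket \eqref{Virasoro relation}. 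Finally, the big-to-small phase space reduction invoked above must itself be justified, which again relies on genus-one topological recursion to express the descendant directions of $F_{1}$ through genus-zero data and small-phase-space values.
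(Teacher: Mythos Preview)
Your proposal is essentially correct and matches the approach of \cite{L1}; note that the present paper does not prove this theorem but only cites it, so there is no ``paper's own proof'' to compare against beyond what can be inferred from the surrounding discussion. Your three-step outline---reduce the big-phase-space genus-one $L_{n}$-constraint to its small-phase-space restriction, identify that restriction with $\langle\langle E^{n+1}\rangle\rangle_{1}=\Phi_{n+1}$, and then propagate $D_{2}=0$ to all $D_{k}=0$ via Getzler's relation on Euler powers---is exactly the strategy of \cite{L1}, and your identification of the two hard points (the big-to-small reduction, and the exact cancellation of genus-zero terms into $\Phi_{k}$) is accurate.

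One sharpening: the homogeneous recursion you are after is precisely the content of the paper's Theorem~3.12 (the explicit form of $\textbf{G}(E^{k_{1}},E^{k_{2}},E^{k_{3}},E^{k_{4}})=0$), and to close the induction you also need the relation $E\,D_{k}=(k-1)D_{k}$, which follows from combining Corollary~\ref{Virasorotype1} with Corollary~\ref{Virasorotype2} (i.e.\ $E^{m}D_{k}-E^{k}D_{m}=(k-m)D_{k+m-1}$ at $m=1$). For instance, with $k_{1}=k_{2}=k_{3}=k_{4}=1$ Theorem~3.12 reads $48D_{3}=32\,E\,D_{3}-24\,E^{2}D_{2}$; using $E\,D_{3}=2D_{3}$ and $D_{2}=0$ gives $D_{3}=0$, and the higher $D_{k}$ follow similarly. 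So the induction is not powered by Getzler's relation alone but by Getzler's relation \emph{together with} the Virasoro-type derivative identities for $\langle\langle E^{k}\rangle\rangle_{1}$ and $\Phi_{k}$; you should make that coupling explicit.
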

It is shown in \cite{L11} that for any given $k\geq2$, the genus-1 Virasoro conjecture holds if and only if $\langle\langle E^{k}\rangle\rangle_{1}=\Phi_{k}$.
Due to the equation $E(\langle\langle E^{2}\rangle\rangle_{1}-\Phi_{2})=\langle\langle E^{2}\rangle\rangle_{1}-\Phi_{2}$ proved in \cite{L1}, this conjecture is reduced to prove that $\upsilon(\langle\langle E^{2}\rangle\rangle_{1}-\Phi_{2})=0$ for any $\upsilon\in H^{*}(X,\mathbb{C})$. It is verified in \cite{L1} that $\gamma_{1}(\langle\langle E^{2}\rangle\rangle_{1}-\Phi_{2})=0$. And one more evidence was discovered  in the following
\begin{thm}[\cite{L2}]
For all smooth projective varieties,
we have
\ben
\Delta(\langle\langle E^2\rangle\rangle_{1}-\Phi_{2})=0.
\een
\end{thm}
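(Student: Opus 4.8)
The plan is to obtain the identity directly from Getzler's genus-one relation specialized to the four vector fields $E,E,\gamma^{\alpha},\gamma_{\alpha}$ and contracted against the metric, namely from equation \eqref{Liu2}, $\sum_{\alpha}\textbf{G}(E,E,\gamma^{\alpha},\gamma_{\alpha})=0$, and to show that after regrouping the left-hand side equals, up to the universal nonzero constant carried by Getzler's relation, $\Delta(\langle\langle E^{2}\rangle\rangle_{1}-\Phi_{2})$. Recall that $\textbf{G}(\cdot,\cdot,\cdot,\cdot)$ splits into a genus-one part (a four-point genus-one correlator, together with terms in which two of the arguments are replaced by their quantum product, and genus-one$\times$genus-zero mixed terms) and a pure genus-zero part carrying the trace contractions $\sum_{\mu}\langle\langle\cdots\gamma^{\mu}\gamma_{\mu}\rangle\rangle_{0}$. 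The first goal is to show the genus-one part assembles into $\Delta\langle\langle E^{2}\rangle\rangle_{1}$, and the second that the genus-zero part assembles into $\Delta\Phi_{2}$.

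For the genus-one part, the decisive point is that the contraction $\sum_{\alpha}\gamma^{\alpha}\otimes\gamma_{\alpha}$, once it enters a quantum product, reconstructs the vector field $\Delta=\sum_{\alpha}\gamma^{\alpha}\circ\gamma_{\alpha}$. I would use the Leibniz rule \eqref{derivative1} in reverse: applying $\Delta$ to the one-point function gives
\ben
\Delta\langle\langle E^{2}\rangle\rangle_{1}=\langle\langle \Delta\, E^{2}\rangle\rangle_{1}+\langle\langle \nabla_{\Delta}E^{2}\rangle\rangle_{1},
\een
where, by the Lemma, formula \eqref{derivative3} with $k=2$ expresses $\nabla_{\Delta}E^{2}$ through the insertions $G(E^{i})$, quantum products with $\Delta$, and $G(\Delta\circ E^{j})$. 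The plan is to match each genus-one term produced by Getzler's relation against these pieces: the four-point term $\sum_{\alpha}\langle\langle E\,E\,\gamma^{\alpha}\gamma_{\alpha}\rangle\rangle_{1}$, combined with the terms of $\textbf{G}$ in which a pair of arguments is quantum-multiplied and with the genus-one$\times$genus-zero terms, reorganizes by the definition of the quantum product and repeated use of \eqref{derivative1}, \eqref{derivative2}, and the symmetry \eqref{Observation1} into the two summands above.

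For the genus-zero part, I would bring every occurrence of the Euler field into the shape appearing in \eqref{phialternative}. Quasi-homogeneity \eqref{quasihom} converts each insertion of $E$ into a $G$-insertion plus a scalar multiple of the correlator plus the $\frac{1}{2}\sum_{\alpha,\beta}\mathcal{C}_{\alpha\beta}t^{\alpha}t^{\beta}$ contribution, while the four-point Euler identity \eqref{4pointeq1} handles the Euler insertions lying inside quantum products; the metric symmetries \eqref{Observation1}, \eqref{Observation2} together with the associativity relation \eqref{WDVV1} then move the quantum factors across the trace contraction and across $G$. The target is to recognize the resulting genus-zero expression as the $\Delta$-derivative of $24\Phi_{2}$, obtained by differentiating \eqref{phialternative} through \eqref{derivative1} and \eqref{derivativeqp}. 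I expect this genus-zero bookkeeping to be the main obstacle: the contraction $\sum_{\alpha}(\cdots)\gamma^{\alpha}(\cdots)\gamma_{\alpha}$ generates many three- and two-point genus-zero correlators, and showing that after all the symmetry moves they coincide termwise with $\Delta\Phi_{2}$, rather than merely up to some further genus-zero identity, is delicate. Once both matchings are in place, Getzler's relation forces the two assemblies to cancel, yielding $\Delta(\langle\langle E^{2}\rangle\rangle_{1}-\Phi_{2})=0$.

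Within the framework of the present paper one can shortcut this computation: the explicit formula \eqref{explicitformula2} for $\textbf{G}(E^{k_{1}},E^{k_{2}},\gamma_{\alpha},\gamma_{\beta})$ (Theorem \ref{Getzler-2E}) specializes at $k_{1}=k_{2}=1$, and raising the index $\beta$ and summing $\beta=\alpha$ reproduces \eqref{Liu2}; reading off the genus-one and genus-zero pieces of that explicit formula then gives the stated identity without re-deriving Getzler's relation from scratch.
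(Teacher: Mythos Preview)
Your proposal is correct and your shortcut at the end is precisely the paper's own route. In this paper the theorem is first quoted from \cite{L2}, and then recovered as the special case $k=2$, $\gamma_{\alpha}=\gamma_{1}$ of Theorem~\ref{mainresult6}; that theorem in turn is deduced from Theorem~\ref{mainresult5}, which is obtained by plugging $\gamma_{\beta}=\gamma^{\mu}\circ\gamma_{\alpha}$, $\gamma_{\alpha}=\gamma_{\mu}$ into Lemmas~\ref{G1-2E} and~\ref{G0-2E} and summing over $\mu$---exactly your ``raise $\beta$ and contract'' applied to Theorem~\ref{Getzler-2E}. Your first, longer approach (computing $\sum_{\alpha}\textbf{G}(E,E,\gamma^{\alpha},\gamma_{\alpha})$ from scratch and matching genus-one and genus-zero pieces against $\Delta\langle\langle E^{2}\rangle\rangle_{1}$ and $\Delta\Phi_{2}$) is the original argument of \cite{L2}; the paper's contribution is to package that computation once and for all in the $(k_{1},k_{2})$-family of Theorem~\ref{Getzler-2E}, so that after the trace contraction the observations \eqref{Observation1}, \eqref{Observation2}, \eqref{simplication2} make all the extra terms cancel pairwise and only the three $\Phi$-terms survive, avoiding the delicate term-by-term matching you anticipated.
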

We will find more evidences in Section 4.
By the fact $\gamma_{1}(\langle\langle E^{k}\rangle\rangle_{1}-\Phi_{k})=k(\langle\langle E^{k-1}\rangle\rangle_{1}-\Phi_{k-1})$ proved in Lemma 6.3 of \cite{L1}, it is easy to show that for any given $k\geq2$, the genus-1 Virasoro conjecture holds
 if and only if $\upsilon(\langle\langle E^{k}\rangle\rangle_{1}-\Phi_{k})=0$ for any $\upsilon\in H^{*}(X,\mathbb{C})$.
\begin{rem}
By the above argument, to prove the
genus-1 Virasoro conjecture, it is also reduced to verify that for any given $k,l\geq2$, $\upsilon_{l}\upsilon_{l-1}\cdot\cdot\cdot\upsilon_{1}(\langle\langle E^{k}\rangle\rangle_{1}-\Phi_{k})=0$ holds for any $\upsilon_{1},\cdot\cdot\cdot,\upsilon_{l}\in H^{*}(X,\mathbb{C})$. Notice that the computation in Section 3
only yields the first derivative of $\{\langle\langle E^{k}\rangle\rangle_{1}-\Phi_{k}\}$, hence to derive its l-$th$ derivative, one may take derivatives of our results (cf. Remark \ref{addedrem}) in section 3 or by applying the pull-back of Getzler's genus one relation to quantum powers of Euler vector field.
\end{rem}
\subsection{Universal equations for Gromov-Witten invariants}
In this subsection, we breifly recall some universal equations for primary Gromov-Witten invariants.
Firstly,  the string equation on the small phase space shows that
\begin{lem}[\cite{L1}]
\bea
&&\langle\langle\gamma_{1}\gamma_{\alpha}\gamma_{\beta}\rangle\rangle_{0}=\eta_{\alpha\beta},\label{stringequ1}\\
&&\langle\langle\gamma_{1}\gamma_{\alpha_{1}}\cdot\cdot\cdot\gamma_{\alpha_{k}}\rangle\rangle_{0}=0 \mbox{      if $k\geq3$},\label{stringequ2}\\
&&\langle\langle\gamma_{1}\gamma_{\alpha_{1}}\cdot\cdot\cdot\gamma_{\alpha_{k}}\rangle\rangle_{g}=0 \mbox{  if $g\geq1$ and  $k\geq0$}.\label{stringequ3}
\eea
\end{lem}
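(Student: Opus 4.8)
\emph{Proof proposal.} The plan is to obtain all three identities by specializing the $L_{-1}$-constraint, which the excerpt already records as valid for every manifold, to the small phase space. Extracting the coefficient of $\lambda^{2g-2}$ in $L_{-1}Z=0$ and using the dilaton shift $\tilde t_m^\alpha=t_m^\alpha-\delta_{m,1}\delta_{\alpha,1}$ built into the definition of $L_{-1}$, one rewrites the constraint on the level of free energies as the string equation
\be
\frac{\partial F_g}{\partial t_0^1}=\sum_{m\geq1,\alpha}t_m^\alpha\frac{\partial F_g}{\partial t_{m-1}^\alpha}+\frac{1}{2}\delta_{g,0}\sum_{\alpha,\beta}\eta_{\alpha\beta}t_0^\alpha t_0^\beta,
\ee
in which the single quadratic term is the only inhomogeneous contribution and comes precisely from the $\tfrac{1}{2\lambda^2}\sum\eta_{\alpha\beta}t_0^\alpha t_0^\beta$ piece of $L_{-1}$.

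First I would differentiate this identity by $\partial/\partial t_0^{\alpha_1}\cdots\partial/\partial t_0^{\alpha_k}$ and then pass to the small phase space by setting $t_m^\alpha=0$ for all $m\geq1$. The decisive observation is that every summand of $\sum_{m\geq1,\alpha}t_m^\alpha\,\partial F_g/\partial t_{m-1}^\alpha$ carries an explicit factor $t_m^\alpha$ with $m\geq1$. Since each operator $\partial/\partial t_0^{\alpha_i}$ differentiates only with respect to a zeroth descendant, it never acts on such a factor; the factor therefore survives every differentiation and is annihilated by the restriction. Consequently the whole first sum contributes nothing on the small phase space, no matter how many derivatives $\partial/\partial t_0^{\alpha_i}$ are applied, and one is left only with the derivatives of the quadratic term.

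It then remains to read off the three cases. For $g=0$ and exactly two extra insertions, $\partial^2/\partial t_0^\alpha\partial t_0^\beta$ applied to $\tfrac12\sum_{\mu,\nu}\eta_{\mu\nu}t_0^\mu t_0^\nu$ produces the constant $\eta_{\alpha\beta}$, which is \eqref{stringequ1}. For $g=0$ and $k\geq3$ insertions, three or more such derivatives annihilate the quadratic term, yielding \eqref{stringequ2}. Finally, for $g\geq1$ the factor $\delta_{g,0}$ removes the quadratic term entirely, so only the vanishing first sum survives and \eqref{stringequ3} holds for every $k\geq0$, the case $k=0$ reducing to $\langle\langle\gamma_1\rangle\rangle_g=\partial F_g/\partial t_0^1=0$ after restriction.

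Since $L_{-1}$ is supplied in closed form in the excerpt, there is essentially no analytic obstacle; the only point requiring care is the passage from the operator identity $L_{-1}Z=0$ to the free-energy form of the string equation, namely the correct handling of the dilaton shift and the matching of the $\lambda^{-2}$ quadratic term with the genus-zero sector. Once this translation is in place the three statements follow by the purely formal restriction argument above.
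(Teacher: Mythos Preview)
Your argument is correct and follows precisely the route the paper indicates: the paper does not give its own proof here but simply states that ``the string equation on the small phase space shows that'' these identities hold, citing \cite{L1}. Your derivation from the $L_{-1}$-constraint, differentiating the string equation and restricting to the small phase space, is exactly the intended mechanism.
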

Next, the important universal equation for genus-0 primary Gromov-Witten invariants is the WDVV equation \eqref{WDVV1}, which is implicitly used for computation throughout the paper. Its first derivative has the form
\bea\label{WDVV2}
&&\langle\langle\{\upsilon_{1}\circ\upsilon_{2}\}\upsilon_{3}\upsilon_{4}\upsilon_{5}\rangle\rangle_{0}\nonumber\\
&=&\langle\langle\{\upsilon_{1}\circ\upsilon_{3}\}\upsilon_{2}\upsilon_{4}\upsilon_{5}\rangle\rangle_{0}
+\langle\langle\{\upsilon_{2}\circ\upsilon_{5}\}\upsilon_{1}\upsilon_{3}\upsilon_{4}\rangle\rangle_{0}
-\langle\langle\{\upsilon_{3}\circ\upsilon_{5}\}\upsilon_{1}\upsilon_{2}\upsilon_{4}\rangle\rangle_{0}.
\eea
It is well known that equation \eqref{WDVV2} is the topological recursion relation derived from Keel relation \cite{Keel} on the moduli space of stable curves $\overline{\mathcal{M}}_{0,5}$. And it implies
\begin{lem}[\cite{L1}]\label{WDVV3}
For any vector field $\upsilon$ on the small phase space, let $\upsilon^{k}$ be the $k^{th}$ quantum power of $\upsilon$. Then for any
$\alpha,\beta$ and $\mu$, we have
\ben
\langle\langle\upsilon^k\gamma_{\alpha}\gamma_{\beta}\gamma_{\mu}\rangle\rangle_{0}&=&
-\sum_{i=1}^{k-1}\langle\langle\upsilon^{k-i}(\gamma_{\alpha}\circ\gamma_{\beta}\circ \upsilon^{i-1})\upsilon\gamma_{\mu}\rangle\rangle_{0}\\
&&+\sum_{i=1}^{k}\langle\langle(\upsilon^{k-i}\circ\gamma_{\alpha})(\gamma_{\beta}\circ\upsilon^{i-1})\upsilon\gamma_{\mu}\rangle\rangle_{0}.
\een
\end{lem}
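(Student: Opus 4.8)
The plan is to prove the identity by induction on the quantum power $k$, using the first-derivative WDVV equation \eqref{WDVV2} as the only ingredient. Since the quantum product $\upsilon\circ\gamma_{\beta}$ of two vector fields is again a combination (with function coefficients) of the $\gamma_{\nu}$, and every correlation function is multilinear in its arguments over functions on the small phase space, it suffices to prove the slightly more general statement in which $\gamma_{\alpha},\gamma_{\beta},\gamma_{\mu}$ are allowed to be arbitrary vector fields; this extra generality is exactly what the inductive step will consume. For the base case $k=1$ the first sum is empty and the second reduces to its single $i=1$ term $\langle\langle(\gamma_{1}\circ\gamma_{\alpha})(\gamma_{\beta}\circ\gamma_{1})\upsilon\gamma_{\mu}\rangle\rangle_{0}=\langle\langle\upsilon\gamma_{\alpha}\gamma_{\beta}\gamma_{\mu}\rangle\rangle_{0}$, which is the left-hand side; the degenerate case $k=0$ is also immediate since both sides vanish by \eqref{stringequ2}.

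For the inductive step I would write $\upsilon^{k}=\upsilon^{k-1}\circ\upsilon$ and apply \eqref{WDVV2} to $\langle\langle\{\upsilon^{k-1}\circ\upsilon\}\gamma_{\alpha}\gamma_{\mu}\gamma_{\beta}\rangle\rangle_{0}$ with the assignment $\upsilon_{1}=\upsilon^{k-1}$, $\upsilon_{2}=\upsilon$, $\upsilon_{3}=\gamma_{\alpha}$, $\upsilon_{4}=\gamma_{\mu}$, $\upsilon_{5}=\gamma_{\beta}$. The decisive point is this slot choice: in \eqref{WDVV2} the fourth entry $\upsilon_{4}$ never enters any quantum product on the right, so putting $\gamma_{\mu}$ there keeps it "inert", matching the target in which $\gamma_{\mu}$ is always a free fourth argument. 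The three resulting terms are, after using commutativity of the correlator, the $i=1$ summand $\langle\langle(\upsilon^{k-1}\circ\gamma_{\alpha})\gamma_{\beta}\upsilon\gamma_{\mu}\rangle\rangle_{0}$ of the positive sum, the residual term $\langle\langle\upsilon^{k-1}\gamma_{\alpha}(\upsilon\circ\gamma_{\beta})\gamma_{\mu}\rangle\rangle_{0}$, and (with its sign) the $i=1$ summand $-\langle\langle\upsilon^{k-1}(\gamma_{\alpha}\circ\gamma_{\beta})\upsilon\gamma_{\mu}\rangle\rangle_{0}$ of the negative sum.

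It remains to feed the residual term back into the induction hypothesis at level $k-1$, now with the three free fields taken to be $\gamma_{\alpha}$, $\upsilon\circ\gamma_{\beta}$, and $\gamma_{\mu}$ (this is where the statement for a general vector field $\upsilon\circ\gamma_{\beta}$ is needed). Using associativity to rewrite $\gamma_{\alpha}\circ(\upsilon\circ\gamma_{\beta})\circ\upsilon^{i-1}=\gamma_{\alpha}\circ\gamma_{\beta}\circ\upsilon^{i}$ and $(\upsilon\circ\gamma_{\beta})\circ\upsilon^{i-1}=\gamma_{\beta}\circ\upsilon^{i}$, and then shifting the summation index $i\mapsto i+1$, the two sums produced by the hypothesis turn into precisely the $i\geq 2$ tails of the target's two sums. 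Adding back the two $i=1$ terms isolated in the previous step recovers the full right-hand side, which completes the induction.

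I expect the obstacles to be purely organizational rather than conceptual. The one genuine choice is the WDVV slot assignment that keeps $\gamma_{\mu}$ inert; a careless assignment instead generates terms such as $\gamma_{\alpha}\circ\gamma_{\mu}$ or $\upsilon\circ\gamma_{\mu}$ that do not fit the target shape and obstruct the induction. The remaining care is bookkeeping: correctly identifying the two $i=1$ boundary terms and performing the index shift that aligns the hypothesis-generated sums with the $i\geq 2$ tails. The passage to arbitrary vector fields in the three non-$\upsilon$ slots, needed to apply the hypothesis with $\upsilon\circ\gamma_{\beta}$, is justified once and for all by multilinearity of the correlators.
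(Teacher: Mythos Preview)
Your proof is correct and matches the approach the paper has in mind. The paper does not spell out a proof of Lemma~\ref{WDVV3}, citing \cite{L1} instead, but its proof of the analogous Lemma~\ref{WDVV4} states exactly your strategy: ``the proof follows by choosing $\upsilon_{1}=\upsilon^{k-1}$, $\upsilon_{2}=\upsilon$, $\upsilon_{3}=\gamma_{\alpha}$, $\upsilon_{4}=\gamma_{\beta}$, \ldots\ in equation \eqref{WDVVd2} and repeatedly using the resulting formula.'' Your slot choice placing $\gamma_{\mu}$ in the inert position $\upsilon_{4}$ of \eqref{WDVV2} and the subsequent index shift are precisely what this iteration amounts to.
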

It follows easily from Keel relation on $\overline{\mathcal{M}}_{0,6}$ that
\bea
&&\langle\langle\{\upsilon_{1}\circ\upsilon_{2}\}\upsilon_{3}\upsilon_{4}\upsilon_{5}\upsilon_{6}\rangle\rangle_{0}\nonumber\\
&=&\sum_{\rho}\langle\langle\upsilon_{1}\upsilon_{3}\upsilon_{6}\gamma^{\rho}\rangle\rangle_{0}
\langle\langle\gamma_{\rho}\upsilon_{2}\upsilon_{4}\upsilon_{5}\rangle\rangle_{0}
+\sum_{\rho}\langle\langle\upsilon_{1}\upsilon_{3}\upsilon_{5}\gamma^{\rho}\rangle\rangle_{0}
\langle\langle\gamma_{\rho}\upsilon_{2}\upsilon_{4}\upsilon_{6}\rangle\rangle_{0}\nonumber\\
&&+\langle\langle\{\upsilon_{2}\circ\upsilon_{4}\}\upsilon_{1}\upsilon_{3}\upsilon_{5}\upsilon_{6}\rangle\rangle_{0}
+\langle\langle\{\upsilon_{1}\circ\upsilon_{3}\}\upsilon_{2}\upsilon_{4}\upsilon_{5}\upsilon_{6}\rangle\rangle_{0}\nonumber\\
&&-\sum_{\rho}\langle\langle\upsilon_{1}\upsilon_{2}\upsilon_{6}\gamma^{\rho}\rangle\rangle_{0}
\langle\langle\gamma_{\rho}\upsilon_{3}\upsilon_{4}\upsilon_{5}\rangle\rangle_{0}
-\sum_{\rho}\langle\langle\upsilon_{1}\upsilon_{2}\upsilon_{5}\gamma^{\rho}\rangle\rangle_{0}
\langle\langle\gamma_{\rho}\upsilon_{3}\upsilon_{4}\upsilon_{6}\rangle\rangle_{0}\nonumber
\\
&&-\langle\langle\{\upsilon_{3}\circ\upsilon_{4}\}\upsilon_{1}\upsilon_{2}\upsilon_{5}\upsilon_{6}\rangle\rangle_{0},\label{WDVVd2}
\eea
which is also  derived from the second derivative of the WDVV equation \eqref{WDVV1}.
We will use the following lemma for the computation in Section 3.
\begin{lem}\label{WDVV4}
For any vector field $\upsilon$ on the small phase space, let $\upsilon^{k}$ be the $k^{th}$ quantum power of $\upsilon$. Then for any
$\alpha,\beta$, $\mu$, and $\sigma$, we have
\ben
&&\langle\langle\upsilon^k\gamma_{\alpha}\gamma_{\beta}\gamma_{\mu}\gamma_{\sigma}\rangle\rangle_{0}\\
&=&-\sum_{i=1}^{k-1}\sum_{\rho}\langle\langle\upsilon^{k-i}\gamma_{\sigma}\upsilon\gamma^{\rho}\rangle\rangle_{0}
\langle\langle \gamma_{\rho}\gamma_{\alpha}\{\gamma_{\beta}\circ\upsilon^{i-1}\}\gamma_{\mu}\rangle\rangle_{0}\\
&&-\sum_{i=1}^{k-1}\langle\langle\upsilon^{k-i}
\{\gamma_{\alpha}\circ\gamma_{\beta}\circ\upsilon^{i-1}\}\upsilon\gamma_{\mu}\gamma_{\sigma}\rangle\rangle_{0}\\
&&-\sum_{i=1}^{k-1}\sum_{\rho}\langle\langle\upsilon^{k-i}\gamma_{\mu}\upsilon\gamma^{\rho}\rangle\rangle_{0}
\langle\langle \gamma_{\rho}\gamma_{\alpha}\{\gamma_{\beta}\circ\upsilon^{i-1}\}\gamma_{\sigma}\rangle\rangle_{0}\\
&&+\sum_{i=1}^{k-1}\sum_{\rho}\langle\langle\upsilon^{k-i}\gamma_{\alpha}\gamma_{\sigma}\gamma^{\rho}\rangle\rangle_{0}
\langle\langle \gamma_{\rho}\{\gamma_{\beta}\circ\upsilon^{i-1}\}\upsilon\gamma_{\mu}\rangle\rangle_{0}\\
&&+\sum_{i=1}^{k-1}\sum_{\rho}\langle\langle\upsilon^{k-i}\gamma_{\alpha}\gamma_{\mu}\gamma^{\rho}\rangle\rangle_{0}
\langle\langle \gamma_{\rho}\{\gamma_{\beta}\circ\upsilon^{i-1}\}\upsilon\gamma_{\sigma}\rangle\rangle_{0}\\
&&+\sum_{i=1}^{k}\langle\langle\{\upsilon^{k-i}
\circ\gamma_{\alpha}\}\{\gamma_{\beta}\circ\upsilon^{i-1}\}\upsilon\gamma_{\mu}\gamma_{\sigma}\rangle\rangle_{0}
\een
\end{lem}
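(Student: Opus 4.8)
The plan is to induct on $k$, using the second-derivative WDVV relation \eqref{WDVVd2} (the Keel relation on $\overline{\mathcal{M}}_{0,6}$) as the engine, in exactly the way Lemma \ref{WDVV3} is obtained from the first-derivative relation \eqref{WDVV2}. The guiding observation is that the two non-split families in the statement, namely $-\sum_{i=1}^{k-1}\langle\langle\upsilon^{k-i}\{\gamma_{\alpha}\circ\gamma_{\beta}\circ\upsilon^{i-1}\}\upsilon\gamma_{\mu}\gamma_{\sigma}\rangle\rangle_{0}$ and $\sum_{i=1}^{k}\langle\langle\{\upsilon^{k-i}\circ\gamma_{\alpha}\}\{\gamma_{\beta}\circ\upsilon^{i-1}\}\upsilon\gamma_{\mu}\gamma_{\sigma}\rangle\rangle_{0}$, are exactly the two families of Lemma \ref{WDVV3} carrying an extra insertion $\gamma_{\sigma}$, whereas the four remaining split families are the genus-zero quantum corrections created when the extra marked point degenerates onto a node. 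The base case $k=1$ is immediate: both sums $\sum_{i=1}^{k-1}$ are empty, and the single term $\langle\langle\{\upsilon^{0}\circ\gamma_{\alpha}\}\{\gamma_{\beta}\circ\upsilon^{0}\}\upsilon\gamma_{\mu}\gamma_{\sigma}\rangle\rangle_{0}$ collapses to $\langle\langle\upsilon\gamma_{\alpha}\gamma_{\beta}\gamma_{\mu}\gamma_{\sigma}\rangle\rangle_{0}$ since $\upsilon^{0}=\gamma_{1}$ is the identity of $\circ$.

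For the inductive step I would write $\upsilon^{k}=\upsilon\circ\upsilon^{k-1}$ and apply \eqref{WDVVd2} with $\upsilon_{1}=\upsilon$, $\upsilon_{2}=\upsilon^{k-1}$, and $\upsilon_{3},\upsilon_{4},\upsilon_{5},\upsilon_{6}$ equal to $\gamma_{\alpha},\gamma_{\beta},\gamma_{\mu},\gamma_{\sigma}$. This yields three non-split five-point correlators together with four split terms, each a contraction $\sum_{\rho}\langle\langle\cdots\gamma^{\rho}\rangle\rangle_{0}\langle\langle\gamma_{\rho}\cdots\rangle\rangle_{0}$ of two four-point correlators. Among the non-split terms, $\langle\langle\{\upsilon^{k-1}\circ\gamma_{\beta}\}\upsilon\gamma_{\alpha}\gamma_{\mu}\gamma_{\sigma}\rangle\rangle_{0}$ is precisely the $i=k$ term of the last family, and $-\langle\langle\{\gamma_{\alpha}\circ\gamma_{\beta}\}\upsilon\upsilon^{k-1}\gamma_{\mu}\gamma_{\sigma}\rangle\rangle_{0}$ is the $i=1$ term of the second family. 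The remaining non-split term $\langle\langle\{\upsilon\circ\gamma_{\alpha}\}\upsilon^{k-1}\gamma_{\beta}\gamma_{\mu}\gamma_{\sigma}\rangle\rangle_{0}$ I would expand by the definition of the quantum product into $\sum_{\nu}\langle\langle\upsilon\gamma_{\alpha}\gamma^{\nu}\rangle\rangle_{0}\langle\langle\gamma_{\nu}\upsilon^{k-1}\gamma_{\beta}\gamma_{\mu}\gamma_{\sigma}\rangle\rangle_{0}$, feed the five-point correlator $\langle\langle\upsilon^{k-1}\gamma_{\nu}\gamma_{\beta}\gamma_{\mu}\gamma_{\sigma}\rangle\rangle_{0}$ into the induction hypothesis, and resum over $\nu$ using multilinearity of $\circ$ together with the contraction identity $\sum_{\nu}\langle\langle\upsilon\gamma_{\alpha}\gamma^{\nu}\rangle\rangle_{0}\langle\langle\gamma_{\nu}\cdots\rangle\rangle_{0}=\langle\langle\{\upsilon\circ\gamma_{\alpha}\}\cdots\rangle\rangle_{0}$. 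A direct check then shows that the two non-split families of the induction hypothesis, after this resummation and the appropriate reindexing of $i$, supply exactly the missing ranges of the two non-split families in the statement, which together with the two boundary terms just identified complete them.

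The genuine work, and the step I expect to be the main obstacle, is matching the four split families. These receive contributions from two sources: the four split terms produced directly by \eqref{WDVVd2}, which carry only the extreme powers $\upsilon$ and $\upsilon^{k-1}$, and the four split families of the induction hypothesis passed through the resummation above. Neither source is in the required normal form. The direct split terms contain four-point correlators such as $\langle\langle\gamma_{\rho}\upsilon^{k-1}\gamma_{\beta}\gamma_{\mu}\rangle\rangle_{0}$ and $\langle\langle\upsilon\upsilon^{k-1}\gamma_{\sigma}\gamma^{\rho}\rangle\rangle_{0}$, which must first be rewritten through Lemma \ref{WDVV3}; the resummed induction terms produce composite correlators such as $\langle\langle\{\upsilon\circ\gamma_{\alpha}\}\upsilon^{k-1-j}\gamma_{\sigma}\gamma^{\rho}\rangle\rangle_{0}$, which must be returned to the form $\langle\langle\upsilon^{k-i}\gamma_{\alpha}\gamma_{\sigma}\gamma^{\rho}\rangle\rangle_{0}$ by a further application of \eqref{WDVV2}. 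Throughout I would use associativity of $\circ$ and the symmetry identities \eqref{Observation1}, and invoke the string equation \eqref{stringequ1}--\eqref{stringequ2} to discard the four-point correlators containing $\upsilon^{0}=\gamma_{1}$ that appear at the ends of the $i$-sums. The whole step is a bookkeeping exercise in which one must check that, after all these reductions and the cancellations between the two sources, every surviving term carries the correct sign and lands in one of the six families with the stated range $1\le i\le k-1$. Keeping track of the telescoping of the $i$-sums and of the signs coming from the three subtractions in \eqref{WDVVd2} is where the argument is most delicate.
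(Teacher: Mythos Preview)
Your overall plan—apply \eqref{WDVVd2} to peel off powers of $\upsilon$ one at a time—is exactly the paper's, but you have swapped the roles of $\upsilon_{1}$ and $\upsilon_{2}$, and that single choice is what creates all the ``delicate bookkeeping'' you anticipate. The paper takes $\upsilon_{1}=\upsilon^{k-1}$, $\upsilon_{2}=\upsilon$, $\upsilon_{3}=\gamma_{\alpha}$, $\upsilon_{4}=\gamma_{\beta}$, $\upsilon_{5}=\gamma_{\mu}$, $\upsilon_{6}=\gamma_{\sigma}$. With that assignment the six non-residual terms produced by \eqref{WDVVd2} are \emph{literally} the $i=1$ terms of the six families in the statement, and the residual five-point correlator is $\langle\langle\{\upsilon\circ\gamma_{\beta}\}\upsilon^{k-1}\gamma_{\alpha}\gamma_{\mu}\gamma_{\sigma}\rangle\rangle_{0}$, which has the same shape as the original with $\gamma_{\beta}$ replaced by $\gamma_{\beta}\circ\upsilon$ and the exponent lowered by one. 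Reapplying \eqref{WDVVd2} with the same pattern (now $\upsilon_{1}=\upsilon^{k-2}$, $\upsilon_{4}=\gamma_{\beta}\circ\upsilon$) produces the $i=2$ terms, and so on; after $k-1$ iterations the final residual is the $i=k$ term of the last family. No induction hypothesis, no Lemma~\ref{WDVV3}, no further use of \eqref{WDVV2}, no string equation, and no cancellations are needed.

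With your assignment $\upsilon_{1}=\upsilon$, $\upsilon_{2}=\upsilon^{k-1}$, the residual term is $\langle\langle\{\upsilon\circ\gamma_{\alpha}\}\upsilon^{k-1}\gamma_{\beta}\gamma_{\mu}\gamma_{\sigma}\rangle\rangle_{0}$, so it is $\gamma_{\alpha}$ rather than $\gamma_{\beta}$ that absorbs the extra factor of $\upsilon$. Since the target formula carries powers of $\upsilon$ only through $\gamma_{\beta}\circ\upsilon^{i-1}$ while $\gamma_{\alpha}$ stays bare, you are forced to undo this shift with further applications of \eqref{WDVV2}; likewise your direct split terms, e.g.\ $\sum_{\rho}\langle\langle\upsilon\gamma_{\alpha}\gamma_{\sigma}\gamma^{\rho}\rangle\rangle_{0}\langle\langle\gamma_{\rho}\upsilon^{k-1}\gamma_{\beta}\gamma_{\mu}\rangle\rangle_{0}$, put the high power of $\upsilon$ on the wrong factor. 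Your outline may close after all the reductions you sketch, but simply swapping $\upsilon_{1}\leftrightarrow\upsilon_{2}$ (equivalently, iterating on the $\{\upsilon_{2}\circ\upsilon_{4}\}$ term instead of the $\{\upsilon_{1}\circ\upsilon_{3}\}$ term) makes the proof a one-line recursion.
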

\begin{proof}
As in the proof of Lemma 4.2 in \cite{L1}, the proof follows by choosing $\upsilon_{1}=\upsilon^{k-1}$, $\upsilon_{2}=\upsilon$,  $\upsilon_{3}=\gamma_{\alpha}$,
 $\upsilon_{4}=\gamma_{\beta}$, $\upsilon_{5}=\gamma_{\mu}$, and $\upsilon_{6}=\gamma_{\sigma}$ in equation \eqref{WDVVd2} and repeatedly using
 the resulting formula.
\end{proof}
The essential universal equation for genus-1 primary Gromov-Witten invariants
is derived from Getzler's genus one relation \cite{Get2}. Adopting the presentation in \cite{L1}, it has the following form
(we call it Getzler's genus one relation again)
\be\label{Getzlertrr}
\textbf{G}(\upsilon_{1},\upsilon_{2},\upsilon_{3},\upsilon_{4})=G_{0}(\upsilon_{1},\upsilon_{2},\upsilon_{3},\upsilon_{4})
+G_{1}(\upsilon_{1},\upsilon_{2},\upsilon_{3},\upsilon_{4})=0,
\ee
where
\ben
&&G_{0}(\upsilon_{1},\upsilon_{2},\upsilon_{3},\upsilon_{4})\\
&=&\sum\limits_{h\in S_{4}}\sum\limits_{\alpha,\beta}\bigg\{\frac{1}{6}\langle\langle \upsilon_{h{(1)}}\upsilon_{h{(2)}}\upsilon_{h{(3)}}\gamma^{\alpha}\rangle\rangle_{0}
\langle\langle \gamma_{\alpha}\upsilon_{h{(4)}}\gamma_{\beta}\gamma^{\beta}\rangle\rangle_{0}\\
&&+\frac{1}{24}\langle\langle \upsilon_{h{(1)}}\upsilon_{h{(2)}}\upsilon_{h{(3)}}\upsilon_{h{(4)}}\gamma^{\alpha}\rangle\rangle_{0}\langle\langle \gamma_{\alpha}\gamma_{\beta}\gamma^{\beta}\rangle\rangle_{0}\\
&&-\frac{1}{4}\langle\langle \upsilon_{h{(1)}}\upsilon_{h{(2)}}\gamma^{\alpha}\gamma^{\beta}\rangle\rangle_{0}
\langle\langle \gamma_{\alpha}\gamma_{\beta}\upsilon_{h{(3)}}\upsilon_{h{(4)}}\rangle\rangle_{0}\bigg\},
\een
and
\ben
&&G_{1}(\upsilon_{1},\upsilon_{2},\upsilon_{3},\upsilon_{4})\\
&=&3\sum\limits_{h\in S_{4}}\langle\langle\{\upsilon_{h{(1)}}\circ \upsilon_{h{(2)}}\}\{\upsilon_{h{(3)}}\circ \upsilon_{h{(4)}}\}\rangle\rangle_{1}\\
&&-4\sum\limits_{h\in S_{4}}\langle\langle\{\upsilon_{h{(1)}}\circ \upsilon_{h{(2)}}\circ \upsilon_{h(3)}\} \upsilon_{h{(4)}}\rangle\rangle_{1}\\
&&-\sum\limits_{h\in S_{4}}\sum\limits_{\alpha}\langle\langle\{\upsilon_{h{(1)}}\circ \upsilon_{h{(2)}}\}\upsilon_{h{(3)}}\upsilon_{h{(4)}}\gamma^{\alpha}\rangle\rangle_{0}\langle\langle\gamma_{\alpha}\rangle\rangle_{1}\\
&&+2\sum\limits_{h\in S_{4}}\sum\limits_{\alpha}\langle\langle \upsilon_{h{(1)}}\upsilon_{h{(2)}}\upsilon_{h{(3)}}\gamma^{\alpha}\rangle\rangle_{0}
\langle\langle\{\gamma_{\alpha}\circ \upsilon_{h{(4)}}\}\rangle\rangle_{1},
\een
for any vector fields $\upsilon_{1},\upsilon_{2},\upsilon_{3},\upsilon_{4}$ on the small phase space.
\begin{rem}By string equations \eqref{stringequ2} and \eqref{stringequ3}, we have
$G_{1}(\gamma_{1},\upsilon_{1},\upsilon_{2},\upsilon_{3})\equiv0$ and
$G_{0}(\gamma_{1},\upsilon_{1},\upsilon_{2},\upsilon_{3})\equiv0$ for any vector fields
$\upsilon_{1},\upsilon_{2},\upsilon_{3}$ on the small phase space.
\end{rem}
For simplicity, we will use the following notational conventions: Repeated indices are summed over their entire meaningful range unless otherwise indicated.
\section{Universal equations from Getzler's genus one relation}
In this section, we will derive an explicit formula for the first derivative of $\Phi_{k}$, and then obtain an explicit universal
equation, i.e., Theorem \ref{Getzler-1E}, by computing Getzler's genus one relation \eqref{Getzlertrr} when one of vector fields is substituted by quantum power of Euler
vector field. With this theorem, we can  derive other explicit universal equations for the cases involving more quantum powers of Euler vector field. These explicit universal equations from Getzler's genus one relation will be called Getzler equations.
To see how much information of Getzler equations can be applied in the genus-1 Virasoro conjecture, we always reduce higher $k$-point correlation functions involving some quantum power of Euler vector field to lower $l$-point correlation functions, and use the first derivative of $\Phi_{k}$ for possible simplification.

\subsection{An explicit formula for the first derivative of $\Phi_{k}$}
 The following lemma is  useful for reducing 4-point functions to 3-point functions.
\begin{lem}\label{4point-3point}
For all $m\geq0$,
\ben
&&\langle\langle E^{m}\gamma_{\alpha}\gamma_{\beta}\gamma_{\mu}\rangle\rangle_{0}\\
&=&-\sum_{i=1}^{m}\langle\langle G(E^{m-i})\{E^{i-1}\circ \gamma_{\alpha}\circ\gamma_{\beta}\}\gamma_{\mu}\rangle\rangle_{0}
-\sum_{i=1}^{m}\langle\langle G(E^{i-1}\circ\gamma_{\alpha}\circ\gamma_{\beta}) E^{m-i}\gamma_{\mu}\rangle\rangle_{0}\\
&&+\sum_{i=1}^{m}\langle\langle G(E^{m-i}\circ\gamma_{\alpha})\{E^{i-1}\circ\gamma_{\beta}\}\gamma_{\mu}\rangle\rangle_{0}
+\sum_{i=1}^{m} \langle\langle G(E^{i-1}\circ\gamma_{\beta})\{E^{m-i}\circ\gamma_{\alpha}\}\gamma_{\mu}\rangle\rangle_{0}.
\een
\end{lem}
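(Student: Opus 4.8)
The plan is to expose a single factor of $E$ inside the correlator and then trade that factor for the operator $G$ using the quasi-homogeneity identity \eqref{4pointeq1}. First I would apply Lemma \ref{WDVV3} with $\upsilon=E$ and $k=m$, which rewrites $\langle\langle E^{m}\gamma_{\alpha}\gamma_{\beta}\gamma_{\mu}\rangle\rangle_{0}$ as
\[
-\sum_{i=1}^{m-1}\langle\langle E\,E^{m-i}\,\{E^{i-1}\circ\gamma_{\alpha}\circ\gamma_{\beta}\}\,\gamma_{\mu}\rangle\rangle_{0}
+\sum_{i=1}^{m}\langle\langle E\,\{E^{m-i}\circ\gamma_{\alpha}\}\,\{E^{i-1}\circ\gamma_{\beta}\}\,\gamma_{\mu}\rangle\rangle_{0},
\]
so that every summand is now a four-point function in which $E$ occupies its own slot.

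Next I would pair both sides of \eqref{4pointeq1} with $\gamma_{\mu}$, i.e.\ insert them into $\langle\langle\,\cdot\,\gamma_{1}\gamma_{\mu}\rangle\rangle_{0}$; using \eqref{stringequ1} together with the elementary consequence $\langle\langle\{A\circ B\}\gamma_{1}\gamma_{\mu}\rangle\rangle_{0}=\langle\langle AB\gamma_{\mu}\rangle\rangle_{0}$ of the definition of $\circ$, this produces the reduction
\[
\langle\langle E\upsilon_{1}\upsilon_{2}\gamma_{\mu}\rangle\rangle_{0}
=\langle\langle G(\upsilon_{1})\upsilon_{2}\gamma_{\mu}\rangle\rangle_{0}
+\langle\langle\upsilon_{1}G(\upsilon_{2})\gamma_{\mu}\rangle\rangle_{0}
-\langle\langle G(\upsilon_{1}\circ\upsilon_{2})\gamma_{1}\gamma_{\mu}\rangle\rangle_{0}
-b_{1}\langle\langle\upsilon_{1}\upsilon_{2}\gamma_{\mu}\rangle\rangle_{0}
\]
valid for all vector fields $\upsilon_{1},\upsilon_{2}$. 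Applied to each summand above, the first two terms reproduce exactly the four sums in the statement, but only over the ranges $1\le i\le m-1$ for the first and second sums and $1\le i\le m$ for the third and fourth. The last two terms are corrections; since $E^{m-i}\circ E^{i-1}=E^{m-1}$, the correction $R:=\langle\langle G(E^{m-1}\circ\gamma_{\alpha}\circ\gamma_{\beta})\gamma_{1}\gamma_{\mu}\rangle\rangle_{0}$ is independent of $i$.

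The decisive simplification is that the generalized WDVV equation \eqref{WDVV1} collapses both families of $b_{1}$-corrections to one and the same $i$-independent three-point function,
\[
\langle\langle E^{m-i}\{E^{i-1}\circ\gamma_{\alpha}\circ\gamma_{\beta}\}\gamma_{\mu}\rangle\rangle_{0}
=\langle\langle\{E^{m-i}\circ\gamma_{\alpha}\}\{E^{i-1}\circ\gamma_{\beta}\}\gamma_{\mu}\rangle\rangle_{0}
=\langle\langle\{E^{m-1}\circ\gamma_{\beta}\}\gamma_{\alpha}\gamma_{\mu}\rangle\rangle_{0}=:T,
\]
so the two families contribute $b_{1}(m-1)T$ and $-b_{1}mT$, leaving $-b_{1}T$, while the $R$-terms contribute $(m-1)R-mR=-R$. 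It then remains to check that the leftover $-R-b_{1}T$ is precisely $-($second-sum $i=m$ term$)-($first-sum $i=m$ term$)$: indeed the missing $i=m$ term of the second sum is $\langle\langle G(E^{m-1}\circ\gamma_{\alpha}\circ\gamma_{\beta})\gamma_{1}\gamma_{\mu}\rangle\rangle_{0}=R$, and the missing $i=m$ term of the first sum is $\langle\langle G(\gamma_{1})\{E^{m-1}\circ\gamma_{\alpha}\circ\gamma_{\beta}\}\gamma_{\mu}\rangle\rangle_{0}=b_{1}T$ by $G(\gamma_{1})=b_{1}\gamma_{1}$ (from \eqref{Goperator}), \eqref{stringequ1}, and the WDVV symmetry $\gamma_{\alpha}\leftrightarrow\gamma_{\beta}$ in $T$. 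Thus the corrections extend the first two ranges to $1\le i\le m$ and yield the stated identity; the case $m=0$ is immediate from \eqref{stringequ2}.

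The step I expect to be the main obstacle is exactly this bookkeeping of correction terms: one must verify that the $\gamma_{1}$-correction and the $b_{1}$-correction coming from the quasi-homogeneity reduction are genuinely $i$-independent (which rests on $E^{m-i}\circ E^{i-1}=E^{m-1}$ and on the WDVV collapse to the common value $T$) and that they reassemble into precisely the two $i=m$ endpoints of the first and second sums, rather than leaving behind any spurious three-point residue.
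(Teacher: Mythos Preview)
Your proof is correct and follows essentially the same route as the paper's. Both arguments apply Lemma~\ref{WDVV3} with $\upsilon=E$ to expose a bare $E$ in each summand, then eliminate that $E$ via quasi-homogeneity, and finally identify the leftover correction terms with the missing $i=m$ endpoints of the first two sums. The only cosmetic difference is that the paper invokes \eqref{quasihom} directly (producing residual terms $\langle\langle E^{m-1}\{\gamma_{\alpha}\circ\gamma_{\beta}\}G(\gamma_{\mu})\rangle\rangle_{0}$ and $(b_{1}+1)\langle\langle E^{m-1}\{\gamma_{\alpha}\circ\gamma_{\beta}\}\gamma_{\mu}\rangle\rangle_{0}$) and then appeals to \eqref{simplication1}, whereas you pair \eqref{4pointeq1} with $\gamma_{\mu}$ to obtain the equivalent reduction with residual terms $R$ and $b_{1}T$; the two packagings differ exactly by an application of \eqref{simplication1}, so the endpoint bookkeeping you flag as the potential obstacle is indeed the same one the paper handles.
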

\begin{proof}
Using Lemma \ref{WDVV3}, equations \eqref{quasihom} and \eqref{WDVV1}, we have
\ben
&&\langle\langle E^{m}\gamma_{\alpha}\gamma_{\beta}\gamma_{\mu}\rangle\rangle_{0}\\
&=&-\sum_{i=1}^{m-1}\langle\langle G(E^{m-i})\{E^{i-1}\circ \gamma_{\alpha}\circ\gamma_{\beta}\}\gamma_{\mu}\rangle\rangle_{0}
-\sum_{i=1}^{m-1}\langle\langle G(E^{i-1}\circ\gamma_{\alpha}\circ\gamma_{\beta}) E^{m-i}\gamma_{\mu}\rangle\rangle_{0}\\
&&+\sum_{i=1}^{m}\langle\langle G(E^{m-i}\circ\gamma_{\alpha})\{E^{i-1}\circ\gamma_{\beta}\}\gamma_{\mu}\rangle\rangle_{0}
+\sum_{i=1}^{m} \langle\langle G(E^{i-1}\circ\gamma_{\beta})\{E^{m-i}\circ\gamma_{\alpha}\}\gamma_{\mu}\rangle\rangle_{0}\\
&&-\sum_{i=1}^{m-1}\langle\langle E^{m-1}\{\gamma_{\alpha}\circ\gamma_{\beta}\}G(\gamma_{\mu})\rangle\rangle_{0}
+\sum_{i=1}^{m}\langle\langle E^{m-1}\{\gamma_{\alpha}\circ\gamma_{\beta}\}G(\gamma_{\mu})\rangle\rangle_{0}
\\
&&+(b_{1}+1)\bigg\{\sum_{i=1}^{m-1}\langle\langle E^{m-1}\{\gamma_{\alpha}\circ\gamma_{\beta}\}\gamma_{\mu}\rangle\rangle_{0}
-\sum_{i=1}^{m} \langle\langle E^{m-1}\{\gamma_{\alpha}\circ\gamma_{\beta}\}\gamma_{\mu}\rangle\rangle_{0}\bigg\}.
\een
The proof follows by using the following equation \eqref{simplication1}.
\end{proof}
In particular, it follows from Lemma \ref{4point-3point} that
\bea
&&\langle\langle E^{m}\gamma_{\alpha}\gamma_{\beta}\gamma^{\mu}\rangle\rangle_{0}\gamma_{\mu}\nonumber\\
&=&-\sum_{i=1}^{m} G(E^{m-i})\circ E^{i-1}\circ\gamma_{\alpha}\circ\gamma_{\beta}
-\sum_{i=1}^{m} G(E^{i-1}\circ\gamma_{\alpha}\circ\gamma_{\beta})\circ E^{m-i}\nonumber\\
&&+\sum_{i=1}^{m} G(E^{m-i}\circ\gamma_{\alpha})\circ E^{i-1}\circ\gamma_{\beta}
+\sum_{i=1}^{m} G(E^{i-1}\circ\gamma_{\beta})\circ E^{m-i}\circ\gamma_{\alpha}.\label{4point-3point1}
\eea
The following lemma is useful for simplification.
\begin{lem}\label{simplication}
For any vector field $\upsilon_{i}$ ($1\leq i\leq4$) on the small phase space, we have
\be
\langle\langle G(\upsilon_{1}\circ\upsilon_{2})\upsilon_{3}\upsilon_{4}\rangle\rangle_{0}
=\langle\langle\upsilon_{1}\upsilon_{2}\{\upsilon_{3}\circ\upsilon_{4}\}\rangle\rangle_{0}
-\langle\langle\upsilon_{1}\upsilon_{2}G(\upsilon_{3}\circ\upsilon_{4})\rangle\rangle_{0},\label{simplication1}\ee
\be
G(\upsilon_{1}\circ\gamma^{\mu})\circ\gamma_{\mu}=G(\upsilon_{1}\circ\gamma_{\mu})\circ\gamma^{\mu}
=\frac{1}{2}\Delta\circ\upsilon_{1},\label{simplication2}\ee
\be\langle\langle G(\upsilon_{1}\circ\upsilon_{2}\circ\upsilon_{3}\circ\gamma^{\mu})G(\gamma_{\mu})\upsilon_{4}\rangle\rangle_{0}
=\langle\langle \{G(\upsilon_{4}\circ\gamma^{\mu})\circ G(\gamma_{\mu})\circ\upsilon_{1}\}\upsilon_{2}\upsilon_{3}\rangle\rangle_{0},\label{simplication3}\ee
\be
\langle\langle\{G(\upsilon_{1}\circ\gamma^{\mu})\circ G(\upsilon_{2}\circ\gamma_{\mu})\}\upsilon_{3}\upsilon_{4}\rangle\rangle_{0}
=\langle\langle\{G(\upsilon_{1}\circ\upsilon_{2}\circ\gamma^{\mu})\circ G(\gamma_{\mu})\}\upsilon_{3}\upsilon_{4}\rangle\rangle_{0}.\label{simplication4}
\ee
\end{lem}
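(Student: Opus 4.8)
The plan is to establish the four identities in order, using only the algebra already in place: the grading operator $G$, the recorded fact that $b_\alpha=1-b_\beta$ whenever $\eta_{\alpha\beta}\neq 0$, the splitting/WDVV equation \eqref{WDVV1}, the reindexing identities \eqref{Observation1}--\eqref{Observation2}, and the symmetry of the Casimir tensor $\sum_\mu\gamma^\mu\otimes\gamma_\mu$. The relation $b_\alpha=1-b_\beta$ says exactly that $G$ and $\mathrm{Id}-G$ are mutually adjoint for the Poincar\'e pairing $\langle\cdot,\cdot\rangle=\eta$. Combined with the elementary identity $\langle\langle\mathcal U\mathcal V\mathcal W\rangle\rangle_0=\langle \mathcal U,\mathcal V\circ\mathcal W\rangle$, this disposes of \eqref{simplication1} at once: $\langle\langle G(\upsilon_1\circ\upsilon_2)\upsilon_3\upsilon_4\rangle\rangle_0=\langle G(\upsilon_1\circ\upsilon_2),\upsilon_3\circ\upsilon_4\rangle=\langle \upsilon_1\circ\upsilon_2,(\mathrm{Id}-G)(\upsilon_3\circ\upsilon_4)\rangle$, and expanding $\mathrm{Id}-G$ gives precisely the right-hand side.

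For \eqref{simplication2} the first equality is just the symmetry $\sum_\mu\gamma^\mu\otimes\gamma_\mu=\sum_\mu\gamma_\mu\otimes\gamma^\mu$ applied to the linear map $\mathcal W\mapsto G(\upsilon_1\circ\mathcal W)\circ(\,\cdot\,)$. For the second equality I would argue componentwise: it suffices to prove, for each $\sigma$, that $\sum_\mu\langle\langle G(\upsilon_1\circ\gamma^\mu)\gamma_\mu\gamma_\sigma\rangle\rangle_0=\tfrac12\langle\langle\Delta\upsilon_1\gamma_\sigma\rangle\rangle_0$. Applying \eqref{simplication1} with $\upsilon_3=\gamma_\mu$, $\upsilon_4=\gamma_\sigma$ rewrites the left side as $\sum_\mu\langle\langle\upsilon_1\gamma^\mu\{\gamma_\mu\circ\gamma_\sigma\}\rangle\rangle_0-\sum_\mu\langle\langle\upsilon_1\gamma^\mu\,G(\gamma_\mu\circ\gamma_\sigma)\rangle\rangle_0$; carrying out the $\mu$-sum by splitting and then using WDVV \eqref{WDVV1} to move the quantum product collapses both sums into the single-correlator shape $\langle\langle\{\gamma_\sigma\circ\upsilon_1\}\gamma^\tau\gamma_\tau\rangle\rangle_0$. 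The first becomes $T:=\sum_\tau\langle\langle\{\gamma_\sigma\circ\upsilon_1\}\gamma^\tau\gamma_\tau\rangle\rangle_0$ and the second becomes $\sum_\tau b_\tau\langle\langle\{\gamma_\sigma\circ\upsilon_1\}\gamma^\tau\gamma_\tau\rangle\rangle_0$. Now, inside one correlator the two slots $\gamma^\tau,\gamma_\tau$ are interchangeable, so expanding $\gamma^\tau=\sum_c\eta^{c\tau}\gamma_c$, relabelling $c\leftrightarrow\tau$, and using $b_c=1-b_\tau$ yields $\sum_\tau b_\tau\langle\langle\mathcal W\gamma^\tau\gamma_\tau\rangle\rangle_0=\sum_\tau(1-b_\tau)\langle\langle\mathcal W\gamma^\tau\gamma_\tau\rangle\rangle_0=\tfrac12T$. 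Hence the left side equals $T-\tfrac12T=\tfrac12T$, and one more WDVV step identifies $T=\langle\langle\upsilon_1\Delta\gamma_\sigma\rangle\rangle_0$, which is the assertion.

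I expect this factor $\tfrac12$ to be the crux. Attempting it by a naive symmetrization of $A:=\sum_\mu G(\upsilon_1\circ\gamma^\mu)\circ\gamma_\mu$ at the level of vector fields — symmetrizing in the two contracted indices and hoping to invoke $b_\mu+b_\rho=1$ — is circular: there the indices are paired by a three-point function rather than by $\eta$, so the grading relation does not apply, and each such rearrangement only reproduces $A=A$ (indeed, repeated use of \eqref{4pointeq1} yields the true but insufficient identity $\sum_\mu G(\upsilon_1\circ\gamma^\mu)\circ\gamma_\mu=\sum_\mu(\upsilon_1\circ\gamma^\mu)\circ G(\gamma_\mu)$ and nothing more). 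The antisymmetry that produces the $\tfrac12$ is available only after \eqref{simplication1} and WDVV have brought the contracted pair $\gamma^\tau,\gamma_\tau$ into the same correlator, where they are genuinely $\eta$-paired; verifying that this reduction is legitimate and tracking the grading weights through the splitting and WDVV steps is the delicate part.

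Finally, \eqref{simplication3} and \eqref{simplication4} carry a grading operator on both halves of the Casimir, and their content is the redistribution of the quantum factors ($\upsilon_2,\upsilon_3$, respectively $\upsilon_2$) between the two halves. I would prove them by the same toolkit: pass to three-point form via $\langle\langle\{\mathcal A\circ\mathcal B\}\upsilon_3\upsilon_4\rangle\rangle_0=\langle\langle\mathcal A\mathcal B\{\upsilon_3\circ\upsilon_4\}\rangle\rangle_0$, use \eqref{simplication1} to trade one of the two $G$'s for a metric contraction so that the reindexing identities \eqref{Observation1}--\eqref{Observation2} — which are exactly the statements that factors may be shuffled across $\sum_\mu\gamma^\mu\otimes\gamma_\mu$ when a single $G$ is present — become applicable, and then reinstate the second $G$ and feed in \eqref{simplication2}. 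The only genuine subtlety beyond \eqref{simplication2} is the two-sided grading bookkeeping, which is routine once the mechanism above is in hand; the specialization $X=\mathrm{pt}$, where every term reduces to $b_1^2\langle\langle\gamma_1\gamma_1\gamma_1\rangle\rangle_0$, furnishes a quick consistency check.
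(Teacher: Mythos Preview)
Your argument is correct and rests on the same underlying mechanism as the paper---the relation $b_\alpha+b_\beta=1$ whenever $\eta_{\alpha\beta}\neq 0$---but your execution is noticeably more elaborate in places. For \eqref{simplication1} your adjointness phrasing is exactly the paper's computation. For \eqref{simplication2} the paper simply says ``by the same argument'': one expands $\sum_\mu G(\upsilon_1\circ\gamma^\mu)\circ\gamma_\mu=\sum_{\mu,\alpha}b_\alpha\langle\langle\upsilon_1\gamma^\mu\gamma^\alpha\rangle\rangle_0\,\gamma_\alpha\circ\gamma_\mu$, observes that the factor $\langle\langle\upsilon_1\gamma^\mu\gamma^\alpha\rangle\rangle_0\,\gamma_\alpha\circ\gamma_\mu$ is symmetric in $\alpha,\mu$, and then flips $b_\alpha\to 1-b_\beta$ via an inserted $\eta$ exactly as in \eqref{simplication1}; this yields $A=\upsilon_1\circ\Delta-A$ directly at the vector-field level, with no need to test against $\gamma_\sigma$ or invoke WDVV. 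Your worry that the vector-field approach is ``circular'' applies only to the genuinely naive symmetrization you describe, not to this $\eta$-insertion trick. For \eqref{simplication4} the paper gives a three-line coordinate expansion: write both sides as $\sum b_\alpha b_\beta\langle\langle\cdots\rangle\rangle_0\langle\langle\cdots\rangle\rangle_0$ and contract the $\mu$-sum, which is shorter than threading through \eqref{simplication1} and \eqref{Observation2} as you propose. For \eqref{simplication3} the paper, like you, defers to \eqref{simplication1} and \eqref{simplication2}. So your route is sound but the paper's is more economical; your detour does have the pedagogical virtue of making the appearance of the factor $\tfrac12$ in \eqref{simplication2} fully transparent.
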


\begin{proof}
As $\eta_{\alpha\beta}\neq0$ requires that $b_{\alpha}=1-b_{\beta}$,
\ben
\langle\langle G(\upsilon_{1}\circ\upsilon_{2})\upsilon_{3}\upsilon_{4}\rangle\rangle_{0}&=&b_{\alpha}\langle\langle\upsilon_{1}\upsilon_{2}\gamma^{\alpha}\rangle\rangle\langle\langle \gamma_{\alpha}\upsilon_{3}\upsilon_{4}\rangle\rangle_{0}\\
&=&b_{\alpha}\eta_{\alpha\beta}\langle\langle\upsilon_{1}\upsilon_{2}\gamma^{\alpha}\rangle\rangle_{0}\langle\langle \gamma^{\beta}\upsilon_{3}\upsilon_{4}\rangle\rangle_{0}\\
&=&(1-b_{\beta})\eta_{\alpha\beta}\langle\langle\upsilon_{1}\upsilon_{2}\gamma^{\alpha}\rangle\rangle_{0}\langle\langle \gamma^{\beta}\upsilon_{3}\upsilon_{4}\rangle\rangle_{0}\\
&=&(1-b_{\beta})\langle\langle\upsilon_{1}\upsilon_{2}\gamma_{\beta}\rangle\rangle_{0}\langle\langle \gamma^{\beta}\upsilon_{3}\upsilon_{4}\rangle\rangle_{0}.
\een
The equation \eqref{simplication1} follows. By the same argument, the equation \eqref{simplication2} also holds.
The equation \eqref{simplication3} follows from equations \eqref{simplication1} and \eqref{simplication2}. The last equation
can be proved as follows
\ben
&&\langle\langle\{G(\upsilon_{1}\circ\gamma^{\mu})\circ G(\upsilon_{2}\circ\gamma_{\mu})\}\upsilon_{3}\upsilon_{4}\rangle\rangle_{0}\\
&=&b_{\alpha}b_{\beta}\langle\langle\upsilon_{1}\gamma^{\mu}\gamma^{\alpha}\rangle\rangle_{0}
\langle\langle\upsilon_{2}\gamma_{\mu}\gamma^{\beta}\rangle\rangle_{0}\langle\langle\{\gamma_{\alpha}\circ \gamma_{\beta}\}\upsilon_{3}\upsilon_{4}\rangle\rangle_{0}\\
&=&b_{\alpha}b_{\beta}
\langle\langle\upsilon_{2}\{\upsilon_{1}\circ\gamma^{\alpha}\}\gamma^{\beta}\rangle\rangle_{0}\langle\langle\{\gamma_{\alpha}\circ \gamma_{\beta}\}\upsilon_{3}\upsilon_{4}\rangle\rangle_{0}\\
&=&\langle\langle\{G(\upsilon_{1}\circ\upsilon_{2}\circ\gamma^{\mu})\circ G(\gamma_{\mu})\}\upsilon_{3}\upsilon_{4}\rangle\rangle_{0}.\een
\end{proof}
Now, an explicit formula for the first derivative of $\Phi_{k}$ is presented below.
\begin{thm}\label{explicit-dphi}For all $k\geq0$, and any $\alpha$,
\ben
24\gamma_{\alpha}\Phi_{k}
&=&-\sum_{i=1}^{k-1}\langle\langle G(E^i)\{E^{k-i-1}\circ\gamma_{\alpha}\}\gamma_{\mu}\gamma^{\mu}\rangle\rangle_{0}\\
&&-\sum_{i=0}^{k-2}(k-i-1)\langle\langle \{G(E^i)\circ E^{k-i-2}\}\Delta\gamma_{\alpha}\rangle\rangle_{0}\\
&&+\sum_{i=0}^{k-2}(-k+2i+2)\langle\langle G(\Delta\circ E^{k-i-2})E^i\gamma_{\alpha}\rangle\rangle_{0}\\
&&-\sum_{i=1}^{k-1}\sum_{j=1}^{i}\langle\langle G(E^{k-i-1}\circ\Delta) G(E^{i-j}\circ\gamma_{\alpha}) E^{j-1}\rangle\rangle_{0}\\
&&-\sum_{i=1}^{k-1}\sum_{j=1}^{i}\langle\langle \{G(E^{k-i-1})\circ \Delta\}G(E^{i-j}\circ\gamma_{\alpha}) E^{j-1}\rangle\rangle_{0}\\
&&+12\sum_{i=1}^{k-1}i\langle\langle \{G(E^{k-i-1}\circ \gamma^{\mu})\circ G(\gamma_{\mu})\}E^{i-1}\gamma_{\alpha}\rangle\rangle_{0}\\
&&-k(k-1)\langle\langle \Delta E^{k-2}\gamma_{\alpha}\rangle\rangle_{0}.
\een
\end{thm}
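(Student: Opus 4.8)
The plan is to differentiate the alternative expression \eqref{phialternative} for $24\Phi_{k}$ directly, applying the vector field $\gamma_{\alpha}$ to each of its three families of $3$-point functions by means of the Leibniz-type rule \eqref{derivative1}. Each application of $\gamma_{\alpha}$ splits into two kinds of contributions: an \emph{insertion} contribution, in which $\gamma_{\alpha}$ enters as an extra argument (turning a $3$-point function into a $4$-point function), and \emph{connection} contributions, in which $\nabla_{\gamma_{\alpha}}$ acts on one vector-field argument at a time. The strategy is to compute all connection contributions explicitly, reduce every resulting $4$-point function back to $3$-point functions, and then reorganize the nested sums so that everything collapses into the seven families displayed in the statement.

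For the connection contributions I would first assemble the covariant derivatives of the building blocks. For the quantum powers, $\nabla_{\gamma_{\alpha}}E^{m}$ is given by \eqref{derivative3}; this is the source of the inner summation index $j$ appearing in the double sums of the fourth and fifth lines. Since the operator $G$ of \eqref{Goperator} only rescales coefficients, it commutes with $\nabla$, so $\nabla_{\gamma_{\alpha}}G(E^{i})=G(\nabla_{\gamma_{\alpha}}E^{i})$ and the same formula \eqref{derivative3} applies inside $G$. Finally, because $\gamma_{\mu}$ and $\gamma^{\mu}$ are flat on the small phase space, equation \eqref{derivativeqp} gives $\nabla_{\gamma_{\alpha}}\Delta=\sum_{\mu}\sum_{\rho}\langle\langle\gamma_{\alpha}\gamma^{\mu}\gamma_{\mu}\gamma^{\rho}\rangle\rangle_{0}\gamma_{\rho}$, and likewise for $\nabla_{\gamma_{\alpha}}(E^{i}\circ\gamma^{\mu})$; these feed the $\Delta$-terms.

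The $4$-point functions produced by the insertion contributions, together with those hidden inside $\nabla_{\gamma_{\alpha}}\Delta$, are then reduced to $3$-point functions. Here I would use Lemma \ref{4point-3point} and its corollary \eqref{4point-3point1} wherever a single quantum power of $E$ is present, and the WDVV first derivative \eqref{WDVV2} (equivalently Lemma \ref{WDVV4}) for genuinely mixed $4$-point functions such as $\langle\langle\gamma_{\alpha}G(E^{i})\Delta E^{k-i-1}\rangle\rangle_{0}$. After reduction, the simplification identities of Lemma \ref{simplication} — especially \eqref{simplication1} to trade a $G$ of a product against a quantum product, and \eqref{simplication2}, \eqref{simplication4} to merge the double-$G$ terms — together with the index-symmetry observations \eqref{Observation1} and \eqref{Observation2}, are applied to combine like terms. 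The quasi-homogeneity identity \eqref{quasihom}, together with \eqref{4pointeq1}, is used whenever an $E$-insertion is created or destroyed. A final reindexing of the sums (shifting $i\mapsto k-i$ and splitting off the boundary values $i=0$ and $i=k-1$) produces the coefficients $k-i-1$, $-k+2i+2$, $12i$ and $k(k-1)$.

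The main obstacle is the bookkeeping: the product rule, combined with the double-sum structure of \eqref{derivative3} and the last (coefficient $6$) term of \eqref{phialternative}, generates a large number of $3$-point functions whose index ranges and integer coefficients must be tracked with care. The most delicate points are the emergence of the nested $\sum_{i}\sum_{j}$ in the fourth and fifth lines, where $\nabla_{\gamma_{\alpha}}E^{k-i-1}$ is itself a double sum, and the precise combinatorial coefficients, since these require the simplifications to be carried out in exactly the right order so that the mixed $4$-point functions are fully reduced and the $G$-on-$\Delta$ contributions organize themselves into the second and third lines of the claimed formula. Verifying the degenerate cases $k=0,1$, where several sums are empty, is immediate and serves as a useful consistency check.
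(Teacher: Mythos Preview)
Your plan is essentially the same as the paper's own proof: differentiate the defining expression for $\Phi_{k}$ via \eqref{derivative1}, feed in \eqref{derivative3} for the covariant derivatives of the Euler powers, reduce the resulting $4$-point functions with Lemma~\ref{4point-3point} and \eqref{WDVV2}, and clean up with Lemma~\ref{simplication}. One small slip: \eqref{WDVV2} is not the same as Lemma~\ref{WDVV4} (the latter is the $5$-point relation from \eqref{WDVVd2} and is not needed here); only the $4$-point identity \eqref{WDVV2} is used.
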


\begin{proof}It is trivial for $k=0,1$. It remains to prove this theorem when $k\geq2$.
According to  equation \eqref{phiformula}, using equations \eqref{derivative1} and \eqref{stringequ2}, we have
\bea
&&24\gamma_{\alpha}\Phi_{k}\nonumber\\
&=&-\sum_{i=0}^{k-1}\langle\langle G(\nabla_{\gamma_{\alpha}}E^i)E^{k-i-1}\Delta\rangle\rangle_{0}-\sum_{i=0}^{k-1}\langle\langle E^{k-i-1}G(E^i)\Delta\gamma_{\alpha}\rangle\rangle_{0}\nonumber\\
&&-\sum_{i=0}^{k-1}\langle\langle G(E^i)\{\nabla_{\gamma_{\alpha}}E^{k-i-1}\}\Delta\rangle\rangle_{0}-\sum_{i=0}^{k-1}\langle\langle \{G(E^i)\circ E^{k-i-1}\}\gamma_{\alpha}\gamma_{\sigma}\gamma^{\sigma}\rangle\rangle_{0}\nonumber\\
&&-12\sum_{i=0}^{k-1}b_{\mu}b_{\beta}\langle\langle E^i\gamma_{\alpha}\gamma_{\mu}\gamma^{\beta}\rangle\rangle_{0}\langle\langle\gamma_{\beta}E^{k-i-1}\gamma^{\mu}\rangle\rangle_{0}\nonumber\\
&&-12\sum_{i=0}^{k-1}b_{\mu}b_{\beta}\langle\langle \{\nabla_{\gamma_{\alpha}}E^i\}\gamma_{\mu}\gamma^{\beta}\rangle\rangle_{0}\langle\langle\gamma_{\beta}E^{k-i-1}\gamma^{\mu}\rangle\rangle_{0}\nonumber\\
&&+2k\langle\langle E^{k-1}\gamma_{\sigma}\gamma^{\sigma}\gamma_{\alpha}\rangle\rangle_{0}
+2k\langle\langle\{\nabla_{\gamma_{\alpha}}E^{k-1}\}\gamma_{\sigma}\gamma^{\sigma}\rangle\rangle_{0}.\label{phiderivative1}
\eea
By equation \eqref{WDVV2}, we have
\bea
&&\langle\langle \{G(E^i)\circ E^{k-i-1}\}\gamma_{\alpha}\gamma_{\sigma}\gamma^{\sigma}\rangle\rangle_{0}\nonumber\\
&=&\langle\langle \{E^{k-i-1}\circ\gamma_{\alpha}\}\gamma_{\sigma}\gamma^{\sigma}G(E^i)\rangle\rangle_{0}
+\langle\langle E^{k-i-1}\gamma_{\sigma}\gamma_{\alpha}\{G(E^i)\circ\gamma^{\sigma}\}\rangle\rangle_{0}\nonumber\\
&&-\langle\langle E^{k-i-1}\{\gamma_{\alpha}\circ\gamma^{\sigma}\}\gamma_{\sigma}G(E^i)\rangle\rangle_{0}.\label{phireduction1}
\eea
Notice that
\bea
&&b_{\mu}b_{\beta}\langle\langle E^i\gamma_{\alpha}\gamma_{\mu}\gamma^{\beta}\rangle\rangle_{0}\langle\langle\gamma_{\beta}E^{k-i-1}\gamma^{\mu}\rangle\rangle_{0}\nonumber\\
&=&b_{\mu}(1-b_{\beta})\langle\langle E^i\gamma_{\alpha}\gamma_{\mu}\gamma_{\beta}\rangle\rangle_{0}\langle\langle\gamma^{\beta}E^{k-i-1}\gamma^{\mu}\rangle\rangle_{0}\nonumber\\
&=&\langle\langle E^iG(\gamma_{\mu})\{E^{k-i-1}\circ\gamma^{\mu}\}\gamma_{\alpha}\rangle\rangle_{0}-\langle\langle E^iG(\gamma_{\mu})G(E^{k-i-1}\circ\gamma^{\mu})\gamma_{\alpha}\rangle\rangle_{0}\label{phireduction2}
\eea
and
\bea
&&b_{\mu}b_{\beta}\langle\langle \{\nabla_{\gamma_{\alpha}}E^i\}\gamma_{\mu}\gamma^{\beta}\rangle\rangle_{0}\langle\langle\gamma_{\beta}E^{k-i-1}\gamma^{\mu}\rangle\rangle_{0}\nonumber\\
&=&[1-(1-b_{\mu})]\langle\langle G((\nabla_{\gamma_{\alpha}}E^i)\circ \gamma_{\mu})E^{k-i-1}\gamma^{\mu}\rangle\rangle_{0}\nonumber\\
&=&\langle\langle G((\nabla_{\gamma_{\alpha}}E^i)\circ \gamma_{\mu})E^{k-i-1}\gamma^{\mu}\rangle\rangle_{0}-\langle\langle G((\nabla_{\gamma_{\alpha}}E^i)\circ \gamma_{\mu})E^{k-i-1}G(\gamma^{\mu})\rangle\rangle_{0}.\label{phireduction3}
\eea
The proof follows by firstly substituting equations \eqref{phireduction1}, \eqref{phireduction2} and \eqref{phireduction3} into the equality \eqref{phiderivative1},
and secondly applying Lemma \ref{4point-3point} and formula \eqref{derivative3} to the resulting equality, and then using Lemma \ref{simplication} for
 simplification.
\end{proof}
Therefore, we have an alternative direct proof of
\begin{cor}[\cite{L1}]\label{Virasorotype2} For any smooth projective variety $X$, we have
\ben
E^k\Phi_{m}-E^m\Phi_{k}=(m-k)\Phi_{k+m-1}.
\een
\end{cor}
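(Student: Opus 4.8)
The plan is to deduce the identity directly from the explicit first-derivative formula of Theorem~\ref{explicit-dphi}, rather than from any genus-one input. The starting point is that $E^m$, viewed as a vector field on the small phase space, is a quantum power and hence expands as $E^m=\sum_\alpha (E^m)^\alpha\gamma_\alpha$ in the basis $\{\gamma_\alpha\}$; since $\gamma_\alpha$ is identified with $\partial/\partial t^\alpha$, the directional derivative is
\[
E^m\Phi_k=\sum_\alpha (E^m)^\alpha\,\gamma_\alpha\Phi_k.
\]
Every term on the right-hand side of Theorem~\ref{explicit-dphi} is multilinear, and in particular linear in the single free insertion $\gamma_\alpha$ (the only other indices, $\mu$ and the summation ranges $i,j$, are internal). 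Therefore $\sum_\alpha (E^m)^\alpha(\cdots\gamma_\alpha\cdots)$ simply replaces $\gamma_\alpha$ by $\sum_\alpha(E^m)^\alpha\gamma_\alpha=E^m$ throughout, giving a closed genus-zero expression for $24\,E^m\Phi_k$ in which $\gamma_\alpha$ is everywhere promoted to $E^m$ (so that, e.g., $E^{k-i-1}\circ\gamma_\alpha$ becomes $E^{k-i-1}\circ E^m$). The same substitution with $k$ and $m$ exchanged produces $24\,E^k\Phi_m$.

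Next I would form the antisymmetric difference $24\bigl(E^k\Phi_m-E^m\Phi_k\bigr)=D_m(E^k)-D_k(E^m)$, where $D_k(\upsilon)$ abbreviates the formula of Theorem~\ref{explicit-dphi}, and show group by group that the part symmetric under $k\leftrightarrow m$ cancels while the antisymmetric part reorganizes with an overall factor $m-k$. The structural inputs that make this work are: the multiplicativity of quantum powers $E^a\circ E^b=E^{a+b}$ together with the Frobenius identity $\langle\langle ABC\rangle\rangle_0=\eta(A\circ B,C)$, which forces three-point insertions such as $\langle\langle\Delta\,E^a E^b\rangle\rangle_0=\eta(\Delta,E^{a+b})$ to depend only on $a+b$ and hence to become manifestly symmetric; the metric-moving observations \eqref{Observation1} and \eqref{Observation2}; and the simplification Lemma~\ref{simplication}, in particular \eqref{simplication1} to push $G$ across a quantum product and \eqref{simplication2} to produce factors of $\Delta$.

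I would then match the surviving antisymmetric terms against the three summands of the alternative expression \eqref{phialternative} for $24\Phi_{m+k-1}$: the insertions of the form $\langle\langle\cdots\gamma_\mu\gamma^\mu\rangle\rangle_0$ assemble into the term $-(m+k-1)\langle\langle E^{m+k-2}\gamma_\mu\gamma^\mu\rangle\rangle_0$; the terms carrying one $G$ and one $\Delta$ assemble into $-\sum_i\langle\langle G(E^i)\Delta E^{m+k-2-i}\rangle\rangle_0$; and the terms carrying two factors of $G$ contracted through $\gamma^\mu,\gamma_\mu$ assemble into $6\sum_i\langle\langle G(E^i\circ\gamma^\mu)G(\gamma_\mu)E^{m+k-2-i}\rangle\rangle_0$. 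As a consistency check, the degenerate case $k=0$ (where $E^0=\gamma_1$ and $\Phi_0=0$) reduces the claimed identity to the known relation $\gamma_1\Phi_m=m\Phi_{m-1}$.

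The main obstacle will be the double-sum groups $\sum_{i=1}^{k-1}\sum_{j=1}^{i}$ appearing in Theorem~\ref{explicit-dphi}. After combining the two such groups by \eqref{simplication1} and pairing them against their $k\leftrightarrow m$ swaps, one must reindex and telescope a double sum into the single sums occurring in $\Phi_{m+k-1}$, while correctly tracking the weights $k-i-1$, $-k+2i+2$ and $i$ and the endpoints of the summation ranges. Everything outside these double sums is mechanical once the Frobenius property is used to symmetrize the three-point functions; the bookkeeping of the telescoping is where the real care is needed.
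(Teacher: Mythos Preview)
Your approach is essentially the same as the paper's: both start from Theorem~\ref{explicit-dphi}, promote the free insertion $\gamma_\alpha$ to $E^m$, form the antisymmetric difference, and match the surviving terms against the three summands of \eqref{phialternative}. The paper carries this out by first writing $24\,E^m\Phi_k$ as an explicit sum of genus-zero three-point functions and then subtracting the $k\leftrightarrow m$ swap, exactly as you outline.

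One point you underplay: after the substitution $\gamma_\alpha\to E^m$, the first block of Theorem~\ref{explicit-dphi} becomes the genuine four-point function $\langle\langle G(E^i)\,E^{k+m-i-1}\,\gamma_\mu\gamma^\mu\rangle\rangle_0$, which is not covered by the Frobenius identity for three-point functions. The paper handles this by invoking Lemma~\ref{4point-3point} (equivalently \eqref{quasihom} together with WDVV) to reduce it to three-point data before any cancellation is attempted; this reduction is what produces the $\langle\langle G(E^{i-1})G(E^{j-1})\{\Delta\circ E^{k+m-i-j}\}\rangle\rangle_0$ and the extra $b_1$-weighted terms visible in the paper's intermediate expression. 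Your telescoping of the double sums will not go through until this four-point block has been reduced, so you should insert Lemma~\ref{4point-3point} (and Lemma~\ref{simplication}) at that stage rather than only at the end.
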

\begin{proof}
We only consider the cases for $k+m\geq2$, since other cases are trivial.
Using Theorem \ref{explicit-dphi}, Lemma \ref{4point-3point} and Lemma \ref{simplication}, we have
\ben
&&24E^m\Phi_{k}\\
&=&\sum_{i=1}^k\sum_{j=1}^m\langle\langle G(E^{i-1})G(E^{j-1})\{\Delta\circ E^{k+m-i-j}\}\rangle\rangle_{0}\\
&&-b_{1}\sum_{i=1}^{k+m-1}\langle\langle G(E^{i-1})\Delta E^{k+m-i-1}\rangle\rangle_{0}\\
&&+\sum_{i=1}^{k-1}\sum_{j=1}^{k+m-i-1}\langle\langle G(E^i)G(\Delta\circ E^{j-1})E^{k+m-i-j-1}\rangle\rangle_{0}\\
&&-\sum_{i=m}^{k+m-2}\sum_{j=1}^{k+m-i-1}\langle\langle G(E^i)G(\Delta\circ E^{j-1})E^{k+m-i-j-1}\rangle\rangle_{0}\\
&&-\sum_{i=1}^{k-1}(2k+m-2i-2)\langle\langle G(E^i)\Delta E^{k+m-i-2}\rangle\rangle_{0}\\
&&+\sum_{i=m}^{k+m-2}(2m+k-2i-2)\langle\langle G(E^i)\Delta E^{k+m-i-2}\rangle\rangle_{0}\\
&&+6\sum_{i=1}^{k-1}(k-i)\langle\langle G(E^{i-1}\circ\gamma^{\mu})G(\gamma_{\mu})E^{k+m-i-1}\rangle\rangle_{0}\\
&&-6\sum_{i=m+1}^{k+m-1}(m-i)\langle\langle G(E^{i-1}\circ\gamma^{\mu})G(\gamma_{\mu})E^{k+m-i-1}\rangle\rangle_{0}\\
&&-(k-1)(k+b_{1}\cdot\delta_{k\geq2})\langle\langle E^{m+k-2}\gamma^{\mu}\gamma_{\mu}\rangle\rangle_{0}.
\een
where if $k\geq2$, then $\delta_{k\geq2}=1$, otherwise $\delta_{k\geq2}=0$.
Then it follows that
\ben
&&24E^m\Phi_{k}-24E^k\Phi_{m}\\
&=&\bigg\{-\sum_{i=0}^{k+m-2}\langle\langle G(E^i)\Delta E^{k+m-i-2}\rangle\rangle_{0}-(k+m-1)\langle\langle E^{m+k-2}\gamma^{\mu}\gamma_{\mu}\rangle\rangle_{0}\\
&&+6\sum_{i=1}^{k+m-1}\langle\langle G(E^{i-1}\circ\gamma^{\mu})G(\gamma_{\mu})E^{k+m-i-1}\rangle\rangle_{0}\bigg\}\times(k-m).
\een
The proof is completed by  equation \eqref{phialternative}.
\end{proof}
\subsection{Getzler equation $\textbf{G}(E^{k},\gamma_{\alpha},\gamma_{\beta},\gamma_{\sigma})=0$}
We start with the computation of the function $G_{1}$.
\begin{lem}\label{G1-1E}For any $\alpha,\beta,\sigma$,
\ben
&&G_{1}(E^k,\gamma_{\alpha},\gamma_{\beta},\gamma_{\sigma})\\
&=&-24\{\gamma_{\alpha}\circ\gamma_{\beta}\circ\gamma_{\sigma}\}\langle\langle E^k\rangle\rangle_{1}+24k\langle\langle E^{k-1}\circ\gamma_{\alpha}\circ\gamma_{\beta}\circ\gamma_{\sigma}\rangle\rangle_{1}\\
&&+12\sum_{g\in S_{3}}\langle\langle \{E^k\circ\gamma_{\varsigma_{g(1)}}\}\{\gamma_{\varsigma_{g(2)}}\circ\gamma_{\varsigma_{g(3)}}\}\rangle\rangle_{1}
-12\sum_{g\in S_{3}}\langle\langle \{E^k\circ\gamma_{\varsigma_{g(1)}}\circ\gamma_{\varsigma_{g(2)}}\}\gamma_{\varsigma_{g(3)}}\rangle\rangle_{1}\\
&&+12\sum_{g\in S_{3}}\sum_{i=1}^k\langle\langle G(E^{i-1}\circ\gamma_{\varsigma_{g(1)}})\circ E^{k-i}\circ\gamma_{\varsigma_{g(2)}}\circ\gamma_{\varsigma_{g(3)}}\rangle\rangle_{1}\\
&&-12\sum_{g\in S_{3}}\sum_{i=1}^k\langle\langle G(E^{i-1}\circ\gamma_{\varsigma_{g(1)}}\circ\gamma_{\varsigma_{g(2)}})\circ E^{k-i}\circ\gamma_{\varsigma_{g(3)}}\rangle\rangle_{1}
\een
where $k\geq0$ and $\{\varsigma_{1},\varsigma_{2},\varsigma_{3}\}=\{\alpha,\beta,\sigma\}$.
\end{lem}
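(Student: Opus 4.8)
The plan is to substitute $\upsilon_1=E^k$, $\upsilon_2=\gamma_\alpha$, $\upsilon_3=\gamma_\beta$, $\upsilon_4=\gamma_\sigma$ directly into the four sums defining $G_1$ in \eqref{Getzlertrr}, and then collapse the sum over $S_4$ term by term using commutativity of $\circ$ and the symmetry of the correlators. Write $G_1=S_1+S_2+S_3+S_4$ for the four constituent sums. In $S_1$ the summand $\langle\langle\{\upsilon_{h(1)}\circ\upsilon_{h(2)}\}\{\upsilon_{h(3)}\circ\upsilon_{h(4)}\}\rangle\rangle_{1}$ has a stabilizer of order $8$ in $S_4$, so the $24$ permutations collapse to the three pairings obtained by pairing $E^k$ with one of $\gamma_\alpha,\gamma_\beta,\gamma_\sigma$; rewriting these as a sum over $g\in S_3$ (each pairing arising twice) gives exactly $12\sum_{g\in S_3}\langle\langle\{E^k\circ\gamma_{\varsigma_{g(1)}}\}\{\gamma_{\varsigma_{g(2)}}\circ\gamma_{\varsigma_{g(3)}}\}\rangle\rangle_{1}$. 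In $S_2$ the summand is invariant under the $S_3$ permuting the three factors of the triple product, so it collapses to four terms indexed by the isolated input: the three with $E^k$ inside the triple product give, after the same $S_3$-rewriting, $-12\sum_{g\in S_3}\langle\langle\{E^k\circ\gamma_{\varsigma_{g(1)}}\circ\gamma_{\varsigma_{g(2)}}\}\gamma_{\varsigma_{g(3)}}\rangle\rangle_{1}$, while the one with $E^k$ isolated produces $-24\langle\langle\{\gamma_\alpha\circ\gamma_\beta\circ\gamma_\sigma\}E^k\rangle\rangle_{1}$.

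Next I would convert this isolated term into the first two terms of the claimed formula. Applying the derivative rule \eqref{derivative1} to the one-point function $\langle\langle E^k\rangle\rangle_{1}$ gives
\[
\langle\langle\{\gamma_\alpha\circ\gamma_\beta\circ\gamma_\sigma\}E^k\rangle\rangle_{1}=\{\gamma_\alpha\circ\gamma_\beta\circ\gamma_\sigma\}\langle\langle E^k\rangle\rangle_{1}-\langle\langle\nabla_{\gamma_\alpha\circ\gamma_\beta\circ\gamma_\sigma}E^k\rangle\rangle_{1}.
\]
Expanding $\nabla_{\gamma_\alpha\circ\gamma_\beta\circ\gamma_\sigma}E^k$ by \eqref{derivative3} isolates the term $k\{\gamma_\alpha\circ\gamma_\beta\circ\gamma_\sigma\}\circ E^{k-1}$, which yields $+24k\langle\langle E^{k-1}\circ\gamma_\alpha\circ\gamma_\beta\circ\gamma_\sigma\rangle\rangle_{1}$, together with two residual families of $G$-type one-point functions of the shapes $\langle\langle G(E^{i-1})\circ\{\gamma_\alpha\circ\gamma_\beta\circ\gamma_\sigma\}\circ E^{k-i}\rangle\rangle_{1}$ and $\langle\langle G(\{\gamma_\alpha\circ\gamma_\beta\circ\gamma_\sigma\}\circ E^{k-i})\circ E^{i-1}\rangle\rangle_{1}$.

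The remaining work is to show that $S_3$, $S_4$, and these residual families assemble into the two $G$-sums of the statement. The key device is that the genus-zero insertion $\gamma^\rho$ can be folded into the genus-one correlator: by multilinearity of the correlators over functions, $\sum_\rho\langle\langle\mathcal A\mathcal B\mathcal C\gamma^\rho\rangle\rangle_{0}\langle\langle\{\gamma_\rho\circ\mathcal D\}\rangle\rangle_{1}=\langle\langle\{\mathcal Q\circ\mathcal D\}\rangle\rangle_{1}$ with $\mathcal Q=\sum_\rho\langle\langle\mathcal A\mathcal B\mathcal C\gamma^\rho\rangle\rangle_{0}\gamma_\rho$, and likewise $\sum_\rho\langle\langle\cdots\gamma^\rho\rangle\rangle_{0}\langle\langle\gamma_\rho\rangle\rangle_{1}=\langle\langle\mathcal Q\rangle\rangle_{1}$ for the $S_3$-contractions. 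Each such $\mathcal Q$ is a contracted genus-zero four-point vector field, which I would reduce to quantum products of $G$-expressions by the WDVV reductions — formula \eqref{4point-3point1} for the four-point functions carrying the power $E^k$, and Lemma \ref{WDVV3} for those carrying $\{E^k\circ\gamma_\bullet\}$ — after which every contribution becomes a pure genus-one one-point function of the required $G(E^{i-1}\circ\cdots)\circ E^{k-i}\circ\cdots$ shape. Finally I would collect all pieces and use the raising/lowering identities \eqref{simplication1}–\eqref{simplication4} of Lemma \ref{simplication} to fuse them into the two symmetric sums $12\sum_{g\in S_3}\sum_{i}$ of the statement.

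The main obstacle is precisely this last assembly. After the WDVV reductions, $S_3$ and $S_4$ each generate several $G$-terms in which the three classes $\gamma_\alpha,\gamma_\beta,\gamma_\sigma$ are distributed in all possible ways between the argument of $G$ and the outer quantum product, with index patterns that do not individually match the target. One must verify that the genuinely asymmetric contributions cancel against the residual families coming from the isolated term in the previous step, and that what survives reorganizes exactly into the $g\in S_3$-symmetric form; this is a lengthy but mechanical bookkeeping, driven entirely by \eqref{simplication1}–\eqref{simplication4} together with the string equations \eqref{stringequ2}.
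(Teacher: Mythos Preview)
Your plan matches the paper's proof almost exactly: both expand $G_1$ by the definition, collapse the $S_4$-symmetry to obtain the $S_1$-, $S_2$-contributions verbatim, and treat the isolated term $-24\langle\langle\{\gamma_\alpha\circ\gamma_\beta\circ\gamma_\sigma\}E^k\rangle\rangle_{1}$ via \eqref{derivative1} and \eqref{derivative3} to produce the first two lines of the claim plus $G$-residuals. The only substantive divergence is in how you propose to reduce the mixed $S_3$/$S_4$ terms.

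Your citation of Lemma~\ref{WDVV3} for the four-point functions of type $\langle\langle\{E^k\circ\gamma_{\bullet}\}\gamma\gamma\gamma^\mu\rangle\rangle_0$ is misplaced: that lemma applies to a first argument of the form $\upsilon^k$, not $\upsilon^k\circ\gamma$, so it does not directly reduce these. The paper sidesteps this by a single WDVV move rather than by reducing each piece separately. It first rewrites the $S_4$-term $\langle\langle\gamma_\alpha\gamma_\beta\gamma_\sigma\gamma^\mu\rangle\rangle_0\langle\langle\gamma_\mu\circ E^k\rangle\rangle_1$ as $\langle\langle\{E^k\circ\gamma^\mu\}\gamma_\alpha\gamma_\beta\gamma_\sigma\rangle\rangle_0\langle\langle\gamma_\mu\rangle\rangle_1$ and applies \eqref{WDVV2} once; summing over $g\in S_3$ yields the identity \eqref{G1simplication2}, which absorbs the $\{E^k\circ\gamma_{\bullet}\}$ four-point terms from $S_3$ entirely. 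After this, every surviving genus-zero four-point function has $E^k$ itself as an input, so Lemma~\ref{4point-3point} applies uniformly and the two $G$-sums of the statement drop out without the additional cancellation bookkeeping you anticipate; the identities \eqref{simplication1}--\eqref{simplication4} and the string equation are not needed for this lemma.
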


\begin{proof}
By the definition of $G_{1}$, we have
\bea
&&G_{1}(E^k,\gamma_{\alpha},\gamma_{\beta},\gamma_{\sigma})\nonumber\\
&=&12\sum_{g\in S_{3}}\langle\langle \{E^k\circ\gamma_{\varsigma_{g(1)}}\}\{\gamma_{\varsigma_{g(2)}}\circ\gamma_{\varsigma_{g(3)}}\}\rangle\rangle_{1}
-12\sum_{g\in S_{3}}\langle\langle \{E^k\circ\gamma_{\varsigma_{g(1)}}\circ\gamma_{\varsigma_{g(2)}}\}\gamma_{\varsigma_{g(3)}}\}\rangle\rangle_{1}\nonumber\\
&&-24\langle\langle\{\gamma_{\alpha}\circ\gamma_{\beta}\circ\gamma_{\sigma}\}E^k\rangle\rangle_{1}-2\sum_{g\in S_{3}}\langle\langle \{E^k\circ\gamma_{\varsigma_{g(1)}}\}\gamma_{\varsigma_{g(2)}}\gamma_{\varsigma_{g(3)}}\gamma^{\mu}\rangle\rangle_{0}\langle\langle\gamma_{\mu}\rangle\rangle_{1}\nonumber\\
&&-2\sum_{g\in S_{3}}\langle\langle E^k\{\gamma_{\varsigma_{g(1)}}\circ\gamma_{\varsigma_{g(2)}}\}\gamma_{\varsigma_{g(3)}}\gamma^{\mu}\rangle\rangle_{0}\langle\langle\gamma_{\mu}\rangle\rangle_{1}\nonumber\\
&&+6\sum_{g\in S_{3}}\langle\langle E^k\gamma_{\varsigma_{g(1)}}\gamma_{\varsigma_{g(2)}}\gamma^{\mu}\rangle\rangle_{0}\langle\langle\gamma_{\mu}\circ\gamma_{\varsigma_{g(3)}}\rangle\rangle_{1}\nonumber\\
&&+12\langle\langle\gamma_{\alpha}\gamma_{\beta}\gamma_{\sigma}\gamma^{\mu}\rangle\rangle_{0}\langle\langle\gamma_{\mu}\circ E^k\rangle\rangle_{1}.\label{G1forumula1}
\eea
Firstly, it follows from equations \eqref{derivative1} and \eqref{derivative3} that
\bea
&&\langle\langle\{\gamma_{\alpha}\circ\gamma_{\beta}\circ\gamma_{\sigma}\}E^k\rangle\rangle_{1}\nonumber\\
&=&\{\gamma_{\alpha}\circ\gamma_{\beta}\circ\gamma_{\sigma}\}\langle\langle E^k\rangle\rangle_{1}-\sum_{i=0}^{k-1}\langle\langle G(E^i)\circ E^{k-i-1}\circ\gamma_{\alpha}\circ\gamma_{\beta}\circ\gamma_{\sigma}\rangle\rangle_{1}\nonumber\\
&&+\sum_{i=1}^{k}\langle\langle G(E^{k-i}\circ\gamma_{\alpha}\circ\gamma_{\beta}\circ\gamma_{\sigma})\circ E^{i-1}\rangle\rangle_{1}-k\langle\langle E^{k-1}\circ\gamma_{\alpha}\circ\gamma_{\beta}\circ\gamma_{\sigma}\rangle\rangle_{1}. \label{G1simplication1}
\eea
Using equation \eqref{WDVV2}, we have
\ben
&&\langle\langle\gamma_{\alpha}\gamma_{\beta}\gamma_{\sigma}\gamma^{\mu}\rangle\rangle_{0}\langle\langle\gamma_{\mu}\circ E^k\rangle\rangle_{1}\\
&=&\langle\langle\{E^k\circ\gamma^{\mu}\}\gamma_{\alpha}\gamma_{\beta}\gamma_{\sigma}\rangle\rangle_{0}\langle\langle\gamma_{\mu}\rangle\rangle_{1}\\
&=&\langle\langle E^k\gamma_{\varsigma_{g(1)}}\gamma_{\varsigma_{g(2)}}\{\gamma^{\mu}\circ\gamma_{\varsigma_{g(3)}}\}\rangle\rangle_{0}\langle\langle\gamma_{\mu}\rangle\rangle_{1}
+\langle\langle \{E^k\circ\gamma_{\varsigma_{g(1)}}\}\gamma_{\varsigma_{g(2)}}\gamma_{\varsigma_{g(3)}}\gamma^{\mu}\rangle\rangle_{0}\langle\langle\gamma_{\mu}\rangle\rangle_{1}\\
&&-\langle\langle \{\gamma_{\varsigma_{g(1)}}\circ\gamma_{\varsigma_{g(3)}}\}E^k\gamma_{\varsigma_{g(2)}}\gamma^{\mu}
\rangle\rangle_{0}\langle\langle\gamma_{\mu}\rangle\rangle_{1}
\een
for any $g\in S_{3}$. Hence it is easy to show that
\bea
&&12\langle\langle\gamma_{\alpha}\gamma_{\beta}\gamma_{\sigma}\gamma^{\mu}\rangle\rangle_{0}\langle\langle\gamma_{\mu}\circ E^k\rangle\rangle_{1}
-2\sum_{g\in S_{3}}\langle\langle \{E^k\circ\gamma_{\varsigma_{g(1)}}\}\gamma_{\varsigma_{g(2)}}\gamma_{\varsigma_{g(3)}}\gamma^{\mu}\rangle\rangle_{0}
\langle\langle\gamma_{\mu}\rangle\rangle_{1}\nonumber\\
&=&2\sum_{g\in S_{3}}\langle\langle E^k\gamma_{\varsigma_{g(1)}}\gamma_{\varsigma_{g(2)}}\gamma^{\mu}\rangle\rangle_{0}\langle\langle\gamma_{\mu}\circ\gamma_{\varsigma_{g(3)}}\rangle\rangle_{1}\nonumber\\
&&-2\sum_{g\in S_{3}}\langle\langle E^k\{\gamma_{\varsigma_{g(1)}}\circ\gamma_{\varsigma_{g(2)}}\}\gamma_{\varsigma_{g(3)}}\gamma^{\mu}\rangle\rangle_{0}
\langle\langle\gamma_{\mu}\rangle\rangle_{1}.\label{G1simplication2}
\eea
By substituting equations \eqref{G1simplication1} and \eqref{G1simplication2} into equation \eqref{G1forumula1},
the proof is completed by using Lemma \ref{4point-3point}.
\end{proof}
\begin{rem}\label{remG1-1E}
Although it is simple to show that there are other equivalent expressions for $G_{1}(E^k,\gamma_{\alpha},\gamma_{\beta},\gamma_{\sigma})$ by equations \eqref{derivative1} and \eqref{derivative3}, for
example,
\bea
&&G_{1}(E^k,\gamma_{\alpha},\gamma_{\beta},\gamma_{\sigma})\nonumber\\
&=&-24\{\gamma_{\alpha}\circ\gamma_{\beta}\circ\gamma_{\sigma}\}\langle\langle E^k\rangle\rangle_{1}+24k\langle\langle E^{k-1}\circ\gamma_{\alpha}\circ\gamma_{\beta}\circ\gamma_{\sigma}\rangle\rangle_{1}\nonumber\\
&&+12\sum_{g\in S_{3}}\{\gamma_{\varsigma_{g(1)}}\circ\gamma_{\varsigma_{g(2)}}\}\langle\langle E^k\circ\gamma_{\varsigma_{g(3)}}\rangle\rangle_{1}
-12\sum_{g\in S_{3}}\gamma_{\varsigma_{g(1)}}\langle\langle E^k\circ\gamma_{\varsigma_{g(2)}}\circ\gamma_{\varsigma_{g(3)}}\rangle\rangle_{1}\nonumber\\
&&+72\langle\langle\gamma_{\alpha}\gamma_{\beta}\gamma_{\sigma}\gamma^{\mu}\rangle\rangle_{0}\langle\langle\gamma_{\mu}\circ E^k\rangle\rangle_{1},\label{equi1}
\eea
we adopt the expression of $G_{1}$ in Lemma \ref{G1-1E} which is useful for later applications in Section 3 and 4. Notice that equation \eqref{equi1}
can not be used to derive $G_{1}(E^k,\gamma_{\alpha},\gamma_{\beta},E^m)$ by substituting  $E^m$ into the position of $\gamma_{\sigma}$ directly.
\end{rem}
By the definition of $G_{0}$, we have for any $\alpha, \beta, \sigma$,
\bea
&&G_{0}(E^{k},\gamma_{\alpha},\gamma_{\beta},\gamma_{\sigma})\nonumber\\
&=&\langle\langle E^k\gamma_{\alpha}\gamma_{\beta}\gamma_{\sigma}\Delta\rangle\rangle_{0}+\frac{1}{2}\sum_{g\in S_{3}}\langle\langle E^k\gamma_{\varsigma_{g(1)}}\gamma_{\varsigma_{g(2)}}\gamma^{\mu}\rangle\rangle_{0}
\langle\langle\gamma_{\mu}\gamma_{\varsigma_{g(3)}}\gamma_{\rho}\gamma^{\rho}\rangle\rangle_{0}\nonumber\\
&&-\sum_{g\in S_{3}}\langle\langle E^k\gamma_{\varsigma_{g(1)}}\gamma^{\mu}\gamma^{\rho}\rangle\rangle_{0}
\langle\langle \gamma_{\mu}\gamma_{\rho}\gamma_{\varsigma_{g(2)}}\gamma_{\varsigma_{g(3)}}\rangle\rangle_{0}\nonumber\\
&&+\langle\langle E^k\gamma_{\rho}\gamma^{\rho}\gamma^{\mu}\rangle\rangle_{0}\langle\langle \gamma_{\mu}\gamma_{\alpha}\gamma_{\beta}\gamma_{\sigma}\rangle\rangle_{0},\label{G0formula1}
\eea
where $k\geq0$ and $\{\varsigma_{1},\varsigma_{2},\varsigma_{3}\}=\{\alpha,\beta,\sigma\}$.
Since the computation is quite involved, we put some intermediate results in Appendix A. With these preparation, we have the
following explicit formula.

\begin{lem}\label{G0-1E}For any $\alpha,\beta,\sigma$,
\ben
&&G_{0}(E^k,\gamma_{\alpha},\gamma_{\beta},\gamma_{\sigma})\\
&=&24\{\gamma_{\alpha}\circ\gamma_{\beta}\circ\gamma_{\sigma}\}\Phi_{k}-4k\langle\langle\{\Delta\circ E^{k-1}\}\gamma_{\alpha}\gamma_{\beta}\gamma_{\sigma}\rangle\rangle_{0}\\
&&+\frac{1}{2}\sum_{g\in S_{3}}\sum_{i=1}^k\langle\langle G(E^{k-i}\circ\gamma_{\varsigma_{g(1)}})\gamma_{\mu}\gamma^{\mu}\{E^{i-1}\circ\gamma_{\varsigma_{g(2)}}\circ\gamma_{\varsigma_{g(3)}}\}\rangle\rangle_{0}\\
&&-\frac{1}{2}\sum_{g\in S_{3}}\sum_{i=1}^k\langle\langle G(E^{k-i}\circ\gamma_{\varsigma_{g(1)}}\circ\gamma_{\varsigma_{g(2)}})\gamma_{\mu}\gamma^{\mu}\{E^{i-1}\circ\gamma_{\varsigma_{g(3)}}\}\rangle\rangle_{0}\\
&&+2k\sum_{g\in S_{3}}\langle\langle G(E^{k-1}\circ\gamma_{\varsigma_{g(1)}}\circ\gamma^{\mu})\gamma_{\varsigma_{g(2)}}\gamma_{\varsigma_{g(3)}}\gamma_{\mu}\rangle\rangle_{0}\\
&&-\frac{1}{2}k\sum_{g\in S_{3}}\langle\langle \{E^{k-1}\circ\gamma_{\varsigma_{g(1)}}\}\gamma_{\mu}\gamma^{\mu}\{\gamma_{\varsigma_{g(2)}}\circ\gamma_{\varsigma_{g(3)}}\}\rangle\rangle_{0}\\
&&+2\sum_{g\in S_{3}}\sum_{i=1}^{k-1}(k-3i)\langle\langle\{G(E^{k-i-1}\circ\gamma_{\varsigma_{g(1)}}\circ\gamma^{\mu})\circ G(\gamma_{\mu})\}E^{i-1}\{\gamma_{\varsigma_{g(2)}}\circ\gamma_{\varsigma_{g(3)}}\}\rangle\rangle_{0}\\
&&-\frac{1}{2}\sum_{g\in S_{3}}\sum_{i=1}^{k-1}i\langle\langle\{\Delta\circ E^{i-1}\}G(E^{k-i-1}\circ\gamma_{\varsigma_{g(1)}}\circ\gamma_{\varsigma_{g(2)}})\gamma_{\varsigma_{g(3)}}\rangle\rangle_{0}\\
&&-\frac{1}{2}\sum_{g\in S_{3}}\sum_{i=1}^{k-1}(k-i)\langle\langle\{\Delta\circ E^{i-1}\}G(E^{k-i-1}\circ\gamma_{\varsigma_{g(1)}})\{\gamma_{\varsigma_{g(2)}}\circ\gamma_{\varsigma_{g(3)}}\}\rangle\rangle_{0}\\
&&-4k\sum_{i=1}^{k-1}\langle\langle G(\Delta\circ E^{k-i-1})E^{i-1}\{\gamma_{\alpha}\circ\gamma_{\beta}\circ\gamma_{\sigma}\}\rangle\rangle_{0}\\
&&+5k(k-1)\langle\langle\Delta E^{k-2}\{\gamma_{\alpha}\circ\gamma_{\beta}\circ\gamma_{\sigma}\}\rangle\rangle_{0},
\een
where $k\geq0$ and $\{\varsigma_{1},\varsigma_{2},\varsigma_{3}\}=\{\alpha,\beta,\sigma\}$.
\end{lem}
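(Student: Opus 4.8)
The plan is to start from the expansion \eqref{G0formula1}, which already writes $G_{0}(E^{k},\gamma_{\alpha},\gamma_{\beta},\gamma_{\sigma})$ as a sum of four types of genus-$0$ correlators: one genuine five-point function $\langle\langle E^{k}\gamma_{\alpha}\gamma_{\beta}\gamma_{\sigma}\Delta\rangle\rangle_{0}$, and three families (summed over $S_{3}$) of products of two four-point functions. Since the target formula involves only three-point functions together with the single directional-derivative term $\{\gamma_{\alpha}\circ\gamma_{\beta}\circ\gamma_{\sigma}\}\Phi_{k}$, the whole computation amounts to systematically lowering the number of marked points in each of these four blocks and then recognizing the $\Phi_{k}$-combination at the end.

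First I would treat the five-point block $\langle\langle E^{k}\gamma_{\alpha}\gamma_{\beta}\gamma_{\sigma}\Delta\rangle\rangle_{0}$ by applying Lemma \ref{WDVV4} with $\upsilon=E$, treating $\Delta=\sum_{\alpha}\gamma^{\alpha}\circ\gamma_{\alpha}$ as a single insertion; this peels off the power of $E$ and produces products of four-point functions together with lower five-point functions, after which every four-point function still carrying a power of $E$ is reduced to three-point functions by Lemma \ref{4point-3point} and its consequence \eqref{4point-3point1}. For the three product-type blocks, the factor carrying $E^{k}$ is a four-point function $\langle\langle E^{k}\cdots\rangle\rangle_{0}$, which I would again reduce through \eqref{4point-3point1} into three-point functions with insertions of the operator $G$, while the companion factor free of $E$ (such as $\langle\langle\gamma_{\mu}\gamma_{\varsigma_{g(3)}}\gamma_{\rho}\gamma^{\rho}\rangle\rangle_{0}$ or $\langle\langle\gamma_{\mu}\gamma_{\alpha}\gamma_{\beta}\gamma_{\sigma}\rangle\rangle_{0}$) is expanded using the first derivative of WDVV \eqref{WDVV2} and the quantum-product structure. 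The bulkiest of these sub-reductions are exactly what the intermediate results collected in Appendix A are meant to record.

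Once everything is in three-point form, I would collect terms by symmetrizing over $S_{3}$ and repeatedly invoke the identities \eqref{simplication1}--\eqref{simplication4} of Lemma \ref{simplication} to merge the $G$-operators and the $\Delta$-contractions. The decisive step is to isolate, among the accumulated three-point functions, precisely the combination that Theorem \ref{explicit-dphi} assigns to the directional derivative of $\Phi_{k}$ along the vector field $\gamma_{\alpha}\circ\gamma_{\beta}\circ\gamma_{\sigma}$; running that theorem in reverse reassembles this sub-collection into the single term $24\{\gamma_{\alpha}\circ\gamma_{\beta}\circ\gamma_{\sigma}\}\Phi_{k}$, and the remaining three-point functions are then matched one by one against the stated closed form.

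The main obstacle I anticipate is organizational rather than conceptual: the number of terms generated by the $S_{3}$-symmetrization together with the double summations coming from Lemma \ref{WDVV4} and \eqref{4point-3point1} is large, and the cancellations needed to pin down the precise coefficients in the statement (the weights $k-3i$, $\frac{1}{2}i$, $\frac{1}{2}(k-i)$, and $5k(k-1)$) are delicate. Keeping the $G$-operator slots and the raised/lowered index conventions consistent throughout---so that identities \eqref{simplication2} and \eqref{simplication4} apply without sign or factor errors---is where the real care is required, which is precisely why these intermediate computations are deferred to the appendix.
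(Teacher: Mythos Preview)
Your plan is essentially the same as the paper's: start from \eqref{G0formula1}, reduce the five-point block via Lemma~\ref{WDVV4} and the product blocks via \eqref{4point-3point1} and \eqref{WDVV2} (these are exactly the Appendix~A Lemmas \ref{G0part1}--\ref{G0part4}), then use Lemma~\ref{simplication} and Theorem~\ref{explicit-dphi} to extract the $24\{\gamma_{\alpha}\circ\gamma_{\beta}\circ\gamma_{\sigma}\}\Phi_{k}$ term. The only organizational nuance you gloss over is that the paper proceeds in two explicit passes: after the first pass (Lemmas \ref{G0part11}--\ref{G0part15}, \ref{G0part21}, \ref{G0part41}) one lands on a long intermediate expression still containing several four-point functions, and a second round of reductions (Lemmas \ref{4pointreduction1}--\ref{4pointreduction4}) is needed before the final coefficients emerge.
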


\begin{proof}
Firstly, by substituting Lemmas \ref{G0part11}, \ref{G0part12}, \ref{G0part13}, \ref{G0part14}, \ref{G0part15} into Lemma \ref{G0part1},
and substituting Lemma \ref{G0part21} into Lemma \ref{G0part2}, and combining with Lemmas \ref{G0part3} and \ref{G0part4}, we obtain an
expression for the function $G_{0}$ by equation \eqref{G0formula1}. Then subsituting Lemma \ref{G0part41} to the resulting expression, and using Theorem \ref{explicit-dphi} and Lemma \ref{simplication} for simplification, a tedious computation shows that
\ben
&&G_{0}(E^k,\gamma_{\alpha},\gamma_{\beta},\gamma_{\sigma})\\
&=&24\{\gamma_{\alpha}\circ\gamma_{\beta}\circ\gamma_{\sigma}\}\Phi_{k}-(b_{1}+k)\langle\langle\{\Delta\circ E^{k-1}\}\gamma_{\alpha}\gamma_{\beta}\gamma_{\sigma}\rangle\rangle_{0}\\
&&-2\sum_{i=1}^{k}\langle\langle\{G(E^{k-i})\circ E^{i-1}\}\gamma_{\mu}\gamma^{\mu}\{\gamma_{\alpha}\circ\gamma_{\beta}\circ\gamma_{\sigma}\}\rangle\rangle_{0}\\
&&-\frac{1}{2}\sum_{g\in S_{3}}\sum_{i=1}^k\langle\langle G(E^{k-i}\circ\gamma_{\varsigma_{g(1)}}\circ\gamma_{\varsigma_{g(2)}})\gamma_{\mu}\gamma^{\mu}\{E^{i-1}\circ\gamma_{\varsigma_{g(3)}}\}\rangle\rangle_{0}\\
&&+\sum_{g\in S_{3}}\sum_{i=1}^k\langle\langle\{G(E^{k-i}\circ\gamma_{\varsigma_{g(1)}})\circ\gamma_{\varsigma_{g(2)}}\}
\gamma_{\mu}\gamma^{\mu}\{E^{i-1}\circ\gamma_{\varsigma_{g(3)}}\}\rangle\rangle_{0}\\
&&-\frac{1}{6}\sum_{g\in S_{3}}\sum_{i=1}^k\langle\langle\{G(E^{k-i})\circ E^{i-1}\circ\gamma^{\mu}\}\{\gamma_{\varsigma_{g(1)}}\circ\gamma_{\varsigma_{g(2)}}\}\gamma_{\varsigma_{g(3)}}\gamma_{\mu}\rangle\rangle_{0}\\
&&+\frac{1}{2}\sum_{g\in S_{3}}\sum_{i=1}^k\langle\langle G(E^{k-i}) \{\gamma_{\varsigma_{g(1)}}\circ\gamma_{\varsigma_{g(2)}}\}\{\gamma_{\varsigma_{g(3)}}\circ\gamma^{\mu}\}\{E^{i-1}\circ\gamma_{\mu}\}\rangle\rangle_{0}\\
&&+\frac{1}{3}\sum_{g\in S_{3}}\sum_{i=1}^k\langle\langle\{G(E^{k-i})\circ E^{i-1}\circ\gamma_{\varsigma_{g(1)}}\circ\gamma^{\mu}\}\gamma_{\varsigma_{g(2)}}\gamma_{\varsigma_{g(3)}}\gamma_{\mu}\rangle\rangle_{0}\\
&&-\frac{1}{6}\sum_{g\in S_{3}}\sum_{i=1}^{k-1}\langle\langle\{\Delta\circ E^{i-1}\}\gamma_{\varsigma_{g(1)}}\gamma_{\varsigma_{g(2)}}\{G(E^{k-i})\circ\gamma_{\varsigma_{g(3)}}\}\rangle\rangle_{0}\\
&&-\frac{1}{6}\sum_{g\in S_{3}}\sum_{i=1}^{k-1}\langle\langle\{\Delta\circ E^{i-1}\}\{\gamma_{\varsigma_{g(1)}}\circ\gamma_{\varsigma_{g(2)}}\}\gamma_{\varsigma_{g(3)}}G(E^{k-i})\rangle\rangle_{0}\\
&&+\frac{1}{2}\sum_{g\in S_{3}}\sum_{i=1}^{k}\langle\langle\{\Delta\circ E^{i-1}\}\gamma_{\varsigma_{g(1)}}\gamma_{\varsigma_{g(2)}}G(E^{k-i}\circ\gamma_{\varsigma_{g(3)}})\rangle\rangle_{0}\\
&&+k\sum_{g\in S_{3}}\langle\langle G(E^{k-1}\circ\gamma_{\varsigma_{g(1)}}\circ\gamma^{\mu})\gamma_{\varsigma_{g(2)}}\gamma_{\varsigma_{g(3)}}\gamma_{\mu}\rangle\rangle_{0}\\
&&-\sum_{g\in S_{3}}\sum_{i=1}^{k}\langle\langle \{G(E^{k-i}\circ\gamma_{\varsigma_{g(1)}})\circ E^{i-1}\circ\gamma^{\mu}\}\gamma_{\varsigma_{g(2)}}\gamma_{\varsigma_{g(3)}}\gamma_{\mu}\rangle\rangle_{0}\\
&&-k\sum_{g\in S_{3}}\langle\langle \{G(E^{k-1}\circ\gamma^{\mu})\circ\gamma_{\varsigma_{g(1)}}\}\gamma_{\varsigma_{g(2)}}\gamma_{\varsigma_{g(3)}}\gamma_{\mu}\rangle\rangle_{0}\\
&&-6k\sum_{i=1}^{k-1}\langle\langle\{G(E^{k-i-1}\circ\gamma^{\mu})\circ G(\gamma_{\mu})\}E^{i-1}\{\gamma_{\alpha}\circ\gamma_{\beta}\circ\gamma_{\sigma}\}\rangle\rangle_{0}\\
&&+3\sum_{g\in S_{3}}\sum_{i=1}^{k-1}(k-2i)\langle\langle\{G(E^{k-i-1}\circ\gamma_{\varsigma_{g(1)}}\circ\gamma^{\mu})\circ G(\gamma_{\mu})\}E^{i-1}\{\gamma_{\varsigma_{g(2)}}\circ\gamma_{\varsigma_{g(3)}}\}\rangle\rangle_{0}\\
&&-2\sum_{i=1}^k\sum_{j=1}^{i-1}\langle\langle\{G(\Delta\circ E^{k-i})\circ G(E^{j-1})\}E^{i-j-1}\{\gamma_{\alpha}\circ\gamma_{\beta}\circ\gamma_{\sigma}\}\rangle\rangle_{0}\\
&&+2\sum_{i=1}^k\sum_{j=1}^{i-1}\langle\langle G(E^{k-i})E^{i-j-1}G(\Delta\circ E^{j-1}\circ \gamma_{\alpha}\circ\gamma_{\beta}\circ\gamma_{\sigma})\rangle\rangle_{0}\\
&&+\frac{1}{2}\sum_{g\in S_{3}}\sum_{i=1}^k\sum_{j=1}^{i-1}\langle\langle\{G(\Delta\circ E^{k-i})\circ E^{j-1}\}G(E^{i-j-1}\circ\gamma_{\varsigma_{g(1)}})\{\gamma_{\varsigma_{g(2)}}\circ\gamma_{\varsigma_{g(3)}}\}\rangle\rangle_{0}\\
&&-\frac{1}{2}\sum_{g\in S_{3}}\sum_{i=1}^{k-1}i\langle\langle\{\Delta\circ E^{i-1}\}G(E^{k-i-1}\circ\gamma_{\varsigma_{g(1)}}\circ\gamma_{\varsigma_{g(2)}})\gamma_{\varsigma_{g(3)}}\rangle\rangle_{0}\\
&&-\frac{1}{2}\sum_{g\in S_{3}}\sum_{i=1}^k\sum_{j=1}^{i-1}\langle\langle\{\Delta\circ E^{k-i}\}G(E^{j-1}\circ\gamma_{\varsigma_{g(1)}}\circ\gamma_{\varsigma_{g(2)}})G(E^{i-j-1}\circ\gamma_{\varsigma_{g(3)}})\rangle\rangle_{0}\\
&&+\sum_{g\in S_{3}}\sum_{i=1}^k\sum_{j=1}^{i-1}\langle\langle\{\Delta\circ E^{k-i}\circ\gamma_{\varsigma_{g(1)}}\}G(E^{j-1}\circ\gamma_{\varsigma_{g(2)}})G(E^{i-j-1}\circ\gamma_{\varsigma_{g(3)}})\rangle\rangle_{0}\\
&&-k\sum_{i=0}^{k-2}\langle\langle G(\Delta\circ E^{k-i-2})E^i\{\gamma_{\alpha}\circ\gamma_{\beta}\circ\gamma_{\sigma}\}\rangle\rangle_{0}\\
&&+2k(k-1)\langle\langle\Delta E^{k-2}\{\gamma_{\alpha}\circ\gamma_{\beta}\circ\gamma_{\sigma}\}\rangle\rangle_{0}.
\een
The proof follows from some tedious manipulation again, that is, substituting Lemmas \ref{4pointreduction1}, \ref{4pointreduction2},
\ref{4pointreduction4} into the above expression, and using Lemma \ref{4pointreduction3} to cancel the redundant terms in the resulting expression,
and then using Lemma \ref{simplication}  for possible simplification.
\end{proof}
The main result of this subsection is the following
\begin{thm}\label{Getzler-1E}For any $\alpha,\beta,\sigma$,
\ben
&&24\{\gamma_{\alpha}\circ\gamma_{\beta}\circ\gamma_{\sigma}\}(\langle\langle E^k\rangle\rangle_{1}-\Phi_{k})\\
&=&12\sum_{g\in S_{3}}\langle\langle \{E^k\circ\gamma_{\varsigma_{g(1)}}\}\{\gamma_{\varsigma_{g(2)}}\circ\gamma_{\varsigma_{g(3)}}\}\rangle\rangle_{1}
-12\sum_{g\in S_{3}}\langle\langle \{E^k\circ\gamma_{\varsigma_{g(1)}}\circ\gamma_{\varsigma_{g(2)}}\}\gamma_{\varsigma_{g(3)}}\}\rangle\rangle_{1}\\
&&+12\sum_{g\in S_{3}}\sum_{i=1}^k\langle\langle G(E^{i-1}\circ\gamma_{\varsigma_{g(1)}})\circ E^{k-i}\circ\gamma_{\varsigma_{g(2)}}\circ\gamma_{\varsigma_{g(3)}}\rangle\rangle_{1}\\
&&-12\sum_{g\in S_{3}}\sum_{i=1}^k\langle\langle G(E^{i-1}\circ\gamma_{\varsigma_{g(1)}}\circ\gamma_{\varsigma_{g(2)}})\circ E^{k-i}\circ\gamma_{\varsigma_{g(3)}}\rangle\rangle_{1}\\
&&+24k\langle\langle E^{k-1}\circ\gamma_{\alpha}\circ\gamma_{\beta}\circ\gamma_{\sigma}\rangle\rangle_{1}-4k\langle\langle\{\Delta\circ E^{k-1}\}\gamma_{\alpha}\gamma_{\beta}\gamma_{\sigma}\rangle\rangle_{0}\\
&&+\frac{1}{2}\sum_{g\in S_{3}}\sum_{i=1}^k\langle\langle G(E^{k-i}\circ\gamma_{\varsigma_{g(1)}})\gamma_{\mu}\gamma^{\mu}\{E^{i-1}\circ\gamma_{\varsigma_{g(2)}}\circ\gamma_{\varsigma_{g(3)}}\}\rangle\rangle_{0}\\
&&-\frac{1}{2}\sum_{g\in S_{3}}\sum_{i=1}^k\langle\langle G(E^{k-i}\circ\gamma_{\varsigma_{g(1)}}\circ\gamma_{\varsigma_{g(2)}})\gamma_{\mu}\gamma^{\mu}\{E^{i-1}\circ\gamma_{\varsigma_{g(3)}}\}\rangle\rangle_{0}\\
&&+2k\sum_{g\in S_{3}}\langle\langle G(E^{k-1}\circ\gamma_{\varsigma_{g(1)}}\circ\gamma^{\mu})\gamma_{\varsigma_{g(2)}}\gamma_{\varsigma_{g(3)}}\gamma_{\mu}\rangle\rangle_{0}\\
&&-\frac{1}{2}k\sum_{g\in S_{3}}\langle\langle \{E^{k-1}\circ\gamma_{\varsigma_{g(1)}}\}\gamma_{\mu}\gamma^{\mu}\{\gamma_{\varsigma_{g(2)}}\circ\gamma_{\varsigma_{g(3)}}\}\rangle\rangle_{0}\\
&&+2\sum_{g\in S_{3}}\sum_{i=1}^{k-1}(k-3i)\langle\langle\{G(E^{k-i-1}\circ\gamma_{\varsigma_{g(1)}}\circ\gamma^{\mu})\circ G(\gamma_{\mu})\}E^{i-1}\{\gamma_{\varsigma_{g(2)}}\circ\gamma_{\varsigma_{g(3)}}\}\rangle\rangle_{0}\\
&&-\frac{1}{2}\sum_{g\in S_{3}}\sum_{i=1}^{k-1}i\langle\langle\{\Delta\circ E^{i-1}\}G(E^{k-i-1}\circ\gamma_{\varsigma_{g(1)}}\circ\gamma_{\varsigma_{g(2)}})\gamma_{\varsigma_{g(3)}}\rangle\rangle_{0}\\
&&-\frac{1}{2}\sum_{g\in S_{3}}\sum_{i=1}^{k-1}(k-i)\langle\langle\{\Delta\circ E^{i-1}\}G(E^{k-i-1}\circ\gamma_{\varsigma_{g(1)}})\{\gamma_{\varsigma_{g(2)}}\circ\gamma_{\varsigma_{g(3)}}\}\rangle\rangle_{0}\\
&&-4k\sum_{i=1}^{k-1}\langle\langle G(\Delta\circ E^{k-i-1})E^{i-1}\{\gamma_{\alpha}\circ\gamma_{\beta}\circ\gamma_{\sigma}\}\rangle\rangle_{0}\\
&&+5k(k-1)\langle\langle\Delta E^{k-2}\{\gamma_{\alpha}\circ\gamma_{\beta}\circ\gamma_{\sigma}\}\rangle\rangle_{0},
\een
where $k\geq0$ and $\{\varsigma_{1},\varsigma_{2},\varsigma_{3}\}=\{\alpha,\beta,\sigma\}$.
\end{thm}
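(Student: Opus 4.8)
The plan is to read the theorem off from Getzler's genus one relation \eqref{Getzlertrr}, specialized to $\textbf{G}(E^{k},\gamma_{\alpha},\gamma_{\beta},\gamma_{\sigma})=G_{0}(E^{k},\gamma_{\alpha},\gamma_{\beta},\gamma_{\sigma})+G_{1}(E^{k},\gamma_{\alpha},\gamma_{\beta},\gamma_{\sigma})=0$, using the two explicit expansions already in hand: Lemma \ref{G1-1E} for $G_{1}$ and Lemma \ref{G0-1E} for $G_{0}$. The whole content of the theorem is an algebraic rearrangement of these two lemmas, so no new geometric input is required beyond what produced them.

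The key structural observation I would make first is that the two quantities appearing on the left-hand side enter each lemma in exactly one distinguished place. In Lemma \ref{G1-1E} the only bare occurrence of $\langle\langle E^{k}\rangle\rangle_{1}$ is the leading term $-24\{\gamma_{\alpha}\circ\gamma_{\beta}\circ\gamma_{\sigma}\}\langle\langle E^{k}\rangle\rangle_{1}$; every other genus-one correlator there carries extra insertions (for instance $\langle\langle E^{k-1}\circ\gamma_{\alpha}\circ\gamma_{\beta}\circ\gamma_{\sigma}\rangle\rangle_{1}$), so it is a genuinely different object. Likewise, in Lemma \ref{G0-1E} the function $\Phi_{k}$ occurs only in the leading term $24\{\gamma_{\alpha}\circ\gamma_{\beta}\circ\gamma_{\sigma}\}\Phi_{k}$, all remaining summands being genus-zero correlators and quantum products. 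Writing $G_{1}=-24\{\gamma_{\alpha}\circ\gamma_{\beta}\circ\gamma_{\sigma}\}\langle\langle E^{k}\rangle\rangle_{1}+R_{1}$ and $G_{0}=24\{\gamma_{\alpha}\circ\gamma_{\beta}\circ\gamma_{\sigma}\}\Phi_{k}+R_{0}$, the relation $G_{0}+G_{1}=0$ becomes
\[
24\{\gamma_{\alpha}\circ\gamma_{\beta}\circ\gamma_{\sigma}\}(\langle\langle E^{k}\rangle\rangle_{1}-\Phi_{k})=R_{0}+R_{1},
\]
and I would check that the asserted right-hand side of the theorem is precisely the union of the residual summands $R_{1}$ of Lemma \ref{G1-1E} and $R_{0}$ of Lemma \ref{G0-1E}, term by term. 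This is a bookkeeping step: one simply transcribes the non-distinguished terms of each lemma and records them together, matching summation ranges and signs.

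The main obstacle is not the combination itself, which is purely formal, but ensuring that both lemmas have already been brought into \emph{fully reduced} and mutually compatible shape, so that adding them produces no residual terms requiring further collapse via Lemma \ref{simplication} or the four-point reductions. In other words, the real labor has been front-loaded into the proofs of Lemmas \ref{G0-1E} and \ref{G1-1E} (and into Theorem \ref{explicit-dphi}, which feeds the simplification of $G_{0}$); once those normal forms are established, the present theorem is immediate. I would therefore devote my checking effort to verifying that every sign and every index range survives the cancellation of the two distinguished leading terms, since a sign slip in either lemma would propagate directly into the final identity.
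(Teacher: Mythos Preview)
Your proposal is correct and matches the paper's own proof exactly: the paper states that the theorem ``follows easily from Lemma \ref{G1-1E}, Lemma \ref{G0-1E} and equation \eqref{Getzlertrr}'', which is precisely your decomposition $G_{0}+G_{1}=0$ with the two distinguished leading terms isolated to form the left-hand side and the residual summands $R_{0}+R_{1}$ transcribed verbatim as the right-hand side. Your remark that all the genuine work has been front-loaded into the proofs of the two lemmas (and into Theorem \ref{explicit-dphi}) is also accurate.
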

\begin{proof}
It follows easily from Lemma \ref{G1-1E}, Lemma \ref{G0-1E}  and equation \eqref{Getzlertrr}.
\end{proof}

\begin{rem}\label{addedrem}
In Lemma \ref{G1-1E}, Lemma \ref{G0-1E}, and Theorem \ref{Getzler-1E}, one can replace $\gamma_{\alpha}, \gamma_{\beta}$, and $\gamma_{\sigma}$
 by $\upsilon_{1}, \upsilon_{2}$, and $\upsilon_{3}$ respectively for any vector fields $\upsilon_{1}, \upsilon_{2}$, and $\upsilon_{3}$ on the small phase space.
The same holds for the following Lemmas \ref{G1-2E}, \ref{G0-2E}, \ref{G1-3E}, \ref{G0-3E}, and Theorems \ref{Getzler-2E}, \ref{Getzler-3E}.
\end{rem}

\subsection{Getzler equation $\textbf{G}(E^{k_{1}},E^{k_{2}},\gamma_{\alpha},\gamma_{\beta})=0$}
In this subsection, the following explicit formulas for functions $G_{1}$, $G_{0}$ and $\textbf{G}$ are derived from the above subsection.
\begin{lem}\label{G1-2E}For $k_{1}, k_{2}\geq0$, and any $\alpha,\beta$,
\ben
&&G_{1}(E^{k_{1}},E^{k_{2}},\gamma_{\alpha},\gamma_{\beta})\\
&=&24\{\gamma_{\alpha}\circ\gamma_{\beta}\}\langle\langle E^{\widehat{K}}\rangle\rangle_{1}-24\sum_{m=1}^{2}\{E^{\widehat{K}-k_{m}}\circ\gamma_{\alpha}\circ\gamma_{\beta}\}\langle\langle E^{k_{m}}\rangle\rangle_{1}\\
&&+24\sum_{h\in S_{2}}\langle\langle\{E^{k_{h(1)}}\circ\gamma_{\alpha}\}\{E^{k_{h(2)}}\circ\gamma_{\beta}\}\rangle\rangle_{1}
-24\sum_{g\in S_{2}}\langle\langle\{E^{\widehat{K}}\circ\gamma_{\varsigma_{g(1)}}\}\gamma_{\varsigma_{g(2)}}\rangle\rangle_{1}\\
&&+24\sum_{g\in S_{2}}\sum_{m=1}^2\sum_{i=1}^{k_{m}}\langle\langle G(E^{i-1}\circ\gamma_{\varsigma_{g(1)}})\circ E^{\widehat{K}-i}\circ\gamma_{\varsigma_{g(2)}}\rangle\rangle_{1}\\
&&-24\sum_{g\in S_{2}}\sum_{i=1}^{\widehat{K}}\langle\langle G(E^{i-1}\circ\gamma_{\varsigma_{g(1)}})\circ E^{\widehat{K}-i}\circ\gamma_{\varsigma_{g(2)}}\rangle\rangle_{1},
\een
where $\widehat{K}=k_{1}+k_{2}$ and $\{\varsigma_{1},\varsigma_{2}\}=\{\alpha,\beta\}$.
\end{lem}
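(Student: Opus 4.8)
The plan is to read off $G_{1}(E^{k_{1}},E^{k_{2}},\gamma_{\alpha},\gamma_{\beta})$ from the already-established formula of Lemma \ref{G1-1E} by a single substitution. Since $G_{1}$ is totally symmetric in its four arguments (its definition is a sum over $S_{4}$), I first write $G_{1}(E^{k_{1}},E^{k_{2}},\gamma_{\alpha},\gamma_{\beta})=G_{1}(E^{k_{1}},\gamma_{\alpha},\gamma_{\beta},E^{k_{2}})$, moving the second Euler power into the last of the three ``$\gamma$'' slots. By Remark \ref{addedrem} the formula of Lemma \ref{G1-1E} is valid when $\gamma_{\sigma}$ is replaced by an arbitrary vector field, so I may take $k=k_{1}$ and $\gamma_{\sigma}=E^{k_{2}}$ there. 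This is precisely the content of Remark \ref{remG1-1E}: the expression in Lemma \ref{G1-1E} was produced using only the generally valid identities \eqref{derivative1} and \eqref{derivative3}, and therefore survives the substitution, whereas the alternative form \eqref{equi1} does not.

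Second, I simplify the substituted expression. Wherever two Euler powers meet I use $E^{a}\circ E^{b}=E^{a+b}$ (so $E^{k_{1}}\circ E^{k_{2}}=E^{\widehat{K}}$ with $\widehat{K}=k_{1}+k_{2}$), which collapses the quantum products. The symmetrization $\sum_{g\in S_{3}}$ now runs over the three arguments $\{\gamma_{\alpha},\gamma_{\beta},E^{k_{2}}\}$, and I split it according to the position occupied by $E^{k_{2}}$; the two placements of $E^{k_{2}}$ inside a quantum product collapse, while the placement of $E^{k_{2}}$ as a free entry of a correlation function survives separately. In this way the $S_{3}$-symmetrization breaks into the $S_{2}$-symmetrizations over $\{\alpha,\beta\}$ appearing in the target, and the range $\sum_{i=1}^{k_{1}}$ of Lemma \ref{G1-1E}, together with the new sums arising when $E^{k_{2}}$ sits inside a factor $G(E^{i-1}\circ\cdots)$, recombines into the asserted $\sum_{m=1}^{2}\sum_{i=1}^{k_{m}}$ and $\sum_{i=1}^{\widehat{K}}$. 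This is also where the symmetry in $k_{1}\leftrightarrow k_{2}$ of the answer emerges, for instance in the term $24\sum_{h\in S_{2}}\langle\langle\{E^{k_{h(1)}}\circ\gamma_{\alpha}\}\{E^{k_{h(2)}}\circ\gamma_{\beta}\}\rangle\rangle_{1}$.

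Third, I pass between the directional-derivative form and the explicit correlation-function form to match the target. For example, the leading term of the substituted lemma, $-24\{\gamma_{\alpha}\circ\gamma_{\beta}\circ E^{k_{2}}\}\langle\langle E^{k_{1}}\rangle\rangle_{1}$, is already the $m=1$ summand of $-24\sum_{m=1}^{2}\{E^{\widehat{K}-k_{m}}\circ\gamma_{\alpha}\circ\gamma_{\beta}\}\langle\langle E^{k_{m}}\rangle\rangle_{1}$; the $m=2$ summand comes from the term $-12\sum_{g\in S_{3}}\langle\langle\{E^{k_{1}}\circ\gamma_{\varsigma_{g(1)}}\circ\gamma_{\varsigma_{g(2)}}\}\gamma_{\varsigma_{g(3)}}\rangle\rangle_{1}$ with $E^{k_{2}}$ in the free slot, rewritten as $\{E^{k_{1}}\circ\gamma_{\alpha}\circ\gamma_{\beta}\}\langle\langle E^{k_{2}}\rangle\rangle_{1}$ via \eqref{derivative1}; and the placement of $E^{k_{2}}$ in the first slot of $12\sum_{g\in S_{3}}\langle\langle\{E^{k_{1}}\circ\gamma_{\varsigma_{g(1)}}\}\{\gamma_{\varsigma_{g(2)}}\circ\gamma_{\varsigma_{g(3)}}\}\rangle\rangle_{1}$ produces $24\langle\langle E^{\widehat{K}}\{\gamma_{\alpha}\circ\gamma_{\beta}\}\rangle\rangle_{1}$, which I convert into $24\{\gamma_{\alpha}\circ\gamma_{\beta}\}\langle\langle E^{\widehat{K}}\rangle\rangle_{1}$ by \eqref{derivative1}. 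The main obstacle is purely organizational: each use of \eqref{derivative1} throws off a covariant-derivative correction $\nabla_{(\cdots)}E^{j}$, which I expand by \eqref{derivative3}, and one must check that all these corrections cancel among themselves and against the $G$-operator sums, leaving exactly the stated combination. There is no conceptual difficulty beyond this bookkeeping, with $E^{a}\circ E^{b}=E^{a+b}$ (equivalently the definition of the quantum powers underlying Corollary \ref{Virasorotype1}) guaranteeing that all intermediate Euler powers assemble into powers of $E^{\widehat{K}}$.
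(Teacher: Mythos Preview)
Your proposal is correct and follows essentially the same approach as the paper: substitute $\gamma_{\sigma}=E^{k_{2}}$ into Lemma~\ref{G1-1E} (which is legitimate by Remark~\ref{addedrem} and, as you note, is precisely why the form in Lemma~\ref{G1-1E} was preferred over \eqref{equi1}), then reorganize with \eqref{derivative1} and \eqref{derivative3}. The paper's one-line proof ``follows easily from Lemma~\ref{G1-1E} and equations~\eqref{derivative1} and~\eqref{derivative3}'' is exactly the procedure you have spelled out, and your outline of how the $S_{3}$-sum splits according to the position of $E^{k_{2}}$ and how the covariant-derivative corrections from \eqref{derivative3} feed the $G$-operator sums is an accurate account of the bookkeeping.
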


\begin{proof}
The proof follows easily from Lemma \ref{G1-1E} and equations \eqref{derivative1} and  \eqref{derivative3}.
\end{proof}

\begin{rem}\label{remG1-2E}
 As in Remark \ref{remG1-1E}, we also have the following equivalent form
\ben
&&G_{1}(E^{k_{1}},E^{k_{2}},\gamma_{\alpha},\gamma_{\beta})\\
&=&24\{\gamma_{\alpha}\circ\gamma_{\beta}\}\langle\langle E^{\widehat{K}}\rangle\rangle_{1}-24\sum_{m=1}^{2}\{E^{\widehat{K}-k_{m}}\circ\gamma_{\alpha}\circ\gamma_{\beta}\}\langle\langle E^{k_{m}}\rangle\rangle_{1}\\
&&+12\sum_{g\in S_{2}}\sum_{h\in S_{2}}\{E^{k_{h(1)}}\circ\gamma_{\varsigma_{g(1)}}\}\langle\langle E^{k_{h(2)}}\circ\gamma_{\varsigma_{g(2)}}\rangle\rangle_{1}
-24\sum_{g\in S_{2}}\gamma_{\varsigma_{g(1)}}\langle\langle E^{\widehat{K}}\circ\gamma_{\varsigma_{g(2)}}\rangle\rangle_{1}\\
&&+12\sum_{g\in S_{2}}\sum_{m=1}^2\sum_{i=1}^{k_{m}}\langle\langle G(E^{i-1}\circ\gamma_{\varsigma_{g(1)}})\circ E^{\widehat{K}-i}\circ\gamma_{\varsigma_{g(2)}}\rangle\rangle_{1}\\
&&-24\sum_{m=1}^2\sum_{i=1}^{k_{m}}\langle\langle G(E^{i-1}\circ\gamma_{\alpha}\circ\gamma_{\beta})\circ E^{\widehat{K}-i}\rangle\rangle_{1}
+24\widehat{K}\langle\langle E^{\widehat{K}-1}\circ\gamma_{\alpha}\circ\gamma_{\beta}\rangle\rangle_{1},
\een
by equations \eqref{derivative1} and \eqref{derivative3}.
\end{rem}

\begin{lem}\label{G0-2E}For $k_{1}, k_{2}\geq0$, and any $\alpha,\beta$,
\ben
&&G_{0}(E^{k_{1}},E^{k_{2}},\gamma_{\alpha},\gamma_{\beta})\\
&=&-24\{\gamma_{\alpha}\circ\gamma_{\beta}\}\Phi_{\widehat{K}}+24\sum_{m=1}^2\{E^{\widehat{K}-k_{m}}\circ\gamma_{\alpha}\circ\gamma_{\beta}\}\Phi_{k_{m}}\\
&&-\sum_{g\in S_{2}}\sum_{i=1}^{\widehat{K}}\langle\langle G(E^{i-1}\circ\gamma_{\varsigma_{g(1)}})\gamma^{\mu}\gamma_{\mu}\{E^{\widehat{K}-i}\circ\gamma_{\varsigma_{g(2)}}\}\rangle\rangle_{0}\\
&&+\sum_{g\in S_{2}}\sum_{m=1}^2\sum_{i=1}^{k_{m}}\langle\langle G(E^{i-1}\circ\gamma_{\varsigma_{g(1)}})\gamma^{\mu}\gamma_{\mu}\{E^{\widehat{K}-i}\circ\gamma_{\varsigma_{g(2)}}\}\rangle\rangle_{0}\\
&&-\sum_{g\in S_{2}}\sum_{i=1}^{k_{1}}\sum_{j=1}^{k_{2}}\langle\langle\{\Delta\circ E^{i+j-2}\}G(E^{\widehat{K}-i-j}\circ\gamma_{\varsigma_{g(1)}})\gamma_{\varsigma_{g(2)}}\rangle\rangle_{0}\\
&&+6\sum_{g\in S_{2}}\sum_{i=1}^{k_{1}}\sum_{j=1}^{k_{2}}\langle\langle \{G(E^{i+j-2}\circ\gamma_{\varsigma_{g(1)}}\circ\gamma^{\mu})\circ G(\gamma_{\mu})\}E^{\widehat{K}-i-j}\gamma_{\varsigma_{g(2)}}\rangle\rangle_{0}\\
&&-2k_{1}k_{2}\langle\langle\Delta E^{\widehat{K}-2}\{\gamma_{\alpha}\circ\gamma_{\beta}\}\rangle\rangle_{0},
\een
where $\widehat{K}=k_{1}+k_{2}$ and $\{\varsigma_{1},\varsigma_{2}\}=\{\alpha,\beta\}$.
\end{lem}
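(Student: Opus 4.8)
The plan is to obtain $G_{0}(E^{k_{1}},E^{k_{2}},\gamma_{\alpha},\gamma_{\beta})$ by specializing the one--Euler--power formula of Lemma~\ref{G0-1E}. Since $G_{0}$ is a sum over $S_{4}$ it is symmetric in its four arguments, so it suffices to compute $G_{0}(E^{k_{1}},\gamma_{\alpha},\gamma_{\beta},E^{k_{2}})$. By Remark~\ref{addedrem} the formula of Lemma~\ref{G0-1E} remains valid after replacing the third slot $\gamma_{\sigma}$ by an arbitrary vector field, so I would set $k=k_{1}$ and $\gamma_{\sigma}=E^{k_{2}}$ there. This substitution is legitimate precisely because Lemma~\ref{G0-1E} is one of the forms covered by Remark~\ref{addedrem}: every non--$\Phi$ term is a genuine genus--$0$ correlator into whose argument $E^{k_{2}}$ is simply inserted, while the single term $24\{\gamma_{\alpha}\circ\gamma_{\beta}\circ\gamma_{\sigma}\}\Phi_{k}$ involves only the \emph{quantum--product} vector field acting on $\Phi_{k}$, which is multilinear in its factors. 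By contrast, the alternative expression \eqref{equi1} must be avoided, as warned in Remark~\ref{remG1-1E}, since its bare multiplier $\gamma_{\varsigma_{g(1)}}$ would be silently reinterpreted as the derivation $E^{k_{2}}$.

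Next I would reduce the resulting expression. In every $S_{3}$--sum over $\{\varsigma_{1},\varsigma_{2},\varsigma_{3}\}=\{\gamma_{\alpha},\gamma_{\beta},E^{k_{2}}\}$ I split off the three cases according to the position of $E^{k_{2}}$; whenever $E^{k_{2}}$ meets a power of $E$ under $\circ$, associativity merges them into a single higher power of the form $E^{\widehat{K}-i}$ with $\widehat{K}=k_{1}+k_{2}$. After re--indexing, the permutations collapse to the $S_{2}$--sums over $\{\gamma_{\alpha},\gamma_{\beta}\}$ of the statement, and the characteristic range combinations (such as $-\sum_{i=1}^{\widehat{K}}+\sum_{i=1}^{k_{1}}+\sum_{i=1}^{k_{2}}$ of one summand, visible in the first two correlator lines of the claim) appear.

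The delicate bookkeeping is the $\Phi$--part. The leading term becomes $24\{E^{k_{2}}\circ\gamma_{\alpha}\circ\gamma_{\beta}\}\Phi_{k_{1}}$, which is one of the two terms of $24\sum_{m}\{E^{\widehat{K}-k_{m}}\circ\gamma_{\alpha}\circ\gamma_{\beta}\}\Phi_{k_{m}}$. To recover the remaining $\Phi$--terms $-24\{\gamma_{\alpha}\circ\gamma_{\beta}\}\Phi_{\widehat{K}}+24\{E^{k_{1}}\circ\gamma_{\alpha}\circ\gamma_{\beta}\}\Phi_{k_{2}}$ I would read Theorem~\ref{explicit-dphi} backwards: it is linear in its distinguished slot, hence recognises suitable packets of the substituted genus--$0$ correlators as $24\,v\,\Phi_{m}$ for $v=\{\gamma_{\alpha}\circ\gamma_{\beta}\}$ with $m=\widehat{K}$ and for $v=\{E^{k_{1}}\circ\gamma_{\alpha}\circ\gamma_{\beta}\}$ with $m=k_{2}$. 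When the merged powers $E^{\widehat{K}-i}$ and the quantum--product arguments of the form $\{E^{m}\circ\gamma\}$ are re--expanded to the normal form of the statement via the WDVV reductions of Lemmas~\ref{WDVV3} and~\ref{4point-3point}, a single power is split into a convolution indexed by a fresh variable $j$, and this is what produces the double sums $\sum_{i=1}^{k_{1}}\sum_{j=1}^{k_{2}}$ (note that the summands depend only on $i+j$). Throughout, Lemma~\ref{simplication} and \eqref{derivative3} are used to merge the $G(\cdots)$ factors and to cancel spurious contributions.

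I expect the main obstacle to be exactly this reorganisation rather than any conceptual point: the intermediate expression singles out $k_{1}$, is long, and is not manifestly symmetric in $k_{1}\leftrightarrow k_{2}$, whereas the target is symmetric. The fragile steps are tracking the emergence of the $i+j$ double sum, checking that the $k_{1}$--dependent coefficients of Lemma~\ref{G0-1E} (such as the $(k-3i)$ weight) reorganise into the clean constants of the claim (the $+6$ in front of the $G\circ G$ term and the overall $-2k_{1}k_{2}$), and verifying that all leftover terms cancel. A convenient final check is to rerun the specialisation with $k_{1}$ and $k_{2}$ interchanged: the $S_{4}$--symmetry of $G_{0}$ forces the two outputs to agree, which pins down any sign or coefficient slip.
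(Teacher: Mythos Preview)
Your proposal is correct and follows essentially the same route as the paper: specialize Lemma~\ref{G0-1E} via Remark~\ref{addedrem} by setting $k=k_{1}$ and $\gamma_{\sigma}=E^{k_{2}}$, then use Lemma~\ref{4point-3point}, Lemma~\ref{simplication}, and Theorem~\ref{explicit-dphi} to reorganise and absorb the genus--$0$ packets into the $\Phi$--terms. The only refinement the paper adds is that after the substitution precisely three residual $4$--point functions survive that are not yet in the target shape---namely $\langle\langle G(E^{\widehat{K}-1}\circ\gamma^{\mu})\gamma_{\alpha}\gamma_{\beta}\gamma_{\mu}\rangle\rangle_{0}$ and the two correlators $\langle\langle\{E^{k_{1}-1}\circ\gamma_{\varsigma_{g(1)}}\}\gamma_{\mu}\gamma^{\mu}\{\gamma_{\varsigma_{g(2)}}\circ E^{k_{2}}\}\rangle\rangle_{0}$---and it disposes of each by a single application of \eqref{WDVV2} before invoking Lemma~\ref{4point-3point}; isolating these three terms up front will save you from the long asymmetric intermediate expression you anticipate.
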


\begin{proof}
By Lemma \ref{G0-1E}, we get the expression of $G_{0}(E^{k_{1}},E^{k_{2}},\gamma_{\alpha},\gamma_{\beta})$ which contains
three terms: $\langle\langle G(E^{\widehat{K}-1}\circ\gamma^{\mu})\gamma_{\alpha}\gamma_{\beta}\gamma_{\mu}\rangle\rangle_{0}$,
$\langle\langle\{E^{k_{1}-1}\circ\gamma_{\alpha}\}\gamma_{\mu}\gamma^{\mu}\{\gamma_{\beta}\circ E^{k_{2}}\}\rangle\rangle_{0}$, and
$\langle\langle\{E^{k_{1}-1}\circ\gamma_{\beta}\}\gamma_{\mu}\gamma^{\mu}\{\gamma_{\alpha}\circ E^{k_{2}}\}\rangle\rangle_{0}$. They are
computed as follows.
\ben
&&\langle\langle G(E^{\widehat{K}-1}\circ\gamma^{\mu})\gamma_{\alpha}\gamma_{\beta}\gamma_{\mu}\rangle\rangle_{0}\\
&=&\langle\langle G(\gamma_{\mu})\gamma_{\alpha}\gamma_{\beta}\{E^{\widehat{K}-1}\circ\gamma^{\mu}\}\rangle\rangle_{0}\\
&=&\langle\langle E^{\widehat{K}-1}\{\gamma_{\beta}\circ\gamma^{\mu}\}\gamma_{\alpha}G(\gamma_{\mu})\rangle\rangle_{0}
+\langle\langle \{E^{\widehat{K}-1}\circ\gamma_{\alpha}\}\gamma^{\mu}G(\gamma_{\mu})\gamma_{\beta}\rangle\rangle_{0}\\
&&-\langle\langle E^{\widehat{K}-1}\{\gamma_{\alpha}\circ\gamma_{\beta}\}\gamma^{\mu}G(\gamma_{\mu})\rangle\rangle_{0},
\een
\ben
&&\langle\langle\{E^{k_{1}-1}\circ\gamma_{\alpha}\}\gamma_{\mu}\gamma^{\mu}\{\gamma_{\beta}\circ E^{k_{2}}\}\rangle\rangle_{0}\\
&=&\langle\langle E^{k_{2}}\{\gamma_{\beta}\circ\gamma^{\mu}\}\gamma_{\mu}\{E^{k_{1}-1}\circ\gamma_{\alpha}\}\rangle\rangle_{0}
+\langle\langle\{E^{\widehat{K}-1}\circ\gamma_{\alpha}\}\gamma_{\mu}\gamma^{\mu}\gamma_{\beta}\rangle\rangle_{0}\\
&&-\langle\langle E^{k_{2}}\gamma_{\beta}\gamma_{\mu}\{E^{k_{1}-1}\circ\gamma_{\alpha}\circ\gamma^{\mu}\}\rangle\rangle_{0},
\een
\ben
&&\langle\langle\{E^{k_{1}-1}\circ\gamma_{\beta}\}\gamma_{\mu}\gamma^{\mu}\{\gamma_{\alpha}\circ E^{k_{2}}\}\rangle\rangle_{0}\\
&=&\langle\langle E^{k_{1}-1}\{\gamma_{\beta}\circ\gamma^{\mu}\}\gamma_{\mu}\{E^{k_{2}}\circ\gamma_{\alpha}\}\rangle\rangle_{0}
+\langle\langle\{E^{\widehat{K}-1}\circ\gamma_{\alpha}\}\gamma_{\mu}\gamma^{\mu}\gamma_{\beta}\rangle\rangle_{0}\\
&&-\langle\langle E^{k_{1}-1}\gamma_{\beta}\gamma_{\mu}\{E^{k_{2}}\circ\gamma_{\alpha}\circ\gamma^{\mu}\}\rangle\rangle_{0}.
\een
The proof is completed by Lemmas \ref{4point-3point}, \ref{simplication} and Theorem \ref{explicit-dphi}.
\end{proof}

\begin{thm}\label{Getzler-2E}For $k_{1}, k_{2}\geq0$, and any $\alpha,\beta$,
\ben
&&24\{\gamma_{\alpha}\circ\gamma_{\beta}\}(\langle\langle E^{\widehat{K}}\rangle\rangle_{1}-\Phi_{\widehat{K}})-24\sum_{m=1}^2\{E^{\widehat{K}-k_{m}}\circ\gamma_{\alpha}\circ\gamma_{\beta}\}(\langle\langle E^{k_{m}}\rangle\rangle_{1}-\Phi_{k_{m}})\\
&=&-24\sum_{h\in S_{2}}\langle\langle\{E^{k_{h(1)}}\circ\gamma_{\alpha}\}\{E^{k_{h(2)}}\circ\gamma_{\beta}\}\rangle\rangle_{1}
+24\sum_{g\in S_{2}}\langle\langle\{E^{\widehat{K}}\circ\gamma_{\varsigma_{g(1)}}\}\gamma_{\varsigma_{g(2)}}\rangle\rangle_{1}\\
&&-24\sum_{g\in S_{2}}\sum_{m=1}^2\sum_{i=1}^{k_{m}}\langle\langle G(E^{i-1}\circ\gamma_{\varsigma_{g(1)}})\circ E^{\widehat{K}-i}\circ\gamma_{\varsigma_{g(2)}}\rangle\rangle_{1}\\
&&+24\sum_{g\in S_{2}}\sum_{i=1}^{\widehat{K}}\langle\langle G(E^{i-1}\circ\gamma_{\varsigma_{g(1)}})\circ E^{\widehat{K}-i}\circ\gamma_{\varsigma_{g(2)}}\rangle\rangle_{1}\\
&&+\sum_{g\in S_{2}}\sum_{i=1}^{\widehat{K}}\langle\langle G(E^{i-1}\circ\gamma_{\varsigma_{g(1)}})\gamma^{\mu}\gamma_{\mu}\{E^{\widehat{K}-i}\circ\gamma_{\varsigma_{g(2)}}\}\rangle\rangle_{0}\\
&&-\sum_{g\in S_{2}}\sum_{m=1}^2\sum_{i=1}^{k_{m}}\langle\langle G(E^{i-1}\circ\gamma_{\varsigma_{g(1)}})\gamma^{\mu}\gamma_{\mu}\{E^{\widehat{K}-i}\circ\gamma_{\varsigma_{g(2)}}\}\rangle\rangle_{0}\\
&&+\sum_{g\in S_{2}}\sum_{i=1}^{k_{1}}\sum_{j=1}^{k_{2}}\langle\langle\{\Delta\circ E^{i+j-2}\}G(E^{\widehat{K}-i-j}\circ\gamma_{\varsigma_{g(1)}})\gamma_{\varsigma_{g(2)}}\rangle\rangle_{0}\\
&&-6\sum_{g\in S_{2}}\sum_{i=1}^{k_{1}}\sum_{j=1}^{k_{2}}\langle\langle \{G(E^{i+j-2}\circ\gamma_{\varsigma_{g(1)}}\circ\gamma^{\mu})\circ G(\gamma_{\mu})\}E^{\widehat{K}-i-j}\gamma_{\varsigma_{g(2)}}\rangle\rangle_{0}\\
&&+2k_{1}k_{2}\langle\langle\Delta E^{\widehat{K}-2}\{\gamma_{\alpha}\circ\gamma_{\beta}\}\rangle\rangle_{0},
\een
where $\widehat{K}=k_{1}+k_{2}$ and $\{\varsigma_{1},\varsigma_{2}\}=\{\alpha,\beta\}$.
\end{thm}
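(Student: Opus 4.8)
The plan is to obtain the theorem directly from Getzler's genus one relation \eqref{Getzlertrr}, which for the present insertions reads $\mathbf{G}(E^{k_{1}},E^{k_{2}},\gamma_{\alpha},\gamma_{\beta})=G_{0}(E^{k_{1}},E^{k_{2}},\gamma_{\alpha},\gamma_{\beta})+G_{1}(E^{k_{1}},E^{k_{2}},\gamma_{\alpha},\gamma_{\beta})=0$. Since the two summands have already been computed explicitly in Lemma \ref{G1-2E} and Lemma \ref{G0-2E}, the entire content of the theorem is to add these two expressions, impose that their sum vanishes, and transpose the appropriate terms. No further identity needs to be invoked: the heavy lifting has been front-loaded into the prerequisite lemmas (in particular Lemma \ref{G0-2E}, whose derivation itself rested on Lemma \ref{G0-1E} and Theorem \ref{explicit-dphi}).

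The key structural point is to track where the genus-one data $\langle\langle E^{\bullet}\rangle\rangle_{1}$ and the functions $\Phi_{\bullet}$ occur. In Lemma \ref{G1-2E}, the only terms in which $\langle\langle E^{\bullet}\rangle\rangle_{1}$ appears with no extra insertions are the two leading ones $24\{\gamma_{\alpha}\circ\gamma_{\beta}\}\langle\langle E^{\widehat{K}}\rangle\rangle_{1}$ and $-24\sum_{m=1}^{2}\{E^{\widehat{K}-k_{m}}\circ\gamma_{\alpha}\circ\gamma_{\beta}\}\langle\langle E^{k_{m}}\rangle\rangle_{1}$; symmetrically, in Lemma \ref{G0-2E} the only occurrences of $\Phi_{\bullet}$ are the two leading terms $-24\{\gamma_{\alpha}\circ\gamma_{\beta}\}\Phi_{\widehat{K}}$ and $+24\sum_{m=1}^{2}\{E^{\widehat{K}-k_{m}}\circ\gamma_{\alpha}\circ\gamma_{\beta}\}\Phi_{k_{m}}$. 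These pair up coefficient-by-coefficient, so that after forming $G_{0}+G_{1}$ and moving them to the left one is left with exactly $24\{\gamma_{\alpha}\circ\gamma_{\beta}\}(\langle\langle E^{\widehat{K}}\rangle\rangle_{1}-\Phi_{\widehat{K}})-24\sum_{m=1}^{2}\{E^{\widehat{K}-k_{m}}\circ\gamma_{\alpha}\circ\gamma_{\beta}\}(\langle\langle E^{k_{m}}\rangle\rangle_{1}-\Phi_{k_{m}})$, which is precisely the left-hand side of the statement. All the remaining summands of Lemma \ref{G1-2E} (genus-one correlators carrying extra $G$- or $\circ$-insertions) and of Lemma \ref{G0-2E} (the genus-zero correlators) do not combine with one another, so they are simply transposed to the right-hand side, each acquiring an overall sign change because $G_{0}+G_{1}=0$ forces them to equal the negative of the paired terms. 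Matching these against the listed right-hand side term-by-term finishes the argument.

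I do not expect a genuine obstacle at this stage, since the theorem is essentially a transcription of the relation $G_{1}=-G_{0}$ once the explicit formulas are in hand. The only point requiring care is the bookkeeping: one must confirm that the $\sum_{g\in S_{2}}$ and $\sum_{h\in S_{2}}$ symmetrizations in Lemmas \ref{G1-2E} and \ref{G0-2E} are written with consistent index conventions for $\{\varsigma_{1},\varsigma_{2}\}=\{\alpha,\beta\}$, so that the genus-one terms with insertions transpose cleanly and the genus-zero terms line up with the correct signs rather than producing spurious residual contributions. Verifying this alignment, together with the sign flips forced by $G_{0}+G_{1}=0$, is the whole of the remaining work, and it is routine.
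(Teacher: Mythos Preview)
Your proposal is correct and follows exactly the same approach as the paper: the paper's proof reads ``It follows easily from Lemma \ref{G1-2E}, Lemma \ref{G0-2E} and equation \eqref{Getzlertrr},'' which is precisely the addition $G_{0}+G_{1}=0$ followed by the transposition you describe. Your term-by-term tracking of the $\langle\langle E^{\bullet}\rangle\rangle_{1}$--$\Phi_{\bullet}$ pairing and the sign flips is accurate and nothing further is needed.
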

\begin{proof}
It follows easily from Lemma \ref{G1-2E}, Lemma \ref{G0-2E} and equation \eqref{Getzlertrr}.
\end{proof}
\subsection{Getzler equation $\textbf{G}(E^{k_{1}},E^{k_{2}},E^{k_{3}},\gamma_{\alpha})=0$} In this subsection, we will derive the following results
from subsection 3.3.
\begin{lem}\label{G1-3E}For $k_{1}, k_{2}, k_{3}\geq0$, and any $\alpha$,
\ben
&&G_{1}(E^{k_{1}},E^{k_{2}},E^{k_{3}},\gamma_{\alpha})\\
&=&-24\gamma_{\alpha}\langle\langle E^{\widetilde{K}}\rangle\rangle_{1}+24\sum_{i=1}^{3}\{E^{k_{i}}\circ\gamma_{\alpha}\}\langle\langle E^{\widetilde{K}-k_{i}}\rangle\rangle_{1}-24\sum_{i=1}^{3}\{E^{\widetilde{K}-k_{i}}\circ\gamma_{\alpha}\}\langle\langle E^{k_{i}}\rangle\rangle_{1},
\een
where $\widetilde{K}=k_{1}+k_{2}+k_{3}$.
\end{lem}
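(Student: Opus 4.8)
The plan is to obtain the formula by specializing Lemma \ref{G1-2E}, exactly as Lemma \ref{G1-2E} was itself specialized from Lemma \ref{G1-1E}. Since every term in the definition of $G_1$ is symmetrized over $S_4$, the function $G_1$ is totally symmetric in its four vector-field arguments, so
$$G_1(E^{k_1},E^{k_2},E^{k_3},\gamma_\alpha)=G_1(E^{k_1},E^{k_2},\gamma_\alpha,E^{k_3}).$$
I would then replace $\gamma_\beta$ by the vector field $E^{k_3}$ in the formula of Lemma \ref{G1-2E}, which is legitimate by Remark \ref{addedrem}. It is essential to work with the form stated in Lemma \ref{G1-2E} and not with the equivalent expression in Remark \ref{remG1-2E}: as cautioned in Remark \ref{remG1-1E}, the latter is obtained from the former by applications of \eqref{derivative1} that exploit $\nabla_\upsilon\gamma_\beta=0$ for a basis class, and since $\nabla_\upsilon E^{k_3}\neq0$, direct substitution would omit the covariant-derivative corrections and give a wrong answer. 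After substitution, each composite $E^{a}\circ E^{k_3}$ collapses to $E^{a+k_3}$, promoting the parameter $\widehat{K}=k_1+k_2$ to $\widetilde{K}=k_1+k_2+k_3$ throughout.

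Next I would reduce every surviving correlation function to a genus-one one-point function, by repeatedly applying \eqref{derivative1} in the form $\langle\langle\mathcal V\mathcal W\rangle\rangle_1=\mathcal V\langle\langle\mathcal W\rangle\rangle_1-\langle\langle\nabla_{\mathcal V}\mathcal W\rangle\rangle_1$ and then expanding each covariant derivative of a quantum power of $E$ by \eqref{derivative3}. The ``boundary'' pieces of this reduction already reproduce the three groups of the claimed formula: the first line of Lemma \ref{G1-2E} yields the $i=3$ term $24\{E^{k_3}\circ\gamma_\alpha\}\langle\langle E^{\widetilde{K}-k_3}\rangle\rangle_1$ and its second line yields the $i=1,2$ terms of $-24\sum_i\{E^{\widetilde{K}-k_i}\circ\gamma_\alpha\}\langle\langle E^{k_i}\rangle\rangle_1$; the two-point functions in its third line give $24\{E^{k_1}\circ\gamma_\alpha\}\langle\langle E^{\widetilde{K}-k_1}\rangle\rangle_1+24\{E^{k_2}\circ\gamma_\alpha\}\langle\langle E^{\widetilde{K}-k_2}\rangle\rangle_1$; and the two pieces of its fourth line give $-24\{E^{\widetilde{K}-k_3}\circ\gamma_\alpha\}\langle\langle E^{k_3}\rangle\rangle_1$ and $-24\gamma_\alpha\langle\langle E^{\widetilde{K}}\rangle\rangle_1$ after a single application of \eqref{derivative1}. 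Thus all three groups of the target appear as the leading pieces of the reduction.

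The heart of the argument, and the step I expect to be the main obstacle, is to show that all residual terms cancel so that nothing beyond the clean form survives. These residuals are genus-one one-point functions of composite vector fields, of the shapes $\langle\langle G(E^{i-1})\circ E^{\widetilde{K}-i}\circ\gamma_\alpha\rangle\rangle_1$, $\langle\langle G(E^{\widetilde{K}-i}\circ\gamma_\alpha)\circ E^{i-1}\rangle\rangle_1$, and $\langle\langle E^{\widetilde{K}-1}\circ\gamma_\alpha\rangle\rangle_1$, arising both from the $\nabla_{\mathcal V}E^{\bullet}$ expansions above and from the single and double sums $\sum_{i=1}^{\widehat{K}}$ and $\sum_{m=1}^2\sum_{i=1}^{k_m}$ already present in the fifth and sixth lines of Lemma \ref{G1-2E}. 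As a consistency check, the terms $\langle\langle E^{\widetilde{K}-1}\circ\gamma_\alpha\rangle\rangle_1$ produced by the last summand $k\,\upsilon\circ E^{k-1}$ of \eqref{derivative3} cancel immediately: their total coefficient is $-24(k_2+k_3)-24(k_1+k_3)+24k_3+24\widetilde{K}=0$. For the $G$-weighted residuals I would put each composite vector field into a normal form using Corollary \ref{Virasorotype1} together with \eqref{derivative3}, and then match summation ranges so that the discrepancy $\sum_{m=1}^2\sum_{i=1}^{k_m}-\sum_{i=1}^{\widehat{K}}$ telescopes against the sums produced by expanding $\nabla_{\gamma_\alpha}E^{\widetilde{K}}$ and $\nabla_{E^{\widehat{K}}\circ\gamma_\alpha}E^{k_3}$; the reindexing identities \eqref{Observation1} and \eqref{Observation2} are needed to pair terms carrying the same Euler exponent. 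Since no genus-zero or $\langle\langle\gamma_\mu\rangle\rangle_1$ contributions enter at this stage, the cancellation leaves exactly the stated expression, whose manifest symmetry in $k_1,k_2,k_3$ provides a final check on the bookkeeping.
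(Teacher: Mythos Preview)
Your approach is correct and is exactly the one taken in the paper: substitute $E^{k_3}$ for $\gamma_\beta$ in Lemma~\ref{G1-2E} (legitimate by Remark~\ref{addedrem}) and then simplify using \eqref{derivative1} and \eqref{derivative3}. The paper records this tersely as ``follows easily from Lemma~\ref{G1-2E} and equations \eqref{derivative1} and \eqref{derivative3}'', while you have usefully spelled out the bookkeeping of the residual cancellations.
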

\begin{proof}
It follows easily from Lemma \ref{G1-2E} and  equations \eqref{derivative1} and  \eqref{derivative3}.
\end{proof}
\begin{lem}\label{G0-3E}For $k_{1}, k_{2}, k_{3}\geq0$, and any $\alpha$,
\ben
&&G_{0}(E^{k_{1}},E^{k_{2}},E^{k_{3}},\gamma_{\alpha})\\
&=&24\gamma_{\alpha}\Phi_{\widetilde{K}}-24\sum_{i=1}^{3}\{E^{k_{i}}\circ\gamma_{\alpha}\}\Phi_{\widetilde{K}-k_{i}}
+24\sum_{i=1}^{3}\{E^{\widetilde{K}-k_{i}}\circ\gamma_{\alpha}\}\Phi_{k_{i}},
\een
where $\widetilde{K}=k_{1}+k_{2}+k_{3}$.
\end{lem}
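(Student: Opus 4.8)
The plan is to deduce the formula from the two-power computation in Lemma \ref{G0-2E} by folding the third Euler power $E^{k_3}$ into the final cohomology slot. By Remark \ref{addedrem}, Lemma \ref{G0-2E} stays valid when its last two arguments are replaced by arbitrary vector fields on the small phase space, so I would set its third argument equal to $E^{k_3}$ and its fourth equal to $\gamma_\alpha$, taking $\{\varsigma_1,\varsigma_2\}=\{E^{k_3},\gamma_\alpha\}$ and noting $\widehat{K}=k_1+k_2=\widetilde{K}-k_3$. Every quantum product $E^a\circ E^b$ that appears after this substitution collapses by associativity to $E^{a+b}$, since $E^{k_3}$ is itself a quantum power of $E$.

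First I would dispose of the two explicit $\Phi$-terms of Lemma \ref{G0-2E}. The term $-24\{\gamma_\alpha\circ\gamma_\beta\}\Phi_{\widehat K}$ becomes $-24\{E^{k_3}\circ\gamma_\alpha\}\Phi_{\widetilde K-k_3}$, which is precisely the $i=3$ summand of the middle sum of the target; and $24\sum_{m=1}^2\{E^{\widehat K-k_m}\circ\gamma_\alpha\circ\gamma_\beta\}\Phi_{k_m}$ becomes $24\sum_{m=1}^2\{E^{\widetilde K-k_m}\circ\gamma_\alpha\}\Phi_{k_m}$ after using $E^{\widehat K-k_m}\circ E^{k_3}=E^{\widetilde K-k_m}$, matching the $i=1,2$ summands of the last sum. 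Thus three of the seven $\Phi$-derivative terms in Lemma \ref{G0-3E} are produced immediately.

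The bulk of the argument is to show that the remaining, purely genus-0 correlator terms of Lemma \ref{G0-2E} --- the two single sums carrying $\gamma^\mu\gamma_\mu$, the $\Delta$-double sum, the $G(\cdots)\circ G(\cdots)$-double sum, and the $\langle\langle\Delta E^{\widehat K-2}\cdots\rangle\rangle_0$ term --- reorganize, after this substitution and after collapsing the $E^a\circ E^b$ products, into the four missing pieces $24\gamma_\alpha\Phi_{\widetilde K}$, $-24\{E^{k_1}\circ\gamma_\alpha\}\Phi_{\widetilde K-k_1}$, $-24\{E^{k_2}\circ\gamma_\alpha\}\Phi_{\widetilde K-k_2}$, and $24\{E^{\widetilde K-k_3}\circ\gamma_\alpha\}\Phi_{k_3}$. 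To reach this I would first apply \eqref{derivative1} and \eqref{derivative3} to push the newly absorbed power $E^{k_3}$ out of the composite slots, so that each correlator involves only pure powers of $E$ and a single factor $\gamma_\alpha$ --- exactly the shape on the right-hand side of Theorem \ref{explicit-dphi}. Reading that theorem in reverse, in the generalized form $\upsilon\,\Phi_m$ for the directions $\upsilon=\gamma_\alpha,\,E^{k_1}\circ\gamma_\alpha,\,E^{k_2}\circ\gamma_\alpha,\,E^{\widetilde K-k_3}\circ\gamma_\alpha$ permitted by Remark \ref{addedrem}, and repeatedly using Lemma \ref{4point-3point} and Lemma \ref{simplication} to bring the correlators into a common normal form, I expect the sums to telescope onto exactly these $\Phi$-derivatives, just as Lemma \ref{G0-2E} was itself obtained from Lemma \ref{G0-1E}.

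The hard part will be the index bookkeeping that separates the $\Phi_{\widetilde K}$ contribution from the $\Phi_{k_3}$ contribution: the substituted single and double sums generate $\Phi$-derivatives with several different subscripts and overlapping summation ranges, and reconciling them into the clean split $24\gamma_\alpha\Phi_{\widetilde K}+24\{E^{\widetilde K-k_3}\circ\gamma_\alpha\}\Phi_{k_3}$ will require the quasi-homogeneity relation $E^m\Phi_k-E^k\Phi_m=(m-k)\Phi_{k+m-1}$ of Corollary \ref{Virasorotype2} to trade shifted powers against one another. Once the inner ranges are aligned by this relation and the redundant $\Delta$- and $G\circ G$-terms cancel through Lemma \ref{simplication}, the stated identity should follow.
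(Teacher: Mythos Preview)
Your plan is correct and follows essentially the same route as the paper: substitute $E^{k_3}$ into Lemma~\ref{G0-2E} (as justified by Remark~\ref{addedrem}), then reduce the resulting genus-0 correlators via Lemma~\ref{4point-3point} and Lemma~\ref{simplication}, and recognize the combinations as derivatives of $\Phi$ using Theorem~\ref{explicit-dphi}. The appeal to Corollary~\ref{Virasorotype2} is in fact unnecessary---the index bookkeeping closes using only the tools the paper cites, just as in the passage from Lemma~\ref{G0-1E} to Lemma~\ref{G0-2E}.
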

\begin{proof}
It follows from Lemmas \ref{G0-2E}, \ref{4point-3point}, \ref{simplication} and Theorem \ref{explicit-dphi}.
\end{proof}

\begin{thm}\label{Getzler-3E}For $k_{1}, k_{2}, k_{3}\geq0$, and any $\alpha$,
\ben
\gamma_{\alpha}(\langle\langle E^{\widetilde{K}}\rangle\rangle_{1}-\Phi_{\widetilde{K}})&=&\sum_{i=1}^{3}\{E^{k_{i}}\circ\gamma_{\alpha}\}(\langle\langle E^{\widetilde{K}-k_{i}}\rangle\rangle_{1}-\Phi_{\widetilde{K}-k_{i}})\\
&&-\sum_{i=1}^{3}\{E^{\widetilde{K}-k_{i}}\circ\gamma_{\alpha}\}(\langle\langle E^{k_{i}}\rangle\rangle_{1}-\Phi_{k_{i}}),
\een
where $\widetilde{K}=k_{1}+k_{2}+k_{3}$.
\end{thm}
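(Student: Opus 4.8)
The plan is to combine the two explicit evaluations already obtained, namely Lemma \ref{G1-3E} for $G_{1}(E^{k_{1}},E^{k_{2}},E^{k_{3}},\gamma_{\alpha})$ and Lemma \ref{G0-3E} for $G_{0}(E^{k_{1}},E^{k_{2}},E^{k_{3}},\gamma_{\alpha})$, and to feed them into Getzler's genus one relation \eqref{Getzlertrr}, which for these arguments reads $\textbf{G}(E^{k_{1}},E^{k_{2}},E^{k_{3}},\gamma_{\alpha})=G_{0}(E^{k_{1}},E^{k_{2}},E^{k_{3}},\gamma_{\alpha})+G_{1}(E^{k_{1}},E^{k_{2}},E^{k_{3}},\gamma_{\alpha})=0$. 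So the entire argument amounts to adding the two right-hand sides and observing that the sum regroups into the claimed form, exactly as in the proofs of Theorems \ref{Getzler-1E} and \ref{Getzler-2E}.

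The key observation is that the two formulas have matching term structure: each is a sum of three pieces, carrying the vector fields $\gamma_{\alpha}$, $\{E^{k_{i}}\circ\gamma_{\alpha}\}$, and $\{E^{\widetilde{K}-k_{i}}\circ\gamma_{\alpha}\}$ respectively, each with numerical coefficient $\pm 24$. When I add $G_{0}$ and $G_{1}$ these pieces pair up coefficient by coefficient: the $\gamma_{\alpha}$ terms contribute $-24\langle\langle E^{\widetilde{K}}\rangle\rangle_{1}$ from $G_{1}$ and $+24\Phi_{\widetilde{K}}$ from $G_{0}$, hence $-24\gamma_{\alpha}(\langle\langle E^{\widetilde{K}}\rangle\rangle_{1}-\Phi_{\widetilde{K}})$; the $\{E^{k_{i}}\circ\gamma_{\alpha}\}$ terms combine into $+24\{E^{k_{i}}\circ\gamma_{\alpha}\}(\langle\langle E^{\widetilde{K}-k_{i}}\rangle\rangle_{1}-\Phi_{\widetilde{K}-k_{i}})$; and the $\{E^{\widetilde{K}-k_{i}}\circ\gamma_{\alpha}\}$ terms combine into $-24\{E^{\widetilde{K}-k_{i}}\circ\gamma_{\alpha}\}(\langle\langle E^{k_{i}}\rangle\rangle_{1}-\Phi_{k_{i}})$.

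Therefore the relation $0=G_{0}+G_{1}$ becomes, after dividing by $-24$ and transposing the two summations over $i=1,2,3$, precisely the asserted identity. I do not expect a genuine obstacle here, since all the substantive cancellation—in particular the complete disappearance of the genus-zero correlators that appear separately in Lemmas \ref{G1-3E} and \ref{G0-3E}—has already been carried out in proving those two lemmas. The only care needed is bookkeeping: verifying that the three summations align so that each $\langle\langle E^{\bullet}\rangle\rangle_{1}$ and its matching $\Phi_{\bullet}$ (sharing the same quantum power of $E$ and the same attached vector field) collapse into a single difference $\langle\langle E^{\bullet}\rangle\rangle_{1}-\Phi_{\bullet}$, which is immediate from the coefficients recorded above.
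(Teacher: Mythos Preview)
Your proposal is correct and follows exactly the paper's approach: the paper's proof is the single line ``It follows easily from Lemma \ref{G1-3E}, Lemma \ref{G0-3E} and equation \eqref{Getzlertrr},'' and your argument unpacks precisely this addition of $G_{0}+G_{1}=0$ and division by $24$.
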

\begin{proof}
It follows easily from Lemma \ref{G1-3E}, Lemma \ref{G0-3E} and equation \eqref{Getzlertrr}.
\end{proof}

\subsection{Getzler equation $\textbf{G}(E^{k_{1}},E^{k_{2}},E^{k_{3}},E^{k_{4}})=0$}
The case in this subsection has been studied in \cite{L1}. We present alternative symmetric expressions as follows.
\begin{lem}\label{G1-4E}For $k_{1},k_{2},k_{3},k_{4}\geq0$,
\ben
&&G_{1}(E^{k_{1}},E^{k_{2}},E^{k_{3}},E^{k_{4}})\\
&=&36K\langle\langle E^{K-1}\rangle\rangle_{1}-24\sum_{i=1}^4E^{k_{i}}\langle\langle E^{K-k_{i}}\rangle\rangle_{1}+3\sum_{g\in S_{4}}E^{k_{g(1)}+k_{g(2)}}\langle\langle E^{k_{g(3)}+k_{g(4)}}\rangle\rangle_{1},
\een
where $K=k_{1}+k_{2}+k_{3}+k_{4}$.
\end{lem}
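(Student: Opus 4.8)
The plan is to obtain $G_{1}(E^{k_{1}},E^{k_{2}},E^{k_{3}},E^{k_{4}})$ from Lemma \ref{G1-3E} by promoting the last argument $\gamma_{\alpha}$ to the Euler power $E^{k_{4}}$, and then to recast the resulting expression — which superficially singles out the index $4$ — into the fully symmetric form using the commutator $[E^{a},E^{b}]=(b-a)E^{a+b-1}$ of Corollary \ref{Virasorotype1}. First I would invoke the tensoriality of $G_{1}$: in its defining formula every argument enters only through quantum products $\circ$ and through correlators $\langle\langle\cdots\rangle\rangle$, both of which are pointwise linear in each slot, so $G_{1}$ is multilinear over functions on the small phase space. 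Writing $E^{k_{4}}=\sum_{\alpha}f^{\alpha}\gamma_{\alpha}$ and substituting the expression of Lemma \ref{G1-3E}, I would use that $\sum_{\alpha}f^{\alpha}\gamma_{\alpha}$ acts on functions exactly as the derivation $E^{k_{4}}$, while $\sum_{\alpha}f^{\alpha}(E^{k_{i}}\circ\gamma_{\alpha})=E^{k_{i}}\circ E^{k_{4}}=E^{k_{i}+k_{4}}$ by associativity of $\circ$. Since each occurrence of $\gamma_{\alpha}$ in Lemma \ref{G1-3E} sits inside a $\circ$-product before being applied as a derivation, this substitution is legitimate (in contrast to the form \eqref{equi1}, where a bare $\gamma_{\varsigma_{g(1)}}$ differentiates a correlator, cf. Remark \ref{remG1-1E}). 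This yields the intermediate identity
\[
G_{1}(E^{k_{1}},E^{k_{2}},E^{k_{3}},E^{k_{4}})=-24E^{k_{4}}\langle\langle E^{\widetilde{K}}\rangle\rangle_{1}+24\sum_{i=1}^{3}E^{k_{i}+k_{4}}\langle\langle E^{\widetilde{K}-k_{i}}\rangle\rangle_{1}-24\sum_{i=1}^{3}E^{K-k_{i}}\langle\langle E^{k_{i}}\rangle\rangle_{1},
\]
where $\widetilde{K}=k_{1}+k_{2}+k_{3}=K-k_{4}$.

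The second ingredient is a commutation identity for one-point functions. Applying \eqref{derivative1} to the one-point functions $\langle\langle E^{b}\rangle\rangle_{1}$ and $\langle\langle E^{a}\rangle\rangle_{1}$, using the symmetry of correlators together with \eqref{bracketrelation1} and Corollary \ref{Virasorotype1}, I would establish
\[
E^{a}\langle\langle E^{b}\rangle\rangle_{1}-E^{b}\langle\langle E^{a}\rangle\rangle_{1}=\langle\langle[E^{a},E^{b}]\rangle\rangle_{1}=(b-a)\langle\langle E^{a+b-1}\rangle\rangle_{1}.
\]
This is the device that converts any term of the form $E^{c}\langle\langle E^{d}\rangle\rangle_{1}$ into $E^{d}\langle\langle E^{c}\rangle\rangle_{1}$ at the cost of a scalar multiple of $\langle\langle E^{c+d-1}\rangle\rangle_{1}$, which here is always $\langle\langle E^{K-1}\rangle\rangle_{1}$ because the two exponents always sum to $K$.

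The final step is to reorganize the intermediate identity. I would first rewrite the last sum via the commutation identity with $a=K-k_{i}$, $b=k_{i}$, turning $-24\sum_{i=1}^{3}E^{K-k_{i}}\langle\langle E^{k_{i}}\rangle\rangle_{1}$ into $-24\sum_{i=1}^{3}E^{k_{i}}\langle\langle E^{K-k_{i}}\rangle\rangle_{1}$ plus a scalar multiple of $\langle\langle E^{K-1}\rangle\rangle_{1}$; combined with the first term this produces the symmetric sum $-24\sum_{i=1}^{4}E^{k_{i}}\langle\langle E^{K-k_{i}}\rangle\rangle_{1}$. The middle sum runs over the three pairs $\{i,4\}$; splitting its coefficient as $24=12+12$ and flipping half of each term by the commutation identity symmetrizes it to $12$ times the sum over all ordered complementary pairs, again up to a multiple of $\langle\langle E^{K-1}\rangle\rangle_{1}$. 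Finally I would observe that each unordered ordered-complement pair appears four times in the $S_{4}$-sum, so $3\sum_{g\in S_{4}}E^{k_{g(1)}+k_{g(2)}}\langle\langle E^{k_{g(3)}+k_{g(4)}}\rangle\rangle_{1}$ equals $12$ times that pair-sum, matching the claimed coefficient. The main obstacle is the bookkeeping of all the induced $\langle\langle E^{K-1}\rangle\rangle_{1}$ contributions: one must track the signs in the commutation identity carefully (using $k_{1}+k_{2}+k_{3}=K-k_{4}$), and check that the scalar coefficients collected from the three sources add up to precisely $24(K+2k_{4})+12(K-4k_{4})=36K$, so that the $k_{4}$-dependence cancels and the genuinely symmetric coefficient $36K$ emerges.
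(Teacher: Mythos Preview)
Your proposal is correct and follows essentially the same approach as the paper: specialize Lemma~\ref{G1-3E} by replacing $\gamma_{\alpha}$ with $E^{k_{4}}$ (legitimate by Remark~\ref{addedrem}, since $G_{1}$ is tensorial in its arguments), then use the commutation relation of Corollary~\ref{Virasorotype1} to symmetrize. The paper records only the one-line justification ``It follows from Lemma~\ref{G1-3E} and Corollary~\ref{Virasorotype1}'', while you have supplied the explicit bookkeeping of the $\langle\langle E^{K-1}\rangle\rangle_{1}$ contributions and verified the coefficient $36K$; one small inaccuracy in your exposition is the claim that \emph{every} occurrence of $\gamma_{\alpha}$ in Lemma~\ref{G1-3E} sits inside a $\circ$-product---the first term $-24\gamma_{\alpha}\langle\langle E^{\widetilde{K}}\rangle\rangle_{1}$ has a bare $\gamma_{\alpha}$---but you already handled this case correctly via $\sum_{\alpha}f^{\alpha}\gamma_{\alpha}=E^{k_{4}}$ as derivations, so the argument stands.
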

\begin{proof}
It follows from Lemma \ref{G1-3E} and Corollary \ref{Virasorotype1}.
\end{proof}
\begin{rem}
By Corollary \ref{Virasorotype1}, Lemma \ref{G1-4E} is equivalent to Corollary 3.2 of \cite{L1}.
\end{rem}
\begin{lem}\label{G0-4E}For $k_{1},k_{2},k_{3},k_{4}\geq0$,
\ben
&&G_{0}(E^{k_{1}},E^{k_{2}},E^{k_{3}},E^{k_{4}})\\
&=&-36K\Phi_{K-1}+24\sum_{i=1}^4E^{k_{i}}\Phi_{K-k_{i}}-3\sum_{g\in S_{4}}E^{k_{g(1)}+k_{g(2)}}\Phi_{k_{g(3)}+k_{g(4)}},
\een
where $K=k_{1}+k_{2}+k_{3}+k_{4}$.
\end{lem}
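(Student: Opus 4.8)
The plan is to deduce the fully symmetric formula from the one-fewer-Euler-power result, Lemma \ref{G0-3E}, in exactly the way Lemma \ref{G1-4E} was deduced from Lemma \ref{G1-3E}, with Corollary \ref{Virasorotype2} now playing the role that Corollary \ref{Virasorotype1} played there. First I would invoke Remark \ref{addedrem} to substitute $\gamma_{\alpha}=E^{k_{4}}$ in Lemma \ref{G0-3E}. Because the quantum powers of $E$ satisfy $E^{a}\circ E^{b}=E^{a+b}$ (by commutativity and associativity of the quantum product together with the definition of quantum power), every quantum-product factor in that lemma collapses. Writing $K=k_{1}+k_{2}+k_{3}+k_{4}$ and noting $\widetilde{K}=K-k_{4}$, this produces the asymmetric identity
\[
G_{0}(E^{k_{1}},E^{k_{2}},E^{k_{3}},E^{k_{4}})=24E^{k_{4}}\Phi_{K-k_{4}}-24\sum_{i=1}^{3}E^{k_{i}+k_{4}}\Phi_{K-k_{4}-k_{i}}+24\sum_{i=1}^{3}E^{K-k_{i}}\Phi_{k_{i}},
\]
in which the distinguished role of $k_{4}$ must be removed to match the claimed $S_{4}$-symmetric expression. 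Here each $E^{j}\Phi_{l}$ is understood as the vector field $E^{j}$ acting as a differential operator on the function $\Phi_{l}$, consistent with the notation in Corollary \ref{Virasorotype2}.

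Next I would symmetrize using Corollary \ref{Virasorotype2}, i.e.\ $E^{k}\Phi_{m}-E^{m}\Phi_{k}=(m-k)\Phi_{k+m-1}$. Applying it to each term $E^{K-k_{i}}\Phi_{k_{i}}$ for $i=1,2,3$ rewrites it as $E^{k_{i}}\Phi_{K-k_{i}}+(2k_{i}-K)\Phi_{K-1}$; together with the surviving $24E^{k_{4}}\Phi_{K-k_{4}}$ these assemble into the symmetric single-power sum $24\sum_{i=1}^{4}E^{k_{i}}\Phi_{K-k_{i}}$. Similarly, each mixed term $E^{k_{i}+k_{4}}\Phi_{K-k_{4}-k_{i}}$ is the ``forward half'' of a complementary-pair contribution to $\sum_{g\in S_{4}}E^{k_{g(1)}+k_{g(2)}}\Phi_{k_{g(3)}+k_{g(4)}}$. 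I would expand that $S_{4}$ sum as four copies of the six ordered complementary pairs obtained by partitioning $\{1,2,3,4\}$ into two unordered pairs, and then use Corollary \ref{Virasorotype2} to replace each ``reverse'' pair $E^{k_{l}+k_{m}}\Phi_{k_{i}+k_{j}}$ by $E^{k_{i}+k_{j}}\Phi_{k_{l}+k_{m}}+((k_{i}+k_{j})-(k_{l}+k_{m}))\Phi_{K-1}$, so that the three mixed terms are matched up to multiples of $\Phi_{K-1}$.

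The only real work, and the expected main obstacle, is the bookkeeping of the residual $\Phi_{K-1}$ coefficients and of the $S_{4}$ expansion. The derived asymmetric form carries $\Phi_{K-1}$ with coefficient $-24(K+2k_{4})$, coming from the single-power symmetrization. On the other side, expanding the $S_{4}$ sum and converting each reverse pair to its forward partner generates, besides $-24$ times the three mixed terms, an extra contribution $(12K-48k_{4})\Phi_{K-1}$; added to the explicit $-36K\Phi_{K-1}$ in the claimed formula this again yields coefficient $-24(K+2k_{4})$. Matching these confirms that the two expressions agree. Since $G_{0}$ is manifestly symmetric in its four arguments (it is built from a sum over $S_{4}$), the apparent $k_{4}$-asymmetry of the intermediate expression is harmless, and the entire verification reduces to the elementary coefficient arithmetic just described, requiring no input beyond Lemma \ref{G0-3E} and Corollary \ref{Virasorotype2}.
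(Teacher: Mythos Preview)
Your argument is correct and follows exactly the route the paper indicates: substitute $E^{k_{4}}$ for $\gamma_{\alpha}$ in Lemma~\ref{G0-3E} and then use Corollary~\ref{Virasorotype2} to symmetrize. Your bookkeeping of the $\Phi_{K-1}$ coefficients is accurate, so this is precisely the intended proof with the details filled in.
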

\begin{proof}
It follows from Lemma \ref{G0-3E} and Corollary \ref{Virasorotype2}.
\end{proof}
\begin{rem}
It is easy to show that Lemma \ref{G0-4E} implies the definition of $\Phi_{k}$ ($k\geq3$) given in \cite{L1}.
\end{rem}
\begin{thm}For $k_{1},k_{2},k_{3},k_{4}\geq0$,
\ben
&&12K(\langle\langle E^{K-1}\rangle\rangle_{1}-\Phi_{K-1})\\
&=&8\sum_{i=1}^4E^{k_{i}}(\langle\langle E^{K-k_{i}}\rangle\rangle_{1}-\Phi_{K-k_{i}})-\sum_{g\in S_{4}}E^{k_{g(1)}+k_{g(2)}}(\langle\langle E^{k_{g(3)}+k_{g(4)}}\rangle\rangle_{1}-\Phi_{k_{g(3)}+k_{g(4)}}),
\een
where $K=k_{1}+k_{2}+k_{3}+k_{4}$.
\end{thm}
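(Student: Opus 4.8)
The plan is to follow exactly the template already used for Theorems \ref{Getzler-2E} and \ref{Getzler-3E}: combine the two halves of Getzler's genus one relation computed in Lemma \ref{G1-4E} and Lemma \ref{G0-4E}, and invoke the vanishing \eqref{Getzlertrr}. The substantive content has in fact already been discharged upstream, since Lemma \ref{G1-4E} rests on Corollary \ref{Virasorotype1} for the bracket $[E^{k},E^{m}]$ and Lemma \ref{G0-4E} on Corollary \ref{Virasorotype2} for $E^{k}\Phi_{m}-E^{m}\Phi_{k}$, so what remains is purely a matter of assembly.

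First I would write out $\textbf{G}(E^{k_{1}},E^{k_{2}},E^{k_{3}},E^{k_{4}})=G_{0}+G_{1}$ using the two lemmas. The key observation is that the two expressions have identical combinatorial shape: each is a sum of three families of terms indexed by the same data — a single $K$-term, a sum $\sum_{i=1}^{4}E^{k_{i}}(\cdots)$, and an $S_{4}$-symmetrized sum $\sum_{g\in S_{4}}E^{k_{g(1)}+k_{g(2)}}(\cdots)$ — with $G_{1}$ carrying the genus-one brackets $\langle\langle E^{\bullet}\rangle\rangle_{1}$ and $G_{0}$ carrying the corresponding $\Phi_{\bullet}$ with the same numerical coefficients $36,\,24,\,3$ but opposite sign.

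Next I would add them termwise. Because the coefficients match and the signs are opposite, each genus-one bracket pairs with the $\Phi$ of the same index, yielding $36K(\langle\langle E^{K-1}\rangle\rangle_{1}-\Phi_{K-1})$, then $-24\sum_{i}E^{k_{i}}(\langle\langle E^{K-k_{i}}\rangle\rangle_{1}-\Phi_{K-k_{i}})$, and finally $+3\sum_{g}E^{k_{g(1)}+k_{g(2)}}(\langle\langle E^{k_{g(3)}+k_{g(4)}}\rangle\rangle_{1}-\Phi_{k_{g(3)}+k_{g(4)}})$. Setting this sum to zero by \eqref{Getzlertrr}, dividing through by the common factor $3$ so that $36,24,3$ become $12,8,1$, and transposing the middle and last families to the right-hand side gives exactly the asserted identity, where the sign of the $\sum_{g\in S_{4}}$ family flips to $-1$ under the transposition.

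I do not expect any genuine obstacle here; the only point requiring care is bookkeeping. Specifically, I would verify that the coefficient in front of each $\langle\langle E^{k_{g(1)}+k_{g(2)}}\rangle\rangle_{1}$ term in $G_{1}$ is matched by precisely the same coefficient in front of $\Phi_{k_{g(3)}+k_{g(4)}}$ in $G_{0}$, so that the differences $\langle\langle E^{\bullet}\rangle\rangle_{1}-\Phi_{\bullet}$ form cleanly across all three families, and confirm that no residual primary ($g=0$) correlators survive — which they cannot, since both Lemma \ref{G1-4E} and Lemma \ref{G0-4E} have already been reduced to expressions purely in $\langle\langle E^{\bullet}\rangle\rangle_{1}$ and $\Phi_{\bullet}$.
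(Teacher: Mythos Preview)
Your proposal is correct and follows exactly the paper's own proof: add the expressions for $G_{1}$ and $G_{0}$ from Lemmas \ref{G1-4E} and \ref{G0-4E}, invoke the vanishing \eqref{Getzlertrr}, divide by $3$, and rearrange. The paper states this in a single sentence, and your careful accounting of how the coefficients $36,24,3$ pair up to produce the differences $\langle\langle E^{\bullet}\rangle\rangle_{1}-\Phi_{\bullet}$ is precisely the intended computation.
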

\begin{proof}
It follows  from Lemma \ref{G1-4E}, Lemma \ref{G0-4E} and equation \eqref{Getzlertrr}.
\end{proof}
\section{Application to the genus-1 Virasoro Conjecture}
In this section, we apply Getzler equations derived in Section 3 to study the genus-1 Virasoro conjecture.
Since Getzler equation $\textbf{G}(E^{k_{1}},E^{k_{2}},E^{k_{3}},E^{k_{4}})=0$ has been analyzed in great detail in \cite{L1}, we begin with the application of Getzler equation $\textbf{G}(E^{k_{1}},E^{k_{2}},E^{k_{3}},\gamma_{\alpha})=0$. It provides the following
relations among $\{\langle\langle E^{k}\rangle\rangle_{1}-\Phi_{k}\}_{k\geq0}$.
\begin{thm}\label{mainresult1}For $k\geq0$, and any $\alpha$,
\ben
\gamma_{\alpha}(\langle\langle E^{k}\rangle\rangle_{1}-\Phi_{k})=\frac{1}{2}k(k-1)\{E^{k-2}\circ\gamma_{\alpha}\}(\langle\langle E^{2}\rangle\rangle_{1}-\Phi_{2}).
\een
\end{thm}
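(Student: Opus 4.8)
The plan is to set $\Psi_k := \langle\langle E^k\rangle\rangle_1 - \Phi_k$ and to prove, by induction on $k$, the slightly stronger statement that for \emph{every} vector field $\upsilon$ on the small phase space (not merely $\upsilon=\gamma_\alpha$) and every $k\geq 0$,
\[
\upsilon\,\Psi_k = \tfrac{1}{2}k(k-1)\{E^{k-2}\circ\upsilon\}\Psi_2.
\]
Proving the general-$\upsilon$ version is what makes the induction self-feeding, and it is legitimate because by Remark \ref{addedrem} Theorem \ref{Getzler-3E} holds verbatim with $\gamma_\alpha$ replaced by any vector field. The base cases are immediate: $\langle\langle E^k\rangle\rangle_1=\Phi_k$ for $k=0,1$ gives $\Psi_0=\Psi_1=0$ so both sides vanish, while for $k=2$ the right-hand side is $\{\gamma_1\circ\upsilon\}\Psi_2=\upsilon\Psi_2$ since $E^0=\gamma_1$ is the identity of the quantum product.

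For the inductive step I would fix $k\geq 3$ and apply Theorem \ref{Getzler-3E} with the choice $(k_1,k_2,k_3)=(k-2,1,1)$, so that $\widetilde K=k$. Because $\Psi_1=0$, the two terms of the second sum carrying $\Psi_{k_2}=\Psi_{k_3}=\Psi_1$ drop out, and the theorem collapses to
\[
\upsilon\,\Psi_k = \{E^{k-2}\circ\upsilon\}\Psi_2 + 2\{E\circ\upsilon\}\Psi_{k-1} - \{E^2\circ\upsilon\}\Psi_{k-2}.
\]
Next I would feed the induction hypothesis into the last two terms, treating $E\circ\upsilon$ and $E^2\circ\upsilon$ as the vector fields. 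Using associativity of the quantum product (so that $E^{k-3}\circ E=E^{k-2}$ and $E^{k-4}\circ E^2=E^{k-2}$), the hypothesis at level $k-1$ turns $\{E\circ\upsilon\}\Psi_{k-1}$ into $\tfrac12(k-1)(k-2)\{E^{k-2}\circ\upsilon\}\Psi_2$, and at level $k-2$ turns $\{E^2\circ\upsilon\}\Psi_{k-2}$ into $\tfrac12(k-2)(k-3)\{E^{k-2}\circ\upsilon\}\Psi_2$. At the single edge value $k=3$ the latter term is $\{E^2\circ\upsilon\}\Psi_1=0$, matching the vanishing factor $(k-2)(k-3)$, so no genuine negative power of $E$ ever arises.

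Assembling everything, each surviving term is a multiple of the common vector field $\{E^{k-2}\circ\upsilon\}$ applied to $\Psi_2$, and the whole identity reduces to the scalar computation
\[
1 + 2\cdot\tfrac12(k-1)(k-2) - \tfrac12(k-2)(k-3) = \tfrac12 k(k-1),
\]
which I would verify by direct expansion. This closes the induction, and specializing $\upsilon=\gamma_\alpha$ gives the theorem. I do not expect a real obstacle: all of the analytic content sits inside Theorem \ref{Getzler-3E}, and the only care required is the bookkeeping of which terms are killed by $\Psi_1=0$, together with the observation that the $(k-2)(k-3)$-coefficient suppresses the $k=3$ edge term before any ill-defined $E^{-1}$ could appear.
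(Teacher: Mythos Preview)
Your proof is correct and follows essentially the same approach as the paper: both specialize Theorem \ref{Getzler-3E} with $(k_1,k_2,k_3)=(k-2,1,1)$ to obtain the recursion \eqref{recursion}, then close the argument by induction using the vanishing $\Psi_0=\Psi_1=0$ and the associativity $E^{m}\circ E^{n}=E^{m+n}$. Your explicit reformulation of the hypothesis for arbitrary $\upsilon$ is only a cosmetic difference, since the statement for all basis vectors $\gamma_\alpha$ already yields the statement for any $\upsilon=\sum_\beta f_\beta\gamma_\beta$ by $C^\infty$-linearity of the action of vector fields on functions; the paper uses this implicitly when it applies the induction hypothesis to $E\circ\gamma_\alpha$ and $E^2\circ\gamma_\alpha$.
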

\begin{proof}
It is trivial for $k=0,1,2.$ For $k\geq3$,
let $k_{1}=k-2$, $k_{2}=k_{3}=1$ in Theorem \ref{Getzler-3E}, we have
\bea
&&\gamma_{\alpha}\left(\langle\langle E^{k}\rangle\rangle_{1}-\Phi_{k}\right)\nonumber\\
&=&2\{E\circ\gamma_{\alpha}\}\left(\langle\langle E^{k-1}\rangle\rangle_{1}-\Phi_{k-1}\right)-\{E^{2}\circ\gamma_{\alpha}\}\left(\langle\langle E^{k-2}\rangle\rangle_{1}-\Phi_{k-2}\right)\nonumber\\
&&+\{E^{k-2}\circ\gamma_{\alpha}\}\left(\langle\langle E^2\rangle\rangle_{1}-\Phi_{2}\right).\label{recursion}
\eea
Suppose that the theorem holds for $k\leq n$ ($n\geq2$). By equation \eqref{recursion}, we have
\ben
&&\gamma_{\alpha}\left(\langle\langle E^{n+1}\rangle\rangle_{1}-\Phi_{n+1}\right)\\
&=&2\{E\circ\gamma_{\alpha}\}\left(\langle\langle E^{n}\rangle\rangle_{1}-\Phi_{n}\right)-\{E^{2}\circ\gamma_{\alpha}\}\left(\langle\langle E^{n-1}\rangle\rangle_{1}-\Phi_{n-1}\right)\\
&&+\{E^{n-1}\circ\gamma_{\alpha}\}\left(\langle\langle E^2\rangle\rangle_{1}-\Phi_{2}\right)\\
&=&n(n-1)\{E^{n-1}\circ\gamma_{\alpha}\}\left(\langle\langle E^2\rangle\rangle_{1}-\Phi_{2}\right)+\{E^{n-1}\circ\gamma_{\alpha}\}\left(\langle\langle E^2\rangle\rangle_{1}-\Phi_{2}\right)\\
&&-\frac{1}{2}(n-1)(n-2)\{E^{n-1}\circ\gamma_{\alpha}\}\left(\langle\langle E^2\rangle\rangle_{1}-\Phi_{2}\right)\\
&=&\frac{1}{2}n(n+1)\{E^{n-1}\circ\gamma_{\alpha}\}\left(\langle\langle E^2\rangle\rangle_{1}-\Phi_{2}\right).
\een
The proof is completed by induction on $k$.
\end{proof}
\begin{rem}
By Theorem \ref{mainresult1}, we have
\ben
&&\{\gamma_{\alpha}\circ\gamma_{\beta}\}(\langle\langle E^{\widehat{K}}\rangle\rangle_{1}-\Phi_{\widehat{K}})-\sum_{m=1}^2\{E^{\widehat{K}-k_{m}}\circ\gamma_{\alpha}\circ\gamma_{\beta}\}(\langle\langle E^{k_{m}}\rangle\rangle_{1}-\Phi_{k_{m}})\\
&=&\frac{2k_{1}k_{2}}{\widehat{K}(\widehat{K}-1)}\{\gamma_{\alpha}\circ\gamma_{\beta}\}(\langle\langle E^{\widehat{K}}\rangle\rangle_{1}-\Phi_{\widehat{K}})
\een
which gives an equivalent equation in Theorem \ref{Getzler-2E}.
\end{rem}
It follows easily from Theorem \ref{mainresult1} that
\begin{cor}\label{mainresult2}For any $\alpha$,
\ben
&&\frac{m-k}{m+k}\gamma_{\alpha}(\langle\langle E^{m+k}\rangle\rangle_{1}-\Phi_{m+k})\\
&=&\{E^{k}\circ\gamma_{\alpha}\}(\langle\langle E^{m}\rangle\rangle_{1}-\Phi_{m})-\{E^{m}\circ\gamma_{\alpha}\}(\langle\langle E^{k}\rangle\rangle_{1}-\Phi_{k}).
\een
where $k+m>0$.
\end{cor}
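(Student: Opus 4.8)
The plan is to reduce every term appearing in the statement to a common multiple of the single vector $\{E^{m+k-2}\circ\gamma_\alpha\}$ acting (as a directional derivative) on the ``master'' difference $\Psi:=\langle\langle E^{2}\rangle\rangle_{1}-\Phi_{2}$, and then to verify a purely numerical identity. First I would record that Theorem \ref{mainresult1} holds verbatim with $\gamma_\alpha$ replaced by an arbitrary vector field $\upsilon$ on the small phase space, i.e.
\[
\upsilon(\langle\langle E^{j}\rangle\rangle_{1}-\Phi_{j})=\frac{1}{2}j(j-1)\{E^{j-2}\circ\upsilon\}\,\Psi .
\]
This is exactly what Remark \ref{addedrem} supplies (Theorem \ref{Getzler-3E} is valid for arbitrary vector fields, hence so is the recursion \eqref{recursion} underlying Theorem \ref{mainresult1}); in fact this vector-field version is already invoked inside the induction proving Theorem \ref{mainresult1}, where the hypothesis is applied along $E\circ\gamma_\alpha$ and $E^{2}\circ\gamma_\alpha$.

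Next I would apply this generalized identity three times. For the left-hand side, taking $\upsilon=\gamma_\alpha$ and $j=m+k$ gives
\[
\frac{m-k}{m+k}\,\gamma_\alpha(\langle\langle E^{m+k}\rangle\rangle_{1}-\Phi_{m+k})=\frac{1}{2}(m-k)(m+k-1)\{E^{m+k-2}\circ\gamma_\alpha\}\Psi .
\]
For the two terms on the right I would take $\upsilon=E^{k}\circ\gamma_\alpha$ with $j=m$, and $\upsilon=E^{m}\circ\gamma_\alpha$ with $j=k$, and then collapse the quantum powers via $E^{m-2}\circ E^{k}=E^{m+k-2}=E^{k-2}\circ E^{m}$. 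This turns both right-hand terms into multiples of the same vector $\{E^{m+k-2}\circ\gamma_\alpha\}\Psi$, with coefficients $\frac{1}{2}m(m-1)$ and $-\frac{1}{2}k(k-1)$ respectively. The corollary then reduces to the elementary identity $m(m-1)-k(k-1)=(m-k)(m+k-1)$, which matches the coefficient on the left-hand side.

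There is no genuine obstacle here; the only points requiring a word of care are (i) the passage to arbitrary vector fields just discussed, needed because the right-hand side differentiates along the quantum products $E^{k}\circ\gamma_\alpha$ and $E^{m}\circ\gamma_\alpha$ rather than along $\gamma_\alpha$, and (ii) the low-degree cases $m\le 1$ or $k\le 1$, where the collapsing step would formally involve $E^{-1}$ or $E^{-2}$. These cases cause no trouble, since the corresponding prefactor $m(m-1)$ or $k(k-1)$ vanishes and the ill-defined quantum power is never actually evaluated; equivalently one checks them directly using $\langle\langle E^{0}\rangle\rangle_{1}=\Phi_{0}$ and $\langle\langle E^{1}\rangle\rangle_{1}=\Phi_{1}$, which make the offending terms disappear on both sides (the hypothesis $k+m>0$ guarantees the remaining nontrivial cases have $m+k\ge 2$).
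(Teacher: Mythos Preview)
Your proposal is correct and follows the same route the paper indicates (``It follows easily from Theorem~\ref{mainresult1}''): apply the vector-field version of Theorem~\ref{mainresult1} to each of the three terms, collapse the quantum powers, and verify the numerical identity $m(m-1)-k(k-1)=(m-k)(m+k-1)$. Your additional remarks on the need for the vector-field generalization (justified via Remark~\ref{addedrem}) and on the degenerate cases $m\le 1$ or $k\le 1$ are appropriate clarifications of points the paper leaves implicit.
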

\begin{cor}\label{mainresult3}For any $k\geq0$ and $m>0$ satisfying $m+k\geq2$, and any $\alpha$,
\ben
\{E^{k}\circ\gamma_{\alpha}\}\frac{\langle\langle E^{m}\rangle\rangle_{1}-\Phi_{m}}{m}=\frac{m-1}{(m+k)(m+k-1)}\gamma_{\alpha}(\langle\langle E^{m+k}\rangle\rangle_{1}-\Phi_{m+k}).
\een
\end{cor}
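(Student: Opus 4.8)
The plan is to read off this corollary directly from Theorem \ref{mainresult1}, which already reduces every first derivative $\upsilon(\langle\langle E^{n}\rangle\rangle_{1}-\Phi_{n})$ to the single combination $\langle\langle E^{2}\rangle\rangle_{1}-\Phi_{2}$. Writing $\Psi_{n}:=\langle\langle E^{n}\rangle\rangle_{1}-\Phi_{n}$ for brevity, and recalling from Remark \ref{addedrem} that the basis vector $\gamma_{\alpha}$ appearing in Theorem \ref{mainresult1} may be replaced by an arbitrary vector field on the small phase space, Theorem \ref{mainresult1} reads $\upsilon\,\Psi_{n}=\half\, n(n-1)\{E^{n-2}\circ\upsilon\}\Psi_{2}$. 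The whole argument is then two applications of this identity together with one substitution.

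First I would apply it with $\upsilon=E^{k}\circ\gamma_{\alpha}$ and $n=m$. Since the quantum powers of $E$ are additive under $\circ$, that is $E^{m-2}\circ E^{k}=E^{m+k-2}$ (a consequence of the associativity of $\circ$ furnished by the WDVV equation \eqref{WDVV1} together with $E^{0}=\gamma_{1}$), associativity gives $E^{m-2}\circ(E^{k}\circ\gamma_{\alpha})=E^{m+k-2}\circ\gamma_{\alpha}$, whence
\[
\{E^{k}\circ\gamma_{\alpha}\}\Psi_{m}=\half\, m(m-1)\{E^{m+k-2}\circ\gamma_{\alpha}\}\Psi_{2}.
\]
Dividing by $m$, which is legitimate because $m>0$, yields $\tfrac{1}{m}\{E^{k}\circ\gamma_{\alpha}\}\Psi_{m}=\half\,(m-1)\{E^{m+k-2}\circ\gamma_{\alpha}\}\Psi_{2}$. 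Second, I would apply Theorem \ref{mainresult1} again, now with $\upsilon=\gamma_{\alpha}$ and $n=m+k$, which is allowed since $m+k\geq2$, to get $\gamma_{\alpha}\Psi_{m+k}=\half\,(m+k)(m+k-1)\{E^{m+k-2}\circ\gamma_{\alpha}\}\Psi_{2}$, and hence $\{E^{m+k-2}\circ\gamma_{\alpha}\}\Psi_{2}=\frac{2}{(m+k)(m+k-1)}\gamma_{\alpha}\Psi_{m+k}$. Substituting this into the previous line gives precisely $\tfrac{1}{m}\{E^{k}\circ\gamma_{\alpha}\}\Psi_{m}=\frac{m-1}{(m+k)(m+k-1)}\gamma_{\alpha}\Psi_{m+k}$, which is the asserted formula.

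Because the manipulation is essentially formal, there is no genuine analytic obstacle; the only points needing care are the boundary values of $m$. When $m=1$ the symbol $E^{m-2}=E^{-1}$ in the first step is undefined, but the prefactor $m(m-1)$ vanishes there, and since $\langle\langle E\rangle\rangle_{1}=\Phi_{1}$ forces $\Psi_{1}\equiv0$, both sides of the claimed identity are identically zero; the value $m=0$ is excluded by hypothesis. Thus it suffices to run the two-step computation above for $m\geq2$, where $m-2\geq0$, $m+k\geq2$ holds automatically, and every quantum power of $E$ occurring is a genuine nonnegative power, so that the additivity $E^{m-2}\circ E^{k}=E^{m+k-2}$ and the division by $(m+k)(m+k-1)\neq0$ are both valid.
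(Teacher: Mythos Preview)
Your proof is correct and is essentially the argument the paper has in mind when it writes ``It follows easily from Theorem \ref{mainresult1}'': two applications of that theorem (one with $\upsilon=E^{k}\circ\gamma_{\alpha}$, $n=m$, and one with $\upsilon=\gamma_{\alpha}$, $n=m+k$) together with $E^{m-2}\circ E^{k}=E^{m+k-2}$, plus the trivial $m=1$ case. One small remark: your appeal to Remark \ref{addedrem} to justify replacing $\gamma_{\alpha}$ by an arbitrary vector field in Theorem \ref{mainresult1} is slightly indirect, since that remark concerns the Section~3 results rather than Theorem \ref{mainresult1} itself; the extension you need follows immediately from $C^{\infty}$-linearity of both sides in the vector field.
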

It is easy to show that Theorem \ref{mainresult1} is equivalent to Theorem \ref{Getzler-3E}. Due to the fact $\gamma_{1}(\langle\langle E^{k}\rangle\rangle_{1}-\Phi_{k})=k(\langle\langle E^{k-1}\rangle\rangle_{1}-\Phi_{k-1})$,
Corollary \ref{mainresult2} implies Virasoro type relation for $\{\Phi_{k}\}$ in Theorem 6.1 of \cite{L1} by setting $\gamma_{\alpha}=\gamma_{1}$ and Corollary \ref{Virasorotype1}, while Corollary \ref{mainresult3} implies Lemma 6.3 in \cite{L1} if $\gamma_{\alpha}=\gamma_{1}$.

Next, we deal with
more general case,
i.e., Getzler equation $\textbf{G}(E^{k_{1}},E^{k_{2}},\gamma_{\alpha},\gamma_{\beta})=0$. It involves more complicated genus-1 data and  genus-0 4-point functions. But
thanks to formulas \eqref{Observation1} and \eqref{Observation2}, we may consider the following universal equation
\ben
\textbf{G}(E^{k_{1}},E^{k_{2}},\gamma_{\mu},\gamma^{\mu}\circ\gamma_{\alpha})=0.
\een
Actually, we have

\begin{thm}\label{mainresult5}For $k_{1},k_{2}\geq0$, and any $\alpha$,
\ben
&&\{\Delta\circ\gamma_{\alpha}\}(\langle\langle E^{k_{1}+k_{2}}\rangle\rangle_{1}-\Phi_{k_{1}+k_{2}})\\
&=&\{\Delta\circ E^{k_{1}}\circ\gamma_{\alpha}\}(\langle\langle E^{k_{2}}\rangle\rangle_{1}-\Phi_{k_{2}})+\{\Delta\circ E^{k_{2}}\circ\gamma_{\alpha}\}(\langle\langle E^{k_{1}}\rangle\rangle_{1}-\Phi_{k_{1}}).
\een
\end{thm}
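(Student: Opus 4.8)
The plan is to apply the explicit Getzler equation of Theorem \ref{Getzler-2E} (equivalently, Lemmas \ref{G1-2E} and \ref{G0-2E}) after the substitution $\gamma_{\alpha}\mapsto\gamma_{\mu}$, $\gamma_{\beta}\mapsto\gamma^{\mu}\circ\gamma_{\alpha}$, summed over $\mu$; this is legitimate by Remark \ref{addedrem}, since both of the last two slots may be replaced by arbitrary vector fields. Under this substitution the factor $\gamma_{\alpha}\circ\gamma_{\beta}$ becomes $\sum_{\mu}\gamma_{\mu}\circ\gamma^{\mu}\circ\gamma_{\alpha}=\Delta\circ\gamma_{\alpha}$, and $E^{\widehat{K}-k_{m}}\circ\gamma_{\alpha}\circ\gamma_{\beta}$ becomes $\Delta\circ E^{\widehat{K}-k_{m}}\circ\gamma_{\alpha}$. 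Hence the distinguished part of Theorem \ref{Getzler-2E} turns into exactly $24$ times $\{\Delta\circ\gamma_{\alpha}\}(\langle\langle E^{\widehat{K}}\rangle\rangle_1-\Phi_{\widehat{K}})-\sum_{m=1}^2\{\Delta\circ E^{\widehat{K}-k_m}\circ\gamma_{\alpha}\}(\langle\langle E^{k_m}\rangle\rangle_1-\Phi_{k_m})$, and since $E^{\widehat{K}-k_1}=E^{k_2}$, $E^{\widehat{K}-k_2}=E^{k_1}$, proving Theorem \ref{mainresult5} is equivalent to showing that all remaining terms on the right-hand side of Theorem \ref{Getzler-2E} vanish after this substitution and summation.

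I would organize the cancellation at the level of $G_1$ and $G_0$ separately, then combine by the Getzler relation \eqref{Getzlertrr}. For $G_1$ (Lemma \ref{G1-2E}), the first two lines produce precisely the genus-one part of the target combination above. The remaining genus-one terms split into the two-point terms and the $G(\cdot)\circ$-terms. For the two-point terms, applying \eqref{Observation1} slides the quantum factors across the $\gamma^{\mu},\gamma_{\mu}$ contraction so that the $\sum_{h\in S_2}$ and $\sum_{g\in S_2}$ contributions both collapse to $\sum_{\mu}\langle\langle\{E^{\widehat{K}}\circ\gamma^{\mu}\}\{\gamma_{\alpha}\circ\gamma_{\mu}\}\rangle\rangle_1$ with opposite signs, hence cancel. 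For the $G(\cdot)\circ$-terms, the crucial device is \eqref{simplication2}: summing over $\mu$ gives $\sum_{\mu}G(E^{i-1}\circ\gamma_{\mu})\circ E^{\widehat{K}-i}\circ\gamma^{\mu}\circ\gamma_{\alpha}=\tfrac12\,\Delta\circ E^{\widehat{K}-1}\circ\gamma_{\alpha}$, which is independent of the summation index $i$ (and of $g\in S_2$). The two index ranges then match, $\sum_{i=1}^{\widehat{K}}1=\widehat{K}=\sum_{m=1}^2\sum_{i=1}^{k_m}1$, and these terms cancel as well. Thus $\sum_{\mu}G_1(E^{k_1},E^{k_2},\gamma_{\mu},\gamma^{\mu}\circ\gamma_{\alpha})$ reduces to $24$ times $\{\Delta\circ\gamma_{\alpha}\}\langle\langle E^{\widehat{K}}\rangle\rangle_1-\sum_m\{\Delta\circ E^{\widehat{K}-k_m}\circ\gamma_{\alpha}\}\langle\langle E^{k_m}\rangle\rangle_1$.

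The main work is the parallel computation for $G_0$ (Lemma \ref{G0-2E}). After the substitution the genus-zero remainder consists of the $\gamma^{\mu}\gamma_{\mu}$-terms, which mirror the $G_1$ sums and cancel by the same $i$-independence and index-count argument via \eqref{simplication2}, together with the genuinely new $\Delta$-spectator terms and the isolated $k_1k_2$-term. I expect the analysis of these last terms to be the principal obstacle: one must use all four parts of Lemma \ref{simplication} together with \eqref{Observation1} and \eqref{Observation2} to bring the double sums $\sum_{i=1}^{k_1}\sum_{j=1}^{k_2}$ into a form whose summand is independent of $(i,j)$, so that the double sum generates exactly the factor $k_1k_2$ which cancels the explicit contribution $2k_1k_2\langle\langle\Delta E^{\widehat{K}-2}\{\gamma_{\alpha}\circ\gamma_{\beta}\}\rangle\rangle_0$ (with $\gamma_{\alpha}\circ\gamma_{\beta}\mapsto\Delta\circ\gamma_{\alpha}$). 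Once this is verified, $\sum_{\mu}G_0$ reduces to $-24$ times $\{\Delta\circ\gamma_{\alpha}\}\Phi_{\widehat{K}}-\sum_m\{\Delta\circ E^{\widehat{K}-k_m}\circ\gamma_{\alpha}\}\Phi_{k_m}$.

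Finally, adding the two reduced expressions and invoking $G_0+G_1=0$ from \eqref{Getzlertrr} yields $\{\Delta\circ\gamma_{\alpha}\}(\langle\langle E^{\widehat{K}}\rangle\rangle_1-\Phi_{\widehat{K}})=\sum_{m=1}^2\{\Delta\circ E^{\widehat{K}-k_m}\circ\gamma_{\alpha}\}(\langle\langle E^{k_m}\rangle\rangle_1-\Phi_{k_m})$, which is the asserted identity. The degenerate cases $k_1=0$ or $k_2=0$ are treated directly, using $\langle\langle E^{0}\rangle\rangle_1=\Phi_0=0$ and $\langle\langle E^{1}\rangle\rangle_1=\Phi_1$ so that both sides collapse consistently.
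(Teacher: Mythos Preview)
Your approach is essentially the same as the paper's: substitute $\gamma_{\mu}$ and $\gamma^{\mu}\circ\gamma_{\alpha}$ into Lemmas \ref{G1-2E} and \ref{G0-2E} via Remark \ref{addedrem}, then use \eqref{Observation1}, \eqref{Observation2} and \eqref{simplication2} (together with the rest of Lemma \ref{simplication}) to reduce $G_1$ and $G_0$ to the displayed combinations, and conclude by \eqref{Getzlertrr}. The paper compresses all of this into ``it is easy to show''; your outline of the $G_0$ double-sum cancellation via $(i,j)$-independence and the resulting $k_1k_2$ factor is exactly the mechanism that makes it work.
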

\begin{proof}
By Lemmas \ref{G1-2E}, \ref{G0-2E} and equations \eqref{Observation1}, \eqref{Observation2}, \eqref{simplication2}, it is easy to show that
\ben
&&G_{1}(E^{k_{1}},E^{k_{2}},\gamma_{\mu},\gamma^{\mu}\circ\gamma_{\alpha})\\
&=&24\{\Delta\circ\gamma_{\alpha}\}\langle\langle E^{k_{1}+k_{2}}\rangle\rangle_{1}-24\{\Delta\circ E^{k_{1}}\circ\gamma_{\alpha}\}\langle\langle E^{k_{2}}\rangle\rangle_{1}-24\{\Delta\circ E^{k_{2}}\circ\gamma_{\alpha}\}\langle\langle E^{k_{1}}\rangle\rangle_{1}
\een
and
\ben
&&G_{0}(E^{k_{1}},E^{k_{2}},\gamma_{\mu},\gamma^{\mu}\circ\gamma_{\alpha})\\
&=&-24\{\Delta\circ\gamma_{\alpha}\}\Phi_{k_{1}+k_{2}}+24\{\Delta\circ E^{k_{1}}\circ\gamma_{\alpha}\}\Phi_{k_{2}}+24\{\Delta\circ E^{k_{2}}\circ\gamma_{\alpha}\}\Phi_{k_{1}}.
\een
The proof is completed by equation \eqref{Getzlertrr}.
\end{proof}
It is equivalent to the following evidences for the genus-1 Virasoro conjecture.
\begin{thm}\label{mainresult6}
For any smooth projective variety $X$,
we have
\ben
(\Delta\circ\gamma_{\alpha})(\langle\langle E^k\rangle\rangle_{1}-\Phi_{k})=0,
\een
for all $k\geq0$ and $\alpha\in\{1,2,\ldots,N\}$.
\end{thm}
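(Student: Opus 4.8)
The plan is to deduce Theorem \ref{mainresult6} from the equivalent Theorem \ref{mainresult5} by induction on $k$. For brevity write $D_k := \langle\langle E^k\rangle\rangle_{1}-\Phi_{k}$ for the genus-one defect; the goal is to show that the vector field $\Delta\circ\gamma_{\alpha}$ annihilates $D_k$ for every $k\geq 0$ and every $\alpha$. The induction is anchored by the vanishing of the two lowest defects and propagated by the splitting $k_1=k-1$, $k_2=1$ in Theorem \ref{mainresult5}.

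First I would dispose of the base cases. For $k=0$ we have $\Phi_{0}=0$ and $\langle\langle\gamma_{1}\rangle\rangle_{1}=0$ by the genus-one string equation \eqref{stringequ3}, so $D_0\equiv 0$; for $k=1$ the identity $\langle\langle E^{k}\rangle\rangle_{1}=\Phi_{k}$ for $k=0,1$ recalled in Section~2 (from \cite{L1}) gives $D_1\equiv 0$. In both cases $(\Delta\circ\gamma_{\alpha})D_k=0$ trivially, since any vector field acts as a derivation and therefore annihilates the zero function.

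For the inductive step, fix $k\geq 2$ and assume $(\Delta\circ\gamma_{\beta})D_{k-1}=0$ for all $\beta$. Applying Theorem \ref{mainresult5} with $k_1=k-1$ and $k_2=1$ yields
\[
\{\Delta\circ\gamma_{\alpha}\}D_{k}
=\{\Delta\circ E^{k-1}\circ\gamma_{\alpha}\}D_{1}
+\{\Delta\circ E\circ\gamma_{\alpha}\}D_{k-1}.
\]
The first summand vanishes because $D_1\equiv 0$, so its directional derivative along $\Delta\circ E^{k-1}\circ\gamma_{\alpha}$ is zero. For the second summand I would expand $E\circ\gamma_{\alpha}=\sum_{\beta}a_{\beta}\gamma_{\beta}$ with $a_{\beta}=\langle\langle E\gamma_{\alpha}\gamma^{\beta}\rangle\rangle_{0}$, and use that the quantum product is linear over functions in each slot (inherited from the function-multilinearity of $\langle\langle\cdots\rangle\rangle_{0}$) to write $\Delta\circ E\circ\gamma_{\alpha}=\sum_{\beta}a_{\beta}(\Delta\circ\gamma_{\beta})$. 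Since a vector field rescaled by a function acts as that function times the original derivation, we get $\{\Delta\circ E\circ\gamma_{\alpha}\}D_{k-1}=\sum_{\beta}a_{\beta}\{\Delta\circ\gamma_{\beta}\}D_{k-1}=0$ by the inductive hypothesis. Hence $\{\Delta\circ\gamma_{\alpha}\}D_{k}=0$, completing the induction and therefore the proof.

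The step I expect to require the most care is the reduction of the second summand: one must be precise that $\Delta\circ E\circ\gamma_{\alpha}$ decomposes as a function-linear combination of the elementary fields $\Delta\circ\gamma_{\beta}$, and that the directional-derivative operation respects this scaling, so that the vanishing hypothesis on the $\Delta\circ\gamma_{\beta}$ can be applied termwise. Everything else is bookkeeping: the base cases are immediate from $D_0=D_1=0$, and the recursion with $k_1=k-1$, $k_2=1$ transports the vanishing from $D_1$ and $D_{k-1}$ up to $D_k$.
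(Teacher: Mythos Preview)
Your proof is correct and follows essentially the same route as the paper's: both argue by induction via Theorem~\ref{mainresult5} with one index equal to $1$, use $D_0=D_1=0$ to kill the lower term, and then invoke the fact that $\Delta\circ E\circ\gamma_{\alpha}$ is a function-linear combination of the $\Delta\circ\gamma_{\beta}$ to feed the inductive hypothesis. The paper compresses this last step into the single phrase ``since $\alpha$ is arbitrary,'' whereas you spell out the $C^{\infty}$-linearity explicitly; the arguments are otherwise identical.
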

\begin{proof}
Let $k_{1}=k_{2}=1$ in Theorem \ref{mainresult5}, we have
\ben
(\Delta\circ\gamma_{\alpha})(\langle\langle E^2\rangle\rangle_{1}-\Phi_{2})=0.
\een
Set $k_{1}=k$ ($k\geq2$) and $k_{2}=1$ in Theorem \ref{mainresult5}, we have
\ben
(\Delta\circ\gamma_{\alpha})(\langle\langle E^{k+1}\rangle\rangle_{1}-\Phi_{k+1})=(\Delta\circ E\circ\gamma_{\alpha})(\langle\langle E^{k}\rangle\rangle_{1}-\Phi_{k}).
\een
Since $\alpha$ is arbitrary, the proof is completed.
\end{proof}
By setting $k=2$ and $\gamma_{\alpha}=\gamma_{1}$, Theorem \ref{mainresult6} implies Theorem 1.1 of \cite{L2}. Hence it provides more evidences for
the genus-1 Virasoro conjecture.
Since by the WDVV equation \eqref{WDVV1}, we have
\ben
\Delta\circ\gamma_{\alpha}=\sum_{\mu}\gamma_{\mu}\circ\gamma^{\mu}\circ\gamma_{\alpha}
&=&\sum\limits_{\sigma,\mu,\beta}\langle\langle\gamma_{\alpha}\gamma_{\beta}\gamma^{\sigma}\rangle\rangle_{0}
\langle\langle\gamma_{\sigma}\gamma_{\mu}\gamma^{\mu}\rangle\rangle_{0}\gamma^{\beta}\\
&=&\sum\limits_{\sigma,\mu,\beta}\langle\langle\gamma_{\alpha}\gamma^{\sigma}\gamma^{\mu}\rangle\rangle_{0}
\langle\langle\gamma_{\sigma}\gamma_{\mu}\gamma_{\beta}\rangle\rangle_{0}\gamma^{\beta}.
\een
Hence by Theorem \ref{mainresult6}, we have for any $k\geq2$
\ben
\left( \begin{array}{ccccccccc}
  A_{11}&A_{12}&\dots&A_{1N}\\
  A_{21}&A_{22}&\dots&A_{2N}\\
  \vdots&\vdots&   &\vdots\\
  A_{N1}&A_{N2}&\dots&A_{NN}\\
\end{array} \right)\left( \begin{array}{ccccccccc}
  \gamma^{1}(\langle\langle E^k\rangle\rangle_{1}-\Phi_{k})\\
  \gamma^{2}(\langle\langle E^k\rangle\rangle_{1}-\Phi_{k})\\
  \vdots\\
  \gamma^{N}(\langle\langle E^k\rangle\rangle_{1}-\Phi_{k})\\
\end{array} \right)=\left( \begin{array}{ccccccccc}
   0\\
   0\\
  \vdots\\
   0\\
\end{array} \right)
\een
where
\ben
A_{\alpha,\beta}=\sum\limits_{\sigma,\mu}\langle\langle\gamma_{\alpha}\gamma^{\sigma}\gamma^{\mu}\rangle\rangle_{0}
\langle\langle\gamma_{\sigma}\gamma_{\mu}\gamma_{\beta}\rangle\rangle_{0}.
\een
It is obviously that $A_{\alpha,\beta}=A_{\beta,\alpha}$.
If the following symmetric matrix
\ben
\left( \begin{array}{ccccccccc}
  A_{11}&A_{12}&\dots&A_{1N}\\
  A_{21}&A_{22}&\dots&A_{2N}\\
  \vdots&\vdots&   &\vdots\\
  A_{N1}&A_{N2}&\dots&A_{NN}\\
\end{array} \right)=\sum\limits_{\sigma,\mu}\left( \begin{array}{ccccccccc}
 \langle\langle\gamma_{1}\gamma^{\sigma}\gamma^{\mu}\rangle\rangle_{0}\\
  \langle\langle\gamma_{2}\gamma^{\sigma}\gamma^{\mu}\rangle\rangle_{0}\\
  \vdots\\
  \langle\langle\gamma_{N}\gamma^{\sigma}\gamma^{\mu}\rangle\rangle_{0}\\
\end{array} \right)\left( \begin{array}{ccccccccc}
 \langle\langle\gamma_{\sigma}\gamma_{\mu}\gamma_{1}\rangle\rangle_{0}\\
  \langle\langle\gamma_{\sigma}\gamma_{\mu}\gamma_{2}\rangle\rangle_{0}\\
  \vdots\\
  \langle\langle\gamma_{\sigma}\gamma_{\mu}\gamma_{N}\rangle\rangle_{0}\\
\end{array} \right)^{T}
\een
is invertible, then $\gamma^{\alpha}(\langle\langle E^k\rangle\rangle_{1}-\Phi_{k})=0$ for any  $\alpha\in\{1,2,\cdot\cdot\cdot,N\}$, which
is equivalent to $\gamma_{\alpha}(\langle\langle E^k\rangle\rangle_{1}-\Phi_{k})=0$ for any  $\alpha\in\{1,2,\cdot\cdot\cdot,N\}$, and then
the genus-1 Virasoro conjecture holds.\par
The above argument gives an alternative proof of
\begin{cor}[\cite{DZ2,L1,L3,Tel}]\label{mainresult61}
For any compact symplectic manifold with semisimple quantum cohomology,
the genus-1 Virasoro conjecture holds.
\end{cor}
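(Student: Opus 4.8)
The plan is to supply the one remaining ingredient in the discussion preceding the statement, namely that semisimplicity of the quantum product forces the symmetric matrix $A=(A_{\alpha\beta})$, with
\[
A_{\alpha\beta}=\sum_{\sigma,\mu}\langle\langle\gamma_{\alpha}\gamma^{\sigma}\gamma^{\mu}\rangle\rangle_{0}\langle\langle\gamma_{\sigma}\gamma_{\mu}\gamma_{\beta}\rangle\rangle_{0},
\]
to be invertible. Granting this, the homogeneous linear system displayed above, which is a consequence of Theorem \ref{mainresult6} after expanding $\Delta\circ\gamma_{\alpha}$ by the WDVV equation \eqref{WDVV1}, has only the trivial solution, so $\gamma^{\alpha}(\langle\langle E^{k}\rangle\rangle_{1}-\Phi_{k})=0$, equivalently $\gamma_{\alpha}(\langle\langle E^{k}\rangle\rangle_{1}-\Phi_{k})=0$, for every $\alpha$ and every $k$; by the reduction recalled in Section 2 this is exactly the genus-1 Virasoro conjecture. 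Since this entire chain uses only the Frobenius structure of quantum cohomology (WDVV and the string equation), it is available for any compact symplectic manifold, so nothing in the projective setup is essential.

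To prove the invertibility I would pass to the idempotent basis. At a point of the small phase space where $\circ$ is semisimple, choose a basis $\{e_{1},\dots,e_{N}\}$ of $H^{*}(X,\mathbb{C})$ with $e_{i}\circ e_{j}=\delta_{ij}e_{i}$. Applying the Frobenius identity $\eta(a\circ b,c)=\eta(a,b\circ c)$ to the idempotents shows at once that they are $\eta$-orthogonal, so $\eta(e_{i},e_{j})=\delta_{ij}g_{i}$ with every $g_{i}\neq0$ by nondegeneracy of $\eta$. Hence $\langle\langle e_{i}e_{j}e_{k}\rangle\rangle_{0}=\eta(e_{i}\circ e_{j},e_{k})=g_{i}\delta_{ij}\delta_{jk}$ and $\eta^{ij}=g_{i}^{-1}\delta_{ij}$, and substituting into the defining contraction gives
\[
A_{ij}=\sum_{s,m,s',m'}\langle\langle e_{i}e_{s}e_{m}\rangle\rangle_{0}\,\eta^{ss'}\eta^{mm'}\,\langle\langle e_{s'}e_{m'}e_{j}\rangle\rangle_{0}=\delta_{ij},
\]
since the two three-point factors force $i=s=m$ and $s'=m'=j$ while the metric factors contribute $g_{i}^{-2}$, and $g_{i}\cdot g_{i}^{-2}\cdot g_{i}\,\delta_{ij}=\delta_{ij}$. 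Thus in the idempotent basis $A$ is the identity matrix.

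Finally I would transfer this back. The matrix $A$ is built from the fully symmetric tensor $\langle\langle\gamma_{\alpha}\gamma_{\beta}\gamma_{\gamma}\rangle\rangle_{0}$ by two contractions against $\eta^{-1}$, so it transforms as a covariant symmetric $2$-tensor: if $P$ is the invertible change-of-basis matrix relating $\{\gamma_{\alpha}\}$ to $\{e_{i}\}$, then $A=PP^{T}$ in the original basis, which is invertible with $\det A=(\det P)^{2}\neq0$. Hence $A$ is invertible at every semisimple point. Because semisimplicity holds at a generic point and therefore on a dense open subset of the small phase space, while $\langle\langle E^{k}\rangle\rangle_{1}-\Phi_{k}$ and its first derivatives are analytic, the pointwise vanishing of $\gamma_{\alpha}(\langle\langle E^{k}\rangle\rangle_{1}-\Phi_{k})$ on the dense semisimple locus upgrades to an identity on the whole small phase space, and the conjecture follows. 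The computation of $A_{ij}$ is immediate; the points that actually demand care, and which I expect to be the main (if modest) obstacle, are verifying that $A$ transforms tensorially so that the idempotent computation is basis-independent, and invoking density together with analyticity to pass from the semisimple locus to all of the small phase space.
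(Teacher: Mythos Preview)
Your proof is correct and follows essentially the same strategy as the paper: both pass to the idempotent basis to show that the linear system coming from Theorem \ref{mainresult6} is nondegenerate. The paper writes $\Delta\circ\gamma_{\alpha}=\sum_{i}g_{i}^{-3/2}\psi_{i\alpha}\mathcal{E}_{i}$ using the transition matrix $(\psi_{i\alpha})$ from \cite{L3} and concludes directly from its invertibility, whereas you compute the matrix $A$ in the idempotent basis (obtaining the identity) and invoke tensoriality to transfer back; these are two packagings of the same observation that quantum multiplication by $\Delta$ is diagonal with nonzero entries $g_{i}^{-1}$ in the idempotent frame.
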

\begin{proof}
Follow the notation in \cite{L3} and restrict everything on the small phase space. Let $\{\mathcal{E}_{i}\}$ be the idempotents which span the space of primary vector fields on the small phase space.
Recall the following formulas derived in \cite{L3}:
\ben
\mathcal{E}_{i}\circ\mathcal{E}_{j}=\delta_{ij}\mathcal{E}_{i};\mbox{       } \gamma_{\alpha}=\sum_{i=1}^N\psi_{i\alpha}\frac{\mathcal{E}_{i}}{\sqrt{g_{i}}};\mbox{       }\Delta=\sum_{i=1}^N\frac{1}{g_{i}}\mathcal{E}_{i},
\een
where $g_{\alpha}=\|\mathcal{E}_{\alpha}\|^2$ and $(\psi_{\alpha\beta})$ is a invertible matrix (see \cite{L3} for more detail).
Hence, we have
\ben
&&\Delta\circ\gamma_{\alpha}=\sum_{i=1}^Ng_{i}^{-\frac{3}{2}}\psi_{i\alpha}\mathcal{E}_{i}.
\een
By Theorem \ref{mainresult6}, we have for any $k\geq2$
\ben
\left( \begin{array}{ccccccccc}
  g_{1}^{-\frac{3}{2}}\psi_{11}&g_{2}^{-\frac{3}{2}}\psi_{21}&\dots&g_{N}^{-\frac{3}{2}}\psi_{N1}\\
  g_{1}^{-\frac{3}{2}}\psi_{12}&g_{2}^{-\frac{3}{2}}\psi_{22}&\dots&g_{N}^{-\frac{3}{2}}\psi_{N2}\\
  \vdots&\vdots&   &\vdots\\
 g_{1}^{-\frac{3}{2}}\psi_{1N}&g_{2}^{-\frac{3}{2}}\psi_{2N}&\dots&g_{N}^{-\frac{3}{2}}\psi_{NN}\\
\end{array} \right)\left( \begin{array}{ccccccccc}
  \mathcal{E}_{1}(\langle\langle E^k\rangle\rangle_{1}-\Phi_{k})\\
  \mathcal{E}_{2}(\langle\langle E^k\rangle\rangle_{1}-\Phi_{k})\\
  \vdots\\
  \mathcal{E}_{N}(\langle\langle E^k\rangle\rangle_{1}-\Phi_{k})\\
\end{array} \right)=\left( \begin{array}{ccccccccc}
   0\\
   0\\
  \vdots\\
   0\\
\end{array} \right)
\een
The proof is completed due to the fact that the matrix $(\psi_{\alpha\beta})$ is invertible.
\end{proof}
Finally, we obtain one new relation from
 Getzler equation $\textbf{G}(E^{k},\gamma_{\alpha},\gamma_{\beta},\gamma_{\sigma})=0$.
The equation is $\textbf{G}(E^k,\gamma_{\alpha},\gamma^{\alpha}\circ\gamma^{\beta}\circ\gamma_{\mu},\gamma_{\beta})=0$  since in this case, using
equations \eqref{Observation1}, \eqref{Observation2} and \eqref{simplication2}, we have
\be\label{G1-1E1}
G_{1}(E^k,\gamma_{\alpha},\gamma^{\alpha}\circ\gamma^{\beta}\circ\gamma_{\mu},\gamma_{\beta})=-24\{\Delta^2\circ\gamma_{\mu}\}\langle\langle E^k\rangle\rangle_{1}+24k\langle\langle\Delta^2\circ E^{k-1}\circ\gamma_{\mu}\rangle\rangle_{1}
\ee
by Lemma \ref{G1-1E}. Actually, we have

\begin{thm}\label{mainresult7}
For all $k\geq1$ and any $\mu$,
\ben
&&24\langle\langle\Delta^2\circ E^{k-1}\circ\gamma_{\mu}\rangle\rangle_{1}\\
&=&5\langle\langle\{\Delta\circ E^{k-1}\}\gamma_{\alpha}\gamma^{\alpha}\{\Delta\circ\gamma_{\mu}\}\rangle\rangle_{0}\\
&&+2\langle\langle\{\Delta\circ E^{k-1}\}\gamma_{\alpha}\{\gamma^{\alpha}\circ\gamma_{\mu}\}\Delta\rangle\rangle_{0}\\
&&-6\langle\langle\{\Delta\circ E^{k-1}\}G(\gamma^{\alpha}\circ\gamma^{\beta}\circ\gamma_{\mu})\gamma_{\alpha}\gamma_{\beta}\rangle\rangle_{0}\\
&&-6\langle\langle\{\Delta\circ E^{k-1}\circ\gamma_{\mu}\}G(\gamma^{\alpha}\circ\gamma^{\beta})\gamma_{\alpha}\gamma_{\beta}\rangle\rangle_{0}\\
&&-6\sum_{i=1}^{k-1}\langle\langle\{G(\Delta\circ E^{i-1}\circ\gamma^{\alpha})\circ G(\gamma_{\alpha})\} E^{k-i-1}\{\Delta\circ\gamma_{\mu}\}\rangle\rangle_{0}\\
&&+4\sum_{i=1}^{k-1}\langle\langle G(\Delta\circ E^{i-1})E^{k-i-1}\{\Delta^2\circ\gamma_{\mu}\}\rangle\rangle_{0}\\
&&-3\sum_{i=1}^{k-1}\langle\langle G(\Delta\circ E^{i-1}\circ\gamma_{\mu})E^{k-i-1}\Delta^2\rangle\rangle_{0}\\
&&+(k-1)\langle\langle\Delta^3E^{k-2}\gamma_{\mu}\rangle\rangle_{0}.
\een
\end{thm}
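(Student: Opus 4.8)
The plan is to read off this identity as the genus-zero shadow of Getzler's relation \eqref{Getzlertrr} evaluated on the distinguished quadruple $\textbf{G}(E^k,\gamma_\alpha,\gamma^\alpha\circ\gamma^\beta\circ\gamma_\mu,\gamma_\beta)=0$, with the sums over $\alpha,\beta$ understood; this is legitimate by Remark \ref{addedrem}, which permits the three non-Euler slots to be arbitrary vector fields. The reason for this choice is that the three inserted fields contract to $\sum_{\alpha,\beta}\gamma_\alpha\circ(\gamma^\alpha\circ\gamma^\beta\circ\gamma_\mu)\circ\gamma_\beta=\Delta^2\circ\gamma_\mu$, so the entire equation organizes itself around the single vector field $\Delta^2\circ\gamma_\mu$, and the left-hand side of the theorem is exactly the genus-one content that Getzler's relation ties to genus-zero data.

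The genus-one half is already in hand: by Lemma \ref{G1-1E} together with the observations \eqref{Observation1}, \eqref{Observation2} and the identity \eqref{simplication2}, one obtains the clean form \eqref{G1-1E1}, namely $G_1=-24\{\Delta^2\circ\gamma_\mu\}\langle\langle E^k\rangle\rangle_1+24k\langle\langle\Delta^2\circ E^{k-1}\circ\gamma_\mu\rangle\rangle_1$. So the substance is the genus-zero half. First I would feed the same substitution into Lemma \ref{G0-1E}. Its $S_3$-sums now run over permutations of the three non-symmetric fields $\gamma_\alpha$, $\gamma^\alpha\circ\gamma^\beta\circ\gamma_\mu$, $\gamma_\beta$; summing over the internal indices repeatedly produces $\sum_\alpha\gamma_\alpha\circ\gamma^\alpha=\Delta$ and $\sum_\beta\gamma^\beta\circ\gamma_\beta=\Delta$, which is precisely what turns most orbits into $\Delta$-laden correlators. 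I would then reduce the four-point functions to three-point functions with Lemma \ref{4point-3point}, move the $G$-operators with Lemma \ref{simplication} (in particular using $G(\upsilon\circ\gamma^\rho)\circ\gamma_\rho=\frac{1}{2}\Delta\circ\upsilon$ from \eqref{simplication2}), and shuffle the surviving metric contractions with \eqref{Observation1} and \eqref{Observation2}.

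The decisive bookkeeping point, and the step I expect to be the main obstacle, is the emergence of an overall factor $k$ that reconciles the coefficient $24k$ in \eqref{G1-1E1} with the bare $24$ in the statement. Those terms of $G_0$ that carry a free sum $\sum_{i=1}^{k}$ with no explicit $k$ should collapse, once the $G$'s and $\Delta$'s are absorbed, onto the $i$-independent combination $E^{k-i}\circ E^{i-1}=E^{k-1}$, so that $\sum_{i=1}^{k}$ merely multiplies a single $E^{k-1}$-correlator by $k$; the terms carrying $\sum_{i=1}^{k-1}$ with polynomial coefficients $(k-3i)$, $i$, $(k-i)$ sum to multiples of $k(k-1)$; and the terms already bearing an explicit $k$ retain their residual $\sum_{i=1}^{k-1}$. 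Thus every genus-zero contribution is divisible by $k$. Adding $G_0+G_1=0$, the leading pieces assemble into $24\{\Delta^2\circ\gamma_\mu\}(\langle\langle E^k\rangle\rangle_1-\Phi_k)$, which vanishes by Theorem \ref{mainresult6} applied to $\Delta^2\circ\gamma_\mu=\Delta\circ(\Delta\circ\gamma_\mu)$ (extend the theorem from $\Delta\circ\gamma_\alpha$ to $\Delta\circ(\text{any field})$ by writing the inner field in the $\gamma_\nu$-basis and using function-bilinearity of $\circ$). What then remains is $24k\langle\langle\Delta^2\circ E^{k-1}\circ\gamma_\mu\rangle\rangle_1=-(\text{genus-zero side})$, and dividing by $k$, which is exactly why the statement requires $k\geq1$, yields the asserted formula. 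The genuinely hard part is not the strategy but the verification that after all reductions the eight genus-zero coefficients come out as $5,2,-6,-6,-6,4,-3$ and $k-1$; this demands careful tracking of the $S_3$-orbit multiplicities and a renaming of the internal summation index inside Lemma \ref{G0-1E} to avoid collision with the external free index $\mu$.
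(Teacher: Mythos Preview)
Your proposal is correct and follows essentially the same route as the paper: evaluate Getzler's relation on the triple $(\gamma_\alpha,\gamma^\alpha\circ\gamma^\beta\circ\gamma_\mu,\gamma_\beta)$, use \eqref{G1-1E1} for $G_1$, expand $G_0$ via Lemma \ref{G0-1E} together with \eqref{Observation1}, \eqref{Observation2}, \eqref{WDVV2}, Lemmas \ref{4point-3point} and \ref{simplication}, then cancel the $\{\Delta^2\circ\gamma_\mu\}(\langle\langle E^k\rangle\rangle_1-\Phi_k)$ term using Theorem \ref{mainresult6} and divide by $k$. Your explanation of why every genus-zero term acquires a factor of $k$ is a helpful gloss on what the paper simply records as the outcome of the computation in \eqref{G0-1E1}.
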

\begin{proof}
By Lemma \ref{G0-1E}, and using formulas \eqref{Observation1}, \eqref{Observation2}, \eqref{WDVV2} and Lemmas \ref{4point-3point}, \ref{simplication}, it
can be verified that
\bea
&&G_{0}(E^k,\gamma_{\alpha},\gamma^{\alpha}\circ\gamma^{\beta}\circ\gamma_{\mu},\gamma_{\beta})\nonumber\\
&=&24\{\Delta^2\circ\gamma_{\mu}\}\Phi_{k}-5k\langle\langle\{\Delta\circ E^{k-1}\}\gamma_{\alpha}\gamma^{\alpha}\{\Delta\circ\gamma_{\mu}\}\rangle\rangle_{0}\nonumber\\
&&-2k\langle\langle\{\Delta\circ E^{k-1}\}\gamma_{\alpha}\{\gamma^{\alpha}\circ\gamma_{\mu}\}\Delta\rangle\rangle_{0}\nonumber\\
&&+6k\langle\langle\{\Delta\circ E^{k-1}\}G(\gamma^{\alpha}\circ\gamma^{\beta}\circ\gamma_{\mu})\gamma_{\alpha}\gamma_{\beta}\rangle\rangle_{0}\nonumber\\
&&+6k\langle\langle\{\Delta\circ E^{k-1}\circ\gamma_{\mu}\}G(\gamma^{\alpha}\circ\gamma^{\beta})\gamma_{\alpha}\gamma_{\beta}\rangle\rangle_{0}\nonumber\\
&&+6k\sum_{i=1}^{k-1}\langle\langle\{G(\Delta\circ E^{i-1}\circ\gamma^{\alpha})\circ G(\gamma_{\alpha})\} E^{k-i-1}\{\Delta\circ\gamma_{\mu}\}\rangle\rangle_{0}\nonumber\\
&&-4k\sum_{i=1}^{k-1}\langle\langle G(\Delta\circ E^{i-1})E^{k-i-1}\{\Delta^2\circ\gamma_{\mu}\}\rangle\rangle_{0}\nonumber\\
&&+3k\sum_{i=1}^{k-1}\langle\langle G(\Delta\circ E^{i-1}\circ\gamma_{\mu})E^{k-i-1}\Delta^2\rangle\rangle_{0}\nonumber\\
&&-k(k-1)\langle\langle\Delta^3E^{k-2}\gamma_{\mu}\rangle\rangle_{0}.\label{G0-1E1}
\eea
By Theorem \ref{mainresult6}, we have
\ben
\{\Delta^2\circ\gamma_{\mu}\}(\langle\langle E^k\rangle\rangle_{1}-\Phi_{k})=0.
\een
Together with equations \eqref{G1-1E1}, \eqref{G0-1E1} and \eqref{Getzlertrr}, the proof is completed.
\end{proof}
In particular, we have
\begin{cor}
\ben
\langle\langle\Delta^2\rangle\rangle_{1}=\frac{7}{24}\langle\langle\Delta\gamma_{\mu}\gamma^{\mu}\Delta\rangle\rangle_{0}
-\frac{1}{2}\langle\langle\Delta G(\gamma^{\alpha}\circ\gamma^{\beta})\gamma_{\alpha}\gamma_{\beta}\rangle\rangle_{0}.
\een
\end{cor}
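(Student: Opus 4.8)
The plan is to obtain this identity as the single specialization $k=1$ of Theorem~\ref{mainresult7}, with the free index $\mu$ chosen so that $\gamma_\mu$ is the identity $\gamma_1$ of the cohomology ring. First I would recall from the string equation \eqref{stringequ1} that $\gamma_1$ is the identity of the quantum product, i.e. $\gamma_1\circ\mathcal{W}=\mathcal{W}$ for every vector field $\mathcal{W}$; in particular $E^0=\gamma_1$, so that setting $k=1$ and $\gamma_\mu=\gamma_1$ collapses the left-hand side $24\langle\langle\Delta^2\circ E^{k-1}\circ\gamma_\mu\rangle\rangle_1$ of Theorem~\ref{mainresult7} to $24\langle\langle\Delta^2\rangle\rangle_1$.

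Next I would process the right-hand side of Theorem~\ref{mainresult7} term by term under the same specialization. The three sums $\sum_{i=1}^{k-1}$ are empty for $k=1$ and hence vanish, and the last term carries the prefactor $(k-1)=0$, so only the four explicit genus-$0$ four-point terms survive. Using $E^0=\gamma_1$ together with the identities $\Delta\circ\gamma_1=\Delta$, $\gamma^\alpha\circ\gamma_1=\gamma^\alpha$, and $\gamma^\alpha\circ\gamma^\beta\circ\gamma_1=\gamma^\alpha\circ\gamma^\beta$, the terms with coefficients $5$ and $2$ both reduce to $\langle\langle\Delta\gamma_\alpha\gamma^\alpha\Delta\rangle\rangle_0$, while the two terms with coefficient $-6$ both reduce to $\langle\langle\Delta G(\gamma^\alpha\circ\gamma^\beta)\gamma_\alpha\gamma_\beta\rangle\rangle_0$.

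Combining like terms then gives $24\langle\langle\Delta^2\rangle\rangle_1 = 7\langle\langle\Delta\gamma_\alpha\gamma^\alpha\Delta\rangle\rangle_0 - 12\langle\langle\Delta G(\gamma^\alpha\circ\gamma^\beta)\gamma_\alpha\gamma_\beta\rangle\rangle_0$; dividing by $24$ and relabeling the summation index $\alpha$ as $\mu$ in the first term yields exactly the claimed formula. The computation contains no genuine obstacle: the only points demanding care are verifying that $\gamma_1$ really is the unit for $\circ$ (so that the free index may legitimately be set to $1$ and every $E^0$ factor disappears) and bookkeeping the two pairs of coincident terms so that the coefficients $5+2=7$ and $(-6)+(-6)=-12$ are assembled correctly.
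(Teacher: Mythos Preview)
Your proof is correct and is exactly the intended derivation: the paper presents this corollary immediately after Theorem~\ref{mainresult7} with the phrase ``In particular, we have'', i.e.\ it is obtained by the specialization $k=1$, $\gamma_\mu=\gamma_1$, just as you do. Your bookkeeping of the surviving terms and the coefficients $5+2=7$, $(-6)+(-6)=-12$ is accurate.
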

\begin{rem}
Theorem \ref{mainresult7} can be also obtained from
\ben
\textbf{G}(E^{k},\gamma_{\alpha},\gamma^{\alpha},\Delta\circ\gamma_{\mu})+\textbf{G}(E^{k},\gamma_{\alpha},\gamma^{\alpha}\circ\gamma_{\mu},\Delta)=0.
\een
\end{rem}
\rule{0pt}{0pt}

\appendix
\section{}
  In this appendix, we present the following results which are used in
Lemma \ref{G0-1E}. Notice that $\{\alpha,\beta,\sigma\}=\{\varsigma_{1},\varsigma_{2},\varsigma_{3}\}$ is implicit below.
We start with the computation of each term on the right-hand side of  equation \eqref{G0formula1} as follows.
\begin{lem}\label{G0part1}For any $\alpha,\beta,\sigma$,
\ben
&&\langle\langle E^k\gamma_{\alpha}\gamma_{\beta}\gamma_{\sigma}\Delta\rangle\rangle_{0}\\
&=&-\frac{1}{6}\sum_{g\in S_{3}}\sum_{i=1}^{k-1}\langle\langle\{G(E^{k-i})\circ\Delta\}\gamma_{\varsigma_{g(1)}}\gamma_{\varsigma_{g(2)}}\{E^{i-1}\circ\gamma_{\varsigma_{g(3)}}\}\rangle\rangle_{0}\\
&&+\frac{1}{6}\sum_{g\in S_{3}}\sum_{i=1}^{k}\langle\langle\{G(\Delta\circ E^{k-i})\}\gamma_{\varsigma_{g(1)}}\gamma_{\varsigma_{g(2)}}\{E^{i-1}\circ\gamma_{\varsigma_{g(3)}}\}\rangle\rangle_{0}\\
&&-\frac{1}{3}b_{1}\sum_{g\in S_{3}}\langle\langle \Delta\gamma_{\varsigma_{g(1)}}\gamma_{\varsigma_{g(2)}}\{E^{k-1}\circ\gamma_{\varsigma_{g(3)}}\}\rangle\rangle_{0}\\
&&-\frac{1}{3}\sum_{g\in S_{3}}\sum_{i=1}^{k}\langle\langle\{\Delta\circ E^{k-i}\}\gamma_{\varsigma_{g(1)}}\gamma_{\varsigma_{g(2)}}\{E^{i-1}\circ\gamma_{\varsigma_{g(3)}}\}\rangle\rangle_{0}\\
&&-\frac{1}{6}\sum_{g\in S_{3}}\sum_{i=1}^{k-1}\langle\langle G(E^{k-i})\Delta \gamma_{\varsigma_{g(1)}}\{E^{i-1}\circ\gamma_{\varsigma_{g(2)}}\circ\gamma_{\varsigma_{g(3)}}\}\rangle\rangle_{0}\\
&&+\frac{1}{2}\sum_{g\in S_{3}}\sum_{i=1}^{k-1}\langle\langle\{\Delta\circ E^{i-1}\}\gamma_{\varsigma_{g(1)}}\gamma_{\varsigma_{g(2)}}G(E^{k-i}\circ\gamma_{\varsigma_{g(3)}})\rangle\rangle_{0}\\
&&+\frac{1}{3}\sum_{g\in S_{3}}\langle\langle\{\Delta\circ E^{k-1}\}\gamma_{\varsigma_{g(1)}}\gamma_{\varsigma_{g(2)}}G(\gamma_{\varsigma_{g(3)}})\rangle\rangle_{0}\\
&&+\frac{1}{6}\sum_{g\in S_{3}}\langle\langle\Delta\gamma_{\varsigma_{g(1)}} \{E^{k-1}\circ\gamma_{\varsigma_{g(2)}}\}G(\gamma_{\varsigma_{g(3)}})\rangle\rangle_{0}\\
&&-\frac{1}{6}\sum_{g\in S_{3}}\sum_{i=1}^{k-1}\langle\langle\{\Delta\circ E^{i-1}\}\gamma_{\varsigma_{g(1)}}\gamma_{\varsigma_{g(2)}}\{G(E^{k-i})\circ\gamma_{\varsigma_{g(3)}}\}\rangle\rangle_{0}\\
&&+\frac{1}{6}\sum_{g\in S_{3}}\sum_{i=1}^{k}\sum_{j=1}^{i-1}\langle\langle\{G(E^{k-i})\circ\Delta\circ E^{j-1}\}G(E^{i-j-1}\circ\gamma_{\varsigma_{g(1)}}\circ\gamma_{\varsigma_{g(2)}})\gamma_{\varsigma_{g(3)}}\rangle\rangle_{0}\\
&&+\frac{1}{6}b_{1}\sum_{g\in S_{3}}\sum_{i=1}^{k-1}\langle\langle\{\Delta\circ E^{k-i-1}\}G(E^{i-1}\circ\gamma_{\varsigma_{g(1)}}\circ\gamma_{\varsigma_{g(2)}})\gamma_{\varsigma_{g(3)}}\rangle\rangle_{0}\\
&&+\frac{1}{2}\sum_{g\in S_{3}}\sum_{i=1}^{k}\sum_{j=1}^{i-1}\langle\langle E^{k-i}G(E^{j-1}\circ\gamma_{\varsigma_{g(1)}}\circ\gamma_{\varsigma_{g(2)}})G(\Delta\circ E^{i-j-1}\circ\gamma_{\varsigma_{g(3)}})\rangle\rangle_{0}\\
&&-\frac{1}{6}\sum_{g\in S_{3}}\sum_{i=1}^{k-1}\langle\langle \{\Delta\circ E^{i-1}\}G( E^{k-i-1}\circ\gamma_{\varsigma_{g(1)}}\circ \gamma_{\varsigma_{g(2)}})G(\gamma_{\varsigma_{g(3)}})\rangle\rangle_{0}\\
&&-\frac{1}{3}\sum_{g\in S_{3}}\sum_{i=1}^{k}\sum_{j=1}^{i-1}\langle\langle \{G(\Delta\circ E^{k-i})\circ E^{j-1}\}G(E^{i-j-1}\circ\gamma_{\varsigma_{g(1)}}\circ\gamma_{\varsigma_{g(2)}})\gamma_{\varsigma_{g(3)}}\rangle\rangle_{0}\\
&&-2\sum_{i=1}^{k-1}\sum_{j=1}^{k-i}\langle\langle\{G(E^{k-i-j})\circ E^{i+j-2}\}\Delta\{\gamma_{\alpha}\circ\gamma_{\beta}\circ\gamma_{\sigma}\}\rangle\rangle_{0}\\
&&-\frac{1}{6}(b_{1}+2)\sum_{g\in S_{3}}\sum_{i=1}^{k-1}\sum_{j=1}^{k-i}\langle\langle\{\Delta\circ E^{k-j-1}\}G(E^{j-1}\circ\gamma_{\varsigma_{g(1)}}\circ\gamma_{\varsigma_{g(2)}})\gamma_{\varsigma_{g(3)}}\rangle\rangle_{0}\\
&&+\frac{1}{6}(b_{1}+4)\sum_{g\in S_{3}}\sum_{i=1}^{k-1}\sum_{j=1}^{k-i}\langle\langle\{\Delta\circ E^{k-j-1}\}G(E^{j-1}\circ\gamma_{\varsigma_{g(1)}})\{\gamma_{\varsigma_{g(2)}}\circ\gamma_{\varsigma_{g(3)}}\}\rangle\rangle_{0}\\
&&-\frac{1}{6}\sum_{g\in S_{3}}\sum_{i=1}^{k}\sum_{j=1}^{i-1}\langle\langle\{G(E^{k-i})\circ\Delta\circ E^{j-1}\}G(E^{i-j-1}\circ\gamma_{\varsigma_{g(1)}})\{\gamma_{\varsigma_{g(2)}}\circ\gamma_{\varsigma_{g(3)}}\}\rangle\rangle_{0}\\
&&-\frac{1}{6}b_{1}\sum_{g\in S_{3}}\sum_{i=1}^{k-1}\langle\langle\{\Delta\circ E^{i-1}\}G(E^{k-i-1}\circ\gamma_{\varsigma_{g(1)}})\{\gamma_{\varsigma_{g(2)}}\circ\gamma_{\varsigma_{g(3)}}\}\rangle\rangle_{0}\\
&&-\sum_{g\in S_{3}}\sum_{i=1}^{k}\sum_{j=1}^{i-1}\langle\langle\{E^{k-i}\circ \gamma_{\varsigma_{g(1)}}\}G(\Delta\circ E^{j-1}\circ\gamma_{\varsigma_{g(2)}})G(E^{i-j-1}\circ\gamma_{\varsigma_{g(3)}})\rangle\rangle_{0}\\
&&+\frac{1}{6}\sum_{g\in S_{3}}\sum_{i=1}^{k-1}\langle\langle\{E^{i-1}\circ \gamma_{\varsigma_{g(1)}}\}G(\Delta\circ E^{k-i-1}\circ\gamma_{\varsigma_{g(2)}})G(\gamma_{\varsigma_{g(3)}})\rangle\rangle_{0}\\
&&+\frac{2}{3}\sum_{g\in S_{3}}\sum_{i=1}^{k}\sum_{j=1}^{i-1}\langle\langle\{G(\Delta\circ E^{k-i})\circ E^{j-1}\}G(E^{i-j-1}\circ\gamma_{\varsigma_{g(1)}})\{\gamma_{\varsigma_{g(2)}}\circ\gamma_{\varsigma_{g(3)}}\}\rangle\rangle_{0}\\
&&-\frac{1}{6}\sum_{g\in S_{3}}\sum_{i=1}^{k-1}\langle\langle\{G(\Delta\circ E^{i-1})\circ E^{k-i-1}\}\{\gamma_{\varsigma_{g(1)}}\circ\gamma_{\varsigma_{g(2)}}\}G(\gamma_{\varsigma_{g(3)}})\rangle\rangle_{0}\\
&&+\frac{1}{6}\sum_{g\in S_{3}}\sum_{i=1}^{k-1}\langle\langle\{\Delta\circ E^{k-i-1}\circ \gamma_{\varsigma_{g(1)}}\}G(E^{i-1}\circ\gamma_{\varsigma_{g(2)}})G(\gamma_{\varsigma_{g(3)}})\rangle\rangle_{0}\\
&&-\frac{1}{6}b_{1}\sum_{g\in S_{3}}\sum_{i=1}^{k-1}\sum_{j=1}^{k-i}\langle\langle E^{k-i-j}G(\Delta\circ E^{i+j-2}\circ\gamma_{\varsigma_{g(1)}})\{\gamma_{\varsigma_{g(2)}}\circ \gamma_{\varsigma_{g(3)}}\}\rangle\rangle_{0}\\
&&+\frac{1}{3}\sum_{g\in S_{3}}\sum_{i=1}^{k}\sum_{j=1}^{i-1}\langle\langle \{G(E^{k-i})\circ E^{j-1}\}G(\Delta\circ E^{i-j-1}\circ\gamma_{\varsigma_{g(1)}})\{\gamma_{\varsigma_{g(2)}}\circ \gamma_{\varsigma_{g(3)}}\}\rangle\rangle_{0}\\
&&-\frac{1}{6}b_{1}\sum_{g\in S_{3}}\sum_{i=1}^{k-1}\langle\langle  E^{k-i-1}G(\Delta\circ E^{i-1}\circ\gamma_{\varsigma_{g(1)}})\{\gamma_{\varsigma_{g(2)}}\circ \gamma_{\varsigma_{g(3)}}\}\rangle\rangle_{0}\\
&&-\sum_{i=1}^{k-1}\sum_{j=1}^{i-1}\langle\langle\{G(\Delta\circ E^{i-j-1})\circ G(E^{k-i})\}E^{j-1}\{\gamma_{\alpha}\circ\gamma_{\beta}\circ\gamma_{\sigma}\}\rangle\rangle_{0}\\
&&+\frac{1}{6}b_{1}\sum_{g\in S_{3}}\sum_{i=1}^{k-1}\sum_{j=1}^{k-i}\langle\langle E^{k-i-j}G(\Delta\circ\gamma_{\varsigma_{g(1)}}\circ\gamma_{\varsigma_{g(2)}}\circ E^{i+j-2})\gamma_{\varsigma_{g(3)}}\rangle\rangle_{0},
\een
where $k\geq0$ and $\{\varsigma_{1},\varsigma_{2},\varsigma_{3}\}=\{\alpha,\beta,\sigma\}$.
\end{lem}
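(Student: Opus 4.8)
The quantity on the left is a genus-$0$ five-point correlation function carrying the quantum power $E^{k}$, so the natural engine is Lemma \ref{WDVV4}, which is precisely the specialization of the $\overline{\mathcal{M}}_{0,6}$ relation \eqref{WDVVd2} that unfolds a power $\upsilon^{k}$ inside a five-point function. The plan is to apply Lemma \ref{WDVV4} with $\upsilon=E$ to $\langle\langle E^{k}\gamma_{\alpha}\gamma_{\beta}\gamma_{\sigma}\Delta\rangle\rangle_{0}$, placing $\Delta$ in a fixed slot and letting $\gamma_{\alpha},\gamma_{\beta},\gamma_{\sigma}$ occupy the remaining generic slots. Since the left-hand side is totally symmetric in $\alpha,\beta,\sigma$, I first rewrite it as $\frac{1}{6}\sum_{g\in S_{3}}\langle\langle E^{k}\gamma_{\varsigma_{g(1)}}\gamma_{\varsigma_{g(2)}}\gamma_{\varsigma_{g(3)}}\Delta\rangle\rangle_{0}$ and apply the lemma termwise; the averaging over $S_{3}$ is what forces the manifestly symmetric presentation of the target formula. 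Note that Lemma \ref{WDVV4} is valid with arbitrary vector fields in its $\gamma$-slots, because its proof only invokes \eqref{WDVVd2}, so inserting $\Delta$ is legitimate.

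A single pass of Lemma \ref{WDVV4} produces two kinds of terms. The first kind are contractions $\sum_{\rho}\langle\langle E^{k-i}\cdots\gamma^{\rho}\rangle\rangle_{0}\langle\langle\gamma_{\rho}\cdots\rangle\rangle_{0}$ of two four-point functions, one of which still carries a power $E^{k-i}$; I reduce that factor to a vector field via formula \eqref{4point-3point1} and then contract it into the companion four-point function, which is what generates the double sums $\sum_{i}\sum_{j}$ and the four-point functions of fixed vector fields appearing in the statement. The second kind are five-point functions each containing one \emph{bare} $E$ (the explicit $\upsilon$ from the lemma) together with a surviving power $E^{k-i}$ or quantum products $\{E^{k-i}\circ\gamma\}$. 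On each of these I apply the quasi-homogeneity equation \eqref{quasihom} to the bare $E$; with four remaining slots the polynomial correction $\nabla^{4}(\tfrac{1}{2}\sum_{\alpha,\beta}\mathcal{C}_{\alpha\beta}t^{\alpha}t^{\beta})$ vanishes (the argument is quadratic), so only the $G$-operator terms and the constant $(b_{1}+1)$-shift survive, which is the origin of the $b_{1}$-coefficients in the answer. The outcome is a collection of four-point functions still carrying a power $E^{m}$.

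Finally I reduce these $E^{m}$-carrying four-point functions to three-point functions by Lemma \ref{4point-3point}; applied inside the already-present sums this yields the triple sums $\sum_{i}\sum_{j}$ with $G(\cdots)$ factors. Throughout, Lemma \ref{simplication} (especially \eqref{simplication1} and \eqref{simplication2}, the latter to resolve the $\Delta=\sum_{\mu}\gamma^{\mu}\circ\gamma_{\mu}$ contractions) and formula \eqref{derivative3} for $\nabla_{\upsilon}E^{k}$ are used to rewrite $G$ of quantum products, to transfer $b_{\alpha}$-weights across $\eta$, and to merge terms; the basic WDVV identities \eqref{WDVV1}--\eqref{WDVV2} are used implicitly to move quantum products between slots. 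Collecting like terms and restoring the $\frac{1}{6}\sum_{g\in S_{3}}$ symmetrization gives the stated identity.

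The step I expect to be the main obstacle is not conceptual but organizational: the single application of Lemma \ref{WDVV4} already spawns on the order of a dozen families of terms, and each subsequent use of \eqref{quasihom} and Lemma \ref{4point-3point} multiplies the count and nests the summations. The genuinely delicate points are, first, bookkeeping the quasi-homogeneity corrections correctly, i.e.\ keeping the $(b_{1}+1)$ constants and verifying that the genus-$0$ polynomial term drops out at each stage, and second, matching the index ranges of the resulting double and triple sums so that the redundant boundary terms cancel exactly and the coefficients (the $\tfrac{1}{6}$, $\tfrac{1}{3}$, $\tfrac{1}{2}$ factors and the $b_{1}$, $b_{1}+2$, $b_{1}+4$ weights) come out as stated. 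There is no shortcut past this computation; its length is exactly why it is isolated as an appendix lemma feeding into Lemma \ref{G0-1E}.
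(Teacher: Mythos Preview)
Your plan is essentially the paper's own proof: apply Lemma~\ref{WDVV4} with $\upsilon=E$, eliminate the bare $E$'s via the quasi-homogeneity relation, reduce the surviving $E^{m}$-carrying four-point functions by Lemma~\ref{4point-3point}, and simplify with Lemma~\ref{simplication}, symmetrizing over $S_{3}$ at the end.

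Two small corrections to the description. First, for the product terms from Lemma~\ref{WDVV4}, the factor you should collapse first is the one containing the \emph{bare} $E$, and the right tool there is equation~\eqref{4pointeq1} (the four-point specialization of \eqref{quasihom}), not \eqref{4point-3point1}; e.g.\ $\sum_{\rho}\langle\langle E\,E^{k-i}\Delta\gamma^{\rho}\rangle\rangle_{0}\langle\langle\gamma_{\rho}\cdots\rangle\rangle_{0}$ becomes a single four-point function with entries $G(E^{k-i})\circ\Delta$, $G(\Delta)\circ E^{k-i}$, $G(E^{k-i}\circ\Delta)$, $b_{1}E^{k-i}\circ\Delta$. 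Reducing the $E^{k-i}$-factor via \eqref{4point-3point1} instead is possible in principle but messier, since that factor also carries the bare $E$. Second, after the bare $E$'s are gone, the paper inserts an explicit intermediate step you only mention parenthetically: several of the resulting four-point functions (e.g.\ $\langle\langle\{G(E^{k-i})\circ\gamma_{\sigma}\}\gamma_{\alpha}\{E^{i-1}\circ\gamma_{\beta}\}\Delta\rangle\rangle_{0}$) are first rewritten via \eqref{WDVV2} to shift the quantum products into forms to which Lemma~\ref{4point-3point} applies directly. This ``type-changing'' pass is where most of the term proliferation and the eventual matching of coefficients actually happens, so you should not treat it as implicit. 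Formula~\eqref{derivative3} is not needed in this lemma.
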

\begin{proof}
By Lemma \ref{WDVV4}, we have
\ben
&&\langle\langle E^k\gamma_{\alpha}\gamma_{\beta}\gamma_{\sigma}\Delta\rangle\rangle_{0}\\
&=&-\sum_{i=1}^{k-1}\langle\langle EE^{k-i}\Delta \gamma^{\rho}\rangle\rangle_{0}
\langle\langle \gamma_{\rho}\gamma_{\alpha}\{\gamma_{\beta}\circ E^{i-1}\}\gamma_{\sigma}\rangle\rangle_{0}\\
&&-\sum_{i=1}^{k-1}\langle\langle EE^{k-i}
\{\gamma_{\alpha}\circ\gamma_{\beta}\circ E^{i-1}\}\gamma_{\sigma}\Delta\rangle\rangle_{0}\\
&&-\sum_{i=1}^{k-1}\langle\langle EE^{k-i}\gamma_{\sigma}\gamma^{\rho}\rangle\rangle_{0}
\langle\langle \gamma_{\rho}\gamma_{\alpha}\{\gamma_{\beta}\circ E^{i-1}\}\Delta\rangle\rangle_{0}\\
&&+\sum_{i=1}^{k-1}\langle\langle E^{k-i}\gamma_{\alpha}\Delta\gamma^{\rho}\rangle\rangle_{0}
\langle\langle \gamma_{\rho}\{\gamma_{\beta}\circ E^{i-1}\}E\gamma_{\sigma}\rangle\rangle_{0}\\
&&+\sum_{i=1}^{k-1}\langle\langle E^{k-i}\gamma_{\alpha}\gamma_{\sigma}\gamma^{\rho}\rangle\rangle_{0}
\langle\langle \gamma_{\rho}\{\gamma_{\beta}\circ E^{i-1}\}E\Delta\rangle\rangle_{0}\\
&&+\sum_{i=1}^{k}\langle\langle\{E^{k-i}
\circ\gamma_{\alpha}\}\{\gamma_{\beta}\circ E^{i-1}\}E\gamma_{\sigma}\Delta\rangle\rangle_{0}.
\een
We compute each term of right-hand side of the above equation as follows.
By equation \eqref{4pointeq1}, we have
\ben
&&\langle\langle EE^{k-i}\Delta \gamma^{\rho}\rangle\rangle_{0}
\langle\langle \gamma_{\rho}\gamma_{\alpha}\{\gamma_{\beta}\circ E^{i-1}\}\gamma_{\sigma}\rangle\rangle_{0}\\
&=&\langle\langle\{G(E^{k-i})\circ\Delta\}\gamma_{\alpha}\gamma_{\sigma}\{E^{i-1}\circ\gamma_{\beta}\}\rangle\rangle_{0}
+\langle\langle\{G(\Delta)\circ E^{k-i}\}\gamma_{\alpha}\gamma_{\sigma}\{E^{i-1}\circ\gamma_{\beta}\}\rangle\rangle_{0}\\
&&-\langle\langle G(\Delta\circ E^{k-i})\gamma_{\alpha}\gamma_{\sigma}\{E^{i-1}\circ\gamma_{\beta}\}\rangle\rangle_{0}
-b_{1}\langle\langle \{\Delta\circ E^{k-i}\}\gamma_{\alpha}\gamma_{\sigma}\{E^{i-1}\circ\gamma_{\beta}\}\rangle\rangle_{0},
\een
\ben
&&\langle\langle EE^{k-i}\gamma_{\sigma}\gamma^{\rho}\rangle\rangle_{0}
\langle\langle \gamma_{\rho}\gamma_{\alpha}\{\gamma_{\beta}\circ E^{i-1}\}\Delta\rangle\rangle_{0}\\
&=&\langle\langle\{G(E^{k-i})\circ\gamma_{\sigma}\}\gamma_{\alpha}\{E^{i-1}\circ\gamma_{\beta}\}\Delta\rangle\rangle_{0}
+\langle\langle\{E^{k-i}\circ G(\gamma_{\sigma}) \}\gamma_{\alpha}\{E^{i-1}\circ\gamma_{\beta}\}\Delta\rangle\rangle_{0}\\
&&-\langle\langle G(E^{k-i}\circ\gamma_{\sigma})\gamma_{\alpha}\{E^{i-1}\circ\gamma_{\beta}\}\Delta\rangle\rangle_{0}
-b_{1}\langle\langle \{E^{k-i}\circ\gamma_{\sigma}\}\gamma_{\alpha}\{E^{i-1}\circ\gamma_{\beta}\}\Delta\rangle\rangle_{0},
\een
\ben
&&\langle\langle E^{k-i}\gamma_{\alpha}\Delta\gamma^{\rho}\rangle\rangle_{0}
\langle\langle \gamma_{\rho}\{\gamma_{\beta}\circ E^{i-1}\}E\gamma_{\sigma}\rangle\rangle_{0}\\
&=&\langle\langle E^{k-i}\Delta\gamma_{\alpha}\{G(E^{i-1}\circ\gamma_{\beta})\circ\gamma_{\sigma}\}\rangle\rangle_{0}
+\langle\langle E^{k-i}\Delta\gamma_{\alpha}\{G(\gamma_{\sigma})\circ E^{i-1}\circ\gamma_{\beta}\}\rangle\rangle_{0}\\
&&-\langle\langle E^{k-i}\Delta\gamma_{\alpha}G(E^{i-1}\circ\gamma_{\beta}\circ\gamma_{\sigma})\rangle\rangle_{0}
-b_{1}\langle\langle E^{k-i}\Delta\gamma_{\alpha}\{E^{i-1}\circ\gamma_{\beta}\circ\gamma_{\sigma}\}\rangle\rangle_{0},
\een
\ben
&&\langle\langle E^{k-i}\gamma_{\alpha}\gamma_{\sigma}\gamma^{\rho}\rangle\rangle_{0}
\langle\langle \gamma_{\rho}\{\gamma_{\beta}\circ E^{i-1}\}E\Delta\rangle\rangle_{0}\\
&=&\langle\langle E^{k-i}\gamma_{\alpha}\gamma_{\sigma}\{G(E^{i-1}\circ\gamma_{\beta})\circ\Delta\}\rangle\rangle_{0}
+\langle\langle E^{k-i}\gamma_{\alpha}\gamma_{\sigma}\{G(\Delta)\circ E^{i-1}\circ\gamma_{\beta}\}\rangle\rangle_{0}\\
&&-\langle\langle E^{k-i}\gamma_{\alpha}\gamma_{\sigma}G(E^{i-1}\circ\gamma_{\beta}\circ\Delta)\rangle\rangle_{0}
-b_{1}\langle\langle E^{k-i}\gamma_{\alpha}\gamma_{\sigma}\{E^{i-1}\circ\gamma_{\beta}\circ\Delta\}\rangle\rangle_{0}.
\een
By equation \eqref{quasihom}, we have
\ben
&&\langle\langle EE^{k-i}
\{\gamma_{\alpha}\circ\gamma_{\beta}\circ E^{i-1}\}\gamma_{\sigma}\Delta\rangle\rangle_{0}\\
&=&\langle\langle G(E^{k-i})\Delta\gamma_{\sigma}\{E^{i-1}\circ\gamma_{\alpha}\circ\gamma_{\beta}\}\rangle\rangle_{0}
+\langle\langle E^{k-i}\Delta\gamma_{\sigma}G(E^{i-1}\circ\gamma_{\alpha}\circ\gamma_{\beta})\rangle\rangle_{0}\\
&&+\langle\langle E^{k-i}\Delta\{E^{i-1}\circ\gamma_{\alpha}\circ\gamma_{\beta}\}G(\gamma_{\sigma})\rangle\rangle_{0}
+\langle\langle E^{k-i}\gamma_{\sigma}\{E^{i-1}\circ\gamma_{\alpha}\circ\gamma_{\beta}\}G(\Delta)\rangle\rangle_{0}\\
&&-2(b_{1}+1)\langle\langle E^{k-i}\gamma_{\sigma}\{E^{i-1}\circ\gamma_{\alpha}\circ\gamma_{\beta}\}\Delta\rangle\rangle_{0}
\een
and
\ben
&&\langle\langle\{E^{k-i}
\circ\gamma_{\alpha}\}\{\gamma_{\beta}\circ E^{i-1}\}E\gamma_{\sigma}\Delta\rangle\rangle_{0}\\
&=&\langle\langle G(E^{k-i}
\circ\gamma_{\alpha})\{\gamma_{\beta}\circ E^{i-1}\}\gamma_{\sigma}\Delta\rangle\rangle_{0}
+\langle\langle\{E^{k-i}
\circ\gamma_{\alpha}\}G(\gamma_{\beta}\circ E^{i-1})\gamma_{\sigma}\Delta\rangle\rangle_{0}\\
&&+\langle\langle\{E^{k-i}
\circ\gamma_{\alpha}\}\{\gamma_{\beta}\circ E^{i-1}\}G(\gamma_{\sigma})\Delta\rangle\rangle_{0}
+\langle\langle\{E^{k-i}
\circ\gamma_{\alpha}\}\{\gamma_{\beta}\circ E^{i-1}\}\gamma_{\sigma}G(\Delta)\rangle\rangle_{0}\\
&&-2(b_{1}+1)\langle\langle\{E^{k-i}
\circ\gamma_{\alpha}\}\{\gamma_{\beta}\circ E^{i-1}\}\gamma_{\sigma}\Delta\rangle\rangle_{0}.
\een
Next, using equation \eqref{WDVV2} to change the type of the following 4-point functions
\ben
&&\langle\langle\{G(E^{k-i})\circ\gamma_{\sigma}\}\gamma_{\alpha}\{E^{i-1}\circ\gamma_{\beta}\}\Delta\rangle\rangle_{0}\\
&=&\langle\langle\{\Delta\circ E^{i-1}\}\gamma_{\alpha}\gamma_{\beta}\{G(E^{k-i})\circ\gamma_{\sigma}\}\rangle\rangle_{0}
+\langle\langle E^{i-1}\Delta\{\gamma_{\alpha}\circ\gamma_{\beta}\}\{G(E^{k-i})\circ\gamma_{\sigma}\}\rangle\rangle_{0}\\
&&-\langle\langle E^{i-1}\gamma_{\beta}\{\Delta\circ\gamma_{\alpha}\}\{G(E^{k-i})\circ\gamma_{\sigma}\}\rangle\rangle_{0},
\een
\ben
&&\langle\langle\{G(\gamma_{\sigma})\circ E^{k-i}\}\gamma_{\alpha}\{E^{i-1}\circ\gamma_{\beta}\}\Delta\rangle\rangle_{0}\\
&=&\langle\langle \{\Delta\circ E^{k-i}\}\{E^{i-1}\circ\gamma_{\beta}\}\gamma_{\alpha} G(\gamma_{\sigma})\rangle\rangle_{0}
+\langle\langle E^{k-i}\Delta\{E^{i-1}\circ\gamma_{\beta}\}\{\gamma_{\alpha}\circ G(\gamma_{\sigma})\}\rangle\rangle_{0}\\
&&-\langle\langle E^{k-i}\{E^{i-1}\circ\gamma_{\beta}\}\{\Delta\circ\gamma_{\alpha}\}G(\gamma_{\sigma})\rangle\rangle_{0},
\een
\ben
&&\langle\langle G(E^{k-i}\circ\gamma_{\sigma})\gamma_{\alpha}\{E^{i-1}\circ\gamma_{\beta}\}\Delta\rangle\rangle_{0}\\
&=&\langle\langle\{\Delta\circ E^{i-1}\}\gamma_{\alpha}\gamma_{\beta}G(E^{k-i}\circ\gamma_{\sigma})\rangle\rangle_{0}
+\langle\langle E^{i-1}\Delta\{\gamma_{\alpha}\circ\gamma_{\beta}\}G(E^{k-i}\circ\gamma_{\sigma})\rangle\rangle_{0}\\
&&-\langle\langle E^{i-1}\gamma_{\beta}\{\Delta\circ\gamma_{\alpha}\}G(E^{k-i}\circ\gamma_{\sigma})\rangle\rangle_{0},
\een
\ben
&&\langle\langle\{ E^{k-i}\circ\gamma_{\sigma}\}\gamma_{\alpha}\{E^{i-1}\circ\gamma_{\beta}\}\Delta\rangle\rangle_{0}\\
&=&\langle\langle \{\Delta\circ E^{k-i}\}\{E^{i-1}\circ\gamma_{\beta}\}\gamma_{\alpha} \gamma_{\sigma}\rangle\rangle_{0}
+\langle\langle E^{k-i}\Delta\{E^{i-1}\circ\gamma_{\beta}\}\{\gamma_{\alpha}\circ \gamma_{\sigma}\}\rangle\rangle_{0}\\
&&-\langle\langle E^{k-i}\{E^{i-1}\circ\gamma_{\beta}\}\{\Delta\circ\gamma_{\alpha}\}\gamma_{\sigma}\rangle\rangle_{0},
\een
\ben
&&\langle\langle G(E^{k-i}
\circ\gamma_{\alpha})\{\gamma_{\beta}\circ E^{i-1}\}\gamma_{\sigma}\Delta\rangle\rangle_{0}\\
&=&\langle\langle\{\Delta\circ E^{i-1}\}\gamma_{\beta}\gamma_{\sigma}G(E^{k-i}\circ\gamma_{\alpha})\rangle\rangle_{0}
+\langle\langle E^{i-1}\Delta\{\gamma_{\beta}\circ\gamma_{\sigma}\}G(E^{k-i}\circ\gamma_{\alpha})\rangle\rangle_{0}\\
&&-\langle\langle E^{i-1}\{\Delta\circ\gamma_{\sigma}\}\gamma_{\beta}G(E^{k-i}\circ\gamma_{\alpha})\rangle\rangle_{0},
\een
\ben
&&\langle\langle\{E^{k-i}
\circ\gamma_{\alpha}\}G(\gamma_{\beta}\circ E^{i-1})\gamma_{\sigma}\Delta\rangle\rangle_{0}\\
&=&\langle\langle\{\Delta\circ E^{k-i}\}\gamma_{\alpha}\gamma_{\sigma}G(E^{i-1}\circ\gamma_{\beta})\rangle\rangle_{0}
+\langle\langle E^{k-i}\Delta\{\gamma_{\alpha}\circ\gamma_{\sigma}\}G(E^{i-1}\circ\gamma_{\beta})\rangle\rangle_{0}\\
&&-\langle\langle E^{k-i}\{\Delta\circ\gamma_{\sigma}\}\gamma_{\alpha}G(E^{k-i}\circ\gamma_{\beta})\rangle\rangle_{0},
\een
\ben
&&\langle\langle\{E^{k-i}
\circ\gamma_{\alpha}\}\{\gamma_{\beta}\circ E^{i-1}\}G(\gamma_{\sigma})\Delta\rangle\rangle_{0}\\
&=&\langle\langle\{\Delta\circ E^{k-i}
\}\gamma_{\alpha}\{\gamma_{\beta}\circ E^{i-1}\}G(\gamma_{\sigma})\rangle\rangle_{0}
+\langle\langle E^{k-i}\Delta
\{\gamma_{\alpha}\circ\gamma_{\beta}\circ E^{i-1}\}G(\gamma_{\sigma})\rangle\rangle_{0}\\
&&-\langle\langle E^{k-i}\gamma_{\alpha}
\{\Delta\circ\gamma_{\beta}\circ E^{i-1}\}G(\gamma_{\sigma})\rangle\rangle_{0},
\een
\ben
&&\langle\langle\{E^{k-i}
\circ\gamma_{\alpha}\}\{\gamma_{\beta}\circ E^{i-1}\}\gamma_{\sigma}G(\Delta)\rangle\rangle_{0}\\
&=&\langle\langle\{G(\Delta)\circ E^{k-i}
\}\gamma_{\alpha}\{\gamma_{\beta}\circ E^{i-1}\}\gamma_{\sigma}\rangle\rangle_{0}
+\langle\langle E^{k-i}\gamma_{\sigma}
\{\gamma_{\alpha}\circ\gamma_{\beta}\circ E^{i-1}\}G(\Delta)\rangle\rangle_{0}\\
&&-\langle\langle E^{k-i}\gamma_{\alpha}\gamma_{\sigma}
\{G(\Delta)\circ\gamma_{\beta}\circ E^{i-1}\}\rangle\rangle_{0},
\een
\ben
&&\langle\langle\{E^{k-i}
\circ\gamma_{\alpha}\}\{\gamma_{\beta}\circ E^{i-1}\}\gamma_{\sigma}\Delta\rangle\rangle_{0}\\
&=&\langle\langle\{\Delta\circ E^{k-i}
\}\gamma_{\alpha}\{\gamma_{\beta}\circ E^{i-1}\}\gamma_{\sigma}\rangle\rangle_{0}
+\langle\langle E^{k-i}\gamma_{\sigma}
\{\gamma_{\alpha}\circ\gamma_{\beta}\circ E^{i-1}\}\Delta\rangle\rangle_{0}\\
&&-\langle\langle E^{k-i}\gamma_{\alpha}\gamma_{\sigma}
\{\Delta\circ\gamma_{\beta}\circ E^{i-1}\}\rangle\rangle_{0}.
\een
The proof is completed by a tedious computation, i.e., by firstly collecting all the above results and using Lemma \ref{4point-3point} to reduce 4-point functions to 3-point functions, and secondly using Lemma \ref{simplication} to simplify the resulting expression,
and then symmetrizing the result.
\end{proof}

\begin{lem}\label{G0part2}For any $\alpha,\beta,\sigma$,
\ben
&&\frac{1}{2}\sum_{g\in S_{3}}\langle\langle E^k\gamma_{\varsigma_{g(1)}}\gamma_{\varsigma_{g(2)}}\gamma^{\mu}\rangle\rangle_{0}
\langle\langle\gamma_{\mu}\gamma_{\varsigma_{g(3)}}\gamma_{\rho}\gamma^{\rho}\rangle\rangle_{0}\\
&=&-\frac{1}{2}\sum_{g\in S_{3}}\sum_{i=1}^k\langle\langle\{G(E^{k-i})\circ\gamma_{\varsigma_{g(1)}}\circ\gamma_{\varsigma_{g(2)}}\}
\gamma_{\mu}\gamma^{\mu}\{E^{i-1}\circ\gamma_{\varsigma_{g(3)}}\}\rangle\rangle_{0}\\
&&-\frac{1}{2}\sum_{g\in S_{3}}\sum_{i=1}^k\langle\langle G(E^{k-i}\circ\gamma_{\varsigma_{g(1)}}\circ\gamma_{\varsigma_{g(2)}})
\gamma_{\mu}\gamma^{\mu}\{E^{i-1}\circ\gamma_{\varsigma_{g(3)}}\}\rangle\rangle_{0}\\
&&+\sum_{g\in S_{3}}\sum_{i=1}^k\langle\langle\{G(E^{k-i}\circ\gamma_{\varsigma_{g(1)}})\circ\gamma_{\varsigma_{g(2)}}\}
\gamma_{\mu}\gamma^{\mu}\{E^{i-1}\circ\gamma_{\varsigma_{g(3)}}\}\rangle\rangle_{0}\\
&&-\frac{1}{2}\sum_{g\in S_{3}}\sum_{i=1}^k\sum_{j=1}^{i-1}\langle\langle\{G(E^{k-i})\circ\Delta\circ E^{j-1}\}G(E^{i-j-1}\circ\gamma_{\varsigma_{g(1)}})\{\gamma_{\varsigma_{g(2)}}\circ\gamma_{\varsigma_{g(3)}}\}\rangle\rangle_{0}\\
&&-\frac{1}{2}\sum_{g\in S_{3}}\sum_{i=1}^k\sum_{j=1}^{i-1}\langle\langle\{G(E^{k-i})\circ E^{j-1}\}G(\Delta\circ E^{i-j-1}\circ\gamma_{\varsigma_{g(1)}})\{\gamma_{\varsigma_{g(2)}}\circ\gamma_{\varsigma_{g(3)}}\}\rangle\rangle_{0}\\
&&+3\sum_{i=1}^{k}(i-1)\langle\langle\{G(E^{k-i})\circ E^{i-2}\}\Delta\{\gamma_{\alpha}\circ\gamma_{\beta}\circ\gamma_{\sigma}\}\rangle\rangle_{0}\\
&&-\frac{1}{2}\sum_{g\in S_{3}}\sum_{i=1}^k\sum_{j=1}^{k-i}\langle\langle\{\Delta\circ E^{j-1}\}G(E^{i-1}\circ\gamma_{\varsigma_{g(1)}}\circ\gamma_{\varsigma_{g(2)}})G(E^{k-i-j}\circ\gamma_{\varsigma_{g(3)}})\rangle\rangle_{0}\\
&&-\frac{1}{2}\sum_{g\in S_{3}}\sum_{i=1}^k\sum_{j=1}^{k-i}\langle\langle E^{k-i-j}G(E^{i-1}\circ\gamma_{\varsigma_{g(1)}}\circ\gamma_{\varsigma_{g(2)}})G(\Delta\circ E^{j-1}\circ\gamma_{\varsigma_{g(3)}})\rangle\rangle_{0}\\
&&+\frac{1}{2}\sum_{g\in S_{3}}\sum_{i=1}^k(k-i)\langle\langle\{\Delta\circ E^{k-i-1}\}G(E^{i-1}\circ\gamma_{\varsigma_{g(1)}}\circ\gamma_{\varsigma_{g(2)}})\gamma_{\varsigma_{g(3)}}\rangle\rangle_{0}\\
&&+\sum_{g\in S_{3}}\sum_{i=1}^k\sum_{j=1}^{i-1}\langle\langle\{\Delta\circ E^{j-1}\circ\gamma_{\varsigma_{g(1)}}\}G(E^{k-i}\circ\gamma_{\varsigma_{g(2)}})G(E^{i-j-1}\circ\gamma_{\varsigma_{g(3)}})\rangle\rangle_{0}\\
&&+\sum_{g\in S_{3}}\sum_{i=1}^k\sum_{j=1}^{i-1}\langle\langle\{ E^{i-j-1}\circ\gamma_{\varsigma_{g(1)}}\}G(E^{k-i}\circ\gamma_{\varsigma_{g(2)}})G(\Delta\circ E^{j-1}\circ\gamma_{\varsigma_{g(3)}})\rangle\rangle_{0}\\
&&-\sum_{g\in S_{3}}\sum_{i=1}^k\sum_{j=1}^{i-1}\langle\langle\{\Delta\circ E^{i-2}\}G(E^{k-i}\circ\gamma_{\varsigma_{g(1)}})\{\gamma_{\varsigma_{g(2)}}\circ\gamma_{\varsigma_{g(3)}}\}\rangle\rangle_{0},
\een
where $k\geq0$ and $\{\varsigma_{1},\varsigma_{2},\varsigma_{3}\}=\{\alpha,\beta,\sigma\}$.
\end{lem}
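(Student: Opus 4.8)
The plan is to compute the displayed double-correlator sum by first disentangling the two glued genus-0 four-point functions and then reducing the quantum powers of $E$ one factor at a time. The starting point is that the factor carrying $E^{k}$, namely $\langle\langle E^{k}\gamma_{\varsigma_{g(1)}}\gamma_{\varsigma_{g(2)}}\gamma^{\mu}\rangle\rangle_{0}$, can be rewritten through Lemma \ref{4point-3point} (equivalently its raised-index form \eqref{4point-3point1}) as a sum over $i=1,\dots,k$ of genuine three-point functions whose third argument is $\gamma^{\mu}$. Once this is done, the summation over $\mu$ glues each such three-point function to the second factor $\langle\langle\gamma_{\mu}\gamma_{\varsigma_{g(3)}}\gamma_{\rho}\gamma^{\rho}\rangle\rangle_{0}$ by the defining property of the quantum product, i.e. $\sum_{\mu}\langle\langle\mathcal{V}_{1}\mathcal{V}_{2}\gamma^{\mu}\rangle\rangle_{0}\langle\langle\gamma_{\mu}\cdots\rangle\rangle_{0}=\langle\langle\{\mathcal{V}_{1}\circ\mathcal{V}_{2}\}\cdots\rangle\rangle_{0}$. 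This converts the product of two four-point functions into a single family of four-point functions of the shape $\langle\langle\{G(E^{k-i})\circ E^{i-1}\circ\gamma_{\varsigma_{g(1)}}\circ\gamma_{\varsigma_{g(2)}}\}\gamma_{\varsigma_{g(3)}}\gamma_{\rho}\gamma^{\rho}\rangle\rangle_{0}$ together with its three siblings coming from the four terms of \eqref{4point-3point1}.

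Next I would reduce these residual four-point functions. Each still carries a quantum power $E^{i-1}$ (and a factor $G(E^{k-i})$) inside the first slot, together with the free diagonal pair $\gamma_{\rho}\gamma^{\rho}$. Applying the first derivative of the WDVV equation \eqref{WDVV2}, together with Lemma \ref{4point-3point} to absorb the remaining $E$-power, splits each of these into three-point data; this is precisely the step that generates the second summation index and hence the double sums $\sum_{i=1}^{k}\sum_{j=1}^{i-1}$ and $\sum_{i=1}^{k}\sum_{j=1}^{k-i}$ appearing on the right-hand side. Crucially, when the diagonal pair $\gamma_{\rho}\gamma^{\rho}$ is redistributed by \eqref{WDVV2}, some terms keep the two factors at distinct marked points (producing the $\gamma_{\mu}\gamma^{\mu}$ insertions of the answer) while others fuse them through the quantum product into $\sum_{\rho}\gamma_{\rho}\circ\gamma^{\rho}=\Delta$; tracking this fusion is what produces the $\Delta$- and $\{\Delta\circ E^{j-1}\}$-type terms.

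Throughout I would use Lemma \ref{simplication} to tidy up: \eqref{simplication1} to trade a $G$ acting on a quantum product for the difference of two correlators, and \eqref{simplication2}, \eqref{simplication3}, \eqref{simplication4} to rearrange compositions of $G$'s against the diagonal. Finally I would symmetrize over $g\in S_{3}$ and collect terms; the symmetrization is essential, because it is only after summing over the $S_{3}$-orbit that the many individually asymmetric contributions organize into the symmetric combinations displayed, with the terms proportional to $\{\gamma_{\alpha}\circ\gamma_{\beta}\circ\gamma_{\sigma}\}$ emerging from the $S_{3}$-invariant part.

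The hard part will be the bookkeeping rather than any single conceptual step: every reduction multiplies the number of terms, and matching coefficients and signs after the double (occasionally triple) reindexing requires repeated, careful shifts of the summation variables $i,j$ and repeated use of the identities in Lemma \ref{simplication} to cancel the redundant contributions. In particular, the correct separation between the $\gamma_{\mu}\gamma^{\mu}$-type terms and the $\Delta$-type terms — that is, deciding at each application of \eqref{WDVV2} whether the diagonal pair fuses or not — is the delicate point where an error is most likely, and it is where I would spend the most care.
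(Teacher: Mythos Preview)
Your proposal is correct and follows essentially the same route as the paper: first apply \eqref{4point-3point1} to the $E^{k}$-factor to produce the four single four-point functions, then use \eqref{WDVV2} on each to peel off the residual $E$-power, reduce via Lemma~\ref{4point-3point} and Lemma~\ref{simplication}, and finally symmetrize over $S_{3}$. The paper carries out exactly these steps (writing out the four WDVV applications explicitly) and then remarks that the remaining bookkeeping is analogous to that of Lemma~\ref{G0part1}.
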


\begin{proof}
It follows from equation \eqref{4point-3point1} that
\ben
&&\langle\langle E^k\gamma_{\varsigma_{g(1)}}\gamma_{\varsigma_{g(2)}}\gamma^{\mu}\rangle\rangle_{0}
\langle\langle\gamma_{\mu}\gamma_{\varsigma_{g(3)}}\gamma_{\rho}\gamma^{\rho}\rangle\rangle_{0}\\
&=&-\sum_{i=1}^{k}\langle\langle\{G(E^{k-i})\circ E^{i-1}\circ\gamma_{\varsigma_{g(1)}}\circ\gamma_{\varsigma_{g(2)}}\}\gamma_{\varsigma_{g(3)}}\gamma_{\rho}\gamma^{\rho}\rangle\rangle_{0}\\
&&-\sum_{i=1}^{k}\langle\langle\{G( E^{i-1}\circ\gamma_{\varsigma_{g(1)}}\circ\gamma_{\varsigma_{g(2)}})\circ E^{k-i}\}\gamma_{\varsigma_{g(3)}}\gamma_{\rho}\gamma^{\rho}\rangle\rangle_{0}\\
&&+\sum_{i=1}^{k}\langle\langle\{G(E^{k-i}\circ\gamma_{\varsigma_{g(1)}})\circ E^{i-1}\circ\gamma_{\varsigma_{g(2)}}\}\gamma_{\varsigma_{g(3)}}\gamma_{\rho}\gamma^{\rho}\rangle\rangle_{0}\\
&&+\sum_{i=1}^{k}\langle\langle\{G(E^{i-1}\circ\gamma_{\varsigma_{g(2)}})\circ E^{k-i}\circ\gamma_{\varsigma_{g(1)}} \}\gamma_{\varsigma_{g(3)}}\gamma_{\rho}\gamma^{\rho}\rangle\rangle_{0}.
\een
Using equation \eqref{WDVV2}, we have
\ben
&&\langle\langle\{G(E^{k-i})\circ E^{i-1}\circ\gamma_{\varsigma_{g(1)}}\circ\gamma_{\varsigma_{g(2)}}\}\gamma_{\varsigma_{g(3)}}\gamma_{\rho}\gamma^{\rho}\rangle\rangle_{0}\\
&=&\langle\langle\{G(E^{k-i})\circ\gamma_{\varsigma_{g(1)}}\circ\gamma_{\varsigma_{g(2)}} \}\gamma_{\rho}\gamma^{\rho}\{E^{i-1}\circ \gamma_{\varsigma_{g(3)}}\}\rangle\rangle_{0}\\
&&+\langle\langle E^{i-1}\gamma_{\varsigma_{g(3)}}\gamma_{\rho}\{G(E^{k-i})\circ\gamma_{\varsigma_{g(1)}}\circ\gamma_{\varsigma_{g(2)}}\circ \gamma^{\rho}\}\rangle\rangle_{0}\\
&&-\langle\langle E^{i-1}\gamma_{\rho}\{G(E^{k-i})\circ\gamma_{\varsigma_{g(1)}}\circ\gamma_{\varsigma_{g(2)}}\}\{\gamma_{\varsigma_{g(3)}}\circ \gamma^{\rho}\}\rangle\rangle_{0},
\een
\ben
&&\langle\langle\{G( E^{i-1}\circ\gamma_{\varsigma_{g(1)}}\circ\gamma_{\varsigma_{g(2)}})\circ E^{k-i}\}\gamma_{\varsigma_{g(3)}}\gamma_{\rho}\gamma^{\rho}\rangle\rangle_{0}\\
&=&\langle\langle G( E^{i-1}\circ\gamma_{\varsigma_{g(1)}}\circ\gamma_{\varsigma_{g(2)}}) \gamma_{\rho}\gamma^{\rho}\{E^{k-i}\circ\gamma_{\varsigma_{g(3)}}\rangle\rangle_{0}\\
&&+\langle\langle E^{k-i}\gamma_{\varsigma_{g(3)}}\gamma_{\rho}\{G( E^{i-1}\circ\gamma_{\varsigma_{g(1)}}\circ\gamma_{\varsigma_{g(2)}})\circ \gamma^{\rho}\}\rangle\rangle_{0}\\
&&-\langle\langle E^{k-i}\gamma_{\rho}\{\gamma_{\varsigma_{g(3)}}\circ \gamma^{\rho}\}G(E^{i-1}\circ\gamma_{\varsigma_{g(1)}}\circ\gamma_{\varsigma_{g(2)}})\rangle\rangle_{0},
\een
\ben
&&\langle\langle\{G(E^{k-i}\circ\gamma_{\varsigma_{g(1)}})\circ E^{i-1}\circ\gamma_{\varsigma_{g(2)}}\}\gamma_{\varsigma_{g(3)}}\gamma_{\rho}\gamma^{\rho}\rangle\rangle_{0}\\
&=&\langle\langle E^{i-1}\gamma_{\varsigma_{g(3)}}\gamma_{\rho}\{G(E^{k-i}\circ\gamma_{\varsigma_{g(1)}})\circ \gamma_{\varsigma_{g(2)}}\circ\gamma^{\rho}\}\rangle\rangle_{0}\\
&&+\langle\langle \{G(E^{k-i}\circ\gamma_{\varsigma_{g(1)}})\circ \gamma_{\varsigma_{g(2)}}\}\gamma_{\rho}\gamma^{\rho}\{E^{i-1}\circ\gamma_{\varsigma_{g(3)}}\}\rangle\rangle_{0}\\
&&-\langle\langle E^{i-1}\gamma_{\rho}\{\gamma^{\rho}\circ\gamma_{\varsigma_{g(3)}}\}\{G(E^{k-i}\circ\gamma_{\varsigma_{g(1)}})\circ \gamma_{\varsigma_{g(2)}}\}\rangle\rangle_{0},
\een
and
\ben
&&\langle\langle\{G(E^{i-1}\circ\gamma_{\varsigma_{g(2)}})\circ E^{k-i}\circ\gamma_{\varsigma_{g(1)}} \}\gamma_{\varsigma_{g(3)}}\gamma_{\rho}\gamma^{\rho}\rangle\rangle_{0}\\
&=&\langle\langle E^{k-i}\gamma_{\varsigma_{g(3)}}\gamma_{\rho}\{G(E^{i-1}\circ\gamma_{\varsigma_{g(2)}})\circ \gamma_{\varsigma_{g(1)}}\circ\gamma^{\rho}\}\rangle\rangle_{0}\\
&&+\langle\langle \{G(E^{i-1}\circ\gamma_{\varsigma_{g(2)}})\circ \gamma_{\varsigma_{g(1)}}\}\gamma_{\rho}\gamma^{\rho}\{E^{k-i}\circ\gamma_{\varsigma_{g(3)}}\}\rangle\rangle_{0}\\
&&-\langle\langle E^{k-i}\gamma_{\rho}\{\gamma^{\rho}\circ\gamma_{\varsigma_{g(3)}}\}\{G(E^{i-1}\circ\gamma_{\varsigma_{g(2)}})\circ \gamma_{\varsigma_{g(1)}}\}\rangle\rangle_{0}.
\een
The remainder of the argument is analogous to that in  Lemma \ref{G0part1}.
\end{proof}
\begin{lem}\label{G0part3}For any $\alpha,\beta,\sigma$,
\ben
&&\sum_{g\in S_{3}}\langle\langle E^k\gamma_{\varsigma_{g(1)}}\gamma^{\mu}\gamma^{\rho}\rangle\rangle_{0}
\langle\langle \gamma_{\mu}\gamma_{\rho}\gamma_{\varsigma_{g(2)}}\gamma_{\varsigma_{g(3)}}\rangle\rangle_{0}\\
&=&-\sum_{g\in S_{3}}\sum_{i=1}^k\langle\langle \{G(E^{k-i})\circ\gamma_{\varsigma_{g(1)}}\circ\gamma^{\mu}\}\gamma_{\varsigma_{g(2)}}\gamma_{\varsigma_{g(3)}}\{E^{i-1}\circ\gamma_{\mu}\}\rangle\rangle_{0}\\
&&-\sum_{g\in S_{3}}\sum_{i=1}^k\langle\langle G(E^{i-1}\circ\gamma_{\varsigma_{g(1)}}\circ\gamma^{\mu})\gamma_{\varsigma_{g(2)}}\gamma_{\varsigma_{g(3)}}\{E^{k-i}\circ\gamma_{\mu}\}\rangle\rangle_{0}\\
&&+\sum_{g\in S_{3}}\sum_{i=1}^k\langle\langle \{G(E^{k-i}\circ\gamma_{\varsigma_{g(1)}})\circ\gamma^{\mu}\}\gamma_{\varsigma_{g(2)}}\gamma_{\varsigma_{g(3)}}\{E^{i-1}\circ\gamma_{\mu}\}\rangle\rangle_{0}\\
&&+\sum_{g\in S_{3}}\sum_{i=1}^k\langle\langle \{G(E^{i-1}\circ\gamma^{\mu})\circ\gamma_{\varsigma_{g(1)}}\}\gamma_{\varsigma_{g(2)}}\gamma_{\varsigma_{g(3)}}\{E^{k-i}\circ\gamma_{\mu}\}\rangle\rangle_{0}\\
&&-6\sum_{i=1}^k\sum_{j=1}^{k-i}\langle\langle G(E^{i+j-2}\circ\gamma_{\alpha}\circ\gamma_{\beta}\circ\gamma_{\sigma}\circ\gamma^{\mu})G(\gamma_{\mu})E^{k-i-j}\rangle\rangle_{0}\\
&&+3\sum_{g\in S_{3}}\sum_{i=1}^k\sum_{j=1}^{k-i}\langle\langle\{G(E^{k-j-1}\circ\gamma_{\varsigma_{g(1)}}\circ\gamma_{\varsigma_{g(2)}}\circ\gamma^{\mu})\circ G(\gamma_{\mu})\}E^{j-1}\gamma_{\varsigma_{g(3)}}\rangle\rangle_{0}\\
&&-2\sum_{g\in S_{3}}\sum_{i=1}^k\sum_{j=1}^{k-i}\langle\langle\{G(E^{k-j-1}\circ\gamma_{\varsigma_{g(1)}}\circ\gamma^{\mu})\circ G(\gamma_{\mu})\}E^{j-1}\{\gamma_{\varsigma_{g(2)}}\circ\gamma_{\varsigma_{g(3)}}\}\rangle\rangle_{0}\\
&&-\sum_{g\in S_{3}}\sum_{i=1}^k\sum_{j=1}^{k-i}\langle\langle\{G(E^{i+j-2}\circ\gamma_{\varsigma_{g(1)}}\circ\gamma^{\mu})\circ G(\gamma_{\mu})\}E^{k-i-j}\{\gamma_{\varsigma_{g(2)}}\circ\gamma_{\varsigma_{g(3)}}\}\rangle\rangle_{0}\\
&&+6\sum_{i=1}^k\sum_{j=1}^{k-i}\langle\langle\{G(E^{k-j-1}\circ\gamma^{\mu})\circ G(\gamma_{\mu})\}E^{j-1}\{\gamma_{\alpha}\circ\gamma_{\beta}\circ\gamma_{\sigma}\}\rangle\rangle_{0},
\een
where $k\geq0$ and $\{\varsigma_{1},\varsigma_{2},\varsigma_{3}\}=\{\alpha,\beta,\sigma\}$.
\end{lem}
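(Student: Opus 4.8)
The object to be computed is the third term on the right-hand side of \eqref{G0formula1}, namely the doubly-contracted product of two genus-$0$ four-point functions $\sum_{g\in S_3}\langle\langle E^k\gamma_{\varsigma_{g(1)}}\gamma^{\mu}\gamma^{\rho}\rangle\rangle_{0}\langle\langle \gamma_{\mu}\gamma_{\rho}\gamma_{\varsigma_{g(2)}}\gamma_{\varsigma_{g(3)}}\rangle\rangle_{0}$. The plan is to follow the template already used for the single-contraction product in Lemma \ref{G0part2} and, before it, Lemma \ref{G0part1}: expand via the reduction \eqref{4point-3point1}, collect terms, use Lemma \ref{4point-3point} to bring four-point functions down to three-point functions, simplify with Lemma \ref{simplication}, and symmetrize over $S_3$ at the end. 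The new feature here, compared with Lemma \ref{G0part2}, is that the first factor carries \emph{two} free upper indices rather than one, so the contraction against the second factor must be unwound in two stages.

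First I would freeze $\mu$ and treat $\sum_{\rho}\langle\langle E^k\gamma_{\varsigma_{g(1)}}\gamma^{\mu}\gamma^{\rho}\rangle\rangle_{0}\gamma_{\rho}$ as a vector field, applying \eqref{4point-3point1} with $\gamma_{\alpha}=\gamma_{\varsigma_{g(1)}}$ and $\gamma_{\beta}=\gamma^{\mu}$. This replaces the first factor by four families of vector fields and, after inserting them into the slot formerly occupied by $\gamma_{\rho}$ in $\langle\langle\gamma_{\mu}\gamma_{\rho}\gamma_{\varsigma_{g(2)}}\gamma_{\varsigma_{g(3)}}\rangle\rangle_{0}$, leaves four-point functions in which the surviving index $\mu$ appears either at the top level of a quantum product (as in the families $G(E^{\bullet})\circ E^{\bullet}\circ\gamma_{\varsigma_{g(1)}}\circ\gamma^{\mu}$ and $G(E^{\bullet}\circ\gamma_{\varsigma_{g(1)}})\circ E^{\bullet}\circ\gamma^{\mu}$) or trapped inside a $G$-operator (as in $G(E^{\bullet}\circ\gamma_{\varsigma_{g(1)}}\circ\gamma^{\mu})\circ E^{\bullet}$ and $G(E^{\bullet}\circ\gamma^{\mu})\circ E^{\bullet}\circ\gamma_{\varsigma_{g(1)}}$). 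For the top-level families I would eliminate $\mu$ with the observations \eqref{Observation1} and \eqref{Observation2}, which shuffle the intervening powers of $E$ between the two $\gamma$-slots and deliver expressions of the first four types on the right-hand side. For the families where $\mu$ is buried inside a $G$, the observations do not apply verbatim; moving the adjacent power of $E$ past the $G$-operator is not free, so here I would invoke \eqref{WDVV2} together with Lemma \ref{4point-3point} a second time. This second reduction is exactly what introduces the inner index $j$ and hence the double sums in the fifth through tenth terms, while \eqref{simplication2} and \eqref{simplication4} convert each closed pair $\gamma^{\mu},\gamma_{\mu}$ caught between two $G$-operators into the symmetric form $\{G(\cdots\circ\gamma^{\mu})\circ G(\gamma_{\mu})\}$, and \eqref{4pointeq1} and \eqref{quasihom} absorb the $E$-powers multiplied against $G$-operators, producing the $\Delta$'s and the coefficients $\pm 6$ in the last lines.

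The hard part will be the bookkeeping rather than any isolated identity. The two contractions and the ensuing double reduction generate a large quantity of intermediate four-point functions, a great many of which must cancel against one another, and the survivors carry delicate coefficients and summation ranges — most visibly the opposing $-6\sum_{i,j}$ and $+6\sum_{i,j}$ families built on the fully symmetric block $\gamma_{\alpha}\circ\gamma_{\beta}\circ\gamma_{\sigma}$, which stand outside the $\sum_{g\in S_3}$ precisely because that block is already symmetric. Correctly separating the terms that are already $\{\alpha,\beta,\sigma\}$-symmetric from those that must still be symmetrized, and tracking signs through the repeated use of \eqref{WDVV2}, is the step most likely to introduce errors; as in Lemmas \ref{G0part1} and \ref{G0part2}, I would close the argument by collecting all contributions, reducing every remaining four-point function to three-point functions through Lemma \ref{4point-3point}, simplifying with Lemma \ref{simplication}, and symmetrizing the result over $S_3$.
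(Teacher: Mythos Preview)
Your proposal is correct and follows essentially the same route as the paper: the paper's own proof of this lemma is the one-line remark that ``the proof is completed by the same argument in that of Lemma \ref{G0part2}'', i.e.\ apply \eqref{4point-3point1} to the $E^k$-factor, then use \eqref{WDVV2} and Lemma \ref{4point-3point} to reduce, simplify with Lemma \ref{simplication}, and symmetrize --- precisely the template you describe. Your extra commentary about invoking \eqref{Observation1}--\eqref{Observation2} and \eqref{quasihom} is harmless but not strictly needed, since those manipulations are already packaged inside Lemma \ref{4point-3point} and Lemma \ref{simplication}.
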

\begin{proof}
The proof is completed by the same argument in that of Lemma \ref{G0part2}.
\end{proof}

\begin{lem}\label{G0part4}For $k\geq0$, and any $\alpha,\beta,\sigma$,
\ben
&&\langle\langle E^k\gamma_{\rho}\gamma^{\rho}\gamma^{\mu}\rangle\rangle_{0}\langle\langle \gamma_{\mu}\gamma_{\alpha}\gamma_{\beta}\gamma_{\sigma}\rangle\rangle_{0}\\
&=&-\sum_{i=1}^{k}\langle\langle \{G(E^{k-i})\circ\Delta\circ E^{i-1}\}\gamma_{\alpha}\gamma_{\beta}\gamma_{\sigma}\rangle\rangle_{0}-\sum_{i=1}^{k}\langle\langle \{G(\Delta\circ E^{i-1})\circ E^{k-i}\}\gamma_{\alpha}\gamma_{\beta}\gamma_{\sigma}\rangle\rangle_{0}\\
&&+\sum_{i=1}^{k}\langle\langle \{\Delta\circ E^{k-1}\}\gamma_{\alpha}\gamma_{\beta}\gamma_{\sigma}\rangle\rangle_{0}.
\een
\end{lem}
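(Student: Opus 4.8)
The plan is to recognize the left-hand side as a single genus-$0$ four-point correlator into which a vector field has been inserted, and then to reduce that vector field by means of the three-point reduction formula \eqref{4point-3point1}. Concretely, by multilinearity of $\langle\langle\cdots\rangle\rangle_{0}$ the left-hand side equals $\langle\langle \mathcal{V}\gamma_\alpha\gamma_\beta\gamma_\sigma\rangle\rangle_{0}$, where
\[
\mathcal{V}=\sum_{\rho}\sum_{\mu}\langle\langle E^k\gamma_\rho\gamma^\rho\gamma^\mu\rangle\rangle_{0}\,\gamma_\mu .
\]
Thus everything reduces to computing $\mathcal{V}$ as an explicit combination of quantum products. (For $k=0$ both sides vanish: the correlator is killed by the string equation \eqref{stringequ2} and the sums on the right are empty, so I would only need the argument for $k\geq1$.)

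First I would apply formula \eqref{4point-3point1} with $m=k$ and with $\gamma_\alpha,\gamma_\beta$ replaced by $\gamma_\rho,\gamma^\rho$; this rewrites $\sum_\mu\langle\langle E^k\gamma_\rho\gamma^\rho\gamma^\mu\rangle\rangle_{0}\gamma_\mu$ as
\[
-\sum_{i=1}^{k}G(E^{k-i})\circ E^{i-1}\circ\gamma_\rho\circ\gamma^\rho
-\sum_{i=1}^{k}G(E^{i-1}\circ\gamma_\rho\circ\gamma^\rho)\circ E^{k-i}
+\sum_{i=1}^{k}G(E^{k-i}\circ\gamma_\rho)\circ E^{i-1}\circ\gamma^\rho
+\sum_{i=1}^{k}G(E^{i-1}\circ\gamma^\rho)\circ E^{k-i}\circ\gamma_\rho .
\]
Summing over $\rho$ and using $\sum_\rho\gamma_\rho\circ\gamma^\rho=\Delta$ together with bilinearity of $\circ$ collapses the first two sums to $-\sum_{i=1}^{k}G(E^{k-i})\circ E^{i-1}\circ\Delta$ and $-\sum_{i=1}^{k}G(E^{i-1}\circ\Delta)\circ E^{k-i}$. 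For the last two sums the factors $\gamma_\rho$ and $\gamma^\rho$ sit in different quantum-product factors, so they do not collapse to $\Delta$ directly; here I would first use commutativity and associativity of $\circ$ to write $G(E^{k-i}\circ\gamma_\rho)\circ E^{i-1}\circ\gamma^\rho=E^{i-1}\circ\bigl(G(E^{k-i}\circ\gamma_\rho)\circ\gamma^\rho\bigr)$, and similarly for the fourth sum, and then apply \eqref{simplication2} in the form $\sum_\rho G(\upsilon\circ\gamma_\rho)\circ\gamma^\rho=\tfrac12\Delta\circ\upsilon$. Each of these two sums then becomes $\tfrac12\sum_{i=1}^{k}\Delta\circ E^{k-1}=\tfrac{k}{2}\Delta\circ E^{k-1}$, so together they contribute $k\,\Delta\circ E^{k-1}$.

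Combining these computations gives
\[
\mathcal{V}=-\sum_{i=1}^{k}G(E^{k-i})\circ E^{i-1}\circ\Delta
-\sum_{i=1}^{k}G(E^{i-1}\circ\Delta)\circ E^{k-i}
+k\,\Delta\circ E^{k-1},
\]
and inserting this into $\langle\langle\mathcal{V}\gamma_\alpha\gamma_\beta\gamma_\sigma\rangle\rangle_{0}$, again by multilinearity and rewriting $k\,\langle\langle\{\Delta\circ E^{k-1}\}\gamma_\alpha\gamma_\beta\gamma_\sigma\rangle\rangle_{0}=\sum_{i=1}^k\langle\langle\{\Delta\circ E^{k-1}\}\gamma_\alpha\gamma_\beta\gamma_\sigma\rangle\rangle_{0}$, yields exactly the claimed identity. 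I expect the only genuinely delicate point to be the handling of the two cross terms: one must not naively set $\gamma_\rho\circ\gamma^\rho=\Delta$ there, but instead rearrange the quantum-product factors first so that \eqref{simplication2} applies to the pair $G(\cdots\circ\gamma_\rho)\circ\gamma^\rho$; the remaining steps are routine uses of bilinearity and of the commutativity and associativity of $\circ$.
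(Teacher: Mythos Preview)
Your proof is correct and follows exactly the approach indicated in the paper: the paper's own proof is the one-line remark ``It follows from equations \eqref{4point-3point1} and \eqref{simplication2}'', and your proposal spells out precisely how these two formulas are applied and combined.
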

\begin{proof}
It follows from equations \eqref{4point-3point1} and \eqref{simplication2}.
\end{proof}
By equation \eqref{G0formula1} and Lemmas \ref{G0part1}, \ref{G0part2}, \ref{G0part3}, \ref{G0part4}, one can obtain an  expression for $G_{0}(E^{k},\gamma_{\alpha},\gamma_{\beta},\gamma_{\sigma})$.  To simplify this expression, we have to reduce the number of 4-point functions.
In order to cancel redundant 4-point functions, the strategy is to repeatedly use  equation \eqref{WDVV2} to obtain some common 4-point functions, and
then use Lemma \ref{4point-3point} and Lemma \ref{simplication} for possible reduction. The following results are used to maximally reduce the number of 4-point functions.

\begin{lem}\label{G0part11}
\ben
&&\langle\langle\{G(E^{k-i})\circ\Delta\}\gamma_{\varsigma_{g(1)}}\gamma_{\varsigma_{g(2)}}\{E^{i-1}\circ\gamma_{\varsigma_{g(3)}}\}\rangle\rangle_{0}\\
&=&\langle\langle\{G(E^{k-i})\circ\Delta\circ E^{i-1}\}\gamma_{\alpha}\gamma_{\beta}\gamma_{\sigma}\rangle\rangle_{0}\\
&&-\sum_{j=1}^{i-1}\langle\langle \{G(E^{k-i})\circ E^{i-j-1}\}\Delta G(E^{j-1}\circ\gamma_{\alpha}\circ\gamma_{\beta}\circ\gamma_{\sigma})\rangle\rangle_{0}\\
&&+\sum_{j=1}^{i-1}\langle\langle \{G(E^{k-i})\circ\Delta\circ E^{j-1}\}G(E^{i-j-1}\circ\gamma_{\varsigma_{g(2)}}\circ\gamma_{\varsigma_{g(3)}})\gamma_{\varsigma_{g(1)}}\rangle\rangle_{0}\\
&&+\sum_{j=1}^{i-1}\langle\langle \{G(E^{k-i})\circ\Delta\circ E^{i-j-1}\}G(E^{j-1}\circ\gamma_{\varsigma_{g(1)}}\circ\gamma_{\varsigma_{g(3)}})\gamma_{\varsigma_{g(2)}}\rangle\rangle_{0}\\
&&-\sum_{j=1}^{i-1}\langle\langle \{G(E^{k-i})\circ\Delta\circ E^{j-1}\}G(E^{i-j-1}\circ\gamma_{\varsigma_{g(3)}})\{\gamma_{\varsigma_{g(1)}}\circ\gamma_{\varsigma_{g(2)}}\}\rangle\rangle_{0}.
\een
\end{lem}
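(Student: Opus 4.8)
The plan is to prove this identity by transferring the quantum power $E^{i-1}$ out of the slot $\{E^{i-1}\circ\gamma_{\varsigma_{g(3)}}\}$ and onto the first argument $G(E^{k-i})\circ\Delta$, peeling off one factor of $E$ at a time with the first-derivative WDVV equation \eqref{WDVV2}. When $i=1$ the identity is immediate: since $E^0=\gamma_1$ is the identity, both sides collapse to $\langle\langle\{G(E^{k-i})\circ\Delta\}\gamma_{\alpha}\gamma_{\beta}\gamma_{\sigma}\rangle\rangle_0$ and every sum $\sum_{j=1}^{0}$ is empty; this is also the case in which the iteration terminates. To peel off a single power I would write $E^{m}\circ\gamma_{\varsigma_{g(3)}}=E\circ\{E^{m-1}\circ\gamma_{\varsigma_{g(3)}}\}$ and apply \eqref{WDVV2} with $\{E\circ(E^{m-1}\circ\gamma_{\varsigma_{g(3)}})\}$ as the distinguished quantum product (this is exactly the recursion already underlying Lemma \ref{WDVV3}). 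One of the three resulting terms carries the factor $\{E\circ(G(E^{k-i})\circ\Delta)\}$, which is the mechanism that moves a single $E$ onto the first slot; iterating this step produces the leading 4-point term $\langle\langle\{G(E^{k-i})\circ\Delta\circ E^{i-1}\}\gamma_{\alpha}\gamma_{\beta}\gamma_{\sigma}\rangle\rangle_0$.

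The other two terms returned by each use of \eqref{WDVV2} contain a bare Euler field $E$ sitting in a slot of a genus-zero 4-point function. First I would eliminate these with the quasi-homogeneity equation \eqref{quasihom} in the case $g=0$, $k=3$ (equivalently formula \eqref{4pointeq1}), which rewrites $\langle\langle E\,\upsilon_1\upsilon_2\upsilon_3\rangle\rangle_0$ as $\sum_i\langle\langle\upsilon_1\cdots G(\upsilon_i)\cdots\rangle\rangle_0-(b_1+1)\langle\langle\upsilon_1\upsilon_2\upsilon_3\rangle\rangle_0$ (the $\nabla^3$ contribution vanishes, being the third derivative of a quadratic). This is precisely the step that turns the correction terms into the $G$-operator form on the right-hand side. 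Then I would apply Lemma \ref{simplication}, especially \eqref{simplication1}, to redistribute the $G$-operators, to split $\Delta$ off from $G(E^{k-i})$ so that it appears in its own slot as in the second through fifth groups of terms, and to contract the $G(\cdots\circ\gamma^{\mu})$ factors via \eqref{simplication2}; any genuine 4-point function that survives would be reduced to 3-point functions by Lemma \ref{4point-3point}.

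Finally I would collect everything and match the accumulated corrections against the four sums over $j$, checking that the three spectator fields are quantum-multiplied in the displayed pattern, namely $\gamma_{\varsigma_{g(2)}}\circ\gamma_{\varsigma_{g(3)}}$, $\gamma_{\varsigma_{g(1)}}\circ\gamma_{\varsigma_{g(3)}}$, and $\gamma_{\varsigma_{g(1)}}\circ\gamma_{\varsigma_{g(2)}}$, each with the correct sign. The hard part will be the bookkeeping rather than any single conceptual ingredient: I expect the main obstacle to be verifying that the $(b_1+1)$ contributions generated by every application of \eqref{quasihom} cancel in pairs, since the final identity is free of $b_1$ (exactly as the analogous cancellation in the proof of Lemma \ref{4point-3point}), and that the nested double sums produced by the iteration collapse to the single sums $\sum_{j=1}^{i-1}$ with the quantum products grouped precisely as stated.
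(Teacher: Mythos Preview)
Your plan is workable in principle but takes a considerably longer route than the paper. The paper does \emph{not} peel off one factor of $E$ at a time; instead it applies \eqref{WDVV2} just once, treating the entire power $E^{i-1}$ as a single vector field. Concretely, with $\upsilon_{1}=E^{i-1}$, $\upsilon_{2}=\gamma_{\varsigma_{g(3)}}$, $\upsilon_{3}=G(E^{k-i})\circ\Delta$, $\upsilon_{4}=\gamma_{\varsigma_{g(1)}}$, $\upsilon_{5}=\gamma_{\varsigma_{g(2)}}$, equation \eqref{WDVV2} immediately gives
\[
\langle\langle\{G(E^{k-i})\circ\Delta\circ E^{i-1}\}\gamma_{\alpha}\gamma_{\beta}\gamma_{\sigma}\rangle\rangle_{0}
+\langle\langle E^{i-1}\{\gamma_{\varsigma_{g(2)}}\circ\gamma_{\varsigma_{g(3)}}\}\gamma_{\varsigma_{g(1)}}\{G(E^{k-i})\circ\Delta\}\rangle\rangle_{0}
-\langle\langle E^{i-1}\gamma_{\varsigma_{g(1)}}\gamma_{\varsigma_{g(3)}}\{G(E^{k-i})\circ\Delta\circ\gamma_{\varsigma_{g(2)}}\}\rangle\rangle_{0}.
\]
The two correction terms now have a bare $E^{i-1}$, so Lemma \ref{4point-3point} (with $m=i-1$) applies directly and produces precisely the four sums $\sum_{j=1}^{i-1}$ after rearranging 3-point functions via associativity. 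No quasi-homogeneity equation is invoked, so no $(b_{1}+1)$ terms ever appear and the cancellation you anticipate is simply absent.

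Your strategy of splitting off one $E$ at a time and then eliminating the resulting bare $E$'s with \eqref{quasihom} is essentially re-proving Lemma \ref{4point-3point} inside this computation. That lemma already packages the iterated WDVV recursion together with the quasi-homogeneity cancellations, so using it as a black box (as the paper does) saves exactly the bookkeeping you flagged as the ``hard part.'' If you pursue your route you will get there, but you will have to manage nested corrections containing both a bare $E$ and a residual $E^{i-1-\ell}\circ\gamma_{\varsigma_{g(3)}}$, and verify the $(b_{1}+1)$ cancellation at every step; the paper bypasses all of this with a single swap followed by one citation of Lemma \ref{4point-3point}.
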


\begin{proof}
By equation \eqref{WDVV2}, we have
\ben
&&\langle\langle\{G(E^{k-i})\circ\Delta\}\gamma_{\varsigma_{g(1)}}\gamma_{\varsigma_{g(2)}}\{E^{i-1}\circ\gamma_{\varsigma_{g(3)}}\}\rangle\rangle_{0}\\
&=&\langle\langle\{G(E^{k-i})\circ\Delta\circ E^{i-1}\}\gamma_{\alpha}\gamma_{\beta}\gamma_{\sigma}\rangle\rangle_{0}
+\langle\langle E^{i-1}\{\gamma_{\varsigma_{g(2)}}\circ\gamma_{\varsigma_{g(3)}}\}\gamma_{\varsigma_{g(1)}}\{G(E^{k-i})\circ\Delta\}\rangle\rangle_{0}\\
&&-\langle\langle E^{i-1}\gamma_{\varsigma_{g(1)}}\gamma_{\varsigma_{g(3)}}\{G(E^{k-i})\circ\Delta\circ\gamma_{\varsigma_{g(2)}}\}\rangle\rangle_{0}.
\een
The proof is completed by using  Lemma \ref{4point-3point}.
\end{proof}
By the same argument as in Lemma \ref{G0part11}, we have the following results, i.e., Lemmas \ref{G0part12}, \ref{G0part13}, \ref{G0part14}, \ref{G0part15}.
\begin{lem}\label{G0part12}
\ben
&&\langle\langle G(\Delta\circ E^{k-i})\gamma_{\varsigma_{g(1)}}\gamma_{\varsigma_{g(2)}}\{E^{i-1}\circ\gamma_{\varsigma_{g(3)}}\}\rangle\rangle_{0}\\
&=&\langle\langle\{G(\Delta\circ E^{k-i})\circ E^{i-1}\}\gamma_{\alpha}\gamma_{\beta}\gamma_{\sigma}\rangle\rangle_{0}\\
&&-\sum_{j=1}^{i-1}\langle\langle G(\Delta\circ E^{k-i}) E^{i-j-1} G(E^{j-1}\circ\gamma_{\alpha}\circ\gamma_{\beta}\circ\gamma_{\sigma})\rangle\rangle_{0}\\
&&+\sum_{j=1}^{i-1}\langle\langle \{G(\Delta\circ E^{k-i})\circ E^{j-1}\}G(E^{i-j-1}\circ\gamma_{\varsigma_{g(2)}}\circ\gamma_{\varsigma_{g(3)}})\gamma_{\varsigma_{g(1)}}\rangle\rangle_{0}\\
&&+\sum_{j=1}^{i-1}\langle\langle \{G(\Delta\circ E^{k-i})\circ E^{i-j-1}\}G(E^{j-1}\circ\gamma_{\varsigma_{g(1)}}\circ\gamma_{\varsigma_{g(3)}})\gamma_{\varsigma_{g(2)}}\rangle\rangle_{0}\\
&&-\sum_{j=1}^{i-1}\langle\langle \{G(\Delta\circ E^{k-i})\circ E^{j-1}\}G(E^{i-j-1}\circ\gamma_{\varsigma_{g(3)}})\{\gamma_{\varsigma_{g(1)}}\circ\gamma_{\varsigma_{g(2)}}\}\rangle\rangle_{0}.
\een
\end{lem}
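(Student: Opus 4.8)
The plan is to reproduce, almost verbatim, the proof of Lemma \ref{G0part11}: the present identity is obtained from that one by substituting $G(\Delta\circ E^{k-i})$ for $G(E^{k-i})\circ\Delta$ in every insertion, and none of the maneuvers used is sensitive to this change, since at each stage the operator in question is treated either as a single vector field or as a factor in a quantum product. Thus the same three ingredients suffice: the first derivative of WDVV \eqref{WDVV2}, the $4$-point-to-$3$-point reduction of Lemma \ref{4point-3point}, and the simplifications of Lemma \ref{simplication}.

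First I would single out the quantum-product factor $E^{i-1}\circ\gamma_{\varsigma_{g(3)}}$ and apply \eqref{WDVV2} with $\upsilon_{1}=E^{i-1}$, $\upsilon_{2}=\gamma_{\varsigma_{g(3)}}$, $\upsilon_{3}=G(\Delta\circ E^{k-i})$, $\upsilon_{4}=\gamma_{\varsigma_{g(1)}}$, $\upsilon_{5}=\gamma_{\varsigma_{g(2)}}$. Using the symmetry of $\langle\langle\cdots\rangle\rangle_{0}$ and $\{\varsigma_{1},\varsigma_{2},\varsigma_{3}\}=\{\alpha,\beta,\sigma\}$, this gives
\ben
&&\langle\langle G(\Delta\circ E^{k-i})\gamma_{\varsigma_{g(1)}}\gamma_{\varsigma_{g(2)}}\{E^{i-1}\circ\gamma_{\varsigma_{g(3)}}\}\rangle\rangle_{0}\\
&=&\langle\langle\{G(\Delta\circ E^{k-i})\circ E^{i-1}\}\gamma_{\alpha}\gamma_{\beta}\gamma_{\sigma}\rangle\rangle_{0}\\
&&+\langle\langle E^{i-1}\{\gamma_{\varsigma_{g(2)}}\circ\gamma_{\varsigma_{g(3)}}\}\gamma_{\varsigma_{g(1)}}G(\Delta\circ E^{k-i})\rangle\rangle_{0}\\
&&-\langle\langle E^{i-1}\gamma_{\varsigma_{g(1)}}\gamma_{\varsigma_{g(3)}}\{G(\Delta\circ E^{k-i})\circ\gamma_{\varsigma_{g(2)}}\}\rangle\rangle_{0}.
\een
The first summand is already term one of the claim. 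To the two remaining $4$-point functions I would apply Lemma \ref{4point-3point}, taking $E^{i-1}$ as the distinguished quantum power (so $m=i-1$) and treating the other three insertions as the generic arguments; this turns each into a sum over $j=1,\dots,i-1$ of $3$-point functions carrying factors $G(E^{j-1}\circ\cdots)$ and $G(E^{i-j-1}\circ\cdots)$.

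Finally I would tidy the result with Lemma \ref{simplication}. Here the identities \eqref{simplication3} and \eqref{simplication4}, together with $\Delta=\sum_{\mu}\gamma^{\mu}\circ\gamma_{\mu}$, are what let one relocate the $G(\Delta\circ E^{k-i})$ factor so that the surviving terms assemble into terms two through five of the stated formula. The main obstacle is entirely organizational: Lemma \ref{4point-3point} produces four families of $3$-point functions from each $4$-point function, so one must verify that the spurious cross-terms cancel in pairs (exactly as in the closing lines of Lemma \ref{G0part11}) and that the remaining terms carry the correct signs and $\varsigma$-labels. No new geometric input beyond \eqref{WDVV2}, Lemma \ref{4point-3point}, and Lemma \ref{simplication} is required.
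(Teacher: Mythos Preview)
Your approach is essentially the same as the paper's: the paper simply says that Lemma~\ref{G0part12} follows ``by the same argument as in Lemma~\ref{G0part11}'', namely one application of \eqref{WDVV2} followed by Lemma~\ref{4point-3point}, and that is exactly what you do.

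One small correction: the final tidying does \emph{not} require identities \eqref{simplication3} or \eqref{simplication4}. After applying Lemma~\ref{4point-3point} to each of the two remaining $4$-point functions you get eight $3$-point terms; four of them cancel in pairs using only the associativity of $\circ$ (equation~\eqref{WDVV1}) to shuffle a factor from one slot to another, e.g.
\[
\langle\langle G(E^{i-1-j})\{E^{j-1}\circ\gamma_{\alpha}\circ\gamma_{\beta}\circ\gamma_{\sigma}\}G(\Delta\circ E^{k-i})\rangle\rangle_{0}
=\langle\langle G(E^{i-1-j})\{E^{j-1}\circ\gamma_{\varsigma_{g(1)}}\circ\gamma_{\varsigma_{g(3)}}\}\{G(\Delta\circ E^{k-i})\circ\gamma_{\varsigma_{g(2)}}\}\rangle\rangle_{0}.
\]
The identities \eqref{simplication3} and \eqref{simplication4} concern expressions of the form $G(\cdot\circ\gamma^{\mu})\circ G(\gamma_{\mu})$, which never appear here. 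So Lemma~\ref{simplication} is irrelevant to this particular lemma; everything follows from \eqref{WDVV1}, \eqref{WDVV2}, and Lemma~\ref{4point-3point}.
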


\begin{lem}\label{G0part13}
\ben
&&\langle\langle\{\Delta\circ E^{k-i}\}\gamma_{\varsigma_{g(1)}}\gamma_{\varsigma_{g(2)}}\{E^{i-1}\circ\gamma_{\varsigma_{g(3)}}\}\rangle\rangle_{0}\\
&=&\langle\langle\{\Delta\circ E^{k-1}\}\gamma_{\alpha}\gamma_{\beta}\gamma_{\sigma}\rangle\rangle_{0}\\
&&-\sum_{j=1}^{i-1}\langle\langle \Delta  E^{k-j-1} G(E^{j-1}\circ\gamma_{\alpha}\circ\gamma_{\beta}\circ\gamma_{\sigma})\rangle\rangle_{0}\\
&&+\sum_{j=1}^{i-1}\langle\langle \{\Delta\circ E^{k-i+j-1}\}G(E^{i-j-1}\circ\gamma_{\varsigma_{g(2)}}\circ\gamma_{\varsigma_{g(3)}})\gamma_{\varsigma_{g(1)}}\rangle\rangle_{0}\\
&&+\sum_{j=1}^{i-1}\langle\langle \{\Delta\circ E^{k-j-1}\}G(E^{j-1}\circ\gamma_{\varsigma_{g(1)}}\circ\gamma_{\varsigma_{g(3)}})\gamma_{\varsigma_{g(2)}}\rangle\rangle_{0}\\
&&-\sum_{j=1}^{i-1}\langle\langle \{\Delta\circ E^{k-i+j-1}\}G(E^{i-j-1}\circ\gamma_{\varsigma_{g(3)}})\{\gamma_{\varsigma_{g(1)}}\circ\gamma_{\varsigma_{g(2)}}\}\rangle\rangle_{0}.
\een

\end{lem}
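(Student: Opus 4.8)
The plan is to reproduce, almost verbatim, the three-step argument used for Lemma \ref{G0part11}, since the left-hand side here is literally the one in Lemma \ref{G0part11} with the leading factor $G(E^{k-i})\circ\Delta$ replaced by $\Delta\circ E^{k-i}$. First I would apply the first derivative of the WDVV equation \eqref{WDVV2} to the four-point function on the left, splitting the last entry $E^{i-1}\circ\gamma_{\varsigma_{g(3)}}$ into its quantum factors $E^{i-1}$ and $\gamma_{\varsigma_{g(3)}}$. Concretely, take $\upsilon_1=E^{i-1}$, $\upsilon_2=\gamma_{\varsigma_{g(3)}}$, $\upsilon_3=\Delta\circ E^{k-i}$, $\upsilon_4=\gamma_{\varsigma_{g(1)}}$, $\upsilon_5=\gamma_{\varsigma_{g(2)}}$ in \eqref{WDVV2}. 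Since $\upsilon_1\circ\upsilon_3=\Delta\circ E^{k-1}$, this produces
\ben
&&\langle\langle\{\Delta\circ E^{k-i}\}\gamma_{\varsigma_{g(1)}}\gamma_{\varsigma_{g(2)}}\{E^{i-1}\circ\gamma_{\varsigma_{g(3)}}\}\rangle\rangle_{0}\\
&=&\langle\langle\{\Delta\circ E^{k-1}\}\gamma_{\alpha}\gamma_{\beta}\gamma_{\sigma}\rangle\rangle_{0}
+\langle\langle E^{i-1}\{\gamma_{\varsigma_{g(2)}}\circ\gamma_{\varsigma_{g(3)}}\}\gamma_{\varsigma_{g(1)}}\{\Delta\circ E^{k-i}\}\rangle\rangle_{0}\\
&&-\langle\langle E^{i-1}\gamma_{\varsigma_{g(1)}}\gamma_{\varsigma_{g(3)}}\{\Delta\circ E^{k-i}\circ\gamma_{\varsigma_{g(2)}}\}\rangle\rangle_{0},
\een
whose first summand is already the leading term of the claimed identity.

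The two remaining summands are four-point functions each containing the \emph{pure} power $E^{i-1}$, so the second step is to feed them into Lemma \ref{4point-3point} with $m=i-1$, keeping the factors $\{\Delta\circ E^{k-i}\}$ and $\{\Delta\circ E^{k-i}\circ\gamma_{\varsigma_{g(2)}}\}$ in the slots that get quantum-multiplied by the new powers of $E$. Each application yields a sum over $j=1,\dots,i-1$; collecting the two expansions and symmetrizing over $g\in S_3$ should recover exactly the four sums on the right-hand side. The index shifts $\Delta\circ E^{k-i}\circ E^{j-1}=\Delta\circ E^{k-i+j-1}$ and $\Delta\circ E^{k-i}\circ E^{i-j-1}=\Delta\circ E^{k-j-1}$ are what convert the raw output of Lemma \ref{4point-3point} into the stated exponents. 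Finally I would invoke Lemma \ref{simplication}, principally \eqref{simplication1}, to transfer the $G$'s onto the intended factors and to consolidate an $E^{i-1-j}$ entry and a $\Delta\circ E^{k-i}$ entry into the single $\Delta\,E^{k-j-1}$ occurring in the term $-\sum_{j}\langle\langle\Delta E^{k-j-1}G(E^{j-1}\circ\gamma_\alpha\circ\gamma_\beta\circ\gamma_\sigma)\rangle\rangle_{0}$.

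The computation is genuinely routine, being formally identical to that of Lemma \ref{G0part11}; the one place demanding care—and the main potential source of error—is the bookkeeping of the $E$-exponents together with the placement of the now-bare factor $\Delta$. In Lemma \ref{G0part11} the $\Delta$ sits inside $G(E^{k-i})\circ\Delta$ and is simply carried along passively, whereas here $\Delta\circ E^{k-i}$ must be tracked through every quantum multiplication and every use of \eqref{simplication1}, and one must check that in each of the four resulting sums the $\Delta$ lands on the entry dictated by the symmetrization. Matching the signs across the two Lemma \ref{4point-3point} expansions, and confirming that the eight raw sums collapse to the four displayed ones, is the last piece of bookkeeping to verify.
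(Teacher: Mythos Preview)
Your proposal is correct and follows exactly the paper's approach: the paper states that Lemmas \ref{G0part12}--\ref{G0part15} are proved ``by the same argument as in Lemma \ref{G0part11},'' i.e.\ one application of \eqref{WDVV2} followed by Lemma \ref{4point-3point}, which is precisely what you outline. Two minor remarks: the identity is for a fixed $g\in S_3$, so no symmetrization over $S_3$ is involved---the eight raw sums from the two applications of Lemma \ref{4point-3point} collapse to four simply by rearranging quantum products inside three-point functions (associativity of $\circ$), and Lemma \ref{simplication} is not actually needed here.
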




\begin{lem}\label{G0part14}
\ben
&&\langle\langle G(E^{k-i})\Delta\gamma_{\varsigma_{g(1)}}\{E^{i-1}\circ\gamma_{\varsigma_{g(2)}}\circ\gamma_{\varsigma_{g(3)}}\}\rangle\rangle_{0}\\
&=&\langle\langle\{\Delta\circ E^{i-1}\}\{\gamma_{\varsigma_{g(2)}}\circ\gamma_{\varsigma_{g(3)}}\}\gamma_{\varsigma_{g(1)}}G(E^{k-i})\rangle\rangle_{0}\\
&&+\sum_{j=1}^{i-1}\langle\langle\{G(\Delta\circ E^{i-j-1})\circ G(E^{k-i})\}E^{j-1}\{\gamma_{\alpha}\circ\gamma_{\beta}\circ\gamma_{\sigma}\}\rangle\rangle_{0}\\
&&+\sum_{j=1}^{i-1}\langle\langle\{G(E^{k-i})\circ E^{i-j-1}\}\Delta G(E^{j-1}\circ\gamma_{\alpha}\circ\gamma_{\beta}\circ\gamma_{\sigma})\rangle\rangle_{0}\\
&&-\sum_{j=1}^{i-1}\langle\langle\{G(E^{k-i})\circ\Delta\circ E^{j-1}\} G(E^{i-j-1}\circ\gamma_{\varsigma_{g(2)}}\circ\gamma_{\varsigma_{g(3)}})\gamma_{\varsigma_{g(1)}}\rangle\rangle_{0}\\
&&-\sum_{j=1}^{i-1}\langle\langle\{G(E^{k-i})\circ E^{i-j-1}\}G(\Delta\circ E^{j-1}\circ\gamma_{\varsigma_{g(1)}})\{\gamma_{\varsigma_{g(2)}}\circ\gamma_{\varsigma_{g(3)}}\}\rangle\rangle_{0}.
\een
\end{lem}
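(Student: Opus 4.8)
The plan is to reproduce, mutatis mutandis, the proof of Lemma \ref{G0part11}: apply the first derivative of the WDVV equation \eqref{WDVV2} once to peel the quantum power $E^{i-1}$ off the triple product in the last slot, then invoke Lemma \ref{4point-3point} to reduce the two resulting bare-power four-point functions, using the generalized WDVV equation \eqref{WDVV1} (and, where convenient, the index identities of Lemma \ref{simplication}) throughout to move $\circ$-factors between the slots of the resulting three-point functions. The only structural difference from Lemma \ref{G0part11} is that here $G(E^{k-i})$ and $\Delta$ enter as two separate arguments, whereas there they were pre-fused into $G(E^{k-i})\circ\Delta$; this is exactly why the present right-hand side contains terms with a product of two $G$-operators.

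Concretely, I would view the left side as the four-point function with arguments $G(E^{k-i})$, $\Delta$, $\gamma_{\varsigma_{g(1)}}$, and $\{E^{i-1}\circ\gamma_{\varsigma_{g(2)}}\circ\gamma_{\varsigma_{g(3)}}\}$, and apply \eqref{WDVV2} with $\upsilon_1=E^{i-1}$, $\upsilon_2=\gamma_{\varsigma_{g(2)}}\circ\gamma_{\varsigma_{g(3)}}$, $\upsilon_3=\Delta$, $\upsilon_4=G(E^{k-i})$, $\upsilon_5=\gamma_{\varsigma_{g(1)}}$. The recombination term $\langle\langle\{\upsilon_1\circ\upsilon_3\}\upsilon_2\upsilon_4\upsilon_5\rangle\rangle_0$ is then, after reordering, precisely $\langle\langle\{\Delta\circ E^{i-1}\}\{\gamma_{\varsigma_{g(2)}}\circ\gamma_{\varsigma_{g(3)}}\}\gamma_{\varsigma_{g(1)}}G(E^{k-i})\rangle\rangle_0$, the first claimed term, which is left untouched. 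The remaining two terms of \eqref{WDVV2} are four-point functions carrying a bare power $E^{i-1}$, namely a multiple of $\langle\langle E^{i-1}\Delta\,G(E^{k-i})\{\gamma_{\alpha}\circ\gamma_{\beta}\circ\gamma_{\sigma}\}\rangle\rangle_0$ and of $\langle\langle E^{i-1}\{\Delta\circ\gamma_{\varsigma_{g(1)}}\}\{\gamma_{\varsigma_{g(2)}}\circ\gamma_{\varsigma_{g(3)}}\}G(E^{k-i})\rangle\rangle_0$.

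Next I would apply Lemma \ref{4point-3point} with $m=i-1$ to each residual, producing the sum $\sum_{j=1}^{i-1}$ indexed by where the power is split, with $G$-operators acting on the pieces $E^{i-j-1}$ and $E^{j-1}$ together with the two fused classes. A typical term such as $\langle\langle G(E^{i-j-1}\circ\Delta)\{E^{j-1}\circ\gamma_\alpha\circ\gamma_\beta\circ\gamma_\sigma\}G(E^{k-i})\rangle\rangle_0$ is then converted by two applications of \eqref{WDVV1} — transporting $E^{j-1}$ out of the product and drawing $G(E^{k-i})$ into it — into $\langle\langle\{G(E^{i-j-1}\circ\Delta)\circ G(E^{k-i})\}E^{j-1}\{\gamma_\alpha\circ\gamma_\beta\circ\gamma_\sigma\}\rangle\rangle_0$, which is the second claimed sum; an entirely analogous rearrangement of the companion term yields the third. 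Reducing the second residual in the same way, and separating $\Delta$ from $\gamma_{\varsigma_{g(1)}}$ via \eqref{WDVV1}/Lemma \ref{simplication}, delivers the last two sums.

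The main obstacle is the bookkeeping in this final reduction rather than any conceptual difficulty. Each of the two residual four-point functions breaks into four pieces under Lemma \ref{4point-3point}, so eight sums appear, and the pieces carrying the fully fused product $\Delta\circ\gamma_\alpha\circ\gamma_\beta\circ\gamma_\sigma$ must cancel between the two reductions, leaving exactly the four sums in the statement. Keeping the signs consistent, choosing at each stage which pair of arguments to fuse in Lemma \ref{4point-3point} so as to land on the stated $G$-nesting (and not on spurious terms like $G(E^{\bullet}\circ G(E^{k-i})\circ\Delta)$), and checking that the chain of \eqref{WDVV1}-moves aligns every surviving term with the claimed right-hand side, is the delicate part; everything else is routine manipulation already used in Lemma \ref{G0part11}.
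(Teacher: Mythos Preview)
Your proposal is correct and is exactly the approach the paper intends: the paper proves Lemma~\ref{G0part14} by saying it follows ``by the same argument as in Lemma~\ref{G0part11}'', namely one application of \eqref{WDVV2} followed by Lemma~\ref{4point-3point}. Your choice of $\upsilon_1,\dots,\upsilon_5$ in \eqref{WDVV2} is apt, and your anticipated cancellation does occur provided, when applying Lemma~\ref{4point-3point} to each residual, you place $G(E^{k-i})$ in the role of the last argument $\gamma_\mu$ (so that $G$ never gets applied to it); with that choice the eight resulting sums collapse to the four stated ones after relabeling $j\leftrightarrow i-j$ and rearranging $\circ$-factors via \eqref{WDVV1}.
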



\begin{lem}\label{G0part15}
\ben
&&\langle\langle\{E^{k-1}\circ\gamma_{\varsigma_{g(2)}}\}\Delta\gamma_{\varsigma_{g(1)}}G(\gamma_{\varsigma_{g(3)}})\rangle\rangle_{0}\\
&=&\langle\langle\{\Delta\circ E^{k-1}\}\gamma_{\varsigma_{g(1)}}\gamma_{\varsigma_{g(2)}}G(\gamma_{\varsigma_{g(3)}})\rangle\rangle_{0}\\
&&+\sum_{i=1}^{k-1}\langle\langle \{G(\Delta\circ E^{k-i-1})\circ E^{i-1}\}\{\gamma_{\varsigma_{g(1)}}\circ\gamma_{\varsigma_{g(2)}}\} G(\gamma_{\varsigma_{g(3)}})\rangle\rangle_{0}\\
&&+\sum_{i=1}^{k-1}\langle\langle \{\Delta\circ E^{k-i-1}\}G( E^{i-1}\circ\gamma_{\varsigma_{g(1)}}\circ\gamma_{\varsigma_{g(2)}}) G(\gamma_{\varsigma_{g(3)}})\rangle\rangle_{0}\\
&&-\sum_{i=1}^{k-1}\langle\langle \{\Delta\circ E^{i-1}\}G( E^{k-i-1}\circ\gamma_{\varsigma_{g(2)}}) \{\gamma_{\varsigma_{g(1)}}\circ G(\gamma_{\varsigma_{g(3)}})\}\rangle\rangle_{0}\\
&&-\sum_{i=1}^{k-1}\langle\langle E^{k-i-1}G(\Delta\circ  E^{i-1}\circ\gamma_{\varsigma_{g(1)}})\{\gamma_{\varsigma_{g(2)}}\circ G(\gamma_{\varsigma_{g(3)}})\}\rangle\rangle_{0}.
\een

\end{lem}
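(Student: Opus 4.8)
The plan is to prove this as a purely genus-0 rearrangement, following verbatim the recipe used for Lemma \ref{G0part11}: one pass of the first-derivative WDVV equation \eqref{WDVV2} to split off the leading ``clean'' term, then Lemma \ref{4point-3point} to reduce the leftover 4-point functions to 3-point functions, and finally Lemma \ref{simplication} for recombination. No genus-1 correlator appears, so the whole argument stays inside genus-0 theory on the small phase space, with $g\in S_{3}$ fixed throughout; the identity is asserted for each fixed $g$, and the symmetrization over $S_{3}$ is carried out later, when this lemma is fed into Lemma \ref{G0part1}.

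First I would apply \eqref{WDVV2} to the left-hand side $\langle\langle\{E^{k-1}\circ\gamma_{\varsigma_{g(2)}}\}\Delta\gamma_{\varsigma_{g(1)}}G(\gamma_{\varsigma_{g(3)}})\rangle\rangle_{0}$ with the assignment $\upsilon_{1}=E^{k-1}$, $\upsilon_{2}=\gamma_{\varsigma_{g(2)}}$, $\upsilon_{3}=\Delta$, $\upsilon_{4}=\gamma_{\varsigma_{g(1)}}$, $\upsilon_{5}=G(\gamma_{\varsigma_{g(3)}})$. The first term produced is $\langle\langle\{\Delta\circ E^{k-1}\}\gamma_{\varsigma_{g(1)}}\gamma_{\varsigma_{g(2)}}G(\gamma_{\varsigma_{g(3)}})\rangle\rangle_{0}$, which is exactly the leading term of the assertion; the remaining two are $\langle\langle\{\gamma_{\varsigma_{g(2)}}\circ G(\gamma_{\varsigma_{g(3)}})\}E^{k-1}\Delta\gamma_{\varsigma_{g(1)}}\rangle\rangle_{0}$ and $-\langle\langle\{\Delta\circ G(\gamma_{\varsigma_{g(3)}})\}E^{k-1}\gamma_{\varsigma_{g(2)}}\gamma_{\varsigma_{g(1)}}\rangle\rangle_{0}$, in each of which the entire power $E^{k-1}$ now sits in a single slot as a spectator.

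Next I would feed each of these two 4-point functions into Lemma \ref{4point-3point} with $m=k-1$, turning each into a sum $\sum_{i=1}^{k-1}$ of 3-point functions built from $G$-operators on quantum powers of $E$. The freedom in Lemma \ref{4point-3point} is in the choice of which two of the three remaining fields play the roles of $\gamma_{\alpha},\gamma_{\beta}$ (hence become grouped, either inside a single $G(\cdots)$ or as a quantum product) and which is the passive $\gamma_{\mu}$. Choosing $\gamma_{\alpha}=\Delta$, $\gamma_{\beta}=\gamma_{\varsigma_{g(1)}}$ in the reduction of the first leftover recovers sum four directly (the factor $\{\gamma_{\varsigma_{g(2)}}\circ G(\gamma_{\varsigma_{g(3)}})\}$ riding along as spectator), while the reduction of the $\Delta\circ G(\gamma_{\varsigma_{g(3)}})$ term, with $\gamma_{\alpha}=\gamma_{\varsigma_{g(2)}}$, $\gamma_{\beta}=\gamma_{\varsigma_{g(1)}}$ grouped, is the source of sums one, two, and three.

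The main obstacle is the bookkeeping in this last step. The WDVV pass leaves the Casimir $\Delta$ bound to $G(\gamma_{\varsigma_{g(3)}})$ in $\langle\langle\{\Delta\circ G(\gamma_{\varsigma_{g(3)}})\}E^{k-1}\gamma_{\varsigma_{g(2)}}\gamma_{\varsigma_{g(1)}}\rangle\rangle_{0}$, whereas sums one, two, and four instead carry $\Delta$ attached to a power of $E$ (as $\Delta\circ E^{i-1}$, $\Delta\circ E^{k-i-1}$, or inside $G(\Delta\circ E^{i-1}\circ\gamma_{\varsigma_{g(1)}})$) with $G(\gamma_{\varsigma_{g(3)}})$ liberated. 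Relocating $\Delta$ is exactly the delicate part: on the resulting 3-point functions one uses the identity $\langle\langle AB\{\Delta\circ G(\gamma_{\varsigma_{g(3)}})\}\rangle\rangle_{0}=\langle\langle\{A\circ B\}\Delta\,G(\gamma_{\varsigma_{g(3)}})\rangle\rangle_{0}$ (a consequence of the definition of $\circ$ and the genus-0 WDVV equation \eqref{WDVV1}) to free $G(\gamma_{\varsigma_{g(3)}})$ and move $\Delta$ into its own slot, then re-attach it to $E$, all while invoking Lemma \ref{simplication} together with $\Delta=\sum_{\mu}\gamma^{\mu}\circ\gamma_{\mu}$ and \eqref{simplication2} to collapse the double sums, track every sign, and reindex back to the single range $1\le i\le k-1$. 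The only genuinely substantive check is that, after these moves, every stray $\Delta\circ G(\gamma_{\varsigma_{g(3)}})$ contribution and every extraneous term from Lemma \ref{4point-3point} cancels, leaving precisely the four displayed sums. This is the computation that the phrase ``by the same argument as in Lemma \ref{G0part11}'' is meant to cover.
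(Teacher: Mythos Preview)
Your plan is correct and is precisely the paper's approach: the paper states that Lemma~\ref{G0part15} follows ``by the same argument as in Lemma~\ref{G0part11}'', i.e.\ one pass of \eqref{WDVV2} followed by Lemma~\ref{4point-3point}, and your choice of $\upsilon_1,\dots,\upsilon_5$ in \eqref{WDVV2} is the right one. One small bookkeeping correction: after applying Lemma~\ref{4point-3point} to each leftover, the first leftover $\langle\langle\{\gamma_{\varsigma_{g(2)}}\circ G(\gamma_{\varsigma_{g(3)}})\}E^{k-1}\Delta\gamma_{\varsigma_{g(1)}}\rangle\rangle_{0}$ actually supplies \emph{both} sum~1 and sum~4 (its second and third summands), while the $\Delta\circ G(\gamma_{\varsigma_{g(3)}})$ leftover supplies sums~2 and~3; the remaining four summands (the first and fourth from each leftover) cancel in pairs once you use that all six ``atoms'' in a genus-0 3-point function can be freely redistributed across the three slots by \eqref{WDVV1}.
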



Next, the following two lemmas produce the 4-point function appearing in $\{\gamma_{\alpha}\circ\gamma_{\beta}\circ\gamma_{\sigma}\}\Phi_{k}$
which is used for simplification.
\begin{lem}\label{G0part41}
\ben
&&\sum_{i=1}^k\langle\langle \{G(E^{k-i})\circ E^{i-1}\circ\Delta\}\gamma_{\alpha}\gamma_{\beta}\gamma_{\sigma}\rangle\rangle_{0}\\
&=&-\sum_{i=1}^{k-1}\langle\langle\{E^{i-1}\circ\gamma_{\alpha}\circ\gamma_{\beta}\circ\gamma_{\sigma}\}\gamma_{\mu}\gamma^{\mu}G(E^{k-i})\rangle\rangle_{0}\\
&&+\frac{1}{3}\sum_{g\in S_{3}}\sum_{i=1}^{k}\langle\langle\{G(E^{k-i})\circ E^{i-1}\circ\gamma_{\varsigma_{g(1)}}\circ\gamma^{\mu}\}\gamma_{\varsigma_{g(2)}}\gamma_{\varsigma_{g(3)}}\gamma_{\mu}\rangle\rangle_{0}\\
&&+\sum_{i=1}^{k}\langle\langle\{G(E^{k-i})\circ E^{i-1}\}\gamma_{\mu}\gamma^{\mu}\{\gamma_{\alpha}\circ\gamma_{\beta}\circ\gamma_{\sigma}\}\rangle\rangle_{0}\\
&&-\sum_{i=1}^k\sum_{j=1}^{i-1}\langle\langle G(E^{k-i})E^{i-j-1}G(\Delta\circ E^{j-1}\circ\gamma_{\alpha}\circ\gamma_{\beta}\circ\gamma_{\sigma})\rangle\rangle_{0}\\
&&-\frac{1}{6}\sum_{g\in S_{3}}\sum_{i=1}^k\langle\langle\{G(E^{k-i})\circ\gamma^{\mu}\}\{\gamma_{\varsigma_{g(1)}}\circ\gamma_{\varsigma_{g(2)}}\}\gamma_{\varsigma_{g(3)}}\{E^{i-1}\circ\gamma_{\mu}\}\rangle\rangle_{0}\\
&&-\sum_{i=1}^k\sum_{j=1}^{i-1}\langle\langle\{G(E^{k-i})\circ E^{i-j-1}\}\Delta G(E^{j-1}\circ\gamma_{\alpha}\circ\gamma_{\beta}\circ\gamma_{\sigma})\rangle\rangle_{0}\\
&&+\sum_{i=1}^k(i-1)\langle\langle\{G(E^{k-i})\circ E^{i-2}\}\Delta \{\gamma_{\alpha}\circ\gamma_{\beta}\circ\gamma_{\sigma}\}\rangle\rangle_{0}.
\een
\end{lem}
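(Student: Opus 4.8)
The plan is to reduce the single genus-$0$ four-point function on the left, whose first slot is the quantum product $G(E^{k-i})\circ E^{i-1}\circ\Delta$, to the combination on the right, which consists of four-point functions carrying the split pair $\gamma_\mu,\gamma^\mu$ in separate slots, four-point functions that still carry $\Delta$, and two families of nested double sums. The engine of the computation is the $\overline{\mathcal{M}}_{0,5}$ topological recursion relation \eqref{WDVV2}, together with the bookkeeping identities \eqref{Observation1} and \eqref{Observation2}, Lemma \ref{4point-3point} (equivalently \eqref{4point-3point1}) for reducing four-point functions that still carry a quantum power $E^m$, and Lemma \ref{simplication} for rewriting the $b_\alpha$-weighted metric contractions in terms of the operator $G$.

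First I would write $\Delta=\sum_\mu\gamma^\mu\circ\gamma_\mu$ inside the leading slot and apply \eqref{WDVV2} to $\langle\langle\{(G(E^{k-i})\circ E^{i-1}\circ\gamma_\mu)\circ\gamma^\mu\}\gamma_\alpha\gamma_\beta\gamma_\sigma\rangle\rangle_{0}$, moving one of the two copies $\gamma^\mu$ out of the product and into its own insertion. Summing over $\mu$ and repeatedly using \eqref{Observation1} and \eqref{Observation2} to shift quantum-product factors between slots is what produces the terms $\langle\langle\{G(E^{k-i})\circ E^{i-1}\}\gamma_\mu\gamma^\mu\{\gamma_\alpha\circ\gamma_\beta\circ\gamma_\sigma\}\rangle\rangle_{0}$ and $\langle\langle\{E^{i-1}\circ\gamma_\alpha\circ\gamma_\beta\circ\gamma_\sigma\}\gamma_\mu\gamma^\mu G(E^{k-i})\rangle\rangle_{0}$, together with the genuinely asymmetric pieces in which one of $\gamma_\alpha,\gamma_\beta,\gamma_\sigma$ is singled out. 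Since the left side is symmetric in $\alpha,\beta,\sigma$ while each application of \eqref{WDVV2} breaks that symmetry, I would restore it at the end by averaging over $S_{3}$; this averaging is exactly what generates the symmetrized sums carrying the coefficients $\frac{1}{3}$ and $-\frac{1}{6}$.

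Next, every intermediate four-point function that still contains a quantum power $E^{i-1}$ or $E^{k-i}$ against three $\gamma$-type insertions is reduced by Lemma \ref{4point-3point}. Since that lemma is built on \eqref{4pointeq1}, the quasi-homogeneity relation \eqref{quasihom}, and the WDVV equation, its iteration peels the factors of $E$ off one step at a time, lowering the power and depositing a factor $G(\,\cdot\,)$ at each stage; this is precisely the mechanism that yields the nested double sums $\sum_{i}\sum_{j=1}^{i-1}$, the surviving $\Delta$-terms, and the term with coefficient $(i-1)$. Finally I would use the identities \eqref{simplication1}--\eqref{simplication4} of Lemma \ref{simplication} to convert the remaining $b_\alpha$-weighted three-point contractions into the stated $G$-form, collect like terms, and symmetrize to land on the right-hand side.

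The main obstacle is not any single identity but the sheer bookkeeping: dozens of four-point functions are generated by the successive substitutions, and one must verify that all of them except the listed $\gamma_\mu\gamma^\mu$-type terms, the surviving $\Delta$-terms, and the two families of double sums cancel. As in the strategy described just before Lemma \ref{G0part11}, the cancellations are forced by bringing the surplus four-point functions to a common shape with \eqref{WDVV2} and then eliminating them through Lemma \ref{4point-3point}, after which Lemma \ref{simplication} fixes the final form. Pinning down the exact summation ranges and the combinatorial coefficients $\frac{1}{3},\,-\frac{1}{6},\,(i-1)$ through this symmetrization is the delicate part of the argument.
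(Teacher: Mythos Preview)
Your proposal is correct and follows essentially the same route as the paper: split $\Delta=\gamma^{\mu}\circ\gamma_{\mu}$, repeatedly apply the derived WDVV relation \eqref{WDVV2} to push factors out of the first slot, then reduce the residual four-point functions with Lemma~\ref{4point-3point} and clean up via \eqref{simplication2}, finally symmetrizing over $S_{3}$ to obtain the coefficients $\tfrac{1}{3}$ and $-\tfrac{1}{6}$. The paper carries out the chain of \eqref{WDVV2} applications explicitly (five successive uses before collecting terms), but the strategy and the tools are exactly those you describe.
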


\begin{proof}
By equation \eqref{WDVV2}, we have
\ben
&&\langle\langle \{G(E^{k-i})\circ E^{i-1}\circ\Delta\}\gamma_{\alpha}\gamma_{\beta}\gamma_{\sigma}\rangle\rangle_{0}\\
&=&\langle\langle \{G(E^{k-i})\circ E^{i-1}\circ\gamma^{\mu}\circ\gamma_{\mu}\}\gamma_{\alpha}\gamma_{\beta}\gamma_{\sigma}\rangle\rangle_{0}\\
&=&\langle\langle \{G(E^{k-i})\circ E^{i-1}\circ\gamma^{\mu}\}\gamma_{\alpha}\gamma_{\beta}\{\gamma_{\mu}\circ\gamma_{\sigma}\}\rangle\rangle_{0}\\
&&+\langle\langle \{G(E^{k-i})\circ E^{i-1}\circ\gamma_{\alpha}\circ\gamma^{\mu}\}\gamma_{\beta}\gamma_{\sigma}\gamma_{\mu}\rangle\rangle_{0}\\
&&-\langle\langle\{G(E^{k-i})\circ E^{i-1}\circ\gamma^{\mu}\}\gamma_{\beta}\{\gamma_{\alpha}\circ\gamma_{\sigma}\}\gamma_{\mu}\rangle\rangle_{0}\\
&=&\langle\langle \{G(E^{k-i})\circ E^{i-1}\circ\gamma_{\sigma}\circ\gamma^{\mu}\}\gamma_{\alpha}\gamma_{\beta}\gamma_{\mu}\rangle\rangle_{0}\\
&&+\langle\langle \{G(E^{k-i})\circ E^{i-1}\circ\gamma_{\alpha}\circ\gamma^{\mu}\}\gamma_{\beta}\gamma_{\sigma}\gamma_{\mu}\rangle\rangle_{0}\\
&&-\langle\langle\{G(E^{k-i})\circ E^{i-1}\circ\gamma^{\mu}\}\gamma_{\beta}\{\gamma_{\alpha}\circ\gamma_{\sigma}\}\gamma_{\mu}\rangle\rangle_{0}.
\een
Using equation \eqref{WDVV2} again, we have
\ben
&&\langle\langle\{G(E^{k-i})\circ E^{i-1}\circ\gamma^{\mu}\}\gamma_{\beta}\{\gamma_{\alpha}\circ\gamma_{\sigma}\}\gamma_{\mu}\rangle\rangle_{0}\\
&=&\langle\langle\{G(E^{k-i})\circ E^{i-1}\}\{\gamma_{\beta}\circ\gamma^{\mu}\}\{\gamma_{\alpha}\circ\gamma_{\sigma}\}\gamma_{\mu}\rangle\rangle_{0}\\
&&+\langle\langle\{G(E^{k-i})\circ E^{i-1}\circ\gamma_{\alpha}\circ\gamma_{\sigma}\}\gamma_{\mu}\gamma^{\mu}\gamma_{\beta}\rangle\rangle_{0}\\
&&-\langle\langle\{G(E^{k-i})\circ E^{i-1}\}\gamma_{\mu}\gamma^{\mu}\{\gamma_{\alpha}\circ\gamma_{\beta}\circ\gamma_{\sigma}\}\rangle\rangle_{0},
\een
\ben
&&\langle\langle\{G(E^{k-i})\circ E^{i-1}\}\{\gamma_{\beta}\circ\gamma^{\mu}\}\{\gamma_{\alpha}\circ\gamma_{\sigma}\}\gamma_{\mu}\rangle\rangle_{0}\\
&=&\langle\langle\{E^{i-1}\circ \gamma_{\mu}\}\{\gamma_{\beta}\circ\gamma^{\mu}\}\{\gamma_{\alpha}\circ\gamma_{\sigma}\}G(E^{k-i})\rangle\rangle_{0}\\
&&+\langle\langle E^{i-1}\gamma_{\mu}\{\gamma_{\beta}\circ\gamma^{\mu}\}\{G(E^{k-i})\circ \gamma_{\alpha}\circ\gamma_{\sigma}\}\rangle\rangle_{0}\\
&&-\langle\langle E^{i-1}\{\gamma_{\beta}\circ\gamma^{\mu}\}\{\gamma_{\alpha}\circ\gamma_{\sigma}\circ \gamma_{\mu}\}G(E^{k-i})\rangle\rangle_{0},
\een
\ben
&&\langle\langle\{G(E^{k-i})\circ E^{i-1}\circ\gamma_{\alpha}\circ\gamma_{\sigma}\}\gamma_{\mu}\gamma^{\mu}\gamma_{\beta}\rangle\rangle_{0}\\
&=&\langle\langle\{E^{i-1}\circ\gamma_{\alpha}\circ\gamma_{\sigma}\}\gamma_{\beta}\gamma^{\mu}\{G(E^{k-i})\circ \gamma_{\mu}\}\rangle\rangle_{0}\\
&&+\langle\langle\{E^{i-1}\circ\gamma_{\alpha}\circ\gamma_{\beta}\circ\gamma_{\sigma}\}\gamma_{\mu}\gamma^{\mu}G(E^{k-i})\rangle\rangle_{0}\\
&&-\langle\langle\{E^{i-1}\circ\gamma_{\alpha}\circ\gamma_{\sigma}\}G(E^{k-i})\gamma^{\mu}\{\gamma_{\beta}\circ \gamma_{\mu}\}\rangle\rangle_{0},
\een
\ben
&&\langle\langle\{E^{i-1}\circ\gamma_{\alpha}\circ\gamma_{\sigma}\}\gamma_{\beta}\gamma^{\mu}\{G(E^{k-i})\circ \gamma_{\mu}\}\rangle\rangle_{0}\\
&=&\langle\langle\{E^{i-1}\circ\gamma^{\mu}\}\{\gamma_{\alpha}\circ\gamma_{\sigma}\}\gamma_{\beta}\{G(E^{k-i})\circ \gamma_{\mu}\}\rangle\rangle_{0}\\
&&+\langle\langle E^{i-1}\gamma_{\beta}\gamma^{\mu}\{G(E^{k-i})\circ\gamma_{\alpha}\circ\gamma_{\sigma}\circ \gamma_{\mu}\}\rangle\rangle_{0}\\
&&-\langle\langle E^{i-1}\{\gamma_{\alpha}\circ\gamma_{\sigma}\}\gamma_{\beta}\{G(E^{k-i})\circ \Delta\}\rangle\rangle_{0},
\een
and
\ben
&&\langle\langle\{E^{i-1}\circ\gamma_{\alpha}\circ\gamma_{\sigma}\}G(E^{k-i})\gamma^{\mu}\{\gamma_{\beta}\circ \gamma_{\mu}\}\rangle\rangle_{0}\\
&=&\langle\langle\{E^{i-1}\circ\gamma^{\mu}\}\{\gamma_{\alpha}\circ\gamma_{\sigma}\}\{\gamma_{\beta}\circ \gamma_{\mu}\}G(E^{k-i})\rangle\rangle_{0}\\
&&+\langle\langle E^{i-1}\gamma^{\mu}\{\gamma_{\alpha}\circ\gamma_{\beta}\circ\gamma_{\sigma}\circ\gamma_{\mu}\}G(E^{k-i})\rangle\rangle_{0}\\
&&-\langle\langle E^{i-1}\{\gamma_{\alpha}\circ\gamma_{\sigma}\}\{\gamma_{\beta}\circ\Delta\}G(E^{k-i})\rangle\rangle_{0}.
\een
Collecting all the above equations and using Lemma \ref{4point-3point} and equation \eqref{simplication2}, and then symmetrizing
the resulting expression, the proof follows.
\end{proof}
\begin{lem}\label{G0part21}
\ben
&&\langle\langle\{G(E^{k-i})\circ\gamma_{\varsigma_{g(1)}}\circ\gamma_{\varsigma_{g(2)}}\}\gamma_{\mu}\gamma^{\mu}\{E^{i-1}\circ\gamma_{\varsigma_{g(3)}}\}\rangle\rangle_{0}\\
&=&\langle\langle\{E^{i-1}\circ\gamma_{\alpha}\circ\gamma_{\beta}\circ\gamma_{\sigma}\}\gamma_{\mu}\gamma^{\mu}G(E^{k-i})\rangle\rangle_{0}\\
&&+\langle\langle\{G(E^{k-i})\circ\gamma^{\mu}\}\{\gamma_{\varsigma_{g(1)}}\circ\gamma_{\varsigma_{g(2)}}\}\gamma_{\varsigma_{g(3)}}\{E^{i-1}\circ\gamma_{\mu}\}\rangle\rangle_{0}\\
&&-\langle\langle G(E^{k-i})\{\gamma_{\varsigma_{g(1)}}\circ\gamma_{\varsigma_{g(2)}}\}\{E^{i-1}\circ\gamma_{\mu}\}\{\gamma_{\varsigma_{g(3)}}\circ\gamma^{\mu}\}\rangle\rangle_{0}\\
&&+\sum_{j=1}^{i-1}\langle\langle\{G(E^{k-i})\circ E^{i-j-1}\}\Delta G(E^{j-1}\circ\gamma_{\alpha}\circ\gamma_{\beta}\circ\gamma_{\sigma})\rangle\rangle_{0}\\
&&-\sum_{j=1}^{i-1}\langle\langle\{G(E^{k-i})\circ\Delta \circ E^{j-1}\} G(E^{i-j-1}\circ\gamma_{\varsigma_{g(3)}})\{\gamma_{\varsigma_{g(1)}}\circ\gamma_{\varsigma_{g(2)}}\}\rangle\rangle_{0}.
\een
\end{lem}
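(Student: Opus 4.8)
The plan is to compute the left-hand side by the same mechanism used in Lemma \ref{G0part11} and, in more elaborate form, in Lemma \ref{G0part41}: apply the first-derivative WDVV relation \eqref{WDVV2} to the four-point function, reposition the contracted pair $\gamma^\mu,\gamma_\mu$ with the observations \eqref{Observation1}, \eqref{Observation2}, reduce the surviving power of $E$ through Lemma \ref{4point-3point}, and normalize with Lemma \ref{simplication}.

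Concretely, I would first write the quantum-product slot as $\{G(E^{k-i})\circ(\gamma_{\varsigma_{g(1)}}\circ\gamma_{\varsigma_{g(2)}})\}$ and apply \eqref{WDVV2} with $\upsilon_1=G(E^{k-i})$, $\upsilon_2=\gamma_{\varsigma_{g(1)}}\circ\gamma_{\varsigma_{g(2)}}$, $\upsilon_3=\gamma_\mu$, $\upsilon_4=\gamma^\mu$ and $\upsilon_5=E^{i-1}\circ\gamma_{\varsigma_{g(3)}}$. The term $\{\upsilon_2\circ\upsilon_5\}\upsilon_1\upsilon_3\upsilon_4$ is already the first term of the statement, since $\gamma_{\varsigma_{g(1)}}\circ\gamma_{\varsigma_{g(2)}}\circ\gamma_{\varsigma_{g(3)}}=\gamma_\alpha\circ\gamma_\beta\circ\gamma_\sigma$ and $G(E^{k-i})$ has been freed into a bare slot. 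The term $-\{\upsilon_3\circ\upsilon_5\}\upsilon_1\upsilon_2\upsilon_4$ reads $-\langle\langle G(E^{k-i})\{\gamma_{\varsigma_{g(1)}}\circ\gamma_{\varsigma_{g(2)}}\}\{E^{i-1}\circ\gamma_{\varsigma_{g(3)}}\circ\gamma_\mu\}\gamma^\mu\rangle\rangle_0$, and one application of \eqref{Observation1} slides $\gamma_{\varsigma_{g(3)}}$ across the $\gamma^\mu,\gamma_\mu$ contraction to produce exactly the third term. The remaining term $\{\upsilon_1\circ\upsilon_3\}\upsilon_2\upsilon_4\upsilon_5=\langle\langle\{G(E^{k-i})\circ\gamma_\mu\}\{\gamma_{\varsigma_{g(1)}}\circ\gamma_{\varsigma_{g(2)}}\}\gamma^\mu\{E^{i-1}\circ\gamma_{\varsigma_{g(3)}}\}\rangle\rangle_0$ still carries the full power $E^{i-1}$; a further use of \eqref{WDVV2} peels off the second term of the statement (the one retaining $\{E^{i-1}\circ\gamma_\mu\}$), while the pieces in which $E^{i-1}$ drops into a bare slot are reduced by Lemma \ref{4point-3point}. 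That reduction is what generates the two sums $\sum_{j=1}^{i-1}$, the index $j$ tracking the split of the power introduced by the $G$-operator terms of Lemma \ref{4point-3point}; the contraction $\gamma_\mu\gamma^\mu$ then collapses to $\Delta$ by \eqref{simplication2}, and \eqref{simplication1} puts the leftover $G$-operators into the displayed form.

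The hard part will be the bookkeeping in the last stage. Each pass of \eqref{WDVV2} spawns several four-point functions that differ only by where the contracted indices $\gamma^\mu,\gamma_\mu$ sit, and the whole point is to route them, using the freedom granted by \eqref{Observation1} and \eqref{Observation2} to move quantum factors across that contraction, so that all but the five listed terms cancel and the two $j$-sums emerge with the correct ranges and signs. This is precisely the kind of delicate cancellation already carried out in Lemmas \ref{G0part1} and \ref{G0part41}; once the terms are aligned, the remaining simplifications from Lemma \ref{simplication} are mechanical.
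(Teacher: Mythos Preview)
Your plan is correct and matches the paper's own proof essentially step for step: a first application of \eqref{WDVV2} with the same choice of $\upsilon_1,\dots,\upsilon_5$ isolates the first and third displayed terms (the latter after sliding $\gamma_{\varsigma_{g(3)}}$ across the $\gamma_\mu,\gamma^\mu$ contraction via \eqref{Observation1}), a second application of \eqref{WDVV2} to the surviving $\{G(E^{k-i})\circ\gamma^\mu\}$ term peels off the second displayed term, and the two residual four-point functions with a bare $E^{i-1}$ are reduced by Lemma~\ref{4point-3point} together with \eqref{simplication2} to produce the two $j$-sums. The bookkeeping you flag as ``the hard part'' is in fact short here---only two residual terms feed into Lemma~\ref{4point-3point}, and the cancellation is immediate once the $\gamma_\mu\gamma^\mu$ pair collapses to $\Delta$; \eqref{simplication1} is not actually needed.
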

\begin{proof}
By equation \eqref{WDVV2}, we have
\ben
&&\langle\langle\{G(E^{k-i})\circ\gamma_{\varsigma_{g(1)}}\circ\gamma_{\varsigma_{g(2)}}\}\gamma_{\mu}\gamma^{\mu}\{E^{i-1}\circ\gamma_{\varsigma_{g(3)}}\}\rangle\rangle_{0}\\
&=&\langle\langle\{E^{i-1}\circ\gamma_{\alpha}\circ\gamma_{\beta}\circ\gamma_{\sigma}\}\gamma_{\mu}\gamma^{\mu}G(E^{k-i})\rangle\rangle_{0}\\
&&+\langle\langle\{G(E^{k-i})\circ\gamma^{\mu}\}\{\gamma_{\varsigma_{g(1)}}\circ\gamma_{\varsigma_{g(2)}}\}\{E^{i-1}\circ\gamma_{\varsigma_{g(3)}}\}\gamma_{\mu}\rangle\rangle_{0}\\
&&-\langle\langle G(E^{k-i})\{\gamma_{\varsigma_{g(1)}}\circ\gamma_{\varsigma_{g(2)}}\}\gamma_{\mu}\{E^{i-1}\circ\gamma_{\varsigma_{g(3)}}\circ\gamma^{\mu}\}\rangle\rangle_{0},
\een
and
\ben
&&\langle\langle\{G(E^{k-i})\circ\gamma^{\mu}\}\{\gamma_{\varsigma_{g(1)}}\circ\gamma_{\varsigma_{g(2)}}\}\{E^{i-1}\circ\gamma_{\varsigma_{g(3)}}\}\gamma_{\mu}\rangle\rangle_{0}\\
&=&\langle\langle\{G(E^{k-i})\circ\gamma^{\mu}\}\{\gamma_{\varsigma_{g(1)}}\circ\gamma_{\varsigma_{g(2)}}\}\gamma_{\varsigma_{g(3)}}\{E^{i-1}\circ\gamma_{\mu}\}\rangle\rangle_{0}\\
&&+\langle\langle E^{i-1}\gamma_{\mu}\{G(E^{k-i})\circ\gamma^{\mu}\}\{\gamma_{\alpha}\circ\gamma_{\beta}\circ\gamma_{\sigma}\}\rangle\rangle_{0}\\
&&-\langle\langle E^{i-1}\gamma_{\varsigma_{g(3)}}\{\gamma_{\varsigma_{g(1)}}\circ\gamma_{\varsigma_{g(2)}}\circ\gamma_{\mu}\}\{G(E^{k-i})\circ\gamma^{\mu}\}\rangle\rangle_{0}.
\een
Notice that
\ben
&&\langle\langle G(E^{k-i})\{\gamma_{\varsigma_{g(1)}}\circ\gamma_{\varsigma_{g(2)}}\}\gamma_{\mu}\{E^{i-1}\circ\gamma_{\varsigma_{g(3)}}\circ\gamma^{\mu}\}\rangle\rangle_{0}\\
&=&\langle\langle G(E^{k-i})\{\gamma_{\varsigma_{g(1)}}\circ\gamma_{\varsigma_{g(2)}}\}\{E^{i-1}\circ\gamma_{\mu}\}
\{\gamma_{\varsigma_{g(3)}}\circ\gamma^{\mu}\}\rangle\rangle_{0}.
\een
The proof is completed by using Lemma \ref{4point-3point} and equation \eqref{simplication2}.
\end{proof}

The following results are useful for further simplification.
\begin{lem}\label{4pointreduction1}
\ben
&&-\sum_{g\in S_{3}}\sum_{i=1}^k\langle\langle\{G(E^{k-i}\circ\gamma_{\varsigma_{g(1)}})\circ E^{i-1}\circ\gamma^{\mu}\}\gamma_{\varsigma_{g(2)}}\gamma_{\varsigma_{g(3)}}\gamma_{\mu}\rangle\rangle_{0}\\
&&-\frac{1}{6}\sum_{g\in S_{3}}\sum_{i=1}^k\langle\langle\{G(E^{k-i})\circ E^{i-1}\circ\gamma^{\mu}\}\{\gamma_{\varsigma_{g(1)}}\circ\gamma_{\varsigma_{g(2)}}\}\gamma_{\varsigma_{g(3)}}\gamma_{\mu}\rangle\rangle_{0}\\
&&+\frac{1}{3}\sum_{g\in S_{3}}\sum_{i=1}^k\langle\langle\{G(E^{k-i})\circ E^{i-1}\circ\gamma_{\varsigma_{g(1)}}\circ \gamma^{\mu}\}\gamma_{\varsigma_{g(2)}}\gamma_{\varsigma_{g(3)}}\gamma_{\mu}\rangle\rangle_{0}\\
&=&-\sum_{g\in S_{3}}\sum_{i=1}^k\langle\langle\{G(E^{k-i}\circ\gamma_{\varsigma_{g(1)}})\circ\gamma_{\varsigma_{g(2)}}\}\gamma_{\mu}\gamma^{\mu}\{E^{i-1}\circ\gamma_{\varsigma_{g(3)}}\}\rangle\rangle_{0}\\
&&-\sum_{g\in S_{3}}\sum_{i=1}^k\langle\langle\{\Delta\circ E^{i-1}\}G(E^{k-i}\circ\gamma_{\varsigma_{g(1)}})\gamma_{\varsigma_{g(2)}}\gamma_{\varsigma_{g(3)}}\rangle\rangle_{0}\\
&&+\sum_{g\in S_{3}}\sum_{i=1}^k\langle\langle G(E^{k-i}\circ\gamma_{\varsigma_{g(1)}})\gamma_{\varsigma_{g(2)}}\{\gamma_{\varsigma_{g(3)}}\circ\gamma^{\mu}\}\{E^{i-1}\circ\gamma_{\mu}\}\rangle\rangle_{0}\\
&&-\frac{1}{6}\sum_{g\in S_{3}}\sum_{i=1}^k\langle\langle\{\Delta\circ E^{i-1}\}\{\gamma_{\varsigma_{g(1)}}\circ\gamma_{\varsigma_{g(2)}}\}\gamma_{\varsigma_{g(3)}}G(E^{k-i})\rangle\rangle_{0}\\
&&+\frac{1}{3}\sum_{g\in S_{3}}\sum_{i=1}^k\langle\langle G(E^{k-i})\{\gamma_{\varsigma_{g(1)}}\circ\gamma_{\varsigma_{g(2)}}\circ\gamma^{\mu}\}\gamma_{\varsigma_{g(3)}}\{E^{i-1}\circ\gamma_{\mu}\}\rangle\rangle_{0}\\
&&+\frac{1}{6}\sum_{g\in S_{3}}\sum_{i=1}^k\langle\langle\{\Delta\circ E^{i-1}\}\gamma_{\varsigma_{g(1)}}\gamma_{\varsigma_{g(2)}}\{G(E^{k-i})\circ\gamma_{\varsigma_{g(3)}}\}\rangle\rangle_{0}\\
&&-\frac{1}{6}\sum_{g\in S_{3}}\sum_{i=1}^k\langle\langle G(E^{k-i})\{\gamma_{\varsigma_{g(1)}}\circ\gamma_{\varsigma_{g(2)}}\}\{\gamma_{\varsigma_{g(3)}}\circ\gamma^{\mu}\}\{E^{i-1}\circ\gamma_{\mu}\}\rangle\rangle_{0}\\
&&-\sum_{g\in S_{3}}\sum_{i=1}^k\sum_{j=1}^{i-1}\langle\langle\{G(\Delta\circ E^{i-j-1})\circ E^{j-1}\}G(E^{k-i}\circ\gamma_{\varsigma_{g(1)}})\{\gamma_{\varsigma_{g(2)}}\circ\gamma_{\varsigma_{g(3)}}\}\rangle\rangle_{0}\\
&&-\sum_{g\in S_{3}}\sum_{i=1}^k\sum_{j=1}^{i-1}\langle\langle\{\Delta\circ E^{i-j-1}\circ\gamma_{\varsigma_{g(1)}}\}G(E^{k-i}\circ\gamma_{\varsigma_{g(2)}})G(E^{j-1}\circ\gamma_{\varsigma_{g(3)}})\rangle\rangle_{0}\\
&&+\sum_{g\in S_{3}}\sum_{i=1}^k\sum_{j=1}^{i-1}\langle\langle\{\Delta\circ E^{i-2}\}G(E^{k-i}\circ\gamma_{\varsigma_{g(1)}})\{\gamma_{\varsigma_{g(2)}}\circ\gamma_{\varsigma_{g(3)}}\}\rangle\rangle_{0}.
\een
\end{lem}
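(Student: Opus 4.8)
The plan is to establish this identity by exactly the mechanism already used in the proofs of Lemmas \ref{G0part1}, \ref{G0part41}, and \ref{G0part21}: starting from each of the three four-point functions on the left-hand side, I would apply the first-derivative WDVV equation \eqref{WDVV2} to peel the factor $E^{i-1}$, carried along with the summed class $\gamma^{\mu}$, out of the slot in which it currently sits and redistribute it onto the remaining slots. Concretely, in the first left-hand term I would regard the leading slot as $\{G(E^{k-i}\circ\gamma_{\varsigma_{g(1)}})\circ\gamma^{\mu}\}\circ E^{i-1}$ and invoke \eqref{WDVV2} with $\upsilon_{1}=G(E^{k-i}\circ\gamma_{\varsigma_{g(1)}})\circ\gamma^{\mu}$, $\upsilon_{2}=E^{i-1}$, and the three remaining entries $\gamma_{\varsigma_{g(2)}},\gamma_{\varsigma_{g(3)}},\gamma_{\mu}$. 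Each such application splits one four-point function into three, and the same move applied to the second and third left-hand terms produces an analogous collection of terms in which the quantum power of $E$ has been transferred to a different factor.

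After redistribution two things happen. Whenever the WDVV moves bring $\gamma^{\mu}$ and $\gamma_{\mu}$ into a common slot, the contraction $\sum_{\mu}\gamma^{\mu}\circ\gamma_{\mu}=\Delta$, together with the rule \eqref{simplication2}, manufactures the factors $\Delta$, $\Delta\circ E^{i-1}$, and $\Delta\circ E^{i-2}$ appearing on the right-hand side. Whenever $\gamma^{\mu}$ and $\gamma_{\mu}$ instead remain in distinct slots, one obtains the terms of the shape $\langle\langle\cdots\gamma_{\mu}\gamma^{\mu}\cdots\rangle\rangle_{0}$. Any four-point function that still carries a quantum power of $E$ would then be reduced to three-point data by Lemma \ref{4point-3point}; the inner summation index $j$ created by that lemma is precisely the source of the nested double sums $\sum_{i=1}^{k}\sum_{j=1}^{i-1}$ on the right, where the exponents $i-j-1$ and $j-1$ record how the power is split. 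A closing application of Lemma \ref{simplication} would absorb the residual $G(\upsilon_{1}\circ\upsilon_{2})$ expressions and cast every surviving term into one of the canonical forms displayed in the statement, after which I would symmetrize over $g\in S_{3}$.

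The computations involved are lengthy but entirely mechanical; the real difficulty is bookkeeping. The main obstacle will be keeping the index ranges aligned: each use of \eqref{WDVV2} and of Lemma \ref{4point-3point} shifts the exponents of $E$ and re-opens a summation, so one must track carefully how $E^{i-1}$, $E^{k-i}$, $E^{i-j-1}$, and $E^{j-1}$ propagate, and must check that the spurious terms generated by the three separate WDVV expansions cancel against one another before the $S_{3}$-symmetrization is carried out. Verifying that no term is double counted and that the fractional coefficients $-1$, $-\tfrac{1}{6}$, $\tfrac{1}{3}$, $\tfrac{1}{6}$ recombine to yield exactly the stated right-hand side is where essentially all of the care is required.
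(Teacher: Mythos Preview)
Your plan is correct and matches the paper's own argument: repeated applications of the derivative WDVV relation \eqref{WDVV2} to redistribute the factor $E^{i-1}$ (and to pair $\gamma^{\mu}$ with $\gamma_{\mu}$ into $\Delta$ via \eqref{simplication2}), followed by Lemma \ref{4point-3point} to reduce the remaining $E$-laden four-point functions, and a final symmetrization over $S_{3}$. The only minor tactical difference is that the paper splits the first slot as $G(E^{k-i}\circ\gamma_{\varsigma_{g(1)}})\,\circ\,(E^{i-1}\circ\gamma^{\mu})$ rather than $(G(E^{k-i}\circ\gamma_{\varsigma_{g(1)}})\circ\gamma^{\mu})\,\circ\,E^{i-1}$ when invoking \eqref{WDVV2}, which makes $\Delta\circ E^{i-1}$ appear after a single step; your split works equally well but requires one extra move before the contraction emerges.
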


\begin{proof}
Using equation \eqref{WDVV2}, we have
\ben
&&\langle\langle\{G(E^{k-i}\circ\gamma_{\varsigma_{g(1)}})\circ E^{i-1}\circ\gamma^{\mu}\}\gamma_{\varsigma_{g(2)}}\gamma_{\varsigma_{g(3)}}\gamma_{\mu}\rangle\rangle_{0}\\
&=&\langle\langle\{\Delta\circ E^{i-1}\}G(E^{k-i}\circ\gamma_{\varsigma_{g(1)}})\gamma_{\varsigma_{g(2)}}\gamma_{\varsigma_{g(3)}}\rangle\rangle_{0}\\
&&+\langle\langle\{G(E^{k-i}\circ\gamma_{\varsigma_{g(1)}})\circ \gamma_{\varsigma_{g(3)}}\}\{E^{i-1}\circ\gamma^{\mu}\}\gamma_{\mu}\gamma_{\varsigma_{g(2)}}\rangle\rangle_{0}\\
&&-\langle\langle\{E^{i-1}\circ\gamma^{\mu}\}G(E^{k-i}\circ\gamma_{\varsigma_{g(1)}})\gamma_{\varsigma_{g(2)}}
\{\gamma_{\varsigma_{g(3)}}\circ\gamma_{\mu}\}\rangle\rangle_{0},
\een
\ben
&&\langle\langle\{G(E^{k-i}\circ\gamma_{\varsigma_{g(1)}})\circ \gamma_{\varsigma_{g(3)}}\}\{E^{i-1}\circ\gamma^{\mu}\}\gamma_{\mu}\gamma_{\varsigma_{g(2)}}\rangle\rangle_{0}\\
&=&\langle\langle E^{i-1}\Delta\gamma_{\varsigma_{g(2)}}\{G(E^{k-i}\circ\gamma_{\varsigma_{g(1)}})\circ\gamma_{\varsigma_{g(3)}}\}\rangle\rangle_{0}\\
&&+\langle\langle\{G(E^{k-i}\circ\gamma_{\varsigma_{g(1)}})\circ\gamma_{\varsigma_{g(3)}}\}\gamma_{\mu}\gamma^{\mu}\{E^{i-1}\circ\gamma_{\varsigma_{g(2)}}\}\rangle\rangle_{0}\\
&&-\langle\langle E^{i-1}\gamma^{\mu}\{\gamma_{\varsigma_{g(3)}}\circ\gamma_{\mu}\}
\{G(E^{k-i}\circ\gamma_{\varsigma_{g(1)}})\circ\gamma_{\varsigma_{g(3)}}\}\rangle\rangle_{0},
\een
and
\ben
&&\langle\langle\{G(E^{k-i})\circ E^{i-1}\circ\gamma^{\mu}\}\{\gamma_{\varsigma_{g(1)}}\circ\gamma_{\varsigma_{g(2)}}\}\gamma_{\varsigma_{g(3)}}\gamma_{\mu}\rangle\rangle_{0}\\
&=&\langle\langle\{G(E^{k-i})\circ \gamma_{\varsigma_{g(1)}}\circ\gamma_{\varsigma_{g(2)}}\}\gamma_{\varsigma_{g(3)}}\gamma_{\mu}\{ E^{i-1}\circ\gamma^{\mu}\}\rangle\rangle_{0}\\
&&+\langle\langle\{\Delta\circ E^{i-1}\}G(E^{k-i})\{\gamma_{\varsigma_{g(1)}}\circ\gamma_{\varsigma_{g(2)}}\}\gamma_{\varsigma_{g(3)}}\rangle\rangle_{0}\\
&&-\langle\langle\{E^{i-1}\circ\gamma^{\mu}\}G(E^{k-i})
\{\gamma_{\varsigma_{g(1)}}\circ\gamma_{\varsigma_{g(2)}}\circ\gamma_{\mu}\}\gamma_{\varsigma_{g(3)}}\rangle\rangle_{0}.
\een
Since by equation \eqref{WDVV2} again, we have
\ben
&&\langle\langle\{G(E^{k-i})\circ E^{i-1}\circ\gamma_{\varsigma_{g(1)}}\circ \gamma^{\mu}\}\gamma_{\varsigma_{g(2)}}\gamma_{\varsigma_{g(3)}}\gamma_{\mu}\rangle\rangle_{0}\\
&=&\langle\langle\{E^{i-1}\circ\gamma^{\mu}\}\gamma_{\varsigma_{g(2)}}\gamma_{\mu}\{G(E^{k-i})\circ\gamma_{\varsigma_{g(1)}}\circ\gamma_{\varsigma_{g(3)}}\}\rangle\rangle_{0}\\
&&+\langle\langle\{\Delta\circ E^{i-1}\}\{G(E^{k-i})\circ\gamma_{\varsigma_{g(1)}}\}\gamma_{\varsigma_{g(2)}}\gamma_{\varsigma_{g(3)}}\rangle\rangle_{0}\\
&&-\langle\langle\{G(E^{k-i})\circ\gamma_{\varsigma_{g(1)}}\}\{E^{i-1}\circ\gamma^{\mu}\}
\gamma_{\varsigma_{g(2)}}\{\gamma_{\varsigma_{g(3)}}\circ\gamma_{\mu}\}\rangle\rangle_{0},
\een
and
\ben
&&\langle\langle\{G(E^{k-i})\circ\gamma_{\varsigma_{g(1)}}\}\{E^{i-1}\circ\gamma^{\mu}\}
\gamma_{\varsigma_{g(2)}}\{\gamma_{\varsigma_{g(3)}}\circ\gamma_{\mu}\}\rangle\rangle_{0}\\
&=&\langle\langle\{G(E^{k-i})\circ E^{i-1}\circ\gamma^{\mu}\}\gamma_{\varsigma_{g(1)}}
\gamma_{\varsigma_{g(2)}}\{\gamma_{\varsigma_{g(3)}}\circ\gamma_{\mu}\}\rangle\rangle_{0}\\
&&+\langle\langle G(E^{k-i})\{\gamma_{\varsigma_{g(1)}}\circ
\gamma_{\varsigma_{g(2)}}\}\{\gamma_{\varsigma_{g(3)}}\circ\gamma_{\mu}\} \{E^{i-1}\circ\gamma^{\mu}\}\rangle\rangle_{0}\\
&&-\langle\langle G(E^{k-i})\gamma_{\varsigma_{g(1)}}\{\gamma_{\varsigma_{g(3)}}\circ\gamma_{\mu}\} \{E^{i-1}\circ
\gamma_{\varsigma_{g(2)}}\circ\gamma^{\mu}\}\rangle\rangle_{0}\\
&=&\langle\langle\{G(E^{k-i})\circ\gamma_{\varsigma_{g(3)}}\circ E^{i-1}\circ\gamma^{\mu}\}\gamma_{\varsigma_{g(1)}}
\gamma_{\varsigma_{g(2)}}\gamma_{\mu}\rangle\rangle_{0}\\
&&+\langle\langle G(E^{k-i})\{\gamma_{\varsigma_{g(1)}}\circ
\gamma_{\varsigma_{g(2)}}\}\{\gamma_{\varsigma_{g(3)}}\circ\gamma_{\mu}\} \{E^{i-1}\circ\gamma^{\mu}\}\rangle\rangle_{0}\\
&&-\langle\langle G(E^{k-i})\gamma_{\varsigma_{g(1)}}\{\gamma_{\varsigma_{g(2)}}\circ\gamma_{\varsigma_{g(3)}}\circ\gamma_{\mu}\} \{E^{i-1}\circ
\gamma^{\mu}\}\rangle\rangle_{0}.
\een
Hence we have
\ben
&&\langle\langle\{G(E^{k-i})\circ\gamma_{\varsigma_{g(1)}}\circ E^{i-1}\circ\gamma^{\mu}\}\gamma_{\varsigma_{g(2)}}\gamma_{\varsigma_{g(3)}}\gamma_{\mu}\rangle\rangle_{0}\\
&&+\langle\langle\{G(E^{k-i})\circ\gamma_{\varsigma_{g(3)}}\circ E^{i-1}\circ\gamma^{\mu}\}\gamma_{\varsigma_{g(1)}}\gamma_{\varsigma_{g(2)}}\gamma_{\mu}\rangle\rangle_{0}\\
&=&\langle\langle\{G(E^{k-i})\circ\gamma_{\varsigma_{g(1)}}\circ\gamma_{\varsigma_{g(3)}}\}\gamma_{\varsigma_{g(2)}}\gamma_{\mu}\{E^{i-1}\circ\gamma^{\mu}\}\rangle\rangle_{0}\\
&&+\langle\langle\{\Delta\circ E^{i-1}\}\gamma_{\varsigma_{g(2)}}\gamma_{\varsigma_{g(3)}}\{G(E^{k-i})\circ\gamma_{\varsigma_{g(1)}}\}\rangle\rangle_{0}\\
&&-\langle\langle G(E^{k-i})\{\gamma_{\varsigma_{g(1)}}\circ\gamma_{\varsigma_{g(2)}}\}\{\gamma_{\varsigma_{g(3)}}\circ\gamma_{\mu}\}\{E^{i-1}\circ\gamma^{\mu}\}\rangle\rangle_{0}\\
&&+\langle\langle G(E^{k-i})\gamma_{\varsigma_{g(1)}}\{\gamma_{\varsigma_{g(2)}}\circ\gamma_{\varsigma_{g(2)}}\circ\gamma_{\mu}\}\{E^{i-1}\circ\gamma^{\mu}\}\rangle\rangle_{0}.
\een
The proof follows by using Lemma \ref{4point-3point} and equation \eqref{simplication2}.
\end{proof}
\begin{lem}\label{4pointreduction2}
\ben
&&-2\sum_{i=1}^k\langle\langle\{G(E^{k-i})\circ E^{i-1}\}\gamma_{\mu}\gamma^{\mu}\{\gamma_{\alpha}\circ\gamma_{\beta}\circ\gamma_{\sigma}\}\rangle\rangle_{0}\\
&=&2\sum_{i=1}^k\sum_{j=1}^{i-1}\langle\langle\{G(\Delta\circ E^{i-j-1})\circ G(E^{k-i})\}E^{j-1}\{\gamma_{\alpha}\circ\gamma_{\beta}\circ\gamma_{\sigma}\}\rangle\rangle_{0}\\
&&-2\sum_{i=1}^k\sum_{j=1}^{i-1}\langle\langle G(E^{k-i})E^{i-j-1}G(\Delta\circ E^{j-1}\circ\gamma_{\alpha}\circ\gamma_{\beta}\circ\gamma_{\sigma})\rangle\rangle_{0}\\
&&-\frac{1}{3}\sum_{g\in S_{3}}\sum_{i=1}^k\langle\langle G(E^{k-i})\gamma_{\varsigma_{g(1)}}\{\gamma_{\varsigma_{g(2)}}\circ\gamma_{\varsigma_{g(3)}}\circ\gamma^{\mu}\}\{E^{i-1}\circ\gamma_{\mu}\}\rangle\rangle_{0}\\
&&-\frac{1}{3}\sum_{g\in S_{3}}\sum_{i=1}^k\langle\langle G(E^{k-i})\{\gamma_{\varsigma_{g(1)}}\circ\gamma_{\varsigma_{g(2)}}\}\{\gamma_{\varsigma_{g(3)}}\circ\gamma^{\mu}\}\{E^{i-1}\circ\gamma_{\mu}\}\rangle\rangle_{0}\\
&&+\frac{1}{3}\sum_{g\in S_{3}}\sum_{i=1}^k\langle\langle G(E^{k-i})\{\Delta\circ E^{i-1}\}\{\gamma_{\varsigma_{g(1)}}\circ\gamma_{\varsigma_{g(2)}}\}\gamma_{\varsigma_{g(3)}}\rangle\rangle_{0}.
\een

\end{lem}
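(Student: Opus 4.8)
The plan is to treat the left-hand side as a sum of genus-zero four-point functions and to process each summand $\langle\langle\{G(E^{k-i})\circ E^{i-1}\}\gamma_{\mu}\gamma^{\mu}\{\gamma_{\alpha}\circ\gamma_{\beta}\circ\gamma_{\sigma}\}\rangle\rangle_{0}$ by exactly the mechanism already used in Lemmas \ref{G0part41}, \ref{G0part21} and \ref{4pointreduction1}: repeatedly apply the first derivative of the WDVV equation \eqref{WDVV2} to redistribute the quantum products among the four slots, then use Lemma \ref{4point-3point} to reduce those four-point functions that carry a genuine power of $E$ to three-point functions, and finally invoke the simplifications of Lemma \ref{simplication}, especially the contraction identity \eqref{simplication2}, to replace the summed pairs $\gamma^{\mu}\cdots\gamma_{\mu}$ by $\Delta$-factors.

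First I would peel the triple product $\{\gamma_{\alpha}\circ\gamma_{\beta}\circ\gamma_{\sigma}\}$ apart one factor at a time with \eqref{WDVV2}, keeping the pair $\gamma_{\mu}\gamma^{\mu}$ intact; symmetrizing over which of $\gamma_{\alpha},\gamma_{\beta},\gamma_{\sigma}$ is detached produces the $\frac{1}{3}\sum_{g\in S_{3}}$ prefactors visible in the last three terms of the claim. In parallel I would break the product $G(E^{k-i})\circ E^{i-1}$ by moving $E^{i-1}$ into a separate slot via \eqref{WDVV2}, and then apply Lemma \ref{4point-3point} (or its contracted form \eqref{4point-3point1}) to each resulting four-point function containing $E^{i-1}$ or $E^{k-i}$. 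That reduction is what generates the nested double sums $\sum_{i=1}^{k}\sum_{j=1}^{i-1}$ together with the doubled $G$-structures $\{G(\Delta\circ E^{i-j-1})\circ G(E^{k-i})\}$ and $G(\Delta\circ E^{j-1}\circ\gamma_{\alpha}\circ\gamma_{\beta}\circ\gamma_{\sigma})$ in the first two terms on the right-hand side.

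Wherever a contraction of the form $G(\cdots\circ\gamma^{\mu})\circ\gamma_{\mu}$ survives, I would collapse it to $\frac{1}{2}\Delta\circ(\cdots)$ by \eqref{simplication2}; this is the source of every $\Delta$ appearing on the right-hand side, and it is also what converts the intermediate terms carrying $\{\gamma_{\varsigma_{g(2)}}\circ\gamma_{\varsigma_{g(3)}}\circ\gamma^{\mu}\}\{E^{i-1}\circ\gamma_{\mu}\}$ into the displayed form, while the overall factor $-2$ on the left is carried through the computation and fixes the coefficients $2$ and $-2$ of the two double-sum terms.

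The hard part will be the coefficient bookkeeping and the matching of summation ranges. Because \eqref{WDVV2} and Lemma \ref{4point-3point} both introduce new sums over the position at which a power of $E$ is split, the many intermediate terms must be regrouped, re-indexed (with attention to the boundary cases $i-j-1=0$ and $j-1=0$), and symmetrized over $S_{3}$ before the fractional coefficients $\pm\frac{1}{3}$ and the integer coefficients $\pm 2$ stabilize to those in the statement. As elsewhere in this section, verifying that all spurious four-point functions cancel after these manipulations, leaving precisely the six terms displayed, is the only genuinely delicate point; each individual step is a routine application of \eqref{WDVV2}, Lemma \ref{4point-3point}, and Lemma \ref{simplication}.
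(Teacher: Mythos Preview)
Your proposal is correct and follows essentially the same route as the paper: first apply \eqref{WDVV2} to break the slot $\{\gamma_{\alpha}\circ\gamma_{\beta}\circ\gamma_{\sigma}\}$, then apply \eqref{WDVV2} again to each resulting term to split off $G(E^{k-i})\circ E^{i-1}$, and finally reduce via Lemma \ref{4point-3point} and \eqref{simplication2}. The only cosmetic difference is that the paper carries out the computation with a fixed choice of distinguished index (taking $\gamma_{\alpha}$ apart from $\gamma_{\beta}\circ\gamma_{\sigma}$) and only rewrites the outcome in $S_{3}$-symmetrized form at the end, whereas you propose symmetrizing along the way; also note that the first WDVV move does not literally keep $\gamma_{\mu},\gamma^{\mu}$ untouched---one of them is absorbed into a product (producing, in particular, the $\Delta=\gamma^{\mu}\circ\gamma_{\mu}$ term), so your phrase ``keeping the pair intact'' is a slight over-simplification of what actually happens.
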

\begin{proof}
By equation \eqref{WDVV2}, we have
\ben
&&\langle\langle\{G(E^{k-i})\circ E^{i-1}\}\gamma_{\mu}\gamma^{\mu}\{\gamma_{\alpha}\circ\gamma_{\beta}\circ\gamma_{\sigma}\}\rangle\rangle_{0}\\
&=&\langle\langle\{G(E^{k-i})\circ E^{i-1}\}\gamma_{\mu}\gamma_{\alpha}\{\gamma_{\beta}\circ\gamma_{\sigma}\circ\gamma^{\mu}\}\rangle\rangle_{0}\\
&&+\langle\langle\{G(E^{k-i})\circ E^{i-1}\}\gamma^{\mu}\{\gamma_{\alpha}\circ\gamma_{\mu}\}\{\gamma_{\beta}\circ\gamma_{\sigma}\}\rangle\rangle_{0}\\
&&-\langle\langle\{G(E^{k-i})\circ E^{i-1}\}\Delta\gamma_{\alpha}\{\gamma_{\beta}\circ\gamma_{\sigma}\}\rangle\rangle_{0},
\een
\ben
&&\langle\langle\{G(E^{k-i})\circ E^{i-1}\}\gamma_{\mu}\gamma_{\alpha}\{\gamma_{\beta}\circ\gamma_{\sigma}\circ\gamma^{\mu}\}\rangle\rangle_{0}\\
&=&\langle\langle G(E^{k-i})\gamma_{\alpha}\{\gamma_{\beta}\circ\gamma_{\sigma}\circ\gamma^{\mu}\}\{E^{i-1}\circ\gamma_{\mu}\}\rangle\rangle_{0}\\
&&+\langle\langle E^{i-1}\{G(E^{k-i})\circ\gamma_{\alpha}\}\gamma_{\mu}\{\gamma_{\beta}\circ\gamma_{\sigma}\circ\gamma^{\mu}\}\rangle\rangle_{0}\\
&&-\langle\langle E^{i-1}\{\gamma_{\alpha}\circ\gamma_{\mu}\}\{\gamma_{\beta}\circ\gamma_{\sigma}\circ\gamma^{\mu}\}G(E^{k-i})\rangle\rangle_{0},
\een
\ben
&&\langle\langle\{G(E^{k-i})\circ E^{i-1}\}\gamma^{\mu}\{\gamma_{\alpha}\circ\gamma_{\mu}\}\{\gamma_{\beta}\circ\gamma_{\sigma}\}\rangle\rangle_{0}\\
&=&\langle\langle G(E^{k-i})\{\gamma_{\beta}\circ\gamma_{\sigma}\}\{\gamma_{\alpha}\circ\gamma_{\mu}\}\{E^{i-1}\circ\gamma^{\mu}\}\rangle\rangle_{0}\\
&&+\langle\langle E^{i-1}\{\gamma_{\beta}\circ\gamma_{\sigma}\}\gamma^{\mu}\{G(E^{k-i})\circ\gamma_{\alpha}\circ\gamma_{\mu}\}\rangle\rangle_{0}\\
&&-\langle\langle E^{i-1}\{\gamma_{\beta}\circ\gamma_{\sigma}\}\{\gamma_{\alpha}\circ\Delta\}G(E^{k-i})\rangle\rangle_{0},
\een
and
\ben
&&\langle\langle\{G(E^{k-i})\circ E^{i-1}\}\Delta\gamma_{\alpha}\{\gamma_{\beta}\circ\gamma_{\sigma}\}\rangle\rangle_{0}\\
&=&\langle\langle G(E^{k-i})\{\Delta\circ E^{i-1}\}\{\gamma_{\beta}\circ\gamma_{\sigma}\}\gamma_{\alpha}\rangle\rangle_{0}\\
&&+\langle\langle E^{i-1}\Delta\{\gamma_{\beta}\circ\gamma_{\sigma}\}\{G(E^{k-i})\circ\gamma_{\alpha}\}\rangle\rangle_{0}\\
&&-\langle\langle E^{i-1}\{\Delta\circ \gamma_{\alpha}\}\{\gamma_{\beta}\circ\gamma_{\sigma}\}G(E^{k-i})\rangle\rangle_{0}.
\een
The proof is completed by using Lemma \ref{4point-3point} and equation \eqref{simplication2}.
\end{proof}

\begin{lem}\label{4pointreduction3}
\ben
&&\sum_{g\in S_{3}}\sum_{i=1}^k\langle\langle G(E^{k-i}\circ\gamma_{\varsigma_{g(1)}})\gamma_{\varsigma_{g(2)}}\{\gamma_{\varsigma_{g(3)}}\circ\gamma^{\mu}\}\{E^{i-1}\circ\gamma_{\mu}\}\rangle\rangle_{0}\\
&=&\frac{1}{2}\sum_{g\in S_{3}}\sum_{i=1}^k\langle\langle \{\Delta\circ E^{i-1}\} G(E^{k-i}\circ\gamma_{\varsigma_{g(1)}})\gamma_{\varsigma_{g(2)}}\gamma_{\varsigma_{g(3)}}\rangle\rangle_{0}\\
&&+\frac{1}{2}\sum_{g\in S_{3}}\sum_{i=1}^k\langle\langle G(E^{k-i}\circ\gamma_{\varsigma_{g(1)}})\gamma_{\mu}\gamma^{\mu}\{E^{i-1}\circ\gamma_{\varsigma_{g(2)}}\circ\gamma_{\varsigma_{g(3)}}\}\rangle\rangle_{0}\\
&&+\frac{1}{2}\sum_{g\in S_{3}}\sum_{i=1}^k\sum_{j=1}^{i-1}\langle\langle \{G(\Delta\circ E^{i-j-1})\circ E^{j-1}\} G(E^{k-i}\circ\gamma_{\varsigma_{g(1)}})\{\gamma_{\varsigma_{g(2)}}\circ\gamma_{\varsigma_{g(3)}}\}\rangle\rangle_{0}\\
&&+\frac{1}{2}\sum_{g\in S_{3}}\sum_{i=1}^k\sum_{j=1}^{i-1}\langle\langle \{\Delta\circ E^{i-j-1}\} G(E^{k-i}\circ\gamma_{\varsigma_{g(1)}})G(E^{j-1}\circ\gamma_{\varsigma_{g(2)}}\circ\gamma_{\varsigma_{g(3)}})\rangle\rangle_{0}\\
&&-\frac{1}{2}\sum_{g\in S_{3}}\sum_{i=1}^k\sum_{j=1}^{i-1}\langle\langle \{\Delta\circ E^{i-2}\} G(E^{k-i}\circ\gamma_{\varsigma_{g(1)}})\{\gamma_{\varsigma_{g(2)}}\circ\gamma_{\varsigma_{g(3)}}\}\rangle\rangle_{0}.
\een

\end{lem}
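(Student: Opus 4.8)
The plan is to prove this identity by the same mechanical procedure used in Lemmas \ref{4pointreduction1} and \ref{4pointreduction2}: repeatedly apply the first-derivative WDVV equation \eqref{WDVV2} to the 4-point functions on the left, collapse the metric contraction in the index $\mu$, and then invoke Lemma \ref{4point-3point} together with the contraction identity \eqref{simplication2} to cast everything into the five claimed shapes.

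First I would fix $g\in S_{3}$ and $1\le i\le k$ and abbreviate $W_{i}=G(E^{k-i}\circ\gamma_{\varsigma_{g(1)}})$, so that a single summand of the left-hand side is $\sum_{\mu}\langle\langle W_{i}\,\gamma_{\varsigma_{g(2)}}\,\{\gamma_{\varsigma_{g(3)}}\circ\gamma^{\mu}\}\,\{E^{i-1}\circ\gamma_{\mu}\}\rangle\rangle_{0}$. Since the symmetric metric is used to raise and lower indices, this summand is unchanged under swapping the upper and lower $\mu$ between the two quantum products, i.e. under replacing $\{\gamma_{\varsigma_{g(3)}}\circ\gamma^{\mu}\}\{E^{i-1}\circ\gamma_{\mu}\}$ by $\{\gamma_{\varsigma_{g(3)}}\circ\gamma_{\mu}\}\{E^{i-1}\circ\gamma^{\mu}\}$. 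I would therefore write the summand as the average of these two forms; this is what produces the uniform factor $\tfrac12$ on the right-hand side.

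Next I would apply \eqref{WDVV2} to open the product $\{\gamma_{\varsigma_{g(3)}}\circ\gamma^{\mu}\}$, taking $\upsilon_{1}=\gamma_{\varsigma_{g(3)}}$, $\upsilon_{2}=\gamma^{\mu}$, $\upsilon_{5}=\{E^{i-1}\circ\gamma_{\mu}\}$ and the remaining two slots $W_{i},\gamma_{\varsigma_{g(2)}}$. This yields three terms. The term coming from $\{\upsilon_{2}\circ\upsilon_{5}\}$ collapses immediately, because $\sum_{\mu}\gamma^{\mu}\circ\{E^{i-1}\circ\gamma_{\mu}\}=E^{i-1}\circ\Delta$ by the definition of $\Delta$; after summing over $S_{3}$ this is exactly the $\{\Delta\circ E^{i-1}\}$-term. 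The other two WDVV terms still carry a genuine power $E^{i-1}$ (or $E^{k-i}$) in a plain slot, so I would reduce them by Lemma \ref{4point-3point}; since the relevant exponent is $i-1$, the reduction introduces precisely the inner sums $\sum_{j=1}^{i-1}$ appearing in the last three terms. Wherever a factor $G(\cdot\circ\gamma^{\mu})$ is contracted against $\gamma_{\mu}$ I would use \eqref{simplication2} to replace it by $\tfrac12\Delta\circ(\cdot)$, which is what converts the various $G(\cdots\circ\gamma^{\mu})G(\gamma_{\mu})$ and $\gamma_{\mu}\gamma^{\mu}$ blocks into the $\Delta$- and $\gamma_{\mu}\gamma^{\mu}$-structures shown.

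The main obstacle will be the bookkeeping of the $S_{3}$-symmetrization intertwined with the index raising and lowering. Several intermediate 4-point functions are produced in more than one way (from both mirror forms and from different WDVV splittings), and one must check that the redundant copies cancel and that the surviving pieces reassemble with the correct coefficients $\pm\tfrac12$ and the stated ranges of the $j$-summation; the last term, with $\{\Delta\circ E^{i-2}\}$ and coefficient $-\tfrac12$, is exactly the residue of this cancellation. This is the tedious step flagged in the discussion preceding Lemma \ref{G0part11}, but it is routine once the three structural inputs, namely \eqref{WDVV2}, Lemma \ref{4point-3point}, and \eqref{simplication2}, are in place.
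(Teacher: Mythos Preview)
Your overall strategy---apply \eqref{WDVV2}, then reduce the surviving $E^{i-1}$-slot with Lemma \ref{4point-3point}, and use \eqref{simplication2} to contract $\gamma_{\mu}\gamma^{\mu}$ into $\Delta$---is exactly the paper's route. The paper's proof does precisely this in two short WDVV steps followed by Lemma \ref{4point-3point}.

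There is, however, one point where your explanation is off. Swapping the position of the upper and lower $\mu$ between the two quantum products is a trivial identity (it is literally the same sum), so averaging over that swap cannot be the source of the uniform $\tfrac12$. The $\tfrac12$ comes instead from the $S_{3}$-symmetrization in the outer sum: the paper's first WDVV application adds the summand to its $(\varsigma_{g(2)},\varsigma_{g(3)})$-swapped copy, obtaining
\ben
&&\langle\langle W_{i}\,\gamma_{\varsigma_{g(2)}}\,\{\gamma_{\varsigma_{g(3)}}\circ\gamma^{\mu}\}\,\{E^{i-1}\circ\gamma_{\mu}\}\rangle\rangle_{0}
+\langle\langle W_{i}\,\gamma_{\varsigma_{g(3)}}\,\{\gamma_{\varsigma_{g(2)}}\circ\gamma^{\mu}\}\,\{E^{i-1}\circ\gamma_{\mu}\}\rangle\rangle_{0}\\
&=&\langle\langle\{\Delta\circ E^{i-1}\}W_{i}\,\gamma_{\varsigma_{g(2)}}\gamma_{\varsigma_{g(3)}}\rangle\rangle_{0}
+\langle\langle W_{i}\,\{\gamma_{\varsigma_{g(2)}}\circ\gamma_{\varsigma_{g(3)}}\}\gamma^{\mu}\{E^{i-1}\circ\gamma_{\mu}\}\rangle\rangle_{0},
\een
and since the $S_{3}$-sum is invariant under swapping $g(2)$ and $g(3)$, the left side summed over $g$ equals \emph{twice} the original left-hand side of the lemma. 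That is where every $\tfrac12$ on the right comes from. After this, one more WDVV step on the second term above and Lemma \ref{4point-3point} give the remaining four lines. So your plan works once you correct the mechanism producing the $\tfrac12$.
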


\begin{proof}
By equation \eqref{WDVV2}, we have
\ben
&&\langle\langle G(E^{k-i}\circ\gamma_{\varsigma_{g(1)}})\gamma_{\varsigma_{g(2)}}\{\gamma_{\varsigma_{g(3)}}\circ\gamma^{\mu}\}\{E^{i-1}\circ\gamma_{\mu}\}\rangle\rangle_{0}\\
&&+\langle\langle G(E^{k-i}\circ\gamma_{\varsigma_{g(1)}})\gamma_{\varsigma_{g(3)}}\{\gamma_{\varsigma_{g(2)}}\circ\gamma^{\mu}\}\{E^{i-1}\circ\gamma_{\mu}\}\rangle\rangle_{0}\\
&=&\langle\langle\{\Delta\circ E^{i-1}\}G(E^{k-i}\circ\gamma_{\varsigma_{g(1)}})\gamma_{\varsigma_{g(2)}}\gamma_{\varsigma_{g(3)}}\rangle\rangle_{0}\\
&&+\langle\langle G(E^{k-i}\circ\gamma_{\varsigma_{g(1)}})\{\gamma_{\varsigma_{g(2)}}\circ\gamma_{\varsigma_{g(3)}}\}\gamma^{\mu}\{E^{i-1}\circ\gamma_{\mu}\}\rangle\rangle_{0},
\een
and
\ben
&&\langle\langle G(E^{k-i}\circ\gamma_{\varsigma_{g(1)}})\{\gamma_{\varsigma_{g(2)}}\circ\gamma_{\varsigma_{g(3)}}\}\gamma^{\mu}\{E^{i-1}\circ\gamma_{\mu}\}\rangle\rangle_{0}\\
&=&\langle\langle G(E^{k-i}\circ\gamma_{\varsigma_{g(1)}})\gamma_{\mu}\gamma^{\mu}\{E^{i-1}\circ\gamma_{\varsigma_{g(2)}}\circ\gamma_{\varsigma_{g(3)}}\}\rangle\rangle_{0}\\
&&+\langle\langle  E^{i-1}\Delta \{\gamma_{\varsigma_{g(2)}}\circ\gamma_{\varsigma_{g(3)}}\}G(E^{k-i}\circ\gamma_{\varsigma_{g(1)}})\rangle\rangle_{0}\\
&&-\langle\langle E^{i-1} \gamma_{\mu}\{\gamma_{\varsigma_{g(2)}}\circ\gamma_{\varsigma_{g(3)}}\circ\gamma^{\mu}\}G(E^{k-i}\circ\gamma_{\varsigma_{g(1)}})\rangle\rangle_{0}.
\een
The proof is completed by using Lemma \ref{4point-3point} and equation \eqref{simplication2}.
\end{proof}

\begin{lem}\label{4pointreduction4}
\ben
&&-\sum_{g\in S_{3}}\langle\langle\{G(E^{k-1}\circ\gamma^{\mu})\circ\gamma_{\varsigma_{g(1)}}\}\gamma_{\varsigma_{g(2)}}\gamma_{\varsigma_{g(3)}}\gamma_{\mu}\rangle\rangle_{0}\\
&=&-\frac{1}{2}\sum_{g\in S_{3}}\langle\langle\{E^{k-1}\circ\gamma_{\varsigma_{g(1)}}\}\gamma_{\mu}\gamma^{\mu}\{\gamma_{\varsigma_{g(2)}}\circ\gamma_{\varsigma_{g(3)}}\}\rangle\rangle_{0}\\
&&+\frac{1}{2}\sum_{g\in S_{3}}\sum_{j=1}^{k-1}\langle\langle E^{k-j-1}G(\Delta\circ E^{j-1}\circ\gamma_{\varsigma_{g(1)}})\{\gamma_{\varsigma_{g(2)}}\circ\gamma_{\varsigma_{g(3)}}\}\rangle\rangle_{0}\\
&&-3\sum_{j=1}^{k-1}\langle\langle G(\Delta\circ E^{k-j-1})E^{j-1}\{\gamma_{\alpha}\circ\gamma_{\beta}\circ\gamma_{\sigma}\}\rangle\rangle_{0}\\
&&-\frac{1}{2}\sum_{g\in S_{3}}\sum_{j=1}^{k-1}\langle\langle \{\Delta\circ E^{k-j-1}\}G( E^{j-1}\circ\gamma_{\varsigma_{g(1)}})\{\gamma_{\varsigma_{g(2)}}\circ\gamma_{\varsigma_{g(3)}}\}\rangle\rangle_{0}\\
&&-\sum_{g\in S_{3}}\sum_{j=1}^{k-1}\langle\langle\{G(E^{k-j-1}\circ\gamma_{\varsigma_{g(1)}}\circ\gamma^{\mu})\circ G(\gamma_{\mu})\}E^{j-1}\{\gamma_{\varsigma_{g(2)}}\circ\gamma_{\varsigma_{g(3)}}\}\rangle\rangle_{0}\\
&&+6\sum_{j=1}^{k-1}\langle\langle\{G(E^{k-j-1}\circ\gamma^{\mu})\circ G(\gamma_{\mu})\}E^{j-1}\{\gamma_{\alpha}\circ\gamma_{\beta}\circ\gamma_{\sigma}\}\rangle\rangle_{0}\\
&&+\frac{1}{2}\sum_{g\in S_{3}}\sum_{j=1}^{k-1}\langle\langle \{\Delta\circ E^{k-j-1}\}G( E^{j-1}\circ\gamma_{\varsigma_{g(1)}}\circ\gamma_{\varsigma_{g(2)}})\gamma_{\varsigma_{g(3)}}\rangle\rangle_{0}\\
&&-3\langle\langle\{\Delta\circ E^{k-1}\}\gamma_{\alpha}\gamma_{\beta}\gamma_{\sigma}\rangle\rangle_{0}\\
&&+\sum_{g\in S_{3}}\langle\langle G(E^{k-1}\circ\gamma_{\varsigma_{g(1)}}\circ\gamma^{\mu})\gamma_{\varsigma_{g(2)}}\gamma_{\varsigma_{g(3)}}\gamma_{\mu}\rangle\rangle_{0}.
\een

\end{lem}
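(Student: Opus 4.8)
The plan is to prove Lemma \ref{4pointreduction4} by exactly the strategy already used for Lemmas \ref{4pointreduction1}, \ref{4pointreduction2}, and \ref{4pointreduction3}: repeatedly apply the first derivative of the WDVV equation \eqref{WDVV2} to the left-hand side, then use Lemma \ref{4point-3point} and equation \eqref{simplication2} to reduce and simplify. Concretely, I would start from the single term $\langle\langle\{G(E^{k-1}\circ\gamma^{\mu})\circ\gamma_{\varsigma_{g(1)}}\}\gamma_{\varsigma_{g(2)}}\gamma_{\varsigma_{g(3)}}\gamma_{\mu}\rangle\rangle_{0}$ and apply \eqref{WDVV2} with $\upsilon_{1}=G(E^{k-1}\circ\gamma^{\mu})$, $\upsilon_{2}=\gamma_{\varsigma_{g(1)}}$, $\upsilon_{3}=\gamma_{\varsigma_{g(2)}}$, $\upsilon_{4}=\gamma_{\varsigma_{g(3)}}$, $\upsilon_{5}=\gamma_{\mu}$. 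This splits the quantum product in the first slot into three correlators; in two of them the free index $\gamma_{\mu}$ becomes quantum-multiplied against one of the $\gamma_{\varsigma}$'s, while $\gamma^{\mu}$ remains inside the $G$-factor, putting the summed index $\mu$ in position to be collapsed.

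The key technical steps come next. To carry the contraction between the $\gamma^{\mu}$ buried in $G(E^{k-1}\circ\gamma^{\mu})$ and the $\gamma_{\mu}$ now sitting in a quantum product, I would invoke the observation formulas \eqref{Observation1} and \eqref{Observation2}, which are designed precisely to shuffle the pair $\gamma^{\mu},\gamma_{\mu}$ across a $G$-factor and a quantum-product factor inside a single correlator, followed by \eqref{simplication2} to replace $G(E^{k-1}\circ\gamma^{\mu})\circ\gamma_{\mu}$ by $\tfrac12\Delta\circ E^{k-1}$. This is what produces the $\Delta$-terms and the $\{G(\cdots\circ\gamma^{\mu})\circ G(\gamma_{\mu})\}$-terms on the right-hand side. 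Any correlator that still retains a quantum power $E^{m}$ as a genuine argument I would then feed into Lemma \ref{4point-3point}: its reduction of a four-point function $\langle\langle E^{m}\gamma_{\alpha}\gamma_{\beta}\gamma_{\mu}\rangle\rangle_{0}$ introduces exactly the inner $\sum_{j}$ over split powers $E^{j-1}$ and $E^{k-j-1}$, accounting for the double sums $\sum_{j=1}^{k-1}$ appearing throughout the statement. Throughout, I would use formula \eqref{derivative3} whenever a $G(E^{k-1}\circ\cdots)$ must be expanded, and symmetrize the resulting expression over $g\in S_{3}$ at the very end, as in the earlier lemmas.

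The main obstacle will be the bookkeeping rather than any conceptual difficulty: each application of \eqref{WDVV2} triples the number of correlators, and after the $\mu$-contraction and the $\sum_{j}$ splitting one is left with a large collection of genus-zero three- and four-point functions that must be grouped so that the redundant four-point functions cancel. The precise coefficients on the right — in particular the $-\tfrac12$, the $-3$, the $+6$, and the single uncancelled four-point term $\sum_{g\in S_{3}}\langle\langle G(E^{k-1}\circ\gamma_{\varsigma_{g(1)}}\circ\gamma^{\mu})\gamma_{\varsigma_{g(2)}}\gamma_{\varsigma_{g(3)}}\gamma_{\mu}\rangle\rangle_{0}$ — are the delicate points. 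The factors $6=|S_{3}|$ and $-3$ are the signature of terms in which the fully symmetric quantum product $\gamma_{\alpha}\circ\gamma_{\beta}\circ\gamma_{\sigma}$ appears and the $S_{3}$-sum collapses six identical (or three paired) contributions into one; tracking when such a collapse occurs, versus when a term genuinely depends on the ordering $\varsigma_{g(1)},\varsigma_{g(2)},\varsigma_{g(3)}$ and must be kept under the sum, is where the care is needed. I expect the computation itself to be routine once the contraction identities \eqref{Observation1}, \eqref{Observation2}, \eqref{simplication2} and the reduction of Lemma \ref{4point-3point} are applied systematically.
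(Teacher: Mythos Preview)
Your overall strategy — apply \eqref{WDVV2}, then use the contraction identities \eqref{Observation2}, \eqref{simplication2}, and finally reduce via Lemma \ref{4point-3point} — is exactly the paper's approach. However, the specific first step you propose does not work.

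With your assignment $\upsilon_{1}=G(E^{k-1}\circ\gamma^{\mu})$, $\upsilon_{2}=\gamma_{\varsigma_{g(1)}}$, $\upsilon_{3}=\gamma_{\varsigma_{g(2)}}$, $\upsilon_{4}=\gamma_{\varsigma_{g(3)}}$, $\upsilon_{5}=\gamma_{\mu}$, the three output terms of \eqref{WDVV2} are the $\upsilon_{1}\circ\upsilon_{3}$ term, the $\upsilon_{2}\circ\upsilon_{5}$ term, and the $\upsilon_{3}\circ\upsilon_{5}$ term. After summing over $g\in S_{3}$, the $\upsilon_{1}\circ\upsilon_{3}$ term reproduces the left-hand side (relabel $g\mapsto g\cdot(12)$), while the $\upsilon_{2}\circ\upsilon_{5}$ and $\upsilon_{3}\circ\upsilon_{5}$ terms become equal and cancel. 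You recover the tautology $0=0$, not a reduction. The point is that in \eqref{WDVV2} the pair $\upsilon_{1},\upsilon_{5}$ is never quantum-multiplied, so putting the two halves of the $\mu$-contraction in those slots prevents the collapse you want.

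The paper instead takes $\upsilon_{2}=G(E^{k-1}\circ\gamma^{\mu})$ and $\upsilon_{5}=\gamma_{\mu}$ (with $\upsilon_{1}=\gamma_{\varsigma_{g(1)}}$, $\upsilon_{3}=\gamma_{\varsigma_{g(3)}}$, $\upsilon_{4}=\gamma_{\varsigma_{g(2)}}$). Then the $\upsilon_{2}\circ\upsilon_{5}$ term is $G(E^{k-1}\circ\gamma^{\mu})\circ\gamma_{\mu}=\tfrac{1}{2}\Delta\circ E^{k-1}$ by \eqref{simplication2}, producing the $\tfrac{1}{2}\langle\langle\{\Delta\circ E^{k-1}\}\gamma_{\alpha}\gamma_{\beta}\gamma_{\sigma}\rangle\rangle_{0}$ contribution immediately; the $\upsilon_{3}\circ\upsilon_{5}$ term gives $\gamma_{\varsigma_{g(3)}}\circ\gamma_{\mu}$, which after \eqref{Observation2} becomes the surviving four-point term $\langle\langle G(E^{k-1}\circ\gamma_{\varsigma_{g(3)}}\circ\gamma^{\mu})\gamma_{\varsigma_{g(1)}}\gamma_{\varsigma_{g(2)}}\gamma_{\mu}\rangle\rangle_{0}$; and the $\upsilon_{1}\circ\upsilon_{3}$ term, after using \eqref{Observation2} to move $E^{k-1}$ out of $G$, is a four-point function with an explicit $E^{k-1}$ insertion that one then hits again with \eqref{WDVV2} and finally with Lemma \ref{4point-3point}. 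Formula \eqref{derivative3} is not used here.
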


\begin{proof}
Using equation \eqref{WDVV2}, we have
\ben
&&\langle\langle\{G(E^{k-1}\circ\gamma^{\mu})\circ\gamma_{\varsigma_{g(1)}}\}\gamma_{\varsigma_{g(2)}}\gamma_{\varsigma_{g(3)}}\gamma_{\mu}\rangle\rangle_{0}\\
&=&\langle\langle G(E^{k-1}\circ\gamma^{\mu})\{\gamma_{\varsigma_{g(1)}}\circ\gamma_{\varsigma_{g(3)}}\}\gamma_{\varsigma_{g(2)}}\gamma_{\mu}\rangle\rangle_{0}
+\frac{1}{2}\langle\langle\{\Delta\circ E^{k-1}\}\gamma_{\alpha}\gamma_{\beta}\gamma_{\sigma}\rangle\rangle_{0}\\
&&-\langle\langle G(E^{k-1}\circ\gamma^{\mu})\gamma_{\varsigma_{g(1)}}\gamma_{\varsigma_{g(2)}}\{\gamma_{\mu}\circ\gamma_{\varsigma_{g(3)}}\}\rangle\rangle_{0}\\
&=&\langle\langle G(\gamma_{\mu})\{\gamma_{\varsigma_{g(1)}}\circ\gamma_{\varsigma_{g(3)}}\}\gamma_{\varsigma_{g(2)}}\{E^{k-1}\circ\gamma^{\mu}\}\rangle\rangle_{0}+\frac{1}{2}\langle\langle\{\Delta\circ E^{k-1}\}\gamma_{\alpha}\gamma_{\beta}\gamma_{\sigma}\rangle\rangle_{0}\\
&&-\langle\langle\{G(E^{k-1}\circ\gamma_{\varsigma_{g(3)}}\circ\gamma^{\mu})\}
\gamma_{\varsigma_{g(1)}}\gamma_{\varsigma_{g(2)}}\gamma_{\mu}\rangle\rangle_{0},
\een
and
\ben
&&\langle\langle G(\gamma_{\mu})\{\gamma_{\varsigma_{g(1)}}\circ\gamma_{\varsigma_{g(3)}}\}\gamma_{\varsigma_{g(2)}}\{E^{k-1}\circ\gamma^{\mu}\}\rangle\rangle_{0}\\
&=&\frac{1}{2}\langle\langle E^{k-1}\Delta \gamma_{\varsigma_{g(2)}}\{\gamma_{\varsigma_{g(1)}}\circ\gamma_{\varsigma_{g(3)}}\}\rangle\rangle_{0}
+\langle\langle\{E^{k-1}\circ\gamma_{\varsigma_{g(2)}}\}\gamma^{\mu}G(\gamma_{\mu})\{\gamma_{\varsigma_{g(1)}}\circ\gamma_{\varsigma_{g(3)}}\}\rangle\rangle_{0}\\
&&-\langle\langle E^{k-1}\gamma^{\mu}\{\gamma_{\varsigma_{g(1)}}\circ\gamma_{\varsigma_{g(3)}}\}\{\gamma_{\varsigma_{g(2)}}\circ G(\gamma_{\mu})\}\rangle\rangle_{0}\\
&=&\frac{1}{2}\langle\langle\{E^{k-1}\circ\gamma_{\varsigma_{g(2)}}\}\gamma_{\mu}\gamma^{\mu}\{\gamma_{\varsigma_{g(1)}}\circ\gamma_{\varsigma_{g(3)}}\}\rangle\rangle_{0}
+\frac{1}{2}\langle\langle E^{k-1}\Delta\gamma_{\varsigma_{g(2)}}\{\gamma_{\varsigma_{g(1)}}\circ\gamma_{\varsigma_{g(3)}}\}\rangle\rangle_{0}\\
&&-\langle\langle E^{k-1}\gamma^{\mu}\{\gamma_{\varsigma_{g(1)}}\circ\gamma_{\varsigma_{g(3)}}\}\{\gamma_{\varsigma_{g(2)}}\circ G(\gamma_{\mu})\}\rangle\rangle_{0}.
\een
The proof is completed by using Lemma \ref{4point-3point}.
\end{proof}


\end{document}